\DeclareSymbolFont{cyrletters}{OT2}{wncyr}{m}{n}
\DeclareMathSymbol{\Sha}{\mathalpha}{cyrletters}{"58}
\newcommand\undertilde[2][1]{%
 \def\useanchorwidth{T}%
  \ifnum#1>1%
    \stackunder[0pt]{\tenq[\numexpr#1-1\relax]{#2}}{\scriptscriptstyle\sim}%
  \else%
    \stackunder[1pt]{#2}{\scriptscriptstyle\sim}%
  \fi%
}
\DeclareMathAlphabet{\mathpzc}{OT1}{pzc}{m}{it}
\title[Kashiwara-Vergne and dihedral Lie algebras]{Kashiwara-Vergne and dihedral\\
bigraded Lie algebras in mould theory}
\author{Hidekazu Furusho}
\author{Nao Komiyama}
\address{Graduate School of Mathematics, Nagoya University, 
Furo-cho, Chikusa-ku, Nagoya, 464-8602, Japan}
\email{furusho@math.nagoya-u.ac.jp}
\email{m15027u@math.nagoya-u.ac.jp}
\subjclass[2020]{17B05 (Primary), 11M32, 16S30, 17B40 (Secondary)}
\keywords{Kashiwara-Verge Lie algebra, dihedral Lie algebra, mould theory}
\date{February 21, 2022} %%final version of AFST
\newtheorem{thm}{Theorem}[section]
\newtheorem{lem}[thm]{Lemma}
\newtheorem{cor}[thm]{Corollary}
\newtheorem{prop}[thm]{Proposition}
\theoremstyle{definition} \newtheorem{rem}[thm]{Remark}}
\theoremstyle{definition} \newtheorem{defn}[thm]{Definition}}
\theoremstyle{definition}
\newtheorem{nota}[thm]{Notation}
\newtheorem{eg}[thm]{Example}
{\theoremstyle{remark} }
\numberwithin{equation}{section}
\newcommand{\C}{\mathbb{C}}
\newcommand{\Q}{\mathbb{Q}}
\newcommand{\Z}{\mathbb{Z}}
\newcommand{\N}{\mathbb{N}}
\newcommand{\shuffle}{\scalebox{.8}{$\Sha$}}
\newcommand{\Sh}[3]{{\rm Sh}\binom{#1;#2}{#3}}
\newcommand{\vecx}{{\bf x}}
\newcommand{\vecy}{{\bf y}}
\newcommand{\varia}[2]{\left({}^{#1}_{#2}\right)}
\newcommand{\q}{\mathbf{q}}
\newcommand{\krv}{\mathfrak{krv}}
\newcommand{\lkrv}{\mathfrak{lkrv}}
\newcommand{\lkrvd}{\mathfrak{lkrvd}}
\newcommand{\tder}{\mathfrak{tder}}
\newcommand{\sder}{\mathfrak{sder}}
\newcommand{\mt}{\mathfrak{mt}}
\newcommand{\tr}{\mathrm{tr}}
\newcommand{\anti}{\mathrm{anti}}
\newcommand{\Fil}{\mathrm{Fil}}
\newcommand{\cp}{\mathtt{cy}}
\newcommand{\res}{\mathrm{res}}
\newcommand{\ma}{\mathrm{ma}}
\newcommand{\vimo}{\mathrm{vimo}}
\newcommand{\teru}{\mathrm{teru}}
\newcommand{\push}{\mathrm{push}}
\newcommand{\pus}{\mathrm{pus}}
\newcommand{\pusnu}{\mathrm{pusnu}}
\newcommand{\nega}{\mathrm{neg}}
\newcommand{\mantar}{\mathrm{mantar}}
\newcommand{\swap}{\mathrm{swap}}
\newcommand{\pspush}{\mathrm{sena}}
\newcommand{\ARI}{\mathrm{ARI}}
\newcommand{\ARID}{\mathrm{ARID}}
\newcommand{\ulflex}[2]{{#1}\rceil_{\scalebox{.5}{$#2$}}}
\newcommand{\urflex}[2]{{}_{\scalebox{.5}{$#1$}}\lceil{#2}}
\newcommand{\llflex}[2]{{#1}\rfloor_{\scalebox{.5}{$#2$}}}
\newcommand{\lrflex}[2]{{}_{\scalebox{.5}{$#1$}}\lfloor{#2}}
\newcommand{\arit}{\mathrm{arit}}
\newcommand{\ari}{\mathrm{ari}}
\newcommand{\aritu}{\mathrm{arit}_u}
\newcommand{\ariu}{\mathrm{ari}_u}
\newcommand{\preariu}{\mathrm{preari}_u}
\newcommand{\pol}{\mathrm{pol}}
\newcommand{\al}{\mathrm{al}}
\newcommand{\il}{\mathrm{il}}
\newcommand{\fin}{\mathrm{fin}}
\newcommand{\gr}{\mathrm{gr}}
\newcommand{\id}{\mathrm{id}}
\newcommand{\D}{\mathcal{D}}
\newcommand{\f}{\mathsf{f}}
\newcommand{\h}{\undertilde{t}}
\newcommand{\tswap}{\undertilde{u}}
\newcommand{\Li}{\mathrm{Li}}
\newcommand{\KZ}{\mathrm{KZ}}
\newcommand{\corr}{\mathrm{corr}}
\newcommand{\Zag}{\mathrm{Zag}}
\newcommand{\Zig}{\mathrm{Zig}}
\newcommand{\Mini}{\mathrm{Mini}}
\newcommand{\Mono}{\mathrm{Mono}}
\newcommand{\reg}{\mathrm{reg}}
\newcommand{\GARI}{\mathrm{GARI}}
\newcommand{\as}{\mathrm{as}}
\newcommand{\is}{\mathrm{is}}
\begin{document}
%\baselineskip 16pt 
%\hfill
%\texttt{\jobname.tex} %\qquad \today
\bibliographystyle{amsalpha+}
\maketitle
%%%%%%%%%%%%%%%%%%%%%%%%%%%%%%%%%%%%%%%%%%%%%%%%%%%%%%%%%%%%%%%%%%%%%%
\begin{abstract}
We introduce the Kashiwara-Vergne bigraded Lie algebra associated with a finite abelian group and give its mould theoretic reformulation.
By using the mould theory, we show that 
it includes Goncharov's dihedral Lie algebra,
which generalizes the result of Raphael and Schneps. 
\end{abstract}
%%%%%%%%%%%%%%%%%%%%%%%%%%%%%%%%%%%%%%%%%%%%%%%%%%%%%%%%%%%%%%%%%%%%%%
\tableofcontents
%%%%%%%%%%%%%%%%%%%%%%%%%%%%%%%%%%%%%%%%%%%%%%%%%%%%%%%%%%%%%%%%%%%%%%
%%%%%%%%%%%%%%%%%%%%%%%%%%%%%%%%%%%%%%%%%%%%%%%%%%%%%%%%%%%%%%%%%%%%%%
\setcounter{section}{-1}
\section{Introduction}\label{introduction}
%\begin{screen}
%${\mathbb D}(\Gamma)_{\bullet\bullet}$, 
%${\bf D}(\Gamma)_{\bullet\bullet}$, 
%\end{screen}
%$\mathcal D$, 
%$D(\Gamma)_{\bullet\bullet}$,
%$\mathfrak D$, $\mathscr D$(by using package {mathrsfs}),\\
%$D(\Gamma)_{\bullet\bullet}$,
%$D^{\shuffle}_{\shuffle}(\Gamma)_{\bullet\bullet}$,
%$D_{\shuffle,\shuffle}(\Gamma)_{\bullet\bullet}$,
%$\undertilde{\tilde D}(\Gamma)_{\bullet\bullet}$,
%$D(\Gamma)^\mathrm{dis\text{-}free}_{\bullet\bullet}$,
%$D(\Gamma)^\dag_{\bullet\bullet}$,
%$D(\Gamma)^\star_{\bullet\bullet}$,
%$D(\Gamma)^\diamond_{\bullet\bullet}$,
%$D(\Gamma)^\natural_{\bullet\bullet}$,
%$D(\Gamma)^\sharp_{\bullet\bullet}$,
%$D(\Gamma)^\flat_{\bullet\bullet}$,
%$D(\Gamma)^\S_{\bullet\bullet}$,
%$D(\Gamma:\S)_{\bullet\bullet}$

The dihedral Lie algebra  $D(\Gamma)_{\bullet\bullet}$ is the bigraded Lie algebra introduced in \cite{G}
which is associated with a finite abelian group $\Gamma$.
It reflects the double shuffle and distribution relations among multiple polylogarithms evaluated at roots of unity.
Its relation with a certain bigraded variant of  motivic Lie algebra is discussed in ~loc.~cit.

Kashiwara-Vergne Lie algebra $\krv_\bullet$ is the filtered graded Lie algebra 
introduced in  \cite{AT} and \cite{AET}. It acts on the set of solutions of 
\lq a formal version' of Kashiwara-Vergne conjecture.
Related to conjectures on mixed Tate motives,  it is 
expected to be isomorphic to the motivic Lie algebra (cf. \cite{F}).

%The double shuffle Lie algebra $\ds_\bullet$ is the graded Lie algebra introduced 
%in \cite{R-PhD} and \cite{R}.
%It is described by double shuffle relations, linear relations among multiple zeta values.
%These two Lie algebras are encoded with the depth filtration
%$\{\Fil^d_\D\}_{d>0}$.
%Related to conjectures on mixed Tate motives, these two Lie algebra are conjectured to be isomorphic (cf. \cite{F}).

A bigraded variant $\lkrv_{\bullet\bullet}$ of $\krv_\bullet$ 
is introduced in \cite{RS}
where they give its interpretation  in terms of Ecalle's mould theory
(\cite{E81, E-ARIGARI, E-flex}).
%and its mould theoretical interpretation \cite{E-ARIGARI, E-flex} is investigated in \cite{RS}.
The results in \cite{M, RS} give  an inclusion of bigraded Lie algebras  
\begin{equation}\label{D to lkrv}
D(\{e\})_{\bullet\bullet}\hookrightarrow\lkrv_{\bullet\bullet}.
\end{equation}
%Actually it is shown in  \cite{S} that there  is an inclusion of graded Lie algebras  
%\begin{equation}\label{ds to krv}
%\ds_\bullet\hookrightarrow\krv_\bullet
%\end{equation}
%by assuming Ecalle's  senary relation \eqref{senary relation} 
%(see \cite[(3.64)]{E-flex}) for elements of $\ds_\bullet$,
%which is a certain identity  in mould theory (cf. \cite{E81}).
Our objective of  this paper is to %introduce and discuss %launch 
%their bigraded variants  
%install a bigraded version of the Lie algebra story
extend it %their results \eqref{D to lkrv} 
to any $\Gamma$
by exploiting Ecalle's mould theory %(\cite{E-ARIGARI, E-flex}) 
with self-contained proofs. % without any assumptions.
%More precisely 
Our results are exhibited as follows:

\begin{enumerate}
\renewcommand{\labelenumi}{(\roman{enumi})}
\item
In Definition \ref{defn: KV condition},
we introduce the filtered graded $\mathbb Q$-linear space $\krv(\Gamma)_\bullet$
which generalizes $\krv_{\bullet}$.
In Theorem \ref{thm:reform:krv}, we show that 
%the depth >1 part $\Fil^2_\D\krv_\bullet$ of 
$\krv(\Gamma)_\bullet$ is identified with the $\mathbb Q$-linear space of finite polynomial-valued alternal moulds 
%$\ARI_{\pspush/\pusnu}\cap\ARI_\al^{\fin,\pol}$
satisfying Ecalle's senary relation \eqref{senary relation}
and whose $\swap$'s \eqref{eq:swap} are $\pus$-neutral \eqref{pus-neutral},
%(cf. Definition \ref{def:ARIpushpusnu}),
that is, there is an isomorphism of $\mathbb Q$-linear spaces
$$%\Fil_{\D}^{2}
\krv(\Gamma)_\bullet\simeq%\Fil^2_{\D}
\ARI(\Gamma)_{\pspush/\pusnu}\cap\ARI(\Gamma)_\al^{\fin,\pol}.$$
%from which we learn that the right hand side forms a Lie algebra under the 
%$\ari_u$-bracket (Corollary \ref{cor:reform krv}).
\item
In Definition \ref{defn:lrkv},
we introduce a bigraded $\mathbb Q$-linear space $\lkrv(\Gamma)_{\bullet\bullet}$
which is defined by the \lq leading terms' of the defining equations of 
$\krv(\Gamma)_{\bullet}$.
We also consider its subspace $\lkrvd(\Gamma)_{\bullet\bullet}$ by imposing the distribution relation in Definition \ref{def:LKRVD}.
Both of them  recover  $\lkrv_{\bullet\bullet}$ when $\Gamma=\{e\}$.
It is shown  that they form Lie algebras
in  Theorem \ref{thm:lkrv Lie alg}  and Corollary \ref{cor:reform:lkrvd}.
%By definition, there is 
An inclusion of bigraded  $\mathbb Q$-linear spaces
%bigraded Lie algebras
\begin{equation*}%\label{eq:inclusion grKRV}
\gr_{\D}\krv(\Gamma)_\bullet \hookrightarrow
\lkrv(\Gamma)_{\bullet\bullet}
\end{equation*}
is presented in \eqref{eq:inclusion grKRV},
where the first term means  %the (depth) degree >1 part of
the associated bigraded  %$\gr_{\D}\krv(\Gamma)_\bullet$
%by the filtration.
of the filtered graded linear space $\krv(\Gamma)_\bullet$. %, \{\Fil_{\D}^m\krv\}_m)$
%is embedded to $\lkrv_{\bullet\bullet}$.
\item
In Theorem \ref{thm:reform:krv:bigrade}, we show that
$\lkrv(\Gamma)_{\bullet\bullet}$
is identified with
the Lie algebra (cf. Theorem \ref{thm: ARIpushpusnu Lie}) of finite polynomial-valued alternal moulds 
which are $\push$-invariant \eqref{push-invariant}
and whose $\swap$'s are $\pus$-neutral \eqref{pus-neutral},
that is, there is an isomorphism of Lie algebras
$$%\Fil_{\D}^{2}
\lkrv(\Gamma)_{\bullet\bullet}\simeq\ARI(\Gamma)_{\push/ \pusnu}\cap \ARI(\Gamma)_\al^{\fin,\pol}.$$
%\end{enumerate}
%
%By taking the associated graded quotient of (i) and combining it with (iii),
%we obtain the following commutative diagram of graded Lie algebras: 
%\begin{equation}\label{CD:krv}
%\xymatrix{ 
%\gr^{>1}_{\D}\krv_{\bullet}\ar^{\!\!\!\!\!\!\!\!\!\!\!\!\!\!\!\!\!\!\!\!\!\!\!\!\!\!\!\!\!\!\!\!\!\!\!\!\!\!\simeq}[r]\ar@{^{(}->}[d]& \gr_{\D}(\ARI_{\pspush/ \pusnu}\cap \ARI_\al^{\fin,\pol})\ar@{^{(}->}[d]\\ 
%%\Fil_{\D}^{2}
%\lkrv_{\bullet\bullet}
%\ar^{\!\!\!\!\!\!\!\!\!\!\!\!\!\!\!\!\!\!\!\!\!\!\!\!\!\!\!\!\!\!\!\!\!\!\!\simeq}[r]&\ARI_{\push/ \pusnu}\cap \ARI_\al^{\fin,\pol}
%}
%\end{equation}
%
%\begin{enumerate}
%\renewcommand{\labelenumi}{(\roman{enumi})}
%\setcounter{enumi}{3}
\item  In \S \ref{sec:dihedral Lie algebra},
we consider Goncharov's dihedral bigraded Lie algebra $D(\Gamma)_{\bullet\bullet}$
and its related Lie algebra ${\mathbb D}(\Gamma)_{\bullet\bullet}$ with the dihedral symmetry  which contains $D(\Gamma)_{\bullet\bullet}$.
%From the results of \cite{B}, \cite{M} and \cite{S-ARIGARI}, we learn that 
%it contains the associated graded Lie algebra $\gr_{\D}\ds_\bullet$
It is explained in Theorem \ref{thm:reform:dihedral} that
its depth>1-part of ${\mathbb D}(\Gamma)_{\bullet\bullet}$ 
coincides with the depth>1-part of the Lie algebra of finite polynomial-valued part of
the set of moulds $\ARI(\Gamma)_{\underline{\al}/\underline{\al}}$ (cf. Definition \ref{def:ARIalal}), 
namely
$$
%\gr_{\D}\ds_\bullet\hookrightarrow
%D_{\bullet\bullet}\simeq\ARI_{\underline{\al}/ \underline{\al}}^{\fin,\pol}.
\Fil^2_\D {\mathbb D}(\Gamma)_{\bullet\bullet}\simeq
\Fil^2_\D \ARI(\Gamma)_{\underline{\al}/ \underline{\al}}^{\fin,\pol}.
$$
In Theorem \ref{thm:embedding}, we show  that %there is an embedding of
%the depth >1 part $\Fil^2_{D}\ARI_{\underline{\al}/ \underline{\al}}$ of $\ARI_{\underline{\al}/ \underline{\al}}$ is 
%embedded to $\ARI_{\push/ \pusnu}$, that is,
there is an inclusion of graded Lie algebras
$$
\Fil_{\D}^{2}\ARI(\Gamma)_{\underline{\al}/ \underline{\al}}\hookrightarrow
\ARI(\Gamma)_{\push/ \pusnu}.
$$
In Corollary \ref{cor:embedding}, by taking an intersection with $ \ARI(\Gamma)_\al^{\fin,\pol}$
%(resp. $\ARID(\Gamma)_\al^{\fin,\pol}$), 
we obtain the inclusion of bigraded Lie algebras from
the depth >1 part $\Fil^2_{D}{\mathbb D}(\Gamma)_{\bullet\bullet}$ of ${\mathbb D}(\Gamma)_{\bullet\bullet}$ 
to $\lkrv(\Gamma)_{\bullet\bullet}$
%(resp. $\lkrvd(\Gamma)_{\bullet\bullet}$), i.e.
$$
\Fil_{\D}^{2}{\mathbb D}(\Gamma)_{\bullet\bullet}\hookrightarrow\lkrv(\Gamma)_{\bullet\bullet},
$$
which %could be said as 
extends \eqref{D to lkrv}.
By imposing the distribution relation there
the inclusion 
$\Fil_{\D}^{2}{D}(\Gamma)_{\bullet\bullet}\hookrightarrow\lkrvd(\Gamma)_{\bullet\bullet}$
is similarly obtained in Corollary \ref{cor:embedding2}.
\end{enumerate}

In Appendix \ref{sec:appendix},
we give self-contained proofs of  several
fundamental properties on the $\ari$-bracket of  the Lie algebra $\ARI(\Gamma)$ 
of moulds associated with $\Gamma$.
In Appendix \ref{sec:MPL at roots},
we discuss moulds arising from the  multiple polylogarithms 
evaluated at roots of unity. 
%we discuss a mould theoretic variant of \eqref{ds to krv} and
%we give a proof of Ecalle's senary relation \eqref{senary relation}
%with $r=1,2,3$.
%for elements of $\ds_\bullet$ with depth $1$, $2$ and $3$.

%\begin{equation}\label{CD:D}
%\xymatrix{ 
%\Fil_{\D}^{2}D_{\bullet\bullet}\ar^{\!\!\!\!\!\!\!\!\!\!\!\!\!\!\!\!\!\!\!\!\!\!\!\!\!\!\!\!\!\!\!\!\!\simeq}[r]\ar@{^{(}->}[d]& \Fil_{\D}^{2}\ARI_{\underline{\al}/ \underline{\al}}^{\fin,\pol}\ar@{^{(}->}[d]\\ 
%%\Fil_{\D}^{2}
%\lkrv_{\bullet\bullet}
%\ar^{\!\!\!\!\!\!\!\!\!\!\!\!\!\!\!\!\!\!\!\!\!\!\!\!\!\!\!\!\!\!\!\!\!\!\!\simeq}[r]&\ARI_{\push/ \pusnu}\cap \ARI_\al^{\fin,\pol}
%}
%\end{equation}

\subsubsection*{Acknowledgements}
We thank for L. Schneps who gave comments on the first version of the paper
and informing us \cite{RS}.
We are grateful to the referee whose comments helped us to improve the paper in a better form.
H.F. and N.K. have been supported by grants JSPS KAKENHI  JP18H01110
and  JP18J14774 respectively.

%%%%%%%%%%%%%%%%%%%%%%%%%%%%%%%%%%%%%%%%%%%%%%%%%%%%%%%%%%%%%%%%%%%%%%
%%%%%%%%%%%%%%%%%%%%%%%%%%%%%%%%%%%%%%%%%%%%%%%%%%%%%%%%%%%%%%%%%%%%%%
\section{Preparation on mould theory}\label{sec:preparation on mould theory}
We prepare several techniques of moulds which will be employed in our later sections.
The notion of moulds, the alternality, flexions and the ari-bracket
associated with a finite abelian group $\Gamma$ 
are explained 
in \S\ref{subsec:moulds} and \S\ref{subsec:ari-bracket}.
In \S\ref{subsec:swap}, we explain that the set 
$\ARI(\Gamma)_{\underline\al/\underline\al}$ of 
bialternal moulds forms a Lie algebra under the ari-bracket
(whose self-contained proof is given in Appendix \ref{sec:appendix}).
In \S\ref{subsec:push-invariance and pus-neutrality},
we introduce the  set $\ARI_{\push/\pusnu}(\Gamma)$ of push-invariant and pus-neutral
moulds and show that it forms a Lie algebra under the $\ari$-bracket
in Theorem \ref{thm: ARIpushpusnu Lie}.
\subsection{Moulds and alternality}\label{subsec:moulds} 
We introduce and discuss moulds associated with a finite abelian group $\Gamma$.

The notion of moulds was invented by Ecalle (cf. \cite[Tome I. pp.12-13]{E81}).
For our convenience we employ the following formulation
influenced by  \cite{S-ARIGARI}
which is different from  the one employed in \cite[D\'{e}finition 1 or D\'{e}finition II.1]{Cre} and \cite[\S 4.1]{Sau}.

Let $\Gamma$ be a finite abelian group.
%the cyclic group of order $N\geqslant 1$.
We set 
%\Erase{$\mathcal{F}:=\bigcup_{m\geq1}\mathbb Q[[x_1,\dots,x_m]]$}.
$\mathcal{F}:=\bigcup_{m\geqslant 1}\mathbb Q%[\Gamma^{\oplus m}]
(x_1,\dots,x_m)$.
%\Add{$\mathcal{F}=\mathcal{F}(\Gamma):={\varinjlim}_m\mathbb Q%[\Gamma^{\oplus m}](x_1,\dots,x_m)^{\oplus\Gamma^{\oplus m}}$}.
%where $\mathbb Q[\Gamma^{\oplus m}]$ is the group-algebra of $\Gamma^{\oplus m}$.
%$\underbrace{\Gamma\oplus\cdots\oplus\Gamma}_m$

\begin{defn}%[\cite{E81} I, pp.12-13]
\label{def:mould}
A {\it mould} on $\Z_{\geqslant0}$
with values in  $\mathcal F$ and
indexed by $\Gamma$ in a lower layer is a collection %\Add{(a sequence).}
\begin{equation*}
	M=\left( M^m\varia{x_1, \dots, x_m}{\sigma_1, \dots, \sigma_m} \right)_{m\in\Z_{\geqslant0}, \sigma_i\in\Gamma}
\end{equation*}
with %$M(\emptyset)\in\mathbb Q$ and
$M^m\varia{x_1, \dots, x_m}{\sigma_1, \dots, \sigma_m}\in \Q(x_1,\dots,x_m)$ for $m\geqslant 0$.
\footnote{
In this case, we have $M^0(\emptyset)\in\Q$ for $m=0$.
}
We denote the set of all such moulds with values
in $\mathcal{F}$ by $\mathcal{M}(\mathcal{F};\Gamma)$. 
The set $\mathcal M(\mathcal F;\Gamma)$ forms a $\mathbb Q$-linear space by
\begin{align*}
	A+ B
	&:= \left( A^m\varia{x_1, \dots, x_m}{\sigma_1, \dots, \sigma_m}+ B^m\varia{x_1, \dots, x_m}{\sigma_1, \dots, \sigma_m} \right)_{m\in\Z_{\geqslant0}, \sigma_i\in\Gamma}, \\
	c A
	&:= \left( c A^m\varia{x_1, \dots, x_m}{\sigma_1, \dots, \sigma_m} \right)_{m\in\Z_{\geqslant0}, \sigma_i\in\Gamma},
\end{align*}
for $A, B\in\mathcal M(\mathcal F;\Gamma)$ and $c\in \mathbb Q$,
namely the addition and the scalar are taken componentwise.
We define a product on $\mathcal M(\mathcal F;\Gamma)$ by
\begin{equation*}
	 (A\times B)^m\varia{x_1, \dots, x_m}{\sigma_1, \dots, \sigma_m}
	:=\sum_{i=0}^mA^i\varia{x_1, \dots, x_i}{\sigma_1, \dots, \sigma_i}
	B^{m-i}\varia{x_{i+1}, \dots, x_m}{\sigma_{i+1}, \dots, \sigma_m},
\end{equation*}
%\begin{equation*}
%	 (A\times B)^m
%	 {\scriptsize\left(\begin{array}{rrr}
%		x_1,& \dots,& x_m \\
%		\sigma_1,& \dots,& \sigma_m
%	\end{array}\right)}
%	:=\sum_{i=0}^mA^i
%	{\scriptsize\left(\begin{array}{rrr}
%		x_1,& \dots,& x_{i} \\
%		\sigma_1,& \dots,& \sigma_i
%	\end{array}\right)}
%	B^{m-i}
%	{\scriptsize\left(\begin{array}{rrr}
%		x_{i+1},& \dots,& x_m \\
%		\sigma_{i+1},& \dots,& \sigma_m
%	\end{array}\right)},
%\end{equation*}
for $A,B\in\mathcal M(\mathcal F;\Gamma)$ and for $m\geqslant0$ and for $(\sigma_1,\dots,\sigma_m)\in\Gamma^{\oplus m}$. 
Then the pair $(\mathcal M(\mathcal F;\Gamma),\times)$ is a non-commutative, associative, unital $\mathbb Q$-algebra. Here, the unit $ I\in\mathcal M(\mathcal F;\Gamma)$ is given by 
$ I:=(1,0,0,\dots)$.
\end{defn}

By the regular action of $\Gamma$ on $\mathbb Q[\Gamma]$, 
$\mathcal M(\mathcal F;\Gamma)$ admits the action of $\Gamma$.
It is described by
$$(\gamma M)^m\varia{x_1, \dots, x_m}{\sigma_1, \dots, \sigma_m}
=M^m\varia{x_1, \dots, x_m}{\gamma^{-1}\sigma_1, \dots, \gamma^{-1}\sigma_m}
$$
for $\gamma\in\Gamma$.

The set  $\mathcal M(\mathcal F;\Gamma)$ is encoded with the depth filtration
$\{\Fil^m_{\D}\mathcal M(\mathcal F;\Gamma)\}_{m\geqslant 0}$
where
$\Fil^m_{\D}\mathcal M(\mathcal F;\Gamma)$ is the collection  of moulds
with $ M^r\varia{x_1, \dots, x_r}{\sigma_1, \dots, \sigma_r}=0$ for $r<m$.
It is clear that the algebra structure of $\mathcal M(\mathcal F;\Gamma)$ is compatible with the depth filtration.
Put
 $$\ARI(\Gamma):=\{ M\in\mathcal M(\mathcal F;\Gamma)\ |\  M^0(\emptyset)=0\}.$$ 
It is a filtered (non-unital) subalgebra. 

%\begin{rem} 
%Ecalle introduced the notion of bimoulds in \cite[\S 5]{E-ARIGARI} and \cite[\S 1.3]{E-flex}
%(cf. \cite[\S 2.1]{S-ARIGARI})
%which are the so-called  \lq moulds  with double layered indices'. 
%We often regard our moulds in Definition \ref{def:mould} as if a bimould and
%we sometimes denote $M^m_{\sigma_1, \dots, \sigma_m}(x_1,\ \dots,\ x_m)$ by $M^m
%\varia{x_1,\ \dots,\ x_m}{\sigma_1,\ \dots,\ \sigma_m}$ for $M\in\ARI(\Gamma)$
%as in ~loc.~cit.
%\end{rem} 

\begin{defn}\label{defn:fin,pol}
A mould $ M\in \mathcal M(\mathcal F;\Gamma)$ is called {\it finite} when
$ M^m\varia{x_1, \dots, x_m}{\sigma_1, \dots, \sigma_m}=0$ except for finitely many $m$.
%for almost all $m$
It is called {\it polynomial-valued}
when $ M^m\varia{x_1, \dots, x_m}{\sigma_1, \dots, \sigma_m}\in\mathbb Q[x_1,\dots,x_m]$ for all $(\sigma_1, \dots, \sigma_m)\in\Gamma^{\oplus m}$ and $m$.
We denote $\mathcal M(\mathcal F;\Gamma)^{\fin,\pol}$
(resp. $\ARI(\Gamma)^{\fin,\pol}$)
to be %the subset of $\ARI_\al$
the subset of all finite polynomial-valued  moulds in $\mathcal M(\mathcal F;\Gamma)$ (resp. $\ARI(\Gamma)$).
\end{defn}
 
We prepare the following algebraic formulation which is useful to present  the notion of the alternality of mould:
Put $X:=\left\{\binom{x_i}{\sigma}\right\}_{i\in\N,\sigma\in\Gamma}$. Let $X_{\Z}$ be the set such that
$$X_{\Z}:=\left\{\varia{u}{\sigma}\ \middle|\ u=a_1x_1+\cdots +a_kx_k,\ k\in\N,\ a_j\in\Z,\ \sigma\in\Gamma\right\},$$
and let %\Erase{$X^{\bullet}$}
$X_{\Z}^\bullet$
be the non-commutative free monoid generated by all elements of $X_{\Z}$
with the empty word $\emptyset$ as the unit. 
Occasionally we denote each element $\omega=u_1\cdots u_m\in X_{\Z}^\bullet$
with $u_1,\dots,u_m\in X_{\Z}$ by
%\begin{equation*}
$\omega=(u_1,\dots,u_m)$
%\end{equation*}
as a sequence.
The {\it length} of $\omega=u_1\cdots u_m$ is defined to be $l(\omega):=m$.

%\begin{rem}
	For our simplicity we  occasionally denote $M\in\mathcal M(\mathcal F;\Gamma)$ by
	\begin{equation*}
		 M=( M^m(\vecx_m))_{m\in\Z_{\geqslant0}} \quad \text{ or }\quad
		 M=( M(\vecx_m))_{m\in\Z_{\geqslant0}},
	\end{equation*}
	where $\vecx_0:=\emptyset$ and $\vecx_m:=\varia{x_1,\ \dots,\ x_m}{\sigma_1,\ \dots,\ \sigma_m}$ for $m\geqslant1$. Under the notations,  the product of $ A, B\in \ARI(\Gamma)$ is expressed as
	\begin{equation*}
		 A\times  B
	=\left(\sum_{\substack{
		\vecx_m=\alpha\beta \\
		%\alpha,\beta\in X_{\Z}^\bullet
		}}A^{l(\alpha)}(\alpha)B^{l(\beta)}(\beta)\right)_{m\in\Z_{\geqslant0}}
	\end{equation*}
	where $\alpha$ and $\beta$ run over  $X_{\Z}^\bullet$.
%\end{rem}

We set 
$\mathcal A_X:=\mathbb Q \langle X_{\Z} \rangle$
to be the non-commutative polynomial algebra generated by 
$X_{\Z}$
(i.e. $\mathcal A_X$ is the $\mathbb Q$-linear space generated by $X_{\Z}^{\bullet}$).
\footnote{
We should beware of the inequality 
$\varia{x_1+x_2}{\quad\sigma}\neq\varia{x_1}{\sigma}+\varia{x_2}{\sigma}$ and
$\varia{0}{\sigma}\neq0$.
%\Erase{$-({}^{x_1}_{\sigma})\neq \cdot ({}^{-x_1}_{\sigma^{-1}})$.}
} 
We equip $\mathcal A_X$ with a product $\shuffle:\mathcal A_X^{\otimes2} \rightarrow \mathcal A_X$ (called the {\it shuffle product}) 
which is linearly defined by $\emptyset\ \shuffle\ \omega:=\omega\ \shuffle\ \emptyset:=w$ and
\begin{equation}
	u\omega\ \shuffle\ v\eta:=u(\omega\ \shuffle\ v\eta)+v(u\omega\ \shuffle\ \eta),
\end{equation}
for $u,v\in X_{\Z}$
and $\omega,\eta\in X_{\Z}^\bullet$.
Then the pair $(\mathcal A_X,\shuffle)$ forms a commutative, associative, unital $\mathbb Q$-algebra.

We define the family $\left\{\Sh{\omega}{\eta}{\alpha}\right\}_{\omega,\eta,\alpha\in X_{\Z}^\bullet}$ in $\Z$ by
\begin{equation*}
	\omega\ \shuffle\ \eta
	=\sum_{\alpha\in X_{\Z}^\bullet}
	\Sh{\omega}{\eta}{\alpha}\alpha.
\end{equation*}
Particularly for $p,q\in\N$ and $u_1,\dots,u_{p+q}\in X_{\Z}$, we rewrite the shuffle product by 
\begin{equation*}
	(u_1,\dots,u_p)\ \shuffle\ (u_{p+1},\dots,u_{p+q})=\sum_{\sigma\in\Sha_{p,q}}(u_{\sigma(1)},\dots,u_{\sigma(p)},u_{\sigma(p+1)},\dots,u_{\sigma(p+q)}).
\end{equation*}
Here the set $\Sha_{p,q}$ is defined by 
\begin{equation}\label{shuffle permutation}
	\{\sigma\in S_{p+q}
	\ |\ \sigma^{-1}(1)<\cdots<\sigma^{-1}(p),\ \sigma^{-1}(p+1)<\cdots<\sigma^{-1}(p+q)\},
\end{equation}
where $S_{p+q}$ is the symmetry group with degree $p+q$.

\begin{defn}\label{def:ARIal}
A mould $M \in\ARI(\Gamma)$ is called {\it alternal} (cf. \cite[I--p.118]{E81}) if
\begin{equation}\label{eq:alternal}
\sum_{\alpha\in X_{\Z}^\bullet}\Sh{\varia{x_1,\ \dots,\ x_p}{\sigma_1,\ \dots,\ \sigma_p}}{\varia{x_{p+1},\ \dots,\ x_{p+q}}{\sigma_{p+1},\ \dots,\ \sigma_{p+q}}}{\alpha} M^{p+q}(\alpha)=0,
\end{equation}
for all $p,q\geqslant1$. 
%(Note that we have $M^0(\emptyset)=0$ for $M \in\ARI(\Gamma)$).
%\end{defn}
%\begin{defn}[cf. \cite{E-ARIGARI}, \cite{E-flex}]\label{def:ARIalal}
The $\mathbb Q$-linear space $\ARI(\Gamma)_\al$ is defined to be the subset of moulds $ M\in\ARI(\Gamma)$ which are alternal
(cf. \cite{E-ARIGARI}, \cite{E-flex}).
\end{defn}

We encode it %both $\ARI_\al$ and $\ARI_{\al/\al}$
 with the induced depth filtrations.
%\Erase{ and  it forms a filtered $\mathbb Q$-algebra}.
We exhibit a couple of examples of alternal moulds for $\Gamma=\{e\}$ below.

\begin{eg}\label{ex:alternal mould}
%\begin{enumerate}
%\renewcommand{\labelenumi}{(\alph{enumi}).}
%\item 
(a). For $f(x)\in \mathbb Q(x)$, %that is, $f(x)$ be a rational function. 
we define $M_f\in\ARI(\Gamma)$ by
	\begin{equation*}
		M_f(\vecx_m):=\left\{\begin{array}{ll}
		f(x_1) & (m=1),\\
		0 & (m\neq1),
		\end{array}\right.
	\end{equation*}
	that is,
	\begin{equation*}
		M_f=\left(0,\ f(x_1),\ 0,\ 0,\dots\right).
	\end{equation*}
%\item 
(b). We define $A\in\ARI(\Gamma)$ by
\footnote{This mould is presented in \cite[p. 124]{E81} and \cite[(II.64)]{Cre}.}
	 \begin{equation*}
		A(\vecx_m)
		:=\left\{\begin{array}{ll}
			0 & (m=0,1), \\
			\frac{1}{x_2-x_1}\cdots\frac{1}{x_m-x_{m-1}} & (m\geqslant2),
		\end{array}\right.
	\end{equation*}
	that is,
	\begin{equation*}
		A=\left(0,\ 0,\ \frac{1}{x_2-x_1},\ \frac{1}{(x_2-x_1)(x_3-x_2)},\ \frac{1}{(x_2-x_1)(x_3-x_2)(x_4-x_3)},\dots\right).
	\end{equation*}
%\end{enumerate}
\end{eg}

\noindent
{\it Proof of their alternalities}:
(a).\ By definition, we have $M_f(\emptyset)=0$. Put $\omega,\eta\in X^\bullet$ with $l(\omega),l(\eta)\geqslant 1$. Note that for $\alpha\in X^\bullet$ with $\Sh{\omega}{\eta}{\alpha}\neq 0$, we have $l(\alpha)=l(\omega)+l(\eta)\geqslant 2$ and we get
\begin{equation*}
	M_f(\alpha)=0.
\end{equation*}
Therefore, we obtain
\begin{equation*}
	\sum_{\alpha\in X^\bullet}\Sh{\omega}{\eta}{\alpha}M_f(\alpha)
	=\sum_{\substack{
		\alpha\in X^\bullet \\
		\Sh{\omega}{\eta}{\alpha}\neq0}}
	\Sh{\omega}{\eta}{\alpha}M_f(\alpha)
	=0.
\end{equation*}
Hence, $M_f$ is an alternal mould.

\noindent
(b).\ The proof is given in \cite[Lemme II.5]{Cre}, but we prove the alternality of $A$ by using the following lemma:

\begin{lem}\label{lem:ex of alternal}
For $r\geqslant 2$ and $s\geqslant 0$, we have
\begin{equation*}\label{eqn:2.2.4}
	\sum_{\alpha\in X_\Z^\bullet}\Sh{(\omega_1,\dots,\omega_{r-1})}{(\omega_{r+1},\dots,\omega_{r+s})}{\alpha}A(\alpha,\omega_r)
	=(-1)^sA(\omega_1,\dots,\omega_{r-1},\omega_r,\omega_{r+s},\dots,\omega_{r+1}),
\end{equation*}
where $\omega_i:=\varia{x_i}{\, e}$ for $i\in\N$.
\end{lem}

\begin{proof}
When $s=0$, the claim is clear.
Assume $s\geqslant 1$.
We prove by induction on $r+s$.
By the definition of the mould $A$, we easily see the case of $(r,s)=(2,1)$.
For $r\geqslant 2$ and $s\geqslant 1$, we have
\begin{align*}
	\sum_{\alpha\in X_\Z^\bullet}&\Sh{(\omega_1,\dots,\omega_{r-1})}{(\omega_{r+1},\dots,\omega_{r+s})}{\alpha}A(\alpha,\omega_r) \\
%1-equal
	=&\sum_{\alpha\in X_\Z^\bullet}\left\{\Sh{(\omega_1,\dots,\omega_{r-2})}{(\omega_{r+1},\dots,\omega_{r+s})}{\alpha}A(\alpha,\omega_{r-1},\omega_r)\right. \\
	&\left.+\Sh{(\omega_1,\dots,\omega_{r-1})}{(\omega_{r+1},\dots,\omega_{r+s-1})}{\alpha}A(\alpha,\omega_{r+s},\omega_r)\right\}. \\
\intertext{Because $A(\omega_1,\dots,\omega_{r-1},\omega_r)=\frac{1}{x_r-x_{r-1}}A(\omega_1,\dots,\omega_{r-1})$ for $r\geqslant 3$, we get}
%2-equal
	=&\frac{1}{x_r-x_{r-1}}\sum_{\alpha\in X_\Z^\bullet}\Sh{(\omega_1,\dots,\omega_{r-2})}{(\omega_{r+1},\dots,\omega_{r+s})}{\alpha}A(\alpha,\omega_{r-1}) \\
	&+\frac{1}{x_r-x_{r+s}}\sum_{\alpha\in X_\Z^\bullet}\Sh{(\omega_1,\dots,\omega_{r-1})}{(\omega_{r+1},\dots,\omega_{r+s-1})}{\alpha}A(\alpha,\omega_{r+s}). 
\intertext{By our induction hypothesis, we calculate}
%3-equal
	=&(-1)^{s}\frac{1}{x_r-x_{r-1}}A(\omega_1,\dots,\omega_{r-2},\omega_{r-1},\omega_{r+s},\dots,\omega_{r+1}) \\
	&+(-1)^{s-1}\frac{1}{x_r-x_{r+s}}A(\omega_1,\dots,\omega_{r-1},\omega_{r+s},\omega_{r+s-1},\dots,\omega_{r+1}) \\
%4-equal
	=&(-1)^{s}\left\{\frac{1}{x_r-x_{r-1}}-\frac{1}{x_r-x_{r+s}}\right\}A(\omega_1,\dots,\omega_{r-2},\omega_{r-1},\omega_{r+s},\dots,\omega_{r+1}) \\
%5-equal
	=&(-1)^{s}\frac{x_{r-1}-x_{r+s}}{(x_r-x_{r-1})(x_r-x_{r+s})}
	A(\omega_1,\dots,\omega_{r-2},\omega_{r-1},\omega_{r+s},\dots,\omega_{r+1}) \\
%6-equal
	=&(-1)^{s}\frac{x_{r-1}-x_{r+s}}{(x_r-x_{r-1})(x_r-x_{r+s})}A(\omega_1,\dots,\omega_{r-1})
	\frac{1}{x_{r+s}-x_{r-1}}\frac{1}{x_{r+s-1}-x_{r+s}}\cdots\frac{1}{x_{r+1}-x_{r+2}} \\
%7-equal
	=&(-1)^{s}A(\omega_1,\dots,\omega_{r-1})\frac{1}{x_r-x_{r-1}}\frac{1}{x_{r+s}-x_r}
	\frac{1}{x_{r+s-1}-x_{r+s}}\cdots\frac{1}{x_{r+1}-x_{r+2}} \\
%8-equal
	=&(-1)^sA(\omega_1,\dots,\omega_{r-1},\omega_r,\omega_{r+s},\dots,\omega_{r+1}).
\end{align*}
Hence, we obtain the claim.
\end{proof}
By using the above lemma, we prove the alternality of $A$.
By the definition, we have $A(\emptyset)=0$, so it is sufficient to prove
\begin{equation}\label{eqn:alternality of ex mould}
	\sum_{\alpha\in X_\Z^\bullet}\Sh{(\omega_1,\dots,\omega_r)}{(\omega_{r+1},\dots,\omega_{r+s})}{\alpha}A(\alpha)=0,
\end{equation}
for $r,s\geqslant 1$.
Assume $r\geqslant s$ without loss of generality.
When $r=s=1$, the left hand side of \eqref{eqn:alternality of ex mould} is equal to
\begin{equation*}
	A(\omega_1,\omega_2)+A(\omega_2,\omega_1)=\frac{1}{x_2-x_1}+\frac{1}{x_1-x_2}=0.
\end{equation*}
When $r\geqslant 2$ and $s\geqslant 1$, the left hand side of \eqref{eqn:alternality of ex mould} is equal to
\begin{align*}
&\sum_{\alpha\in X_\Z^\bullet}\Sh{(\omega_1,\dots,\omega_{r-1})}{(\omega_{r+1},\dots,\omega_{r+s})}{\alpha}A(\alpha,\omega_r) \\
	&\quad+\sum_{\alpha\in X_\Z^\bullet}\Sh{(\omega_1,\dots,\omega_r)}{(\omega_{r+1},\dots,\omega_{r+s-1})}{\alpha}A(\alpha,\omega_{r+s}).
\intertext{By using Lemma \ref{lem:ex of alternal}, we get}
	&=(-1)^sA(\omega_1,\dots,\omega_{r-1},\omega_r,\omega_{r+s},\dots,\omega_{r+1})+(-1)^{s-1}A(\omega_1,\dots,\omega_r,\omega_{r+s},\omega_{r+s-1},\dots,\omega_{r+1}) \\
	&=0.
\end{align*}
Hence, the mould $A$ is alternal.
\hfill $\Box$

\begin{rem}\label{rem:component of mould by embedding}
%\Add{
Assume that $u_1,\dots,u_m\in \mathcal F$ are algebraically independent over $\mathbb Q$.  %in $\mathcal F$.
For $ M\in\ARI(\Gamma)$ we denote
%	\begin{equation*}
$		 M^m\varia{u_1,\ \dots,\ u_m}{\sigma_1,\ \dots,\ \sigma_m}$
to be the image of $M^m\varia{x_1,\ \dots,\ x_m}{\sigma_1,\ \dots,\ \sigma_m}$ under the field embedding
$\mathbb Q(x_1,\dots,x_m)\hookrightarrow \mathcal F$
sending $x_i\mapsto u_i$.
%:= M^m(x_1,\dots,x_m)|_{x_i=u_i}.$
%	\end{equation*}
%	}
\end{rem}
%There is a sequence  $\ARI_{\underline\al/\underline\al}\subset \ARI_{\al/\al}\subset\ARI_\al$.
%We encode them %both $\ARI_\al$ and $\ARI_{\al/\al}$
% with the induced depth filtrations and they form a filtered $\mathbb Q$-algebra.
For our later use, we  prepare more notations:

\begin{nota}[{\cite[\S 2.1]{E-flex}}]\label{nota:mould operations}
For any mould $M=\left( M^m\varia{u_1,\ \dots,\ u_m}{\sigma_1,\ \dots,\ \sigma_m}\right)_m\in\ARI(\Gamma)$,  we define 
\begin{align*}
%mantar
&\mantar( M)^m
{\scriptsize\left(\begin{array}{rrr}
	u_1,& \dots,& u_m \\
	\sigma_1,& \dots,& \sigma_m
\end{array}\right)}
	=(-1)^{m-1} M^m
	{\scriptsize\left(\begin{array}{rrr}
		u_m,& \dots,& u_1 \\
		\sigma_m,& \dots,& \sigma_1
	\end{array}\right)},\\
%push
&\push( M)^m
{\scriptsize\left(\begin{array}{rrr}
	u_1,& \dots,& u_m \\
	\sigma_1,& \dots,& \sigma_m
\end{array}\right)}
	= M^m
	{\scriptsize\left(\begin{array}{rrrr}
		-u_1-\cdots-u_m,& u_1,& \dots,& u_{m-1} \\
		\sigma_m^{-1},& \sigma_1\sigma_m^{-1},& \dots,& \sigma_{m-1}\sigma_m^{-1}
	\end{array}\right)},\\
%pus
%&\pus( M)^m
%{\scriptsize\left(\begin{array}{rrr}
%	u_1,& \dots,& u_m \\
%	\sigma_1,& \dots,& \sigma_m
%\end{array}\right)}
%	= M^m
%	{\scriptsize\left(\begin{array}{rrrr}
%		u_m,& u_1,& \dots,& u_{m-1} \\
%		\sigma_m,& \sigma_1,& \dots,& \sigma_{m-1}
%\end{array}\right)},\\
%nega
&\nega( M)^m
{\scriptsize\left(\begin{array}{rrr}
	u_1,& \dots,& u_m \\
	\sigma_1,& \dots,& \sigma_m
\end{array}\right)}
	= M^m
	{\scriptsize\left(\begin{array}{rrr}
		-u_1,& \dots,& -u_m \\
		\sigma_1^{-1},& \dots,& \sigma_m^{-1}
\end{array}\right)},\\
%teru
&\teru( M)^m
{\scriptsize\left(\begin{array}{rrr}
	u_1,& \dots,& u_m \\
	\sigma_1,& \dots,& \sigma_m
\end{array}\right)}
	= M^m
	{\scriptsize\left(\begin{array}{rrr}
		u_1,& \dots,& u_m \\
		\sigma_1,& \dots,& \sigma_m
\end{array}\right)} \\
&\qquad+\frac{1}{u_m}\left\{ M^{m-1}
	{\scriptsize\left(\begin{array}{rrrl}
		u_1,& \dots,& u_{m-2},& u_{m-1}+u_m \\
		\sigma_1,& \dots,& \sigma_{m-2},& \sigma_{m-1}
	\end{array}\right)}
	- M^{m-1}
	{\scriptsize\left(\begin{array}{rrr}
		u_1,& \dots,& u_{m-1} \\
		\sigma_1,& \dots,& \sigma_{m-1}
	\end{array}\right)}\right\}.
\end{align*}
\end{nota}
Note that they are all $\mathbb Q$-linear endomorphisms on $\ARI(\Gamma)$.
We remark that $\nega\circ\nega=\id$ and
 $\mantar\circ\mantar=\id$.

\begin{defn}
We call a mould $M\in\ARI(\Gamma)$ {\it push-invariant} 
%(\cite{E-flex} \Add{There is no definition of push-invariant in any page!!})
when we have
\begin{equation}\label{push-invariant}
	\push( M)= M.
\end{equation}
We define $\ARI(\Gamma)_{\push}$ (\cite[\S 2.5]{E-flex})
to be the set of moulds $ M$ in $\ARI(\Gamma)$
which is push-invariant \eqref{push-invariant}.
\end{defn}

\begin{eg}
By direct computations we observe that 
the mould $P\in \ARI(\{e\})$ defined by
\begin{equation*}
	P(\vecx_m)
	:=\left\{\begin{array}{ll}
		0 & (m=0,1), \\
		\frac{1}{x_1} + \cdots + \frac{1}{x_m} - \frac{1}{x_1 + \cdots + x_m} & (m\geqslant2)
	\end{array}\right.
\end{equation*}
%Then, by directly calculation, we know that this mould $P$ 
is $\push$-invariant, but is not alternal.
\end{eg}

%%%%%%%%%%%%%%%%%%%%%%%%%%%%%%%%%%%%%%%%%%%%%%%%%%%%%%%%%%%%
\subsection{Flexions and ari-bracket}\label{subsec:ari-bracket}
We explain an ari-bracket $\ari_u$ on $\ARI(\Gamma)$
by using flexions. 
%recall flexions introduced by Ecalle (See Schneps). This is the case of mould.

%\Add{To be added more in this subsection}

The notion of flexions  is introduced by Ecalle in \cite[\S 2.1]{E-flex}
for bimoulds (cf. \cite[\S 2.2]{S-ARIGARI}). 
Here we consider those for moulds in $\ARI(\Gamma)$.
\begin{defn}\label{def:flexion}
The {\it flexions} are the %following
 four binary operators $\urflex{*}{*},\ \ulflex{*}{*},\ \lrflex{*}{*},\ \llflex{*}{*}:X_{\Z}^\bullet\times X_{\Z}^\bullet\rightarrow X_{\Z}^\bullet$
 which are defined by
\begin{align*}
	\urflex{\beta}{\alpha}
	&:= {\scriptsize\left(\begin{array}{rrrr}
		b_1+\cdots+b_n+a_1,& a_2,& \dots,& a_m \\
		\sigma_1,& \sigma_2,& \dots,& \sigma_m 
	\end{array}\right)} , \\
	\ulflex{\alpha}{\beta} 
	&:= {\scriptsize\left(\begin{array}{rrrl}
		a_1,& \dots,& a_{m-1},& a_m+b_1+\cdots+b_n \\
		\sigma_1,& \dots,& \sigma_{m-1},& \sigma_m
	\end{array}\right)} , \\
	\lrflex{\beta}{\alpha} 
	&:={\scriptsize\left(\begin{array}{rrr}
		a_1,& \dots,& a_m \\
		\tau_n^{-1}\sigma_1,& \dots,& \tau_n^{-1}\sigma_m
	\end{array}\right)} , \\
	\llflex{\alpha}{\beta} 
	&:={\scriptsize\left(\begin{array}{rrr}
		a_1,& \dots,& a_m \\
		\sigma_1\tau_1^{-1},& \dots,& \sigma_m\tau_1^{-1}
	\end{array}\right)}  , \\
	\urflex{\emptyset}{\gamma}&:=\ulflex{\gamma}{\emptyset}:=\lrflex{\emptyset}{\gamma}:=\llflex{\gamma}{\emptyset}:=\gamma , \\
	\urflex{\gamma}{\emptyset}&:=\ulflex{\emptyset}{\gamma}:=\lrflex{\gamma}{\emptyset}:=\llflex{\emptyset}{\gamma}:=\emptyset ,
\end{align*}
for $\alpha=\varia{a_1,\dots,a_m}{\sigma_1,\dots,\sigma_m},\beta=\varia{b_1,\dots,b_n}{\tau_1,\dots,\tau_n}\in X_{\Z}^\bullet$ ($m,n\geqslant1$) and $\gamma\in X_{\Z}^\bullet$.
\end{defn}
%%\begin{rem}
	Note that we have
%	\begin{align*}
		%&
		$
		l(\urflex{\beta}{\alpha})=l(\ulflex{\alpha}{\beta})=l(\lrflex{\beta}{\alpha})=l(\llflex{\alpha}{\beta})=l(\alpha) \text{ and } %\\
		%&
		l(\alpha,\beta)=l(\alpha)+l(\beta)
		$
%	\end{align*}
	for $\alpha,\beta\in X_{\Z}^\bullet$.
%%\end{rem}
%

The derivation $\arit$ and bracket $\ari$  are introduced for bimoulds in terms of flexions in \cite[\S 2.2]{E-flex} (cf. \cite[\S 2.2]{S-ARIGARI}) and
here we consider those for $\ARI(\Gamma)$ as follows.

\begin{defn}\label{def:aritu}
      Let $ B\in \mathcal \ARI(\Gamma)$. The linear map
      \footnote{The lower suffix $u$ is reflected by the notion of $u$-moulds in \cite{S-ARIGARI}.}
     $\arit_u( B):\ARI(\Gamma) \rightarrow \ARI(\Gamma)$ is defined by
     \footnote{\label{footnote:arit embedding} For $m\geqslant2$ and $\alpha,\beta,\gamma\in X_\Z^\bullet$ with $\vecx_m=\alpha\beta\gamma$, all letters appearing in the word $\alpha\urflex{\beta}{\gamma}$ (resp. $\lrflex{\beta}{\gamma}$) are algebraically independent over $\mathbb Q$. So by Remark \ref{rem:component of mould by embedding}, the component $ A^{l(\alpha,\gamma)}(\alpha\urflex{\beta}{\gamma})$ (resp. $B^{l(\beta)}(\llflex{\beta}{\gamma})$) is well-defined. Similarly, $ A^{l(\alpha,\gamma)}(\ulflex{\alpha}{\beta}\gamma)$ and $B^{l(\beta)}(\lrflex{\alpha}{\beta})$ are also well-defined.}
	\begin{align*}
		&(\arit_u( B)( A))^m(\vecx_m)
		=(\arit_u( B)( A))^m\varia{x_1,\ \dots,\ x_m}{\sigma_1,\ \dots,\ \sigma_m} \\
		&:=\left\{\begin{array}{ll}
			\displaystyle \sum_{\substack{\vecx_m=\alpha\beta\gamma \\
				\beta,\gamma\neq\emptyset}}
			 A^{l(\alpha,\gamma)}(\alpha\urflex{\beta}{\gamma}) B^{l(\beta)}(\llflex{\beta}{\gamma})
			-\sum_{\substack{\vecx_m=\alpha\beta\gamma \\
				\alpha,\beta\neq\emptyset}}
			 A^{l(\alpha,\gamma)}(\ulflex{\alpha}{\beta}\gamma) B^{l(\beta)}(\lrflex{\alpha}{\beta}) & (m\geqslant2),\\
			0 & (m=0,1),
		\end{array}\right.
	\end{align*}
%\Add{	\begin{align*}
%		&(\arit_v( B)( A))^m\varia{\sigma_1,\ \dots,\ \sigma_m}{x_1,\ \dots,\ x_m} \\
%		&:=\left\{\begin{array}{ll}
%			\displaystyle \sum_{\substack{\vecx_m=\alpha\beta\gamma \\
%				\beta,\gamma\neq\emptyset}}
%			 A^{l(\alpha,\gamma)}(\alpha\urflex{\beta}{\gamma}) B^{l(\beta)}(\llflex{\beta}{\gamma})
%			-\sum_{\substack{\vecx_m=\alpha\beta\gamma \\
%				\alpha,\beta\neq\emptyset}}
%			 A^{l(\alpha,\gamma)}(\ulflex{\alpha}{\beta}\gamma) B^{l(\beta)}(\lrflex{\alpha}{\beta}) & (m\geqslant2),\\
%			0 & (m=0,1),
%		\end{array}\right.
%	\end{align*}}
	for $A\in\ARI(\Gamma)$.
\end{defn}

%Here $u$ and $v$ stands for ($u$-)moulds and $v$-moulds respectively
%(cf. \cite[\S 2.1]{S-ARIGARI}).

It is shown in \cite[Appendix A.1]{S-ARIGARI}  that %the above two linear maps 
$\arit$ forms a derivation on the set of bimoulds.
The same holds for  $\ARI(\Gamma)$  as follows,
where we present an alternative  proof  to hers.
%Similarly, for the above map $\aritu(B)$, we prove the following lemma.

\begin{lem}\label{lem:arit derivation}
For any $A\in\ARI(\Gamma)$, $\aritu(A)$ forms a derivation of $\ARI(\Gamma)$
with respect to the product $\times$,
 that is, for any $B,C\in\ARI(\Gamma)$, we have
\begin{equation*}
	\aritu(A)(B\times C)=\aritu(A)(B)\times C+B\times \aritu(A)(C).
\end{equation*}
\end{lem}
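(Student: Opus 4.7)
\emph{Proof plan.} The plan is to expand both sides directly from the definitions of $\aritu$ and $\times$, and then to match the resulting terms via a careful dichotomy for the splittings of the flexion-modified words. The cases $m=0,1$ are trivial: $\aritu(\cdot)^m=0$ in those degrees, and every term of $(\aritu(A)(B)\times C)^m$ and $(B\times\aritu(A)(C))^m$ contains such a vanishing factor. So I fix $m\geq 2$ and expand
\begin{align*}
(\aritu(A)(B\times C))^m(\vecx_m)
&=\sum_{\substack{\vecx_m=\alpha\beta\gamma\\ \beta,\gamma\neq\emptyset}}(B\times C)^{l(\alpha,\gamma)}(\alpha\urflex{\beta}{\gamma})A^{l(\beta)}(\llflex{\beta}{\gamma}) \\
&\quad - \sum_{\substack{\vecx_m=\alpha\beta\gamma\\ \alpha,\beta\neq\emptyset}}(B\times C)^{l(\alpha,\gamma)}(\ulflex{\alpha}{\beta}\gamma)A^{l(\beta)}(\lrflex{\alpha}{\beta}),
\end{align*}
and then further expand each $(B\times C)$-factor as a sum over the binary splittings of its argument word.

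The key combinatorial observation is that, because the flexion $\urflex{\beta}{\gamma}$ modifies only the first letter of $\gamma$, every splitting of $\alpha\urflex{\beta}{\gamma}$ into two concatenated subwords is of exactly one of two types: either (A) $\alpha$ itself splits as $\alpha=\alpha_1\alpha_2$, producing the pair $(\alpha_1,\alpha_2\urflex{\beta}{\gamma})$, or (B) $\gamma$ splits as $\gamma=\gamma_1\gamma_2$ with $\gamma_1\neq\emptyset$, producing $(\alpha\urflex{\beta}{\gamma_1},\gamma_2)$. The analogous dichotomy holds for $\ulflex{\alpha}{\beta}\gamma$, where only the last letter of $\alpha$ is modified. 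I will also use the auxiliary identities $\llflex{\beta}{\gamma}=\llflex{\beta}{\gamma_1}$ and $\lrflex{\alpha}{\beta}=\lrflex{\alpha_2}{\beta}$, both immediate from Definition \ref{def:flexion}, which say that $\llflex{*}{*}$ depends only on the first letter of its second argument and $\lrflex{*}{*}$ only on the last letter of its first argument.

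Applying this dichotomy, the type (A) contributions (coming from both the positive and the negative sums) reassemble, via the relabelling $(\alpha_1;\alpha_2,\beta,\gamma)\leftrightarrow(\mu;\alpha',\beta',\gamma')$, into $B\times\aritu(A)(C)$; and the type (B) contributions reassemble, via $(\alpha,\beta,\gamma_1;\gamma_2)\leftrightarrow(\alpha',\beta',\gamma';\nu')$, into $\aritu(A)(B)\times C$. Boundary terms with $\alpha_1=\emptyset$ or $\gamma_2=\emptyset$ vanish automatically because $B^0=C^0=0$ in $\ARI(\Gamma)$, so no extra terms appear, and the derivation identity falls out.

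The main obstacle is the flexion bookkeeping: one must verify that the split flexion expressions recombine correctly. Concretely, for a type (A) split one checks that the pair $(\alpha_2\urflex{\beta}{\gamma},\llflex{\beta}{\gamma})$ coincides with the pair $(\alpha'\urflex{\beta'}{\gamma'},\llflex{\beta'}{\gamma'})$ appearing in $\aritu(A)(C)$ for the triple $(\alpha',\beta',\gamma')=(\alpha_2,\beta,\gamma)$, and analogously on the $\ulflex{*}{*}/\lrflex{*}{*}$ side and for type (B). Once these identifications are confirmed (using only the fact that the relevant flexions depend only on the first or last letter of one of their arguments), the remaining argument reduces to a straightforward matching of index sets.
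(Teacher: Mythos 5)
Your proposal is correct and follows essentially the same route as the paper's proof: the paper expands $(B\times C)$ inside $\arit_u(A)(B\times C)$, applies its Lemma \ref{lem:tri-factorization} (specialized to binary splittings) to obtain exactly your type (A)/type (B) dichotomy, then uses its Lemma \ref{lem:flexion's action} for the reductions $\llflex{\beta}{\gamma_1\gamma_2}=\llflex{\beta}{\gamma_1}$ and $\lrflex{\alpha_1\alpha_2}{\beta}=\lrflex{\alpha_2}{\beta}$, and finally regroups the type (A) terms into $B\times\arit_u(A)(C)$ and the type (B) terms into $\arit_u(A)(B)\times C$, with boundary terms disappearing because $B^0=C^0=0$. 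The only cosmetic difference is that you phrase the splitting dichotomy directly rather than invoking the named lemma.
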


\begin{proof}
Let $m\geqslant0$. We have
{\footnotesize
\begin{align*}
	(\arit(A) & (B \times C))(\vecx_m)
%1-equal
	= \sum_{\substack{
		\vecx_m=abc \\
		b,c\neq\emptyset}}
	(B\times C)(a \urflex{b}{c})A(\llflex{b}{c})
	-\sum_{\substack{
		\vecx_m=abc \\
		a,b\neq\emptyset}}
	(B\times C)(\ulflex{a}{b} c)A(\lrflex{a}{b}) \\
%2-equal
	=& \sum_{\substack{
		\vecx_m=abc \\
		b,c\neq\emptyset}}
	\left\{
		\sum_{a \urflex{b}{c}=de}
		B(d)C(e)
	\right\}A(\llflex{b}{c})
	-\sum_{\substack{
		\vecx_m=abc \\
		a,b\neq\emptyset}}
	\left\{
		\sum_{\ulflex{a}{b}c=ef}
		B(e)C(f)
	\right\}A(\lrflex{a}{b})
	\intertext{{\normalsize
By applying Lemma \ref{lem:tri-factorization}.(1) for $c\neq\emptyset$, $f=\emptyset$ to the first term and by applying Lemma \ref{lem:tri-factorization}.(2) for $a\neq\emptyset$, $d=\emptyset$ to the second term, we have}}
%3-equal
	=& \sum_{\substack{
		\vecx_m=abc \\
		b,c\neq\emptyset}}
	\left\{
	\sum_{a=a_1a_2}
	B(a_1)C(a_2 \urflex{b}{c})
	+\sum_{\substack{
		c=c_1c_2 \\
		c_1\neq\emptyset}}
	B(a \urflex{b}{c_1})C(c_2)
	\right\}A(\llflex{b}{c}) \\
	&-\sum_{\substack{
		\vecx_m=abc \\
		a,b\neq\emptyset}}
	\left\{
	\sum_{\substack{
		a=a_1a_2 \\
		a_2\neq\emptyset}}
	B(a_1)C(\ulflex{a_2}{b} c)
	+\sum_{c=c_1c_2}
	B(\ulflex{a}{b} c_1)C(c_2)
	\right\}A(\lrflex{a}{b}) \\
%4-equal
	=& \sum_{\substack{
		\vecx_m=a_1a_2bc \\
		b,c\neq\emptyset}}
	B(a_1)C(a_2 \urflex{b}{c})A(\llflex{b}{c})
	+\sum_{\substack{
		\vecx_m=abc_1c_2 \\
		b,c_1\neq\emptyset}}
	B(a \urflex{b}{c_1})C(c_2)A(\llflex{b}{c_1c_2}) \\
	&-\sum_{\substack{
		\vecx_m=a_1a_2bc \\
		a_2,b\neq\emptyset}}
	B(a_1)C(\ulflex{a_2}{b}c)A(\lrflex{a_1a_2}{b})
	-\sum_{\substack{
		\vecx_m=abc_1c_2 \\
		a,b\neq\emptyset}}
	B(\ulflex{a}{b} c_1)C(c_2)A(\lrflex{a}{b}).
	\intertext{{\normalsize
By applying Lemma \ref{lem:flexion's action} to the second term, we get $A(\llflex{b}{c_1c_2})=A(\llflex{b}{c_1})$. Similarly, for the third term, we get $A(\lrflex{a_1a_2}{b})=A(\lrflex{a_2}{b})$. Therefore, we calculate}}
%5-equal
	=& \sum_{\vecx_m=dc_2}
	\left\{
	\sum_{\substack{
		d=abc_1 \\
		b,c_1\neq\emptyset}}
	B(a \urflex{b}{c_1})A(\llflex{b}{c_1})
	-\sum_{\substack{
		d=abc_1 \\
		a,b\neq\emptyset}}
	B(\ulflex{a}{b} c_1)A(\lrflex{a}{b})
	\right\}C(c_2) \\
	&+\sum_{\vecx_m=a_1d}
	B(a_1)
	\left\{
	\sum_{\substack{
		d=a_2bc \\
		b,c\neq\emptyset}}
	C(a_2 \urflex{b}{c})A(\llflex{b}{c})
	-\sum_{\substack{
		d=a_2bc \\
		a_2,b\neq\emptyset}}
	C(\ulflex{a_2}{b} c)A(\lrflex{a_2}{b})
	\right\} \\
%6-equal
	=& \sum_{\vecx_m=dc_2}
	(\arit(A)(B))(d)C(c_2) 
	+\sum_{\bf w_n=a_1d}
	B(a_1)(\arit(A)(C))(d) \\
%7-equal
	=& (\arit(A)(B)\times C)(\vecx_m) 
	+(B\times\arit(A)(C))(\vecx_m).	
\end{align*}
}
Hence, we obtain the claim.
\end{proof}

We define the following bracket as with the bracket $\ari$ introduced in \cite[(2.40)]{E-flex}.
\begin{defn}
The {\it $\ari_u$-bracket} means 
%We define two 
the bilinear map $\ari_u:\ARI(\Gamma)^{\otimes2}\rightarrow \ARI(\Gamma)$ which is defined by
\begin{equation}\label{ari-bracket}
	\ari_u( A, B)
	:=\arit_u( B)( A)-\arit_u( A)( B)
	+[ A,  B]
\end{equation}
for $ A, B\in\ARI(\Gamma)$.
Here
\footnote{In the papers \cite{E-flex}, \cite{S-ARIGARI} and \cite{RS}, the product $A\times B$ (resp. the bracket $[A,B]$) is denoted by $mu(A,B)$ (resp. $lu(A,B)$).}
, we have
 %, $*\in\{u,v\}$ and 
$ [A, B]:= A\times  B- B\times  A$.
\end{defn}
		
We note that the bracket $\ari_u(A,  B)$  in the case when $\Gamma=\{e\}$
also appears in \cite[(A.3)]{R-PhD}
and is denoted by  $[ A, B]_{\ari}$.

The following is also stated for moulds and bimoulds
in \cite[Proposition 2.2.2]{S-ARIGARI}
where her key formula (2.2.10) looks unproven and containing a signature error. 

\begin{prop}\label{ARI Lie algebra}
	The $\mathbb Q$-linear space $\ARI(\Gamma)$ forms a filtered Lie algebra under the $\ari_u$-bracket.
\end{prop}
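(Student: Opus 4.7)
The plan is to verify the three defining properties in turn: antisymmetry of $\ari_u$, compatibility with the depth filtration, and the Jacobi identity.

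Antisymmetry is immediate from the definition \eqref{ari-bracket}: under the swap $A \leftrightarrow B$, the difference $\arit_u(B)(A) - \arit_u(A)(B)$ changes sign, and so does the commutator $[A,B] = A \times B - B \times A$. Compatibility with $\{\Fil_{\D}^m \mathcal M(\mathcal F;\Gamma)\}_{m}$ is a direct counting argument: if $A \in \Fil_{\D}^m$ and $B \in \Fil_{\D}^n$, then $A \times B \in \Fil_{\D}^{m+n}$ by the already-noted multiplicativity of the depth filtration on $(\mathcal M(\mathcal F;\Gamma), \times)$. Inspecting Definition \ref{def:aritu}, every non-zero summand contributing to $(\arit_u(B)(A))^k$ requires both $A^{l(\alpha)+l(\gamma)}$ and $B^{l(\beta)}$ to be non-zero, forcing $l(\alpha) + l(\gamma) \geqslant m$ and $l(\beta) \geqslant n$, hence $k \geqslant m+n$; the same bound applies to $\arit_u(A)(B)$, so $\ari_u(A,B) \in \Fil_{\D}^{m+n}$.

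The substantial content is the Jacobi identity. My approach is the pre-Lie reduction. Introduce
$$\preari_u(A,B) := \arit_u(B)(A) + A \times B,$$
so that $\ari_u(A,B) = \preari_u(A,B) - \preari_u(B,A)$, and let $D_B$ denote the $\mathbb Q$-linear endomorphism $A \mapsto \preari_u(A,B)$ of $\ARI(\Gamma)$. A standard calculation shows that the antisymmetrization of $\preari_u$ defines a Lie bracket precisely when
$$D_C \circ D_B - D_B \circ D_C \;=\; D_{\ari_u(B,C)}.$$
Expanding both sides using the derivation property of $\arit_u$ from Lemma \ref{lem:arit derivation} and the associativity of $\times$, all purely multiplicative contributions telescope into the single summand $A \times \ari_u(B,C)$, and what survives is the purely operator-theoretic identity
$$\arit_u(C) \circ \arit_u(B) - \arit_u(B) \circ \arit_u(C) \;=\; \arit_u\bigl(\ari_u(B,C)\bigr),$$
i.e.\ that $\arit_u : (\ARI(\Gamma), \ari_u) \to \mathrm{End}_{\mathbb Q}(\ARI(\Gamma))$ is a Lie algebra homomorphism.

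The main obstacle is this last identity, and it is the only genuinely combinatorial part. Unwinding both sides through Definition \ref{def:aritu} expresses the left-hand side as a sum over nested tri-factorizations of $\vecx_m$ in which $B$ and $C$ are inserted at two independent flexion cuts, with the four flexions $\urflex{*}{*}$, $\ulflex{*}{*}$, $\lrflex{*}{*}$, $\llflex{*}{*}$ composed; the right-hand side is a sum over a single tri-factorization, with $\ari_u(B,C)$ evaluated at the middle block and $A$ at the outer. One proves the equality by partitioning the double-cut factorizations according to the relative positions of the $B$-cut and the $C$-cut: the cases in which the two cuts are nested reassemble into $\arit_u$ of $\arit_u(C)(B)$ or $\arit_u(B)(C)$, while the cases in which the two cuts are disjoint reassemble into $\arit_u$ of the commutator $[B,C]$. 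This bookkeeping is analogous in spirit to the splitting argument used in the proof of Lemma \ref{lem:arit derivation}, but with an additional layer of factorization that requires the composition rules for flexions to be tracked carefully; it is precisely what the authors settle self-containedly in Appendix \ref{sec:appendix}.
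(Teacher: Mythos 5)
Your proposal is correct and follows essentially the same route as the paper's proof in Appendix~\ref{sec:A.1}: antisymmetry and filtration-compatibility are dispatched directly, the Jacobi identity is reduced via $\preariu(A,B) = \aritu(B)(A) + A\times B$ to the commutator identity $\aritu(C)\circ\aritu(B) - \aritu(B)\circ\aritu(C) = \aritu(\ariu(B,C))$ (this is precisely Proposition~\ref{prop:arit-ari}, modulo relabeling), and the flexion-factorization bookkeeping you sketch is the content of Lemmas~\ref{lem:flexion's action}--\ref{lem:flexions property} and the long computation there. The only cosmetic difference is that you pass directly from the condition $D_C D_B - D_B D_C = D_{\ari_u(B,C)}$ to Jacobi, whereas the paper inserts the explicit right pre-Lie associator identity (Proposition~\ref{prop:preari}) as an intermediate step; these are equivalent formulations of the same argument.
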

\begin{proof}
	We give a self-contained proof in Appendix \ref{sec:A.1}.
\end{proof}

The following proposition for $\Gamma=\{e\}$ is shown in \cite[Appendix A]{SS}.
\begin{prop}%[{\cite[Proposition A.2]{SS}},\Add{??}]
\label{ARIal Lie algebra}
The $\mathbb Q$-linear space $\ARI(\Gamma)_\al$ forms a filtered Lie subalgebra of $\ARI(\Gamma)$ under the $\ari_u$-bracket.
%Furthermore $\ARI_{\underline\al/\underline\al}$ 
%	also form a filtered Lie subalgebra of $\ARI_\al$.
\end{prop}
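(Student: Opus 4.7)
The plan is to show $\ariu(A,B) \in \ARI(\Gamma)_\al$ whenever $A, B$ are alternal. For any $\omega_1, \omega_2 \in X_{\Z}^\bullet$ of positive lengths $p, q$, introduce the shuffle test functional
\[
\mathrm{Alt}_{\omega_1, \omega_2}(M) := \sum_{\alpha\in X_{\Z}^\bullet}\Sh{\omega_1}{\omega_2}{\alpha} M^{p+q}(\alpha),
\]
so that alternality of $M$ is the condition $\mathrm{Alt}_{\omega_1, \omega_2}(M) = 0$ for every $\omega_1, \omega_2$ of positive length. Since Proposition \ref{ARI Lie algebra} already provides the filtered Lie algebra structure on all of $\ARI(\Gamma)$, only the preservation of alternality under $\ariu$ is at issue. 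I would split $\ariu(A,B) = \aritu(B)(A) - \aritu(A)(B) + [A, B]$ and handle the three pieces separately.

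For the commutator $[A, B] = A\times B - B\times A$, unfolding $(A\times B)(\alpha)$ at each cut $\alpha = \alpha_1\alpha_2$ and interchanging with the outer shuffle yields the Hopf-type identity
\[
\mathrm{Alt}_{\omega_1, \omega_2}(A\times B) = \sum_{\omega_1 = \omega_1'\omega_1'',\ \omega_2 = \omega_2'\omega_2''} \mathrm{Alt}_{\omega_1',\omega_2'}(A)\cdot\mathrm{Alt}_{\omega_1'',\omega_2''}(B),
\]
using the convention $\mathrm{Alt}_{\emptyset,\omega}(M) = M(\omega)$. Alternality of $A$ and $B$ kills every summand except the two boundary ones (with $\omega_1' = \omega_1, \omega_2'' = \omega_2$ and with $\omega_2' = \omega_2, \omega_1'' = \omega_1$), leaving the pair $A(\omega_1)B(\omega_2) + A(\omega_2)B(\omega_1)$. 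By commutativity of values in $\mathbb{Q}(x_1,\dots,x_{p+q})$, the analogous expansion for $B\times A$ produces the identical pair, so $\mathrm{Alt}_{\omega_1,\omega_2}([A,B]) = 0$, i.e. $[A, B]$ is automatically alternal.

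The remaining task is to prove $\mathrm{Alt}_{\omega_1, \omega_2}\bigl(\aritu(B)(A) - \aritu(A)(B)\bigr) = 0$. Expanding each $\aritu$ contribution via Definition \ref{def:aritu} produces sums over tri-factorizations $\alpha = abc$ of flexion-twisted terms such as $A(a\urflex{b}{c})B(\llflex{b}{c})$ and $A(\ulflex{a}{b}c)B(\lrflex{a}{b})$. Interchanging the outer shuffle with the inner tri-factorization and grouping by length partitions, each contribution reorganizes into inner sums that are themselves alternality test sums of $A$ or $B$ evaluated at flexion-modified words. The tri-factorization identity and the sub-shuffle invariance of the flexion action -- the same auxiliary tools that feed into the proof of Lemma \ref{lem:arit derivation} -- are the main algebraic inputs. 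Alternality of $A, B$ collapses the bulk of the terms, and the surviving boundary contributions from the two $\aritu$ summands pair up and cancel against each other by an explicit antisymmetric matching.

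The main obstacle is the combinatorial bookkeeping of how the four flexion operations $\urflex{}{}, \llflex{}{}, \ulflex{}{}, \lrflex{}{}$ interact with shuffle decompositions of $\alpha$, and in particular the careful isolation and pairing of the boundary contributions from $\aritu(B)(A)$ and $\aritu(A)(B)$. The refinement from the $\Gamma = \{e\}$ case of \cite{SS} to general $\Gamma$ introduces no essential new difficulty, since the group labels enter the flexions only via uniform right-multiplications by some $\tau_i^{-1}$ that commute with positional shuffle permutations, and alternality is a purely positional condition unaffected by this $\Gamma$-labelling.
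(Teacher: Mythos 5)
Your overall plan is the right one and your treatment of the commutator $[A,B]$ is correct and matches the paper's argument: the paper's Lemma~\ref{lem:shuffle coefficient} (the iterated deconcatenation coproduct $\Delta_r$ being a homomorphism for $\shuffle$) applied at $r=2$ gives exactly the Hopf-type identity you write, and alternality of $A$, $B$ collapses it to $A(\omega_1)B(\omega_2)+A(\omega_2)B(\omega_1)$, which is symmetric and hence cancels against the same expansion for $B\times A$.

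Where your proposal goes astray is the $\aritu$ part. You propose to treat $\aritu(B)(A)-\aritu(A)(B)$ as a unit, with \lq\lq surviving boundary contributions from the two $\aritu$ summands pair[ing] up and cancel[ing] against each other.'' That is not how the argument goes, and it is doubtful any such cancellation between $\aritu(B)(A)$ and $\aritu(A)(B)$ exists: the paper proves the stronger statement that $\aritu(B)(A)$ is \emph{on its own} alternal whenever $A$ and $B$ are (formula~\eqref{arit-al}). The two sums that cancel are the two internal sums \emph{within} the definition of a single $\aritu(B)(A)$ (the $\urflex{\beta}{\gamma}$-sum minus the $\ulflex{\alpha}{\beta}$-sum), not the two $\aritu$ terms in the $\ariu$ bracket. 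Concretely, the proof isolates the middle slot $\beta$ as a single letter $x$, applies Lemma~\ref{lem:shuffle coefficient} at $r=4$, observes that alternality of $B$ kills the contributions where both $\omega_2$ and $\eta_2$ are nonempty, uses that $\Sh{\omega_2'}{\eta_2'}{x}\neq 0$ forces $(\omega_2',\eta_2')\in\{(x,\emptyset),(\emptyset,x)\}$, and after re-indexing the surviving terms recognizes alternality sums of $A$ evaluated at flexion-twisted words, which vanish. The auxiliary tools you cite (Lemma~\ref{lem:tri-factorization} and the flexion-derivation argument of Lemma~\ref{lem:arit derivation}) are from the proof of Proposition~\ref{ARI Lie algebra} and are not the ones used here; the decisive lemma is Lemma~\ref{lem:shuffle coefficient}, which your sketch does not name. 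Your remark about the $\Gamma$-generalization being benign is correct and is indeed how the paper handles it. So your commutator half is solid, but the $\aritu$ half as stated rests on a cancellation mechanism that is not the right one, and the genuine combinatorial work (the $r=4$ coproduct expansion and the forced-emptiness arguments) is absent.
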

\begin{proof}
	We prove this in Appendix \ref{sec:A.2}.
\end{proof}

%\Add{It is explained in  \cite[Proposition 2.2.2]{S-ARIGARI}
%    that the set $\ARI$  forms a Lie algebra under the $\ari_u$-bracket
%	and also under the $\ari_v$-bracket.}

%\begin{screen}
%To do:
%\begin{itemize}
%\item explain the ari-bracket
%(cf.\cite[Appendix A. Definition 4.1]{R-PhD} for moulds
%and \cite[(2.2.7)]{S-ARIGARI} for bimoulds).
%%\item $\ari_u$ and $\ari_v$.
%%\item Uniform the definition of shuffle. Be aware of $\sigma$ and $\sigma^{-1}$.
%\end{itemize}
%\end{screen}

%%%%%%%%%%%%%%%%%%%%%%%%%%%%%%%%%%%%%%%%%%%%%%%%%%%%%%%%%%%%%
\subsection{Swap and bialternality}\label{subsec:swap}
We encode  $\ARI(\Gamma)$ with another Lie algebra structure introduced by $\ari_v$.
%another Lie bracket $\ari_v$ of $\ARI(\Gamma)$.
%given by $\ari_v$-bracket,
%which is denoted by $\overline{\ARI}(\Gamma)$ 
%By abuse of notations, 
We prepare $\overline{\ARI}(\Gamma)$, a copy of $\ARI(\Gamma)$.
%by $\overline{\ARI}(\Gamma)$
We denote
$M^m_{\sigma_1, \dots, \sigma_m}(x_1,\ \dots,\ x_m)$ by $M^m
\varia{\sigma_1,\ \dots,\ \sigma_m}{x_1,\ \dots,\ x_m}$
\footnote{
We note that the top and bottom rows are switched.
}
for each element $M$ in $\overline{\ARI}(\Gamma)$
to distinguish it from an element in  $\ARI(\Gamma)$.

Similarly to our previous sections, we work over the following algebraic formulation:
Put $Y:=\left\{\binom{\sigma}{y_i}\right\}_{i\in\N,\sigma\in\Gamma}$. Let $Y_{\Z}$ be the set such that
$$Y_{\Z}:=\left\{\varia{\sigma}{v}\ \middle|\ v=a_1y_1+\cdots +a_ky_k,\ k\in\N,\ a_j\in\Z,\ \sigma\in\Gamma\right\},$$
and let %\Erase{$X^{\bullet}$}
$Y_{\Z}^\bullet$
be the non-commutative free monoid generated by all elements of $Y_{\Z}$
with the empty word $\emptyset$ as the unit. 
%Occasionally we denote each element $\omega=v_1\cdots v_m\in Y_{\Z}^\bullet$ with $v_1,\dots,v_m\in Y_{\Z}$ by
%\begin{equation*}
%$\omega=(v_1,\dots,v_m)$
%\end{equation*}
%as a sequence.
%The {\it length} of $\omega=v_1\cdots v_m$ is defined to be $l(\omega):=m$.
We set 
$\mathcal A_Y:=\mathbb Q \langle Y_{\Z} \rangle$
to be the non-commutative polynomial algebra generated by 
$Y_{\Z}$.
%(i.e. $\mathcal A_Y$ is the $\mathbb Q$-linear space generated by $Y_{\Z}^{\bullet}$).
In the same way to $\mathcal A_X$, 
it is equipped with a structure of
a commutative, associative, unital $\mathbb Q$-algebra
with
the shuffle product $\shuffle:\mathcal A_Y^{\otimes2} \rightarrow \mathcal A_Y$ .
%which is linearly defined by $\emptyset\ \shuffle\ \omega:=\omega\ \shuffle\ \emptyset:=w$ and
%\begin{equation}
%	u\omega\ \shuffle\ v\eta:=u(\omega\ \shuffle\ v\eta)+v(u\omega\ \shuffle\ \eta),
%\end{equation}
%for $u,v\in Y_{\Z}$
%and $\omega,\eta\in Y_{\Z}^\bullet$.
%Then the pair $(\mathcal A_Y,\shuffle)$ forms a commutative, associative, unital $\mathbb Q$-algebra.
The flexions are also introduced in this setting.
\begin{defn}
The {\it flexions} on  $Y_{\Z}^\bullet$ are the %following
 four binary operators $\urflex{*}{*},\ \ulflex{*}{*},\ \lrflex{*}{*},\ \llflex{*}{*}:Y_{\Z}^\bullet\times Y_{\Z}^\bullet\rightarrow Y_{\Z}^\bullet$
 which are defined by
\begin{align*}
	\urflex{\beta}{\alpha}
	&:= {\scriptsize\left(\begin{array}{crrr}
		\sigma_1\tau_1\cdots\tau_n,& \sigma_2,& \dots,& \sigma_m \\
		a_1,& a_2,& \dots,& a_m 
	\end{array}\right)} , \\
	\ulflex{\alpha}{\beta} 
	&:= {\scriptsize\left(\begin{array}{rrrc}
		\sigma_1,& \dots,& \sigma_{m-1},& \sigma_m\tau_1\cdots\tau_n \\
		a_1,& \dots,& a_{m-1},& a_m
	\end{array}\right)} , \\
	\lrflex{\beta}{\alpha} 
	&:={\scriptsize\left(\begin{array}{crc}
		\sigma_1,& \dots,& \sigma_m \\
		a_1-b_n,& \dots,& a_m-b_n
	\end{array}\right)} , \\
	\llflex{\alpha}{\beta} 
	&:={\scriptsize\left(\begin{array}{crc}
		\sigma_1,& \dots,& \sigma_m \\
		a_1-b_1,& \dots,& a_m-b_1
	\end{array}\right)}  , \\
	\urflex{\emptyset}{\gamma}&:=\ulflex{\gamma}{\emptyset}:=\lrflex{\emptyset}{\gamma}:=\llflex{\gamma}{\emptyset}:=\gamma , \\
	\urflex{\gamma}{\emptyset}&:=\ulflex{\emptyset}{\gamma}:=\lrflex{\gamma}{\emptyset}:=\llflex{\emptyset}{\gamma}:=\emptyset ,
\end{align*}
for $\alpha=\varia{\sigma_1,\dots,\sigma_m}{a_1,\dots,a_m},\beta=\varia{\tau_1,\dots,\tau_n}{b_1,\dots,b_n}\in Y_{\Z}^\bullet$ ($m,n\geqslant1$) and $\gamma\in Y_{\Z}^\bullet$.
\end{defn}

We denote $\vecy_0:=\emptyset$ and $\vecy_m:=\varia{\sigma_1,\ \dots,\ \sigma_m}{y_1,\ \dots,\ y_m}$ for $m\geqslant1$.
\begin{defn}%Ecalle and Schneps
      Let $ B\in \overline{\ARI}(\Gamma)$. The linear map $\arit_v( B):\overline{\ARI}(\Gamma) \rightarrow \overline{\ARI}(\Gamma)$ is defined by
            \footnote{The lower suffix $v$ is reflected by the notion of $v$-moulds in \cite{S-ARIGARI}.}
	\begin{align*}
		&(\arit_v( B)( A))^m(\vecy_m)
		=(\arit_v( B)( A))^m\varia{\sigma_1,\ \dots,\ \sigma_m}{y_1,\ \dots,\ y_m} \\
		&:=\left\{\begin{array}{ll}
			\displaystyle \sum_{\substack{\vecy_m=\alpha\beta\gamma \\
				\beta,\gamma\neq\emptyset}}
			 A^{l(\alpha,\gamma)}(\alpha\urflex{\beta}{\gamma}) B^{l(\beta)}(\llflex{\beta}{\gamma})
			-\sum_{\substack{\vecy_m=\alpha\beta\gamma \\
				\alpha,\beta\neq\emptyset}}
			 A^{l(\alpha,\gamma)}(\ulflex{\alpha}{\beta}\gamma) B^{l(\beta)}(\lrflex{\alpha}{\beta}) & (m\geqslant2),\\
			0 & (m=0,1),
		\end{array}\right.
	\end{align*}
	for $A\in\overline{\ARI}(\Gamma)$.
\end{defn}
Similarly to Lemma \ref{lem:arit derivation}, the following holds.
\begin{lem}\label{lem:aritv derivation}
For any $A\in\overline{\ARI}(\Gamma)$, $\arit_v(A)$ forms a derivation of $\overline{\ARI}(\Gamma)$
with respect to the product $\times$.
\end{lem}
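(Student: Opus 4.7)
The plan is to imitate the proof of Lemma \ref{lem:arit derivation} verbatim, replacing the $u$-flexions on $X_\Z^\bullet$ by the $v$-flexions on $Y_\Z^\bullet$. Concretely, fix $A, B, C \in \overline{\ARI}(\Gamma)$ and $m \geqslant 0$, and expand
\begin{equation*}
\bigl(\arit_v(A)(B\times C)\bigr)^m(\vecy_m)
\end{equation*}
by splitting each occurrence of $(B\times C)(\alpha\urflex{\beta}{\gamma})$ and $(B\times C)(\ulflex{\alpha}{\beta}\gamma)$ via the definition of the product $\times$. This produces a sum over four-fold factorizations $\vecy_m = a_1 a_2 b c$, $\vecy_m = a b c_1 c_2$, etc. The goal is to regroup these four-fold sums into the two bilinear terms
\begin{equation*}
\bigl(\arit_v(A)(B)\times C\bigr)^m(\vecy_m) + \bigl(B\times \arit_v(A)(C)\bigr)^m(\vecy_m).
\end{equation*}

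The first step is to establish the two combinatorial inputs used in the $u$-case:\ a \emph{tri-factorization} identity describing how a subsequence $\alpha\urflex{\beta}{\gamma}$ (resp.\ $\ulflex{\alpha}{\beta}\gamma$) of $Y_\Z^\bullet$ further factors as a product $de$ (resp.\ $ef$), and a \emph{flexion stability} identity of the form $A(\llflex{b}{c_1c_2}) = A(\llflex{b}{c_1})$ and $A(\lrflex{a_1 a_2}{b}) = A(\lrflex{a_2}{b})$. Both are formal manipulations at the level of words in $Y_\Z^\bullet$ that follow directly from the new definition of the flexions: the upper flexions $\urflex{*}{*},\ \ulflex{*}{*}$ act trivially on the bottom (numerical) row and only concatenate characters of $\Gamma$, while the lower flexions $\lrflex{*}{*},\ \llflex{*}{*}$ act trivially on the top (group) row and only shift the numerical row by $b_n$ or $b_1$. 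In particular, the shift $b_1$ only depends on the first letter of the right argument and $b_n$ only on the last letter of the left argument, so the required flexion stability is immediate; the tri-factorization identities hold for the same formal reason as in the $u$-case since cutting a word commutes with these flexions.

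With these two auxiliary statements in hand, the rest is a bookkeeping computation identical in structure to the one carried out for Lemma \ref{lem:arit derivation}: the four terms arising from the expansion regroup into exactly the two Leibniz terms, the \emph{mixed} contributions canceling telescopically. The main obstacle, if any, is verifying that the tri-factorization and flexion stability lemmas indeed transfer to $Y_\Z^\bullet$; once that is checked (which requires only reading off the new flexion definitions), the derivation property follows by the exact same chain of equalities. Hence the proof is essentially a translation, and we may reproduce it as in Lemma \ref{lem:arit derivation} with $\vecx_m, x_i, \sigma_i$ replaced throughout by $\vecy_m, y_i, \sigma_i$ and the $u$-flexions by the $v$-flexions.
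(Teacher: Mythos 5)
Your proposal is correct and matches the paper's own argument: the paper observes that $\arit_v$ is given by the exact same flexion formula as $\arit_u$, so the proof of Lemma \ref{lem:arit derivation} carries over verbatim once one checks that the tri-factorization and flexion-stability properties hold for the $Y_\Z^\bullet$ flexions. Your extra verification that $\llflex{\beta}{\alpha_2\alpha_1}=\llflex{\beta}{\alpha_2}$ and $\lrflex{\alpha_1\alpha_2}{\beta}=\lrflex{\alpha_2}{\beta}$ depend only on the first (resp.\ last) letter and hence transfer, and that the tri-factorization is purely a matter of word lengths, is exactly the content implicit in the paper's two-line proof.
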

\begin{proof}
The $\arit_v$-bracket can be expressed by the exactly same formula as the $\aritu$-bracket in terms of flexions. 
Therefore it can be proved in the same way to the one of
Lemma \ref{lem:arit derivation}.
\end{proof}

\begin{defn}
The {\it $\ari_v$-bracket} means 
%We define two 
the bilinear map $\ari_v:\overline{\ARI}(\Gamma)^{\otimes2}\rightarrow \overline{\ARI}(\Gamma)$ which is defined by
\footnote{In \cite{RS}, the brackets $\arit_v( A, B)$ and $\ari_v( A, B)$ are denoted by $\overline{\arit}$ and $\overline{\ari}$ respectively.}
\begin{equation}\label{ari v-bracket}
	\ari_v( A, B)
	:=\arit_v( B)( A)-\arit_v( A)( B)
	+[ A,  B]
\end{equation}
for $ A, B\in\overline{\ARI}(\Gamma)$.
Here %, $*\in\{u,v\}$ and 
$ [A, B]:= A\times  B- B\times  A$.
\end{defn}
Similarly to Proposition \ref{ARI Lie algebra}, the following holds.
\begin{prop}\label{barARI Lie algebra}
	The $\mathbb Q$-linear space $\overline{\ARI}(\Gamma)$ forms a filtered Lie algebra under the $\ari_v$-bracket.
\end{prop}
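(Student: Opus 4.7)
The plan is to establish antisymmetry and the Jacobi identity for $\ari_v$ by transporting the proof of Proposition \ref{ARI Lie algebra} from the $u$-setting to the $v$-setting. Antisymmetry follows immediately from the definition
$$\ari_v(A,B) = \arit_v(B)(A) - \arit_v(A)(B) + [A,B],$$
since swapping $A \leftrightarrow B$ negates both the difference of $\arit_v$ terms and the commutator.

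For the Jacobi identity, I would follow the strategy used for Proposition \ref{ARI Lie algebra} in Appendix \ref{sec:A.1}. The decisive observation is that the formula defining $\arit_v(B)(A)$ is literally the same sum over tri-factorizations $\vecy_m = \alpha\beta\gamma$ involving the same flexion symbols $\urflex{\beta}{\gamma}$, $\ulflex{\alpha}{\beta}$, $\lrflex{\alpha}{\beta}$, $\llflex{\beta}{\gamma}$ as in the $u$-case; only the action of these symbols on words differs, since in the $v$-setting the upper flexions multiply $\sigma$-components while the lower flexions subtract a single $y$-variable. In either case the modification is a group homomorphism in the relevant coordinate, so the combinatorial identities that drive the $u$-proof — the tri-factorization lemma (Lemma \ref{lem:tri-factorization}), the invariance of iterated flexion application under further splitting (Lemma \ref{lem:flexion's action}), and the derivation property — all admit formally identical $v$-analogues proved by the same case analysis. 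The derivation property itself is already recorded as Lemma \ref{lem:aritv derivation}.

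Granting these $v$-analogues, the expansion of $\ari_v(\ari_v(A,B),C)$ together with its two cyclic permutations cancels term by term in complete parallel with the $\ari_u$ computation of Appendix \ref{sec:A.1}, which yields the Jacobi identity. The main obstacle is verifying that each flexion identity invoked there transports faithfully to the $v$-variant; this is where the key formula analogous to (2.2.10) of \cite{S-ARIGARI} must be checked explicitly, but it presents no conceptual difficulty because the $v$-flexions behave combinatorially like the $u$-flexions once the additive and multiplicative roles are interchanged. Compatibility with the depth filtration is then automatic, since flexions preserve word length, the product $\times$ is filtration-compatible, and $\ari_v$ is built from these operations, so $\overline{\ARI}(\Gamma)$ inherits the structure of a filtered Lie algebra.
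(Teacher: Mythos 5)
Your proposal is correct and takes essentially the same route as the paper, which simply notes that the proof of Proposition~\ref{ARI Lie algebra} transports verbatim to the $v$-setting. You have correctly identified why this works: the formula for $\arit_v$ is textually identical to that for $\arit_u$ in flexion notation, and the flexion identities (Lemmas~\ref{lem:flexion's action}, \ref{lem:tri-factorization}, \ref{lem:flexions property}) depend only on the abstract combinatorics of word splitting plus the fact that the upper/lower flexions act by a group operation on one coordinate and leave the other untouched — with the additive and multiplicative roles swapped between the $u$- and $v$-settings, as you observe. One small point of presentation: the paper's Appendix~\ref{sec:A.1} does not derive the Jacobi identity by direct term-by-term cancellation but by first establishing the identity $\aritu(B)\circ\aritu(A)-\aritu(A)\circ\aritu(B)=\aritu(\ariu(A,B))$ (Proposition~\ref{prop:arit-ari}, the analogue of (2.2.10) in \cite{S-ARIGARI} that you flag), then showing $(\ARI(\Gamma),\preariu)$ is a pre-Lie algebra (Proposition~\ref{prop:preari}), and finally deducing Jacobi from the pre-Lie relation; the $v$-analogue should be organized the same way, but this is a reformulation rather than a genuinely different argument, and your plan accommodates it.
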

\begin{proof}
It can be also proved in the same way to the one of
Proposition \ref{ARI Lie algebra}.
\end{proof}

In the same way to Definition \ref{def:ARIal},
alternal moulds in $\overline{\ARI}(\Gamma)$ can be introduced and we denote  $\overline{\ARI}(\Gamma)_\al$ to be
its subset consisting of  alternal moulds. %in $\overline{\ARI}(\Gamma)$.
Similarly to Proposition \ref{ARIal Lie algebra}, the following holds.

\begin{prop}\label{barARIal Lie algebra}
$\overline{\ARI}(\Gamma)_\al$ forms a Lie algebra under the $\ari_v$-bracket.
\end{prop}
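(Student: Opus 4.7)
The plan is to reduce the proof to an essentially verbatim transcription of the proof of Proposition~\ref{ARIal Lie algebra} (the $u$-version, carried out in Appendix~\ref{sec:A.2}), exploiting the observation that the definitions of $\arit_v$, $\ari_v$, and alternality in $\overline{\ARI}(\Gamma)$ are given by combinatorially identical formulas in terms of flexions on $Y_{\Z}^\bullet$ as the ones for $\ARI(\Gamma)$ in terms of flexions on $X_{\Z}^\bullet$. Since Proposition~\ref{barARI Lie algebra} is already stated as holding by the same proof as Proposition~\ref{ARI Lie algebra}, it suffices to show that $\overline{\ARI}(\Gamma)_\al$ is a subspace of $\overline{\ARI}(\Gamma)$ closed under $\ari_v$.

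First, I would translate the alternality identity \eqref{eq:alternal} into the $Y_\Z^\bullet$-setting, defining $\overline{\ARI}(\Gamma)_\al$ by the shuffle condition
\[
\sum_{\alpha\in Y_\Z^\bullet}\Sh{\varia{\sigma_1,\dots,\sigma_p}{y_1,\dots,y_p}}{\varia{\sigma_{p+1},\dots,\sigma_{p+q}}{y_{p+1},\dots,y_{p+q}}}{\alpha}M^{p+q}(\alpha)=0
\]
for all $p,q\geqslant1$. Then I would identify the key combinatorial lemmas used in Appendix~\ref{sec:A.2}: namely (i) the tri-factorization lemma for concatenations $\alpha\beta\gamma$ that was invoked in the proof of Lemma~\ref{lem:arit derivation}, (ii) the invariance property of flexions under certain sub-concatenations (the analog of Lemma~\ref{lem:flexion's action}), and (iii) the shuffle-compatibility of the flexion operators $\urflex{*}{*}$, $\ulflex{*}{*}$, $\lrflex{*}{*}$, $\llflex{*}{*}$. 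Each of these is purely formal — it depends only on how the flexions split a word into subwords and on their length-preserving behavior, not on the specific formulas defining them — so the lemmas transfer to the $v$-setting without modification.

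With those structural lemmas in place, the computation showing that $\arit_v(B)(A)$ and $[A,B]$, and hence $\ari_v(A,B)$, is alternal whenever $A,B$ are, proceeds by exactly the same manipulation of shuffle sums as in the $u$-case: one expands the shuffle of two subsequences of $\vecy_{p+q}$, splits each summand by the tri-factorization, applies the flexion invariance to identify terms, and checks that the total contribution cancels by the alternality of $A$ and $B$. The only step that must be inspected carefully — and hence the main (minor) obstacle — is verifying that the $v$-flexions, whose lower action $\lrflex{\beta}{\alpha}$, $\llflex{\alpha}{\beta}$ shifts the $y_i$'s by subtraction of $b_n$ or $b_1$ rather than acting on group components, satisfy the same associativity-type identity used in Lemma~\ref{lem:flexion's action}; however this is immediate from the definitions since the shifts by $b_n$ (resp. $b_1$) depend only on the extremal letter of $\beta$ and are unaffected by further concatenation on the far side. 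Once this check is recorded, the entire argument of Appendix~\ref{sec:A.2} applies mutatis mutandis, and the proposition follows.
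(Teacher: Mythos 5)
Your proposal is correct and takes the same route as the paper, which dispenses with the proof in one line ("It can be proved in the same way to the one of Proposition~\ref{ARIal Lie algebra}"); your write-up simply makes explicit what that transfer involves — porting the shuffle-coefficient, tri-factorization, and flexion-invariance lemmas to $Y_\Z^\bullet$ and noting that the lower $v$-flexions depend only on extremal letters just as the $u$-flexions do.
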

\begin{proof}
%The $\ari_v$-bracket can be expressed by the exactly same formula as the $\ariu$-bracket in terms of flexions. 
%Therefore 
It can be proved in the same way to the one of
Proposition \ref{ARIal Lie algebra}.
\end{proof}

We define the $\mathbb Q$-linear map $\swap:\ARI(\Gamma) \rightarrow \overline{\ARI}(\Gamma)$ by
%\footnote{
%It looks that there is an error on the definition of $\swap$ in  \cite[??]{??}.}
\begin{equation}\label{eq:swap}
	\swap( M)^m
{\scriptsize\left(\begin{array}{rrr}
	\sigma_1,& \dots,& \sigma_m \\
	v_1,& \dots,& v_m
\end{array}\right)}
%{\tiny\left(\begin{array}{lll}
%	v_1,& \dots,& v_m \\
%	\sigma_1,& \dots,& \sigma_m
%\end{array}\right)}
%\varia{v_1,\ \dots,\ v_m}{\sigma_1,\ \dots,\ \sigma_m}
= M^m
{\scriptsize\left(\begin{array}{rrrrr}
	v_m,& v_{m-1}-v_m,& \dots,& v_2-v_3,& v_1-v_2 \\
	\sigma_1\cdots\sigma_m,& \sigma_1\cdots\sigma_{m-1},& \dots,& \sigma_1\sigma_2,& \sigma_1
\end{array}\right)}
\end{equation}
for any mould $M=\left(M^m\varia{u_1,\ \dots,\ u_m}{\sigma_1,\ \dots,\ \sigma_m}\right)\in\ARI(\Gamma)$.

%For any mould $ M=\left( M^m\varia{u_1,\ \dots,\ u_m}{\sigma_1,\ \dots,\ \sigma_m}\right)\in\mathcal M(\mathcal F;\Gamma)$, we put
%\footnote{
%The reason why we use the parameter $v_i$ instead of $u_i$ is that
%in the context of bimoulds (\cite{E-ARIGARI} and \cite{E-flex})
%$\swap$ is regarded to be an operator from $u$-moulds to $v$-moulds
%(cf. \cite[\S 2.4]{S-ARIGARI}).
%}
%%\begin{screen}
%{\color{red}
%\begin{equation}
%\swap( M)^m
%{\scriptsize\left(\begin{array}{rrr}
%	v_1,& \dots,& v_m \\
%	\sigma_1,& \dots,& \sigma_m
%\end{array}\right)}
%%{\tiny\left(\begin{array}{lll}
%%	v_1,& \dots,& v_m \\
%%	\sigma_1,& \dots,& \sigma_m
%%\end{array}\right)}
%%\varia{v_1,\ \dots,\ v_m}{\sigma_1,\ \dots,\ \sigma_m}
%= M^m
%{\scriptsize\left(\begin{array}{rrrrr}
%	v_m,& v_{m-1}-v_m,& \dots,& v_2-v_3,& v_1-v_2 \\
%	\sigma_1\cdots\sigma_m,& \sigma_1\cdots\sigma_{m-1},& \dots,& \sigma_1\sigma_2,& \sigma_1
%\end{array}\right)}.
%%\varia{\ \ \quad v_m,\ v_{m-1}-v_m,\ \dots,\ v_2-v_3,\ v_1-v_2}
%%{\sigma_1\cdots\sigma_m,\ \sigma_1\cdots\sigma_{m-1},\ \dots,\quad \sigma_1\sigma_2,\ \quad \sigma_1}
%\end{equation}
%}
%{\color{blue}
%$$
%\swap( M)^m
%{\scriptsize\left(\begin{array}{rrr}
%	v_1,& \dots,& v_m \\
%	\sigma_1,& \dots,& \sigma_m
%\end{array}\right)}
%= M^m
%{\scriptsize\left(\begin{array}{rrrrr}
%	v_m,& v_{m-1}-v_m,& \dots,& v_2-v_3,& v_1-v_2 \\
%	\sigma_1^{-1}\cdots\sigma_m^{-1},& \sigma_1^{-1}\cdots\sigma_{m-1}^{-1},& \dots,& \sigma_1^{-1}\sigma_2^{-1},& \sigma_1^{-1}
%\end{array}\right)}
%$$
%}
%\end{screen}

\begin{defn}[cf. \cite{E-ARIGARI}, \cite{E-flex}]\label{def:ARIalal}
The subset 
$\ARI(\Gamma)_{\underline\al/\underline\al}$
of bialternal moulds
is defined to be
%two subsets $\ARI_{\underline\al/\underline\al}$ and $\Fil_{\D}^{2}\ARI_{\al/\al}$ of $\ARI_{\al/\al}$ by
\begin{align*}
	\ARI(\Gamma)_{\underline\al/\underline\al}:=&\{M\in\ARI(\Gamma)_\al \ |\
	\swap(M)\in\overline{\ARI}(\Gamma)_\al, \  M^1\varia{x_1}{\sigma_1}=M^1\varia{-x_1}{\sigma_1^{-1}}\}.%, \\
	%\Fil_{\D}^{2}\ARI_{\al/\al}:=&\{M\in\ARI_{\al/\al} \ |\ M^1(x_1)=0\}. 
\end{align*}
\end{defn}

Here are observations on Example \ref{ex:alternal mould}.
\begin{eg}
(a).
%The mould $M_f$ 
When $f(x)\in\mathbb Q(x)$ is an even function, 
the mould $M_f$ in Example \ref{ex:alternal mould}.(a) 
is in $\ARI(\Gamma)_{\underline\al/\underline\al}$.

(b).
However, the mould $A$ in Example \ref{ex:alternal mould}.(b) is not in $\ARI(\Gamma)_{\underline\al/\underline\al}$.
In fact, we have
$$
\swap(A)^2
{\scriptsize\left(\begin{array}{cc}
	e,& e \\
	v_1,& v_2
\end{array}\right)}
=A^2
{\scriptsize\left(\begin{array}{cc}
	v_2,& v_1-v_2 \\
	e,& e
\end{array}\right)}
=\frac{1}{v_1-2v_2},
$$
and we get
\begin{align*}
\sum_{\alpha\in Y_\Z^\bullet}
\Sh{\varia{e}{v_1}}{\varia{e}{v_2}}{\alpha}\swap(A)^2&(\alpha)
=\swap(A)^2
{\scriptsize\left(\begin{array}{cc}
	e,& e \\
	v_1,& v_2
\end{array}\right)}
+\swap(A)^2
{\scriptsize\left(\begin{array}{cc}
	e,& e \\
	v_2,& v_1
\end{array}\right)} \\
&=\frac{1}{v_1-2v_2}+\frac{1}{v_2-2v_1}
=-\frac{v_1+v_2}{(v_1-2v_2)(v_2-2v_1)}
\neq 0.
\end{align*}
Hence, $\swap(A)$ is not alternal, which says %is, we obtain 
$A\not\in\ARI(\Gamma)_{\underline\al/\underline\al}$.
\end{eg}

\begin{prop}%[{\cite[Proposition A.2]{SS}},\Add{??}]
\label{ARIalal Lie algebra}
The $\mathbb Q$-linear space $\ARI(\Gamma)_{\underline\al/\underline\al}$ 
 forms a filtered Lie subalgebra of $\ARI(\Gamma)_\al$ under the $\ari_u$-bracket.
\end{prop}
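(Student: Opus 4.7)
The approach is to leverage Proposition \ref{ARIal Lie algebra}, which already establishes that $\ARI(\Gamma)_\al$ is closed under $\ari_u$. It then suffices to verify that the two additional conditions in Definition \ref{def:ARIalal} are preserved by the bracket. The depth-one symmetry $M^1\varia{x_1}{\sigma_1} = M^1\varia{-x_1}{\sigma_1^{-1}}$ is inherited trivially, because $\arit_u$ vanishes on moulds of depth $\leqslant 1$ by Definition \ref{def:aritu}, and since $M^0=N^0=0$ for any $M,N\in\ARI(\Gamma)$, the depth-one component of $M\times N - N\times M$ is also zero. Hence $\ari_u(M,N)^1=0$ and the symmetry is vacuous at depth one.

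The substantive step is the preservation of swap-alternality. The plan is to prove the intertwining identity
$$\swap\bigl(\ari_u(M,N)\bigr) = \ari_v\bigl(\swap(M),\swap(N)\bigr) \quad \text{for all } M,N\in\ARI(\Gamma),$$
and then invoke Proposition \ref{barARIal Lie algebra}: when $M,N\in\ARI(\Gamma)_{\underline\al/\underline\al}$, the moulds $\swap(M),\swap(N)$ lie in $\overline{\ARI}(\Gamma)_\al$, so the right-hand side belongs to $\overline{\ARI}(\Gamma)_\al$. This gives exactly the required swap-alternality of $\ari_u(M,N)$.

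To establish the intertwining identity I would proceed in two stages. First, check that $\swap$ defines a $\mathbb Q$-algebra isomorphism $(\ARI(\Gamma),\times) \to (\overline{\ARI}(\Gamma),\times)$; this is a direct computation from \eqref{eq:swap}, tracking how a splitting $\vecx_m=\alpha\beta$ of the $x$-indexing corresponds, via the variable substitution and the cumulative product on group indices, to a splitting of the $y$-indexing. Second, verify the analogous compatibility at the level of $\arit$, namely
$$\swap \circ \arit_u(B) = \arit_v(\swap(B)) \circ \swap \quad \text{as operators on } \ARI(\Gamma).$$
Combining these two compatibilities with the defining formulas \eqref{ari-bracket} and \eqref{ari v-bracket} immediately yields the intertwining identity for $\ari_u$ and $\ari_v$.

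The main obstacle lies in the $\arit$-compatibility, which requires matching the four $u$-flexions from Definition \ref{def:flexion} with their $v$-counterparts under the swap dictionary. The swap reverses the $x$-order, converts the $x_i$ into successive differences of $v_j$, and replaces each $\sigma_i$ by a cumulative product; under these changes a three-part decomposition $\vecx_m=\alpha\beta\gamma$ corresponds to a three-part decomposition $\vecy_m=\gamma'\beta'\alpha'$ in which left and right are essentially interchanged. The telescoping identities to check are that sums of the form $u_i+\cdots+u_j$ appearing in the $x$-flexions collapse to single $v$-variable differences, while cumulative products of group indices telescope to match the $y$-flexion prescriptions. This is a delicate but finite combinatorial verification; once the correct bijective dictionary between $x$- and $y$-decompositions is fixed, each term on one side matches exactly one term on the other, and the two derivations agree.
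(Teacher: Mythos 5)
Your high-level structure (reduce to three items and quote earlier propositions for the first and third) matches the paper, but the key technical step contains a genuine gap. You propose to establish the intertwining identity
\begin{equation*}
\swap\bigl(\ari_u(M,N)\bigr) = \ari_v\bigl(\swap(M),\swap(N)\bigr)
\end{equation*}
for \emph{all} $M,N\in\ARI(\Gamma)$, starting from the claim that $\swap$ is an algebra isomorphism $(\ARI(\Gamma),\times)\to(\overline{\ARI}(\Gamma),\times)$. That first claim is already false: writing out $\swap(A\times B)^m$ using \eqref{eq:swap}, the $A$-factor does become $\swap(A)^i\varia{\sigma_{m-i+1},\dots,\sigma_m}{v_{m-i+1},\dots,v_m}$ on the tail of $\vecy_m$, but the $B$-factor becomes $\swap(B)^{m-i}$ evaluated at $v$-variables uniformly \emph{shifted} by $v_{m-i+1}$, so it is not $\swap(B)^{m-i}\varia{\sigma_1,\dots,\sigma_{m-i}}{v_1,\dots,v_{m-i}}$. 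Thus $\swap(A\times B)\neq \swap(A)\times\swap(B)$ in general, and the unrestricted intertwining identity fails: no finite combinatorial matching of flexions will repair this.

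The paper's own proof circumvents this by only invoking the restricted intertwining Lemma \ref{swap-ari commutation}, which requires $M$ and $N$ to be \emph{push-invariant}, and pairing it with Proposition \ref{prop:DKV1}, which shows that every mould in $\ARI(\Gamma)_{\underline\al/\underline\al}$ is automatically push-invariant. That push-invariance step is the missing ingredient in your argument, and without it your second stage does not get off the ground. Once you add it, the rest of your outline (reduce to the three conditions, observe the depth-one condition is vacuous, conclude via Proposition \ref{barARIal Lie algebra}) coincides with the paper.
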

\begin{proof}
	We prove this in Appendix \ref{sec:A.3}.
\end{proof}

%%%%%%%%%%%%%%%%%%%%%%%%%%%%%%%%%%%%%%%%%%%%%%%%%%%%%%%%%%%%%
\subsection{Push-invariance and pus-neutrality}\label{subsec:push-invariance and pus-neutrality}
We introduce the set  $\ARI(\Gamma)_{\push/\pusnu}$ of moulds which are push-invariant
and whose $\swap$ are pus-neutral and
%Then we %give a complete proof of the fact 
show that it forms a Lie algebra under the ari-bracket.
%along Ecalle's idea expanded in \cite{E-flex} \S 2.

By abuse of notation, we introduce the following notations for $\overline{\ARI}(\Gamma)$
similarly to Notation \ref{nota:mould operations}.
\begin{nota}[{\cite[\S 2.1]{E-flex}}]
For any mould $M=\left( M^m\varia{\sigma_1,\ \dots,\ \sigma_m}{v_1,\ \dots,\ v_m}\right)_m\in\overline{\ARI}(\Gamma)$,  we define 
\begin{align*}
%mantar
&\mantar( M)^m
{\scriptsize\left(\begin{array}{rrr}
	\sigma_1,& \dots,& \sigma_m \\
	v_1,& \dots,& v_m
\end{array}\right)}
	=(-1)^{m-1} M^m
	{\scriptsize\left(\begin{array}{rrr}
		\sigma_m,& \dots,& \sigma_1 \\
		v_m,& \dots,& v_1
	\end{array}\right)},\\
%pus
&\pus( M)^m
{\scriptsize\left(\begin{array}{rrr}
	\sigma_1,& \dots,& \sigma_m \\
	v_1,& \dots,& v_m
\end{array}\right)}
	= M^m
	{\scriptsize\left(\begin{array}{rrrr}
		\sigma_m,& \sigma_1,& \dots,& \sigma_{m-1} \\
		v_m,& v_1,& \dots,& v_{m-1}
\end{array}\right)},\\
%nega
&\nega( M)^m
{\scriptsize\left(\begin{array}{rrr}
	\sigma_1,& \dots,& \sigma_m \\
	v_1,& \dots,& v_m
\end{array}\right)}
	= M^m
	{\scriptsize\left(\begin{array}{rrr}
		\sigma_1^{-1},& \dots,& \sigma_m^{-1} \\
		-u_1,& \dots,& -u_m
\end{array}\right)}.
\end{align*}
%(By the same symbol, we denote $\mantar$ and $\nega$.)
\end{nota}

\begin{defn}%[\cite{E-flex} \S 2.5 ??]
\label{def:ARIpushpusnu}
%We call a mould $M\in\ARI(\Gamma)$ {\it push-invariant} 
%%(\cite{E-flex} \Add{There is no definition of push-invariant in any page!!})
%when we have
%\begin{equation}\label{push-invariant}
%	\push( M)= M,
%\end{equation}
%(cf. Notation \ref{nota:mould operations} for $\push$)
We call a mould $N\in\overline{\ARI}(\Gamma)$ {\it pus-neutral} (\cite[(2.73)]{E-flex})
\footnote{
It is not push-neutral but pus-neutral.
In \cite[Definition 5']{RS},  it is called {\it circ-neutral} when \eqref{pus-neutral} holds for $m>1$ when $\Gamma=\{e\}$.
}
when we have
\begin{equation}\label{pus-neutral}
	\sum_{i=1}^{m}\pus^i(N)^m\varia{\sigma_1,\ \dots,\ \sigma_m}{v_1,\ \dots,\ v_m}
	\left(=\sum_{i\in\Z/m\Z} N^m\varia{\sigma_{i+1},\ \dots,\ \sigma_{i+m}}{v_{i+1},\ \dots,\ v_{i+m}}\right)=0
\end{equation}
for all $m\geqslant1$ and $\sigma_1,\dots,\sigma_m\in\Gamma$.
%We define $\ARI(\Gamma)_{\push}$ (\cite[\S 2.5]{E-flex})
%to be the set of moulds $ M$ in $\ARI(\Gamma)$
%which is push-invariant \eqref{push-invariant} %moulds %$ M$ satisfying \eqref{push-invariant} 
We define  $\ARI(\Gamma)_{\push/ \pusnu}$
\footnote{
We expect that our $\ARI(\Gamma)_{\push/ \pusnu}$
might be related to Ecalle's $\ARI^{\ast/\overline{\pusnu}}$
(\cite[\S 2.5]{E-flex}),
whose precise definition looks missing.
}
to be the set  of    %$\ARI(\Gamma)_{\push}$
%consisting of 
moulds $M\in \ARI(\Gamma)$  which is push-invariant \eqref{push-invariant} and
whose $\swap (M)$ is pus-neutral  \eqref{pus-neutral}.
\end{defn}

\begin{eg}
We consider the mould $Q\in \overline{\ARI}(\{e\})$ defined by
\begin{equation*}
	Q{\scriptsize\left(\begin{array}{ccc}
	e,& \dots,& e \\
	v_1,& \dots,& v_m
\end{array}\right)}
	:=\left\{\begin{array}{ll}
		0 & (m=0,1), \\
		\frac{1}{v_2-v_1} + \cdots + \frac{1}{v_m-v_1} & (m\geqslant2).
	\end{array}\right.
\end{equation*}
Then this mould $Q$ is $\pus$-neutral.
In fact, for $1\leqslant i\leqslant m$, we have
$$
Q{\scriptsize\left(\begin{array}{cccccc}
	e,& \dots,& e, &e,& \dots,& e \\
	v_i,& \dots,& v_m,&v_1,& \dots,& v_{i-1}
\end{array}\right)}
=\sum_{j=i+1}^m\frac{1}{v_j-v_i} + \sum_{j=1}^{i-1}\frac{1}{v_j-v_i},
$$
and so we calculate
\begin{align*}
\sum_{i=1}^m\pus^i(Q){\scriptsize\left(\begin{array}{ccc}
	e,& \dots,& e \\
	v_1,& \dots,& v_m
\end{array}\right)}
&=\sum_{i=1}^mQ{\scriptsize\left(\begin{array}{cccccc}
	e,& \dots,& e, &e,& \dots,& e \\
	v_i,& \dots,& v_m,&v_1,& \dots,& v_{i-1}
\end{array}\right)} \\
&=\sum_{i=1}^m\left\{ \sum_{j=i+1}^m\frac{1}{v_j-v_i} + \sum_{j=1}^{i-1}\frac{1}{v_j-v_i} \right\} \\
&=\sum_{\substack{
	i,j\in \{1,\dots,m\} \\
	i<j}}
\frac{1}{v_j-v_i}
+ \sum_{\substack{
	i,j\in \{1,\dots,m\} \\
	i>j}}
\frac{1}{v_j-v_i}
=0.
\end{align*}
Hence, the mould $Q$ is $\pus$-neutral.
\end{eg}

Firstly we show that the set $\ARI_{\push}$ forms a Lie algebra under the $\ari_u$-bracket
%the set of push-invariant moulds is closed under the ari-bracket,
%give a complete proof that  $\ARI_{\mathrm{push}}$ forms a Lie algebra 
which was stated in \cite[\S 2.5]{E-flex} without a detailed proof.

\begin{prop}\label{prop:ARIpush Lie}
	If $ A, B\in\ARI(\Gamma)$ are $\push$-invariant, then $\ari_u( A, B)$ is $\push$-invariant.
%	The set $\ARI_{\push}$ forms a Lie algebra under the $\ari_u$-bracket.
\end{prop}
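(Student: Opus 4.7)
The plan is to establish $\push$-invariance of $\ari_u(A,B)$ by a componentwise verification, in the same spirit as the appendix proofs of Proposition \ref{ARI Lie algebra} and Proposition \ref{ARIal Lie algebra}. The depth $0$ and $1$ components of $\ari_u(A,B)$ vanish, so only $m\geqslant 2$ needs attention. For such $m$ I would expand $\ari_u(A,B)^m(\vecx_m)$ via the flexion/product formula
\[
\ari_u(A,B)^m(\vecx_m)
=\sum_{\substack{\vecx_m=\alpha\beta\gamma\\ \beta,\gamma\neq\emptyset}} A(\alpha\urflex{\beta}{\gamma})B(\llflex{\beta}{\gamma})
-\sum_{\substack{\vecx_m=\alpha\beta\gamma\\ \alpha,\beta\neq\emptyset}} A(\ulflex{\alpha}{\beta}\gamma)B(\lrflex{\alpha}{\beta})
-(A\leftrightarrow B)+(A\times B-B\times A)^m(\vecx_m),
\]
then substitute the $\push$-shifted arguments $(-u_1-\cdots-u_m,u_1,\ldots,u_{m-1};\sigma_m^{-1},\sigma_1\sigma_m^{-1},\ldots,\sigma_{m-1}\sigma_m^{-1})$ on the left-hand side and compare with the right-hand side.

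The central idea is to exploit the fact that, on depth $m$ components, $\push$ has order $m+1$ and can be realized as a cyclic rotation on an enlarged sequence of $m+1$ \emph{virtual} entries: the usual entries of $\vecx_m$ together with the auxiliary entry corresponding to $-u_1-\cdots-u_m$ with $\Gamma$-label $(\sigma_1\cdots\sigma_m)^{-1}$. In this framework I would re-index the six constituent sums of $\ari_u(A,B)^m$ by the position of the virtual entry among the three blocks $\alpha,\beta,\gamma$, where the two degenerate regimes $\alpha=\emptyset$ and $\gamma=\emptyset$ correspond precisely to the product terms $A\times B$ and $B\times A$. Applying $\push$ then shifts the virtual entry one slot over; a bijection between the factorization data before and after the shift, combined with the assumed invariance $\push(A)=A$ and $\push(B)=B$, should identify every summand in $\push(\ari_u(A,B))^m(\vecx_m)$ with the corresponding summand in $\ari_u(A,B)^m(\vecx_m)$.

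The main obstacle will be the combinatorial bookkeeping inside this bijection. One must verify, case by case, that when the virtual entry crosses an $\alpha/\beta$ or $\beta/\gamma$ boundary, the flexion arguments $\alpha\urflex{\beta}{\gamma}$, $\llflex{\beta}{\gamma}$, $\ulflex{\alpha}{\beta}\gamma$, $\lrflex{\alpha}{\beta}$ of the shifted factorization reduce, via $\push(A)=A$ or $\push(B)=B$, to the flexion arguments of the original factorization. Particular care is needed at the transitions into and out of the degenerate regimes, where the product terms $A\times B$ and $B\times A$ must glue seamlessly to the generic flexion sums under the cyclic shift so that no leftover contribution remains. I would expect this verification to be carried out in the appendix at a level of detail analogous to the derivation property of $\arit_u$ (Lemma \ref{lem:arit derivation}), with the bulk of the argument being the explicit case-matching between the cyclically adjacent factorization types.
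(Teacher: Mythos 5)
Your proposal recasts, in a conceptually appealing way, essentially the same calculation that the paper carries out directly. The paper expands $\push(A\times B)$ and $\push(\arit_u(B)(A))$ separately (arriving at \eqref{push on product} and \eqref{push on arit}), simplifies each with one application of the $\push$-invariance of $A$ and $B$, and then verifies that the residual cross terms cancel when assembling $\push(\ari_u(A,B))$; your ``virtual $(m+1)$-st entry plus cyclic rotation'' framing is an accurate structural picture of why the same index juggling succeeds, and carrying it out faithfully would reproduce the paper's flexion manipulations. Two comments. First, the assertion that ``the two degenerate regimes $\alpha=\emptyset$ and $\gamma=\emptyset$ correspond precisely to the product terms $A\times B$ and $B\times A$'' should not be taken literally: already for $m=2$ the six constituent summands of $\ari_u(A,B)^2$ fall into two $\push$-orbits of length three, and a single $\push$ step carries a flexion term of $\arit_u(B)(A)$ to a genuine product term of $[A,B]$ and a product term of $[A,B]$ to a flexion term, so the product terms and the degenerate flexion regimes are interleaved cyclically rather than paired bilaterally. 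Second, and more substantively, the proposal ends precisely where the work begins: the case-by-case verification that each flexion argument transforms correctly as the virtual entry crosses a block boundary, and that the product terms glue seamlessly into the flexion sums, is the entire content of the paper's computation in \S\ref{subsec:push-invariance and pus-neutrality} (which is in the body of the paper, not the appendix as you anticipated; the appendix proves Propositions \ref{ARI Lie algebra}--\ref{ARIalal Lie algebra}). As written, this is a sound and well-motivated blueprint, but not yet a complete proof.
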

\begin{proof}
	%Suppose that $A,B\in\ARI$ are $\push$-invariant. 
	Let $m\geqslant1$. We put $\omega=\varia{u_1,\ \dots,\ u_m}{\sigma_1,\ \dots,\ \sigma_m}$. We have
	\begin{align*}
		\push&( A\times B)(\omega) \\
		=&( A\times B){\scriptsize\left(\begin{array}{rrrr}
			-u_1-\cdots-u_m,& u_1,& \dots,& u_{m-1} \\
			\sigma_m^{-1},&\sigma_1\sigma_m^{-1},& \dots,& \sigma_{m-1}\sigma_m^{-1}
		\end{array}\right)} \\
		=& A(\emptyset) B{\scriptsize\left(\begin{array}{rrrr}
			-u_1-\cdots-u_m,& u_1,& \dots,& u_{m-1} \\
			\sigma_m^{-1},&\sigma_1\sigma_m^{-1},& \dots,& \sigma_{m-1}\sigma_m^{-1}
		\end{array}\right)} \\
		&+\sum_{i=1}^m A{\scriptsize\left(\begin{array}{rrrr}
			-u_1-\cdots-u_m,& u_1,& \dots,& u_{i-1} \\
			\sigma_m^{-1},&\sigma_1\sigma_m^{-1},& \dots,& \sigma_{i-1}\sigma_m^{-1}
		\end{array}\right)}
		 B{\scriptsize\left(\begin{array}{rrr}
			u_i,& \dots,& u_{m-1} \\
			\sigma_i\sigma_m^{-1},& \dots,& \sigma_{m-1}\sigma_m^{-1}
		\end{array}\right)}.
	\end{align*}
	Because $ A,  B$ are $\push$-invariant, we get
	\begin{align}\label{push on product}
		\push&( A\times B)(\omega) \\
		=& A(\emptyset) B(\omega)
		+ A(\omega) B(\emptyset) \nonumber \\
		&+\sum_{i=1}^{m-1} 
		A{\scriptsize\left(\begin{array}{rrrr}
			u_1,& \dots,& u_{i-1},& u_i+\cdots+u_m \\
			\sigma_1,& \dots,& \sigma_{i-1},& \sigma_m
		\end{array}\right)}
		B{\scriptsize\left(\begin{array}{rrr}
			u_i,& \dots,& u_{m-1} \\
			\sigma_i\sigma_m^{-1},& \dots,& \sigma_{m-1}\sigma_m^{-1}
		\end{array}\right)}. \nonumber
	\end{align}
	On the other hand, for $\eta=\varia{u_0,\ u_1,\ \dots,\ u_{m-1}}{\tau_0,\ \tau_1,\ \dots,\ \tau_{m-1}}$, we have
	\begin{align*}
		&\arit_u( B)( A)(\eta) \\
		=&\sum_{\substack{\eta=\alpha\beta\gamma \\
			\beta,\gamma\neq\emptyset}}
		 A(\alpha\urflex{\beta}{\gamma}) B(\llflex{\beta}{\gamma})
		-\sum_{\substack{\eta=\alpha\beta\gamma \\
			\alpha,\beta\neq\emptyset}}
		 A(\ulflex{\alpha}{\beta}\gamma) B(\lrflex{\alpha}{\beta}) \\
%2-equal
		=&\sum_{0\leqslant i\leqslant j<m-1}
			 A{\scriptsize\left(\begin{array}{rrrcrrr}
			 	u_0,& \dots,& u_{i-1},& u_i+\cdots+u_{j+1},& u_{j+2},& \dots,& u_{m-1} \\
			 	\tau_0,& \dots,& \tau_{i-1},& \tau_{j+1},& \tau_{j+2},& \dots,& \tau_{m-1}
			\end{array}\right)}
			B{\scriptsize\left(\begin{array}{rrr}
				u_i,& \dots,& u_j \\
				\tau_i\tau_{j+1}^{-1},& \dots,& \tau_j\tau_{j+1}^{-1} \\
			\end{array}\right)} \\
		&-\sum_{1\leqslant i\leqslant j\leqslant m-1}
			 A{\scriptsize\left(\begin{array}{rrrcrrr}
			 	u_0,& \dots,& u_{i-2},& u_{i-1}+\cdots+u_{j},& u_{j+1},& \dots,& u_{m-1} \\
			 	\tau_0,& \dots,& \tau_{i-2},& \tau_{i-1},& \tau_{j+1},& \dots,& \tau_{m-1}
			\end{array}\right)}
			 B{\scriptsize\left(\begin{array}{rrr}
				u_i,& \dots,& u_j \\
				\tau_i\tau_{i-1}^{-1},& \dots,& \tau_j\tau_{i-1}^{-1} \\
			\end{array}\right)} \\
%3-equal
		=&\sum_{0\leqslant j<m-1}
			 A{\scriptsize\left(\begin{array}{rrrr}
			 	u_0+\cdots+u_{j+1},& u_{j+2},& \dots,& u_{m-1} \\
			 	\tau_{j+1},& \tau_{j+2},& \dots,& \tau_{m-1}
			\end{array}\right)}
			B{\scriptsize\left(\begin{array}{rrr}
				u_0,& \dots,& u_j \\
				\tau_0\tau_{j+1}^{-1},& \dots,& \tau_j\tau_{j+1}^{-1} \\
			\end{array}\right)} \\
		&-\sum_{1\leqslant j\leqslant m-1}
			 A{\scriptsize\left(\begin{array}{rrrr}
			 	u_0+\cdots+u_{j},& u_{j+1},& \dots,& u_{m-1} \\
			 	\tau_{0},& \tau_{j+1},& \dots,& \tau_{m-1}
			\end{array}\right)}
			 B{\scriptsize\left(\begin{array}{rrr}
				u_1,& \dots,& u_j \\
				\tau_1\tau_{0}^{-1},& \dots,& \tau_j\tau_{0}^{-1} \\
			\end{array}\right)} \\
		&+\sum_{0< i\leqslant j<m-1}
			 A{\scriptsize\left(\begin{array}{rrrcrrr}
			 	u_0,& \dots,& u_{i-1},& u_i+\cdots+u_{j+1},& u_{j+2},& \dots,& u_{m-1} \\
			 	\tau_0,& \dots,& \tau_{i-1},& \tau_{j+1},& \tau_{j+2},& \dots,& \tau_{m-1}
			\end{array}\right)}
			B{\scriptsize\left(\begin{array}{rrr}
				u_i,& \dots,& u_j \\
				\tau_i\tau_{j+1}^{-1},& \dots,& \tau_j\tau_{j+1}^{-1} \\
			\end{array}\right)} \\
		&-\sum_{1< i\leqslant j\leqslant m-1}
			 A{\scriptsize\left(\begin{array}{rrrcrrr}
			 	u_0,& \dots,& u_{i-2},& u_{i-1}+\cdots+u_{j},& u_{j+1},& \dots,& u_{m-1} \\
			 	\tau_0,& \dots,& \tau_{i-2},& \tau_{i-1},& \tau_{j+1},& \dots,& \tau_{m-1}
			\end{array}\right)}
			 B{\scriptsize\left(\begin{array}{rrr}
				u_i,& \dots,& u_j \\
				\tau_i\tau_{i-1}^{-1},& \dots,& \tau_j\tau_{i-1}^{-1} \\
			\end{array}\right)}.
	\end{align*}
	So by putting $u_0=-u_1-\cdots-u_m$ and $\tau_0=\sigma_m^{-1}$ and $\tau_i=\sigma_i\sigma_m^{-1}$ for $1\leqslant i\leqslant m-1$, we calculate
	\begin{align*}
		&\push(\arit_u( B)( A))(\omega) \\
		=&(\arit_u( B)( A)){\scriptsize\left(\begin{array}{rrrr}
			-u_1-\cdots-u_m,& u_1,& \dots,& u_{m-1} \\
			\sigma_m^{-1},&\sigma_1\sigma_m^{-1},& \dots,& \sigma_{m-1}\sigma_m^{-1}
		\end{array}\right)} \\
%2-equal
		=&\sum_{0\leqslant j<m-1}
			 A{\scriptsize\left(\begin{array}{rrrr}
			 	-u_{j+2}-\cdots-u_m,& u_{j+2},& \dots,& u_{m-1} \\
			 	\sigma_{j+1}\sigma_m^{-1},& \sigma_{j+2}\sigma_m^{-1},& \dots,& \sigma_{m-1}\sigma_m^{-1}
			\end{array}\right)} \\
			&\hspace{5cm}\cdot B{\scriptsize\left(\begin{array}{rrrr}
				-u_1-\cdots-u_m,& u_1,& \dots,& u_j \\
				\sigma_{j+1}^{-1},& \sigma_1\sigma_{j+1}^{-1},& \dots,& \sigma_j\sigma_{j+1}^{-1} \\
			\end{array}\right)} \\
		&-\sum_{1\leqslant j\leqslant m-1}
			 A{\scriptsize\left(\begin{array}{rrrr}
			 	-u_{j+1}-\cdots-u_m,& u_{j+1},& \dots,& u_{m-1} \\
			 	\sigma_m^{-1},& \sigma_{j+1}\sigma_m^{-1},& \dots,& \sigma_{m-1}\sigma_m^{-1}
			\end{array}\right)} \\
			&\hspace{5cm}\cdot B{\scriptsize\left(\begin{array}{rrr}
				u_1,& \dots,& u_j \\
				\sigma_1,& \dots,& \sigma_j \\
			\end{array}\right)} \\
		&+\sum_{0< i\leqslant j<m-1}
			 A{\scriptsize\left(\begin{array}{rrrrrrrr}
			 	-u_1-\cdots-u_m,& u_1,& \dots,& u_{i-1},& u_i+\cdots+u_{j+1},& u_{j+2},& \dots,& u_{m-1} \\
			 	\sigma_m^{-1},& \sigma_1\sigma_m^{-1},& \dots,& \sigma_{i-1}\sigma_m^{-1},& \sigma_{j+1}\sigma_m^{-1},& \sigma_{j+2}\sigma_m^{-1},& \dots,& \sigma_{m-1}\sigma_m^{-1}
			\end{array}\right)} \\
			&\hspace{5cm}\cdot B{\scriptsize\left(\begin{array}{rrr}
				u_i,& \dots,& u_j \\
				\sigma_i\sigma_{j+1}^{-1},& \dots,& \sigma_j\sigma_{j+1}^{-1} \\
			\end{array}\right)} \\
		&-\sum_{1< i\leqslant j\leqslant m-1}
			 A{\scriptsize\left(\begin{array}{rrrrrrrr}
			 	-u_1-\cdots-u_m,& u_1,& \dots,& u_{i-2},& u_{i-1}+\cdots+u_{j},& u_{j+1},& \dots,& u_{m-1} \\
			 	\sigma_m^{-1},& \sigma_1\sigma_m^{-1},& \dots,& \sigma_{i-2}\sigma_m^{-1},& \sigma_{i-1}\sigma_m^{-1},& \sigma_{j+1}\sigma_m^{-1},& \dots,& \sigma_{m-1}\sigma_m^{-1}
			\end{array}\right)} \\
			&\hspace{5cm}\cdot B{\scriptsize\left(\begin{array}{rrr}
				u_i,& \dots,& u_j \\
				\sigma_i\sigma_{i-1}^{-1},& \dots,& \sigma_j\sigma_{i-1}^{-1} \\
			\end{array}\right)}.
		\intertext{Because $ A, B$ are $\push$-invariant, we get}
%3-equal
		=&\sum_{0\leqslant j<m-1}
			 A{\scriptsize\left(\begin{array}{rrr}
			 	u_{j+2},& \dots,& u_m \\
			 	\sigma_{j+2}\sigma_{j+1}^{-1},& \dots,& \sigma_{m}\sigma_{j+1}^{-1}
			 \end{array}\right)}
			 B{\scriptsize\left(\begin{array}{rrrr}
			 	u_1,& \dots,& u_j,& u_{j+1}+\cdots+u_m \\
			 	\sigma_1,& \dots,& \sigma_j,& \sigma_{j+1}
			 \end{array}\right)} \\
		&-\sum_{1\leqslant j\leqslant m-1}
			 A{\scriptsize\left(\begin{array}{rrr}
			 	u_{j+1},& \dots,& u_m \\
			 	\sigma_{j+1},& \dots,& \sigma_{m}
			 \end{array}\right)}
			 B{\scriptsize\left(\begin{array}{rrr}
				u_1,& \dots,& u_j \\
				\sigma_1,& \dots,& \sigma_j \\
			\end{array}\right)} \\
		&+\sum_{0< i\leqslant j<m-1}
			A{\scriptsize\left(\begin{array}{rrrcrrr}
				u_1,& \dots,& u_{i-1},& u_i+\cdots+u_{j+1},& u_{j+2},& \dots,& u_m \\
				\sigma_1,& \dots,& \sigma_{i-1},& \sigma_{j+1},& \sigma_{j+2},& \dots,& \sigma_m
			\end{array}\right)}
			B{\scriptsize\left(\begin{array}{rrr}
				u_i,& \dots,& u_j \\
				\sigma_i\sigma_{j+1}^{-1},& \dots,& \sigma_j\sigma_{j+1}^{-1}
			\end{array}\right)} \\
		&-\sum_{1< i\leqslant j\leqslant m-1}
			 A{\scriptsize\left(\begin{array}{rrrcrrr}
				u_1,& \dots,& u_{i-2},& u_{i-1}+\cdots+u_j,& u_{j+1},& \dots,& u_m \\
				\sigma_1,& \dots,& \sigma_{i-2},& \sigma_{j-1},& \sigma_{j+1},& \dots,& \sigma_m
			\end{array}\right)}
			 B{\scriptsize\left(\begin{array}{rrr}
				u_i,& \dots,& u_j \\
				\sigma_i\sigma_{i-1}^{-1},& \dots,& \sigma_j\sigma_{i-1}^{-1} \\
			\end{array}\right)} \\
%4-equal
		=&\sum_{\substack{\omega=\alpha\beta \\
			\alpha,\beta\neq\emptyset}}
			 A(\lrflex{\alpha}{\beta})
			 B(\ulflex{\alpha}{\beta}) 
		-( B\times A)\varia{u_1,\ \dots,\ u_m}{\sigma_1,\ \dots,\ \sigma_m} \\
		&+\sum_{\substack{\omega=\alpha\beta\gamma \\
			\beta,\gamma\neq\emptyset}}
			 A(\alpha\urflex{\beta}{\gamma})
			 B(\llflex{\beta}{\gamma}) 
		-\sum_{0<i\leqslant m-1} A{\scriptsize\left(\begin{array}{rrrr}
			u_1,& \dots,& u_{i-1},& u_i+\cdots+u_m \\
			\sigma_1,& \dots,& \sigma_{i-1},& \sigma_m
			\end{array}\right)} \\
			&\hspace{8.5cm}\cdot B{\scriptsize\left(\begin{array}{rrr}
				u_i,& \dots,& u_{m-1} \\
				\sigma_i\sigma_{m}^{-1},& \dots,& \sigma_j\sigma_{m}^{-1}
			\end{array}\right)} \\
		&-\left\{\sum_{\substack{\omega=\alpha\beta\gamma \\
			\alpha,\beta\neq\emptyset}}
			 A(\ulflex{\alpha}{\beta}\gamma)
			 B(\lrflex{\alpha}{\beta})\right.
		-\left.\sum_{\substack{\omega=\alpha\beta \\
			\alpha,\beta\neq\emptyset}}
			 A(\ulflex{\alpha}{\beta})
			 B(\lrflex{\alpha}{\beta})\right\}.
	\end{align*}
	So we get
	\begin{align}\label{push on arit}
	 \push&(\arit_u( B)( A))(\omega) \\
		=&(\arit_u( B)( A))(\omega) \nonumber
		-( B\times A)(\omega) \\
		&-\sum_{i=1}^{m-1} A{\scriptsize\left(\begin{array}{rrrr}
				u_1,& \dots,& u_{i-1},& u_i+\cdots+u_m \\
				\sigma_1,& \dots,& \sigma_{i-1},& \sigma_m
			\end{array}\right)}
			B{\scriptsize\left(\begin{array}{rrr}
				u_i,& \dots,& u_{m-1} \\
				\sigma_i\sigma_{m}^{-1},& \dots,& \sigma_j\sigma_{m}^{-1}
			\end{array}\right)} \nonumber\\
		&+\sum_{\substack{\omega=\alpha\beta \nonumber\\
			\alpha,\beta\neq\emptyset}}
			\bigl\{ A(\ulflex{\alpha}{\beta})
			 B(\lrflex{\alpha}{\beta})
		+ A(\lrflex{\alpha}{\beta})
		 B(\ulflex{\alpha}{\beta})\bigr\}. \nonumber
	\end{align}
	Therefore, by using \eqref{push on product} and \eqref{push on arit}, we have
	\begin{align*}
		\push(\ari_u( A, B))(\omega)
		=&\push(\arit_u( B)( A))(\omega)-\push(\arit_u( A)( B))(\omega) \\
		&\quad +\push( A\times  B)(\omega)-\push( B\times  A)(\omega) \\
		=&(\arit_u( B)( A))(\omega)-(\arit_u( A)( B))(\omega) \\
		&\quad +( A\times  B)(\omega)-( B\times  A)(\omega) \\
		=&\ari_u( A, B)(\omega).
	\end{align*}
	Thus $\ari_u( A, B)$ is $\push$-invariant.
\end{proof}

\begin{rem}
This proposition improves the results of \cite[\S 4.1.3]{RS},
which shows that the intersection $\ARI_{\push}(\Gamma)\cap\ARI_\al^\pol(\Gamma)$
forms a Lie algebra under the $\ari_u$-bracket when $\Gamma=\{e\}$.
\end{rem}

Secondly we show that the set of pus-neutral moulds is closed under the $\ari_v$-bracket.
The following proposition is a generalization of \cite[Lemma 21]{RS} which treats the case when $\Gamma =\{e\}$.
%which was also stated in \cite{E-flex} \S 2.5 without a detailed proof

\begin{prop}\label{pus-neutral under ari}
	If $ A, B\in\overline{\ARI}(\Gamma)$ are $\pus$-neutral, then $\ari_v( A, B)$ is $\pus$-neutral.
\end{prop}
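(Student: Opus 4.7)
The plan is to show directly that $\sum_{i=1}^{m}\pus^i(\ari_v(A,B))^m(\vecy_m) = 0$ for every $m\geqslant 1$ and every $\vecy_m$, generalizing \cite[Lemma 21]{RS} (which treats $\Gamma=\{e\}$) to arbitrary finite abelian $\Gamma$. By bilinearity of $\ari_v$ and of $\pus$, the cyclic sum splits into four contributions coming from $\arit_v(B)(A)$, $\arit_v(A)(B)$, $A\times B$ and $B\times A$, and I would reorganize each as a $\pus$-orbit sum in the $A$-slot or $B$-slot.

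For the commutator part $\sum_i \pus^i(A\times B - B\times A)$, writing $(A\times B)^m(\vecy_m)=\sum_{\vecy_m=\eta_1\eta_2} A(\eta_1)B(\eta_2)$ and applying $\pus^i$ cyclically shifts the cut-point through all $m$ positions; organizing the double sum by the length $k:=l(\eta_1)$ shows that for $0<k<m$ the contribution of a fixed $k$ already runs over a full $\pus$-orbit of the $A$-argument (and of the $B$-argument), so these inner sums vanish by pus-neutrality of $A$ and $B$. The boundary terms $k=0$ and $k=m$ (the $A(\emptyset)$- and $B(\emptyset)$-contributions) survive and will be matched against analogous boundary terms from the $\arit_v$-pieces.

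Next I would expand $\sum_i \pus^i(\arit_v(B)(A))^m(\vecy_m)$. For each shift $i$, the definition runs over decompositions $\pus^i(\vecy_m) = \alpha\beta\gamma$; I would reindex the pair (shift index $i$, decomposition) by the length $k:=l(\beta)$ together with the cyclic starting position $j$ of $\beta$ inside $\vecy_m$. The key flexion identity one needs is that, as $j$ runs through $\Z/m\Z$ with $k$ fixed, the $A$-argument $\alpha\urflex{\beta}{\gamma}$ (respectively $\ulflex{\alpha}{\beta}\gamma$) sweeps out a complete $\pus$-orbit of a single word of length $m-k+1$, while the $B$-argument $\llflex{\beta}{\gamma}$ (respectively $\lrflex{\alpha}{\beta}$) sweeps out a complete $\pus$-orbit of a single word of length $k$ — this works because the lower flexions depend only on one flanking letter of the neighbouring block, so they transform covariantly under cyclic rotation. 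Pus-neutrality of $A$ (resp.\ of $B$) then kills the bulk of the sum; the leftover terms, corresponding to $\alpha=\emptyset$ or $\gamma=\emptyset$, collapse to a $\pus$-cyclic sum of $B\times A$. An identical computation for $\sum_i\pus^i(\arit_v(A)(B))$ yields the $\pus$-cyclic sum of $A\times B$, and the combination $\arit_v(B)(A)-\arit_v(A)(B)+[A,B]$ cancels cyclically.

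The main obstacle will be the $\Gamma$-equivariant bookkeeping of the flexions under the cyclic shift $\pus$. Unlike the scalar case of \cite[Lemma 21]{RS}, the upper flexions $\urflex{}{}$, $\ulflex{}{}$ in the $v$-setting modify $\sigma$-coordinates by group multiplication of the full factor $\tau_1\cdots\tau_n$, while $\pus$ permutes $\sigma$- and $v$-coordinates together; one must verify carefully that the flexed arguments appearing in $\arit_v(B)(A)(\pus^i\vecy_m)$ really are the $\pus$-cyclic images of the fixed words obtained at $i=0$ (up to a relabelling of the decomposition), as claimed above. This is the combinatorial heart of the proof; once this $\Z/m\Z$-equivariance of the flexion apparatus is established, the reorganization into $\pus$-orbits and the appeal to pus-neutrality of $A$ and $B$ are mechanical.
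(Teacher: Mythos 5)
There is a genuine gap, and it is in your treatment of the commutator part $\sum_i \pus^i([A,B])$. Your claim is that for fixed $k$ with $0<k<m$ the inner sum $\sum_{i\in\Z/m\Z} A^k(\eta_1^{(i)})B^{m-k}(\eta_2^{(i)})$ vanishes because the $A$-argument (and the $B$-argument) runs over a full $\pus$-orbit. This is not correct: as $i$ varies, $\eta_1^{(i)}$ and $\eta_2^{(i)}$ are complementary arcs of the cyclic word $\vecy_m$, so the two arguments are coupled and the sum does not factor as a product of two orbit-sums. Concretely with $m=4$, $k=2$ and $A^2(a,b)=B^2(a,b)=a-b$ (both $\pus$-neutral at depth $2$) one computes
\begin{equation*}
\sum_{i\in\Z/4\Z}A^2(y_{1+i},y_{2+i})B^2(y_{3+i},y_{4+i})
=2\bigl[(y_1-y_2)(y_3-y_4)+(y_2-y_3)(y_4-y_1)\bigr]\neq 0,
\end{equation*}
so $\pus$-neutrality of $A$ and $B$ does not make the cyclic sum of $A\times B$ vanish on its own. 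Moreover, the boundary terms $k=0$ and $k=m$ that you expect to ``survive'' are identically zero, because every element of $\overline{\ARI}(\Gamma)$ has $M^0(\emptyset)=0$, so there is nothing there to match against the $\arit_v$-piece.

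What actually makes the commutator work (and what the paper does) is to first reduce, by depth-graded bilinearity, to $A$ of pure depth $k$ and $B$ of pure depth $l$ with $k+l=m$. Then $(A\times B)^m$ has only one non-trivial cut, and the cyclic sums of $A\times B$ and of $B\times A$ are matched by the shift $i\mapsto i-l$ (followed by using commutativity of the scalar values), so $[A,B]$ is $\pus$-neutral for purely combinatorial reasons that make no use of the $\pus$-neutrality of $A$ or $B$. The same homogeneity reduction also cleans up the $\arit_v$ step: there the paper reindexes $i\mapsto i-j$ so that the $B$-argument is held fixed while the $A$-argument sweeps a full $\Z/k\Z$-orbit $\{\pus^j(A)\}_{j=0}^{k-1}$; it is the $\pus$-neutrality of $A$ alone that kills each inner sum, rather than a simultaneous orbit-sweep of both arguments as you describe (and note also that $\alpha\urflex{\beta}{\gamma}$ has length $m-l(\beta)$, not $m-l(\beta)+1$). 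In short, the $\arit_v$ analysis is close to the right idea but needs the depth reduction and the $i\mapsto i-j$ trick to be made precise, while the commutator analysis requires a different mechanism (internal cancellation between $A\times B$ and $B\times A$) that your proposal misses.
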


\begin{proof}
The proof goes in the same way to that of \cite{RS}. 
Let $m\geqslant1$. 
Since the algebra $\overline{\ARI}(\Gamma)$ is graded by depth,
it is enough to prove  $A$ and $B\in \ARI(\Gamma)$ with
%the claim for $A\in\gr_{\D}^k\ARI(\Gamma)$ and $B\in\gr_{\D}^l\ARI(\Gamma)$ where 
depth $k$ and $l\in\N$ with $m=k+l$.
	
	Firstly, we prove that $[ A, B]$ is $\pus$-neutral. We calculate
	\begin{align*}
		&\sum_{i=1}^{m}\pus^i([ A, B])^m\varia{\sigma_1,\ \dots,\ \sigma_m}{v_1,\ \dots,\ v_m}
		=\sum_{i\in\Z/m\Z}( A\times  B- B\times  A)^m\varia{\sigma_{1+i},\ \dots,\ \sigma_{m+i}}{v_{1+i},\ \dots,\ v_{m+i}} \\
		&=\sum_{i\in\Z/m\Z}
		\left\{
			A^k\varia{\sigma_{1+i},\ \dots,\ \sigma_{k+i}}{v_{1+i},\ \dots,\ v_{k+i}} 
			B^l\varia{\sigma_{k+1+i},\ \dots,\ \sigma_{m+i}}{v_{k+1+i},\ \dots,\ v_{m+i}} 
			-B^l\varia{\sigma_{1+i},\ \dots,\ \sigma_{l+i}}{v_{1+i},\ \dots,\ v_{l+i}} 
			A^k\varia{\sigma_{l+1+i},\ \dots,\ \sigma_{m+i}}{v_{l+1+i},\ \dots,\ v_{m+i}}
		\right\} \\
		&=\sum_{i\in\Z/m\Z}
		\left\{
			A^k\varia{\sigma_{1+i},\ \dots,\ \sigma_{k+i}}{v_{1+i},\ \dots,\ v_{k+i}} 
			B^l\varia{\sigma_{k+1+i},\ \dots,\ \sigma_{m+i}}{v_{k+1+i},\ \dots,\ v_{m+i}} 
			-B^l\varia{\sigma_{1+i-l},\ \dots,\ \sigma_{i}}{v_{1+i-l},\ \dots,\ v_{i}} 
			A^k\varia{\sigma_{1+i},\ \dots,\ \sigma_{k+i}}{v_{1+i},\ \dots,\ v_{k+i}}
		\right\} \\
		&=\sum_{i\in\Z/m\Z}
		\left\{
			A^k\varia{\sigma_{1+i},\ \dots,\ \sigma_{k+i}}{v_{1+i},\ \dots,\ v_{k+i}} 
			B^l\varia{\sigma_{k+1+i},\ \dots,\ \sigma_{m+i}}{v_{k+1+i},\ \dots,\ v_{m+i}} 
			-B^l\varia{\sigma_{k+1+i},\ \dots,\ \sigma_{m+i}}{v_{k+1+i},\ \dots,\ v_{m+i}} 
			A^k\varia{\sigma_{1+i},\ \dots,\ \sigma_{k+i}}{v_{1+i},\ \dots,\ v_{k+i}}
		\right\}
	\end{align*}
	The first and the second terms are equal, so $[ A, B]$ is $\pus$-neutral.
	
	Secondly, we prove that $\arit_v( B)( A)$ is $\pus$-neutral. Then we have
	\begin{align*}
		&(\arit_v( B)( A))^m\varia{\sigma_1,\ \dots,\ \sigma_m}{v_1,\ \dots,\ v_m}
		=\sum_{\substack{\omega=\alpha\beta\gamma \\
		\beta,\gamma\neq\emptyset}}
		 A^{k}(\alpha\urflex{\beta}{\gamma}) B^{l}(\llflex{\beta}{\gamma})
		-\sum_{\substack{\omega=\alpha\beta\gamma \\
		\alpha,\beta\neq\emptyset}}
		 A^{k}(\ulflex{\alpha}{\beta}\gamma) B^{l}(\lrflex{\alpha}{\beta}).
	\end{align*}
	Because $l\geqslant1$ and $B$ is with depth $l$, we may put
	\footnote{For the first (resp. second) term, the word $\alpha$ (resp. $\gamma$) can be $\emptyset$ and the word $\gamma$ (resp. $\alpha$) is not $\emptyset$. So the index $j$ must run from $0$ (resp. $1$) to $k-1=m-l-1$ (resp. $k=m-l$).}
	$\alpha=\varia{\sigma_1,\ \dots,\ \sigma_{j}}{v_1,\ \dots,\ v_{j}}$,
	$\beta=\varia{\sigma_{j+1},\ \dots,\ \sigma_{j+l}}{v_{j+1},\ \dots,\ v_{j+l}}$ and
	$\gamma=\varia{\sigma_{j+l+1},\ \dots,\ \sigma_{m}}{v_{j+l+1},\ \dots,\ v_{m}}$ and we get
	\begin{align*}
		&(\arit_v( B)( A))^m\varia{\sigma_1,\ \dots,\ \sigma_m}{v_1,\ \dots,\ v_m} \\
		&=\sum_{j=0}^{k-1}
		A^{k}{\scriptsize\left(\begin{array}{rrrcrrr}
			\sigma_1,& \dots,& \sigma_{j},& \sigma_{j+1}\cdots\sigma_{j+l+1},& \sigma_{j+l+2},& \dots,& \sigma_{m} \\
			v_1,& \dots,& v_{j},& v_{j+l+1},& v_{j+l+2},& \dots,& v_{m}
		\end{array}\right)}
		B^{l}{\scriptsize\left(\begin{array}{rrr}
			\sigma_{j+1},& \dots,& \sigma_{j+l} \\
			v_{j+1}-v_{j+l+1},& \dots,& v_{j+l}-v_{j+l+1} \\
		\end{array}\right)} \\
		&\quad-\sum_{j=1}^{k}
		A^{k}{\scriptsize\left(\begin{array}{rrrcrrr}
		 	\sigma_1,& \dots,& \sigma_{j-1},& \sigma_{j}\cdots\sigma_{j+l},& \sigma_{j+l+1},& \dots,& \sigma_{m} \\
		 	v_1,& \dots,& v_{j-1},& v_{j},& v_{j+l+1},& \dots,& v_{m}
		\end{array}\right)}
		B^{l}{\scriptsize\left(\begin{array}{rrr}
			\sigma_{j+1},& \dots,& \sigma_{j+l} \\
			v_{j+1}-v_{j},& \dots,& v_{j+l}-v_{j} \\
		\end{array}\right)}.
	\end{align*}
	Here, for $0\leqslant j\leqslant k-1$, we have
	\begin{align*}
		&\sum_{i\in\Z/m\Z}\sum_{j=0}^{k-1}
		A^{k}{\scriptsize\left(\begin{array}{rrrcrrr}
			\sigma_{i+1},& \dots,& \sigma_{i+j},& \sigma_{i+j+1}\cdots\sigma_{i+j+l+1},& \sigma_{i+j+l+2},& \dots,& \sigma_{i+m} \\
			v_{i+1},& \dots,& v_{i+j},& v_{i+j+l+1},& v_{i+j+l+2},& \dots,& v_{i+m}
		\end{array}\right)} \\
		&\hspace{5cm}\cdot
		B^{l}{\scriptsize\left(\begin{array}{rrr}
			\sigma_{i+j+1},& \dots,& \sigma_{i+j+l} \\
			v_{i+j+1}-v_{i+j+l+1},& \dots,& v_{i+j+l}-v_{i+j+l+1} \\
		\end{array}\right)} \\
%1-equal
		&=\sum_{i\in\Z/m\Z}\sum_{j=0}^{k-1}
		A^{k}{\scriptsize\left(\begin{array}{rrrcrrr}
			\sigma_{i+1-j},& \dots,& \sigma_{i},& \sigma_{i+1}\cdots\sigma_{i+l+1},& \sigma_{i+l+2},& \dots,& \sigma_{i+m-j} \\
			v_{i+1-j},& \dots,& v_{i},& v_{i+l+1},& v_{i+l+2},& \dots,& v_{i+m-j}
		\end{array}\right)} \\
		&\hspace{5cm}\cdot
		B^{l}{\scriptsize\left(\begin{array}{rrr}
			\sigma_{i+1},& \dots,& \sigma_{i+l} \\
			v_{i+1}-v_{i+l+1},& \dots,& v_{i+l}-v_{i+l+1} \\
		\end{array}\right)}. \\
%2-equal
		&=\sum_{i\in\Z/m\Z}\sum_{j=0}^{k-1}
		(\pus^j( A))^{k}{\scriptsize\left(\begin{array}{crrr}
			\sigma_{i+1}\cdots\sigma_{i+l+1},& \sigma_{i+l+2},& \dots,& \sigma_{i+m} \\
			v_{i+l+1},& v_{i+l+2},& \dots,& v_{i+m}
		\end{array}\right)} \\
		&\hspace{5cm}\cdot
		B^l{\scriptsize\left(\begin{array}{rrr}
			\sigma_{i+1},& \dots,& \sigma_{i+l} \\
			v_{i+1}-v_{i+l+1},& \dots,& v_{i+l}-v_{i+l+1} \\
		\end{array}\right)}.
	\end{align*}
	Since $ A$ is $\pus$-neutral, this is 0.
%	By using $\pus$-neutrality of $ A$, we know that this is equal to 0. 
Similarly by using the $\pus$-neutrality of $ A$, we obtain 
	\begin{align*}
		&\sum_{i\in\Z/m\Z}\sum_{j=1}^{k}
		A^{k}{\scriptsize\left(\begin{array}{rrrcrrr}
		 	\sigma_{i+1},& \dots,& \sigma_{i+j-1},& \sigma_{i+j}\cdots\sigma_{i+j+l},& \sigma_{i+j+l+1},& \dots,& \sigma_{i+m} \\
		 	v_{i+1},& \dots,& v_{i+j-1},& v_{i+j},& v_{i+j+l+1},& \dots,& v_{i+m}
		\end{array}\right)} \\
		&\hspace{5cm}\cdot
		B^{l}{\scriptsize\left(\begin{array}{rrr}
			\sigma_{i+j+1},& \dots,& \sigma_{i+j+l} \\
			v_{i+j+1}-v_{i+j},& \dots,& v_{i+j+l}-v_{i+j} \\
		\end{array}\right)}=0
	\end{align*}
	for all $1\leqslant j\leqslant k$.
	Therefore, we get
	\begin{align*}
       \sum_{i=1}^{m}\pus^i(\arit_v( B)( A))^m\varia{\sigma_1,\ \dots,\ \sigma_m}{v_1,\ \dots,\ v_m}
       =\sum_{i\in\Z/m\Z}(\arit_v( B)( A))^m\varia{\sigma_{1+i},\ \dots,\ \sigma_{m+i}}{v_{1+i},\ \dots,\ v_{m+i}}=0.
	\end{align*}
Whence  $\arit_v( B)( A)$ is $\pus$-neutral.

	Thirdly,  in the same way, we can show that $\arit_v( A)( B)$ is $\pus$-neutral by using the $\pus$-neutrality of $ B$. 
	
	Thus we complete the proof because  $\ari_v( A, B)=[ A, B]+\arit_v( B)( A)-\arit_v( A)( B)$.
\end{proof}

We need the following lemma for the proof of Theorem \ref{thm: ARIpushpusnu Lie}.
%It generalizes \cite[Lemma 2.4.1]{S-ARIGARI} which treats the case when $\Gamma=\{e\}$.
%The proof goes the same way to her proof. 
%The proof of the following lemma is given in \cite{S-ARIGARI}.
%\Add{Correct the lemma below.}

\begin{lem}%[\cite{S-ARIGARI} Lemma 2.4.1]
\label{swap-ari commutation}
	If $ A, B\in\ARI(\Gamma)$ are $\push$-invariant, then we have
	$$\swap(\ari_u( A, B))=\ari_v(\swap( A),\swap( B)).$$
\end{lem}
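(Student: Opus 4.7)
The plan is to establish the identity by a direct depth-by-depth computation, decomposing both sides via the definitions
$$\ari_u(A,B)=\arit_u(B)(A)-\arit_u(A)(B)+[A,B],$$
$$\ari_v(\swap(A),\swap(B))=\arit_v(\swap(B))(\swap(A))-\arit_v(\swap(A))(\swap(B))+[\swap(A),\swap(B)],$$
and matching the two sides summand-by-summand. In each step the aim is to convert the $u$-side combinatorics (partitions $\omega=\alpha\beta\gamma$ of a word in $X_{\Z}^\bullet$ with $u$-flexions shifting variables) into the $v$-side combinatorics (partitions of a word in $Y_{\Z}^\bullet$ with $v$-flexions shifting colors), using the formula $\swap(M)^m\binom{\sigma_1,\dots,\sigma_m}{v_1,\dots,v_m}=M^m\binom{v_m,v_{m-1}-v_m,\dots,v_1-v_2}{\sigma_1\cdots\sigma_m,\dots,\sigma_1}$.

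First I would derive a formula for $\swap(A\times B)^m\binom{\sigma_1,\dots,\sigma_m}{v_1,\dots,v_m}$ in terms of $\swap(A)$ and $\swap(B)$. This is purely a relabelling: a cut $(x_1,\dots,x_i)\,|\,(x_{i+1},\dots,x_m)$ on the $u$-side corresponds, after swap, to the cut $(\sigma_1\cdots\sigma_{m-i+1},\sigma_{m-i+2},\dots,\sigma_m)\,|\,(\sigma_1,\dots,\sigma_{m-i})$ on the $v$-side, but the resulting expression differs from both $\swap(A)\times\swap(B)$ and $\swap(B)\times\swap(A)$ by boundary terms. Second, for each triple $\omega=\alpha\beta\gamma$ entering the definition of $\arit_u(B)(A)$, I apply swap to each factor: the $u$-flexions $\urflex{\beta}{\gamma}$, $\llflex{\beta}{\gamma}$, $\ulflex{\alpha}{\beta}$, $\lrflex{\alpha}{\beta}$ produce, after swap, arguments of $\swap(A)$ and $\swap(B)$ whose colors are cyclic partial products $\sigma_j\cdots\sigma_k$ modified by inversions $\sigma_j^{-1}$ coming from the definition of $\llflex{\cdot}{\cdot}$ and $\lrflex{\cdot}{\cdot}$.

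The role of push-invariance is now to rewrite these stray inverse colors: $\push$-invariance of $B$ at depth $\ell=l(\beta)$ identifies $B^\ell$ evaluated at colors with a leading $\sigma_i^{-1}$ and variables ending at $-v_i-v_{i+1}-\cdots$ with $B^\ell$ evaluated at the cyclically translated arguments that appear in the corresponding summand of $\arit_v(\swap(B))(\swap(A))$; the analogous rewriting for the $\arit_u(A)(B)$ term uses $\push$-invariance of $A$. This is exactly the mechanism already exploited in the proof of Proposition~\ref{prop:ARIpush Lie}, and indeed the formula \eqref{push on arit} suggests the organization: boundary contributions come precisely from the sum splittings where $\alpha=\emptyset$ or $\gamma=\emptyset$, i.e., those that have no counterpart in $\arit_v$.

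Once steps above are assembled, the remaining task is to check that the mismatch
$\swap([A,B])-[\swap(A),\swap(B)]$ (computed in the first step) cancels exactly the boundary discrepancy accumulated in the second step between $\swap(\arit_u(B)(A))-\swap(\arit_u(A)(B))$ and $\arit_v(\swap(B))(\swap(A))-\arit_v(\swap(A))(\swap(B))$. The main obstacle will be the combinatorial bookkeeping in this matching: one must track, for each summand indexed by a triple $\alpha\beta\gamma$ of lengths $(i,\ell,m-i-\ell)$, how the reversal $j\mapsto m-j+1$ induced by $\swap$ and the cyclic color rearrangement $\sigma_j\mapsto\sigma_1\cdots\sigma_{m-j+1}$ interact with the positions at which the $u$-flexions act, and to identify precisely which depth of push-invariance of $B$ (respectively $A$) is needed to absorb each stray $\sigma_j^{-1}$. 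This is the $\Gamma$-graded analogue of the well-known interaction between swap and ari in the classical ($\Gamma=\{e\}$) bimould setting.
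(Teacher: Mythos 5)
Your overall strategy—a direct, depth-by-depth computation matching summands of $\ari_u$ against those of $\ari_v$ after applying $\swap$—is what a self-contained proof would do; the paper itself simply cites Schneps's Lemma~2.4.1 of \cite{S-ARIGARI} rather than spelling it out. You also correctly identify that $\push$-invariance is what absorbs the stray inverses and cyclic shifts that appear when $\swap$ reorders colors, and you correctly point to Proposition~\ref{prop:ARIpush Lie} (and the displayed identity~\eqref{push on arit}) as the template.

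However, the first step of your plan is mischaracterized in a way that hides the actual content of the lemma. You describe $\swap(A\times B)$ as "purely a relabelling" whose discrepancy with $\swap(A)\times\swap(B)$ and $\swap(B)\times\swap(A)$ is "boundary terms." That is not what happens. Computing directly from the definition of $\swap$ and using commutativity of $\Gamma$, one finds
\begin{equation*}
\swap(A\times B)^m\varia{\sigma_1,\ \dots,\ \sigma_m}{v_1,\ \dots,\ v_m}
=\sum_{\vecy_m=\alpha\beta}\swap(A)^{l(\beta)}(\urflex{\alpha}{\beta})\,\swap(B)^{l(\alpha)}(\llflex{\alpha}{\beta}),
\end{equation*}
so \emph{every} summand of the product carries a $v$-flexion (the accumulated color $\sigma_1\cdots\sigma_{m-i+1}$ you display in your own cut is already a sign of this), not just a few terms at the ends. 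The presence of flexions here is precisely the mechanism by which the multiplicative parts $A\times B$, $B\times A$ on the $u$-side feed into the $\arit_v$-type sums on the $v$-side, and dually the $\arit_u$ terms feed into the $v$-side product and commutator; treating this step as bookkeeping underestimates where the identity comes from. Beyond that, the proposal stops at the plan: the core cancellation—that after using $\push$-invariance the corrections from the $\arit_u$ terms exactly offset the flexed-product discrepancy above—is explicitly deferred ("the main obstacle will be the combinatorial bookkeeping") and never verified. So the sketch points in the right direction and has the right ingredients (including the role of push and the dihedral/cyclic structure), but it does not yet constitute a proof, and its description of the $\swap$-of-product step needs to be corrected before the matching can be organized properly.
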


\begin{proof}
This  follows from the proof of  \cite[Lemma 2.4.1]{S-ARIGARI},
which actually works for $\mathrm{BARI}$.
\end{proof}

\begin{thm}\label{thm: ARIpushpusnu Lie}
%	Let $A,B\in\ARI$ be $\push$-invariant such that $\swap(A)$ and $\swap(B)$ are $\pus$-neutral. Then $\swap(\ari(A,B))$ is $\pus$-neutral. That is, 
	The set $\ARI(\Gamma)_{\push/ \pusnu}$ forms a Lie subalgebra of $\ARI(\Gamma)_{\push}$ under the $\ari_u$-bracket.
\end{thm}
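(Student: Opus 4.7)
The plan is to combine the three preceding results in a short chain, since the theorem is really the culmination of Proposition \ref{prop:ARIpush Lie}, Proposition \ref{pus-neutral under ari} and Lemma \ref{swap-ari commutation}. Take $A, B \in \ARI(\Gamma)_{\push/\pusnu}$; by definition both moulds are push-invariant and both $\swap(A),\swap(B)\in\overline{\ARI}(\Gamma)$ are pus-neutral. We need to verify the two defining closure properties for $\ari_u(A,B)$.

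First I would invoke Proposition \ref{prop:ARIpush Lie} directly to obtain that $\ari_u(A,B)$ is push-invariant. Next, since both $A$ and $B$ are push-invariant, Lemma \ref{swap-ari commutation} applies and gives the compatibility
\begin{equation*}
\swap(\ari_u(A,B))=\ari_v(\swap(A),\swap(B)).
\end{equation*}
Because $\swap(A)$ and $\swap(B)$ are pus-neutral moulds in $\overline{\ARI}(\Gamma)$, Proposition \ref{pus-neutral under ari} then shows that the right-hand side is pus-neutral. Combining these two facts yields $\ari_u(A,B)\in\ARI(\Gamma)_{\push/\pusnu}$, which proves that $\ARI(\Gamma)_{\push/\pusnu}$ is closed under the $\ari_u$-bracket. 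Since it is already a $\mathbb Q$-linear subspace of the Lie algebra $\ARI(\Gamma)_{\push}$ (both defining conditions being linear), it inherits the Jacobi identity and antisymmetry, so it is a Lie subalgebra.

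There is essentially no obstacle here beyond assembling the preceding results correctly; the only point to be careful about is that the application of Lemma \ref{swap-ari commutation} genuinely requires the push-invariance hypothesis on both $A$ and $B$, which is why the intertwining of $\swap$ with the ari-bracket fails on all of $\ARI(\Gamma)$ but works on $\ARI(\Gamma)_{\push}$. All the heavy lifting (the explicit flexion computations for $\push$ and $\pus$, and the derivation property of $\arit_u,\arit_v$) has already been done in Lemmas \ref{lem:arit derivation}, \ref{lem:aritv derivation}, and in Propositions \ref{prop:ARIpush Lie} and \ref{pus-neutral under ari}, so the present statement reduces to a two-line bookkeeping argument.
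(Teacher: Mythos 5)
Your proof is correct and follows exactly the same chain of reasoning as the paper: Proposition \ref{prop:ARIpush Lie} for push-invariance, Lemma \ref{swap-ari commutation} to pass through $\swap$, and Proposition \ref{pus-neutral under ari} for pus-neutrality of $\ari_v(\swap(A),\swap(B))$. Nothing to add.
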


\begin{proof}
    Let $ A, B \in \ARI(\Gamma)_{\push/\pusnu}$. Then by Proposition \ref{prop:ARIpush Lie},
    $\ari_u( A, B)\in \ARI(\Gamma)_\push$.
	Let $m\geqslant1$. Because $ A$ and $ B$ are $\push$-invariant, by Lemma \ref{swap-ari commutation}, we have
	\begin{align*}
		\sum_{i\in\Z/m\Z}\pus^i&\circ\swap(\ari_u( A, B))^m
			\varia{\sigma_1,\ \dots,\ \sigma_m}{v_1,\ \dots,\ v_m} \\
		=&\sum_{i\in\Z/m\Z}\pus^i\bigl(\ari_v(\swap( A),\swap( B))\bigr)^m
			\varia{\sigma_1,\ \dots,\ \sigma_m}{v_1,\ \dots,\ v_m}.
	\end{align*}
	By Proposition \ref{pus-neutral under ari}, the right hand side is equal to 0. Hence $\swap(\ari_u( A, B))$ is pus-neutral.
	Thus we obtain $\ari_u(A, B)\in \ARI(\Gamma)_{\push/\pusnu}$.
\end{proof}

\begin{rem}
The above theorem improves \cite[Corollary 22]{RS} which shows that
$\ARI(\Gamma)_{\push/ \pusnu}\cap \ARI_\al^\pol(\Gamma)$
forms a Lie algebra  when $\Gamma=\{e\}$.
\end{rem}

Our Lie  algebra $\ARI(\Gamma)_{\push/ \pusnu}$ 
will play an important role in the following sections.

\begin{defn}\label{def:ARID}
For $N\geqslant 1$, put $\Gamma^N:=\{g^N\in\Gamma\ |\  g\in\Gamma\}$.
We consider the map $i_N:\ARI(\Gamma)\to\ARI(\Gamma^N)$
which  is  given by
\begin{equation}
i_N(M)^m\varia{x_1,\ \dots,\ x_m}{\sigma_1,\ \dots,\ \sigma_m}
=
M^m\varia{x_1,\ \dots,\ x_m}{\sigma_1,\ \dots,\ \sigma_m}
\end{equation}
and also the map $m_N:\ARI(\Gamma)\to\ARI(\Gamma^N)$
which  is  given by
\begin{equation}
m_N(M)^m\varia{x_1,\ \dots,\ x_m}{\sigma_1,\ \dots,\ \sigma_m}
=%\frac{1}{|\Gamma_N|}
\sum_{\tau_i^N=\sigma_i}
M^m\varia{Nx_1,\ \dots,\ Nx_m}{\quad \tau_1,\ \dots,\ \tau_m}.
%\begin{cases}
%&M^m\varia{x_1,\ \dots,\ x_m}{\sigma_1,\ \dots,\ \sigma_m} \quad \text{ when all } \sigma_i\in \Gamma^N,\\
%&0  \qquad\qquad\qquad\qquad \text{otherwise.}
%\end{cases}
\end{equation}
%where $|\Gamma_N|$ is the order of $N$-torsion subgroup of $\Gamma$.
We define  the following  $\mathbb Q$-linear subspaces
which are subject to the distribution relations:
%$\ARID(\Gamma)$ is defined to be the joint kernel of $i_N-m_N$, that is, 
\begin{align*}
&\ARID(\Gamma):=\{M\in\ARI(\Gamma)\ |\ i_N(M)=m_N(M)
\text{ for all } N\geqslant 1
\text{ with }
N\bigm |  |\Gamma|
\}, \\
&\ARID(\Gamma)_{\underline\al/\underline\al}:=\ARID(\Gamma)\cap
\ARI(\Gamma)_{\underline\al/\underline\al}.
\end{align*}
\end{defn}

\begin{prop} \label{prop:ARID Lie algebra}
The $\mathbb Q$-linear subspace $\ARID(\Gamma)$ forms a filtered Lie subalgebra of 
$\ARI(\Gamma)$ under the $\ari_u$-bracket.
\end{prop}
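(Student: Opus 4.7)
The plan is to show that for each $N\geqslant 1$ with $N\mid |\Gamma|$, both maps $i_N,m_N:\ARI(\Gamma)\to\ARI(\Gamma^N)$ are Lie algebra homomorphisms with respect to the $\ari_u$-brackets on source and target. Granting this, for any $A,B\in\ARID(\Gamma)$ we obtain
$$
i_N(\ari_u(A,B))=\ari_u(i_N(A),i_N(B))=\ari_u(m_N(A),m_N(B))=m_N(\ari_u(A,B)),
$$
so $\ari_u(A,B)\in\ARID(\Gamma)$. The $\mathbb{Q}$-linear subspace claim and the filtered property are immediate, because $i_N-m_N$ is $\mathbb{Q}$-linear and both $i_N$ and $m_N$ visibly preserve the depth.

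For $i_N$ the argument is essentially formal. The definition of $i_N$ simply restricts a mould to index sequences lying in the subgroup $\Gamma^N\subseteq\Gamma$. Every group operation that enters the product $\times$ or any of the four flexions in Definition \ref{def:aritu} produces new indices of the form $\sigma_i\sigma_j^{\pm 1}$, and these stay inside $\Gamma^N$ whenever the $\sigma_k$'s do. Hence $i_N$ commutes with $\times$ and with $\arit_u$, and therefore with $\ari_u$.

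The substantive step is that $m_N$ is a Lie algebra homomorphism. First I would verify the product identity $m_N(A\times B)=m_N(A)\times m_N(B)$: a preimage sequence $(\tau_1,\dots,\tau_m)$ of $(\sigma_1,\dots,\sigma_m)$ under the coordinatewise $N$-th power map splits independently at any cut $0\leqslant i\leqslant m$ into a preimage of $(\sigma_1,\dots,\sigma_i)$ and a preimage of $(\sigma_{i+1},\dots,\sigma_m)$, and the scaling $x_j\mapsto Nx_j$ is compatible with concatenation. The core task is the analogous identity $m_N\circ\arit_u(B)=\arit_u(m_N(B))\circ m_N$, which I would check factorization by factorization $\vecx_m=\alpha\beta\gamma$. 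On the variable side, every flexion produces $\mathbb{Z}$-linear combinations of $u_1,\dots,u_m$, so it commutes with the uniform rescaling by $N$. On the index side, since $\Gamma$ is abelian, $(\tau_i\tau_j^{\pm 1})^N=\sigma_i\sigma_j^{\pm 1}$; therefore preimages of the indices produced by a flexion of $(\sigma_1,\dots,\sigma_m)$ are exactly flexions of preimages of $(\sigma_1,\dots,\sigma_m)$, and the sum over preimages of $(\sigma_1,\dots,\sigma_m)$ re-indexes into the sum defining $\arit_u(m_N(B))(m_N(A))$.

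The principal obstacle is the careful bookkeeping needed to re-index the preimage sums and match them with the flexion decomposition for each of the four flexion terms in $\arit_u$; once set up, every equality follows from $\mathbb{Z}$-linearity of the flexions on the variable side and the abelianness of $\Gamma$ on the index side.
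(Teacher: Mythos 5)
Your proposal is correct and follows essentially the same strategy as the paper's proof: show that both $i_N$ and $m_N$ are Lie algebra homomorphisms (with the substantive verification being the compatibility of $m_N$ with $\times$ and $\arit_u$ via re-indexing preimage sums, exactly as you outline), then conclude that $\ARID(\Gamma)=\bigcap_N\ker(i_N-m_N)$ is closed under $\ari_u$. The paper phrases the last step slightly more abstractly as ``the kernel of a difference of Lie algebra homomorphisms is a Lie subalgebra,'' while you spell out the chain of equalities directly, but this is the same argument.
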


\begin{proof}
First we prove that $m_N$ is a Lie algebra homomorphism, that is, 
\begin{equation*}
	m_N(\ari_u(A,B))=\ari_u(m_N(A),m_N(B))
\end{equation*}
for $A,B\in\ARI(\Gamma)$. Let $m\geqslant0$. We have
\begin{align*}
	m_N(A\times B)^m(\vecx_m)
	&= \sum_{\substack{
		\tau_i^N=\sigma_i \\
		1\leqslant i\leqslant m}}
	(A\times B)^m\varia{Nx_1,\ \dots,\ Nx_m}{\quad \tau_1,\ \dots,\ \tau_m} \\
	&= \sum_{\substack{
		\tau_i^N=\sigma_i \\
		1\leqslant i\leqslant m}}
	\sum_{j=0}^m
	A^i\varia{Nx_1,\ \dots,\ Nx_j}{\quad \tau_1,\ \dots,\ \tau_j}
	B^{m-i}\varia{Nx_{j+1},\ \dots,\ Nx_m}{\quad \tau_{j+1},\ \dots,\ \tau_m} \\
	&= \sum_{j=0}^m
	\left\{
	\sum_{\substack{
		\tau_i^N=\sigma_i \\
		1\leqslant i\leqslant j}}A^i\varia{Nx_1,\ \dots,\ Nx_j}{\quad \tau_1,\ \dots,\ \tau_j}
	\right\}
	\left\{
	\sum_{\substack{
		\tau_i^N=\sigma_i \\
		j+1\leqslant i\leqslant m}}	B^{m-i}\varia{Nx_{j+1},\ \dots,\ Nx_m}{\quad \tau_{j+1},\ \dots,\ \tau_m}
	\right\} \\
	&= \sum_{j=0}^m
	m_N(A)^i\varia{x_1,\ \dots,\ x_j}{\sigma_1,\ \dots,\ \sigma_j}
	m_N(B)^{m-i}\varia{x_{j+1},\ \dots,\ x_m}{\sigma_{j+1},\ \dots,\ \sigma_m} \\
	&= (m_N(A)\times m_N(B))^m(\vecx_m).
\end{align*}
Similarly, we have
\begin{align*}
	&m_N(\arit_u(B)(A))^m(\vecx_m) \\
	&=\sum_{\substack{
		\nu_i^N=\sigma_i \\
		1\leqslant i\leqslant m}}
	\arit(B)(A)^m\varia{Nx_1,\ \dots,\ Nx_m}{\quad \nu_1,\ \dots,\ \nu_m} \\
%	&= \sum_{\substack{\vecx_m=\alpha\beta\gamma \\
%				\beta,\gamma\neq\emptyset}}
%			 A^{l(\alpha,\gamma)}(\alpha\urflex{\beta}{\gamma}) B^{l(\beta)}(\llflex{\beta}{\gamma})
%			-\sum_{\substack{\vecx_m=\alpha\beta\gamma \\
%				\alpha,\beta\neq\emptyset}}
%			 A^{l(\alpha,\gamma)}(\ulflex{\alpha}{\beta}\gamma) B^{l(\beta)}(\lrflex{\alpha}{\beta}) \\
	&= \sum_{\substack{
		\nu_i^N=\sigma_i \\
		1\leqslant i\leqslant m}}
	\left\{
	\sum_{1\leqslant k\leqslant l<m}
	A{\scriptsize\left(\begin{array}{rrrcrrr}
	 	Nx_1,& \dots,& Nx_{k-1},& N(x_k+\cdots+x_{l+1}),& Nx_{l+2},& \dots,& Nx_{m} \\
	 	\nu_1,& \dots,& \nu_{k-1},& \nu_{l+1},& \nu_{l+2},& \dots,& \nu_{m}
	\end{array}\right)}
	\right. \\
	&\hspace{5cm}\cdot \left.
	\vphantom{\sum_{1\leqslant k\leqslant l<m}}
	B{\scriptsize\left(\begin{array}{rrr}
		Nx_k,& \dots,& Nx_l \\
		\nu_k\nu_{l+1}^{-1},& \dots,&\nu_l\nu_{l+1}^{-1} \\
	\end{array}\right)}
	\right\} \\
	&\quad -\sum_{\substack{
		\nu_i^N=\sigma_i \\
		1\leqslant i\leqslant m}}
		\left\{
	\sum_{1< k\leqslant l\leqslant m}
	A{\scriptsize\left(\begin{array}{rrrcrrr}
	 	Nx_1,& \dots,& Nx_{k-2},& N(x_{k-1}+\cdots+x_{l}),& Nx_{l+1},& \dots,& Nx_{m} \\
	 	\nu_1,& \dots,& \nu_{k-2},& \tau_{k-1},& \nu_{l+1},& \dots,& \nu_{m}
	\end{array}\right)}
	\right. \\
	&\hspace{5cm}\cdot \left.
	\vphantom{\sum_{1\leqslant k\leqslant l<m}}
	B{\scriptsize\left(\begin{array}{rrr}
		Nx_k,& \dots,& Nx_l \\
		\nu_k\nu_{k-1}^{-1},& \dots,& \nu_l\nu_{k-1}^{-1} \\
	\end{array}\right)}
	\right\}.
\end{align*}
Here, we put $S_{k,l}:=\{i\in\N\ |\ 1\leqslant i\leqslant k-1\}\cup\{i\in\N\ |\ l+1\leqslant i\leqslant m\}$ for $1\leqslant k\leqslant l\leqslant m$. Then we can divide the set of variables of the summation in the following :
\begin{equation*}
	\{\nu_i\ |\ 1\leqslant i\leqslant m\}
	=\{\nu_i\ |\ i\in S_{k,l}\}
	\cup\{\nu_i\ |\ k\leqslant i\leqslant l\}.
\end{equation*}
So by substituting $\nu_i:=\tau_i$ for $i\in S_{k,l}$ and $\nu_i:=\tau_i\tau_{l+1}$ for $k\leqslant i\leqslant l$ for the first summation, and by substituting $\nu_i:=\tau_i$ for $i\in S_{k,l}$ and $\nu_i:=\tau_i\tau_{k-1}$ for $k\leqslant i\leqslant l$ for the second summation, we calculate
\begin{align*}
	&m_N(\arit_u(B)(A))^m(\vecx_m) \\
	&=  \sum_{1\leqslant k\leqslant l<m}
	\left\{
	\sum_{\substack{
		\tau_i^N=\sigma_i \\
		i\in S_{k,l}}}
	A{\scriptsize\left(\begin{array}{rrrcrrr}
	 	Nx_1,& \dots,& Nx_{k-1},& N(x_k+\cdots+x_{l+1}),& Nx_{l+2},& \dots,& Nx_{m} \\
	 	\tau_1,& \dots,& \tau_{k-1},& \tau_{l+1},& \tau_{l+2},& \dots,& \tau_{m}
	\end{array}\right)}
	\right\} \\
	&\hspace{5cm}\cdot \left\{
	\sum_{\substack{
		\tau_i^N=\sigma_i\sigma_{l+1}^{-1} \\
		k\leqslant i\leqslant l}}
	B{\scriptsize\left(\begin{array}{crc}
		Nx_k,& \dots,& Nx_l \\
		\tau_k,& \dots,&\tau_l \\
	\end{array}\right)}
	\right\} \\
	&\quad -\sum_{1< k\leqslant l\leqslant m}
	\left\{
	\sum_{\substack{
		\tau_i^N=\sigma_i \\
		i\in S_{k,l}}}
	A{\scriptsize\left(\begin{array}{rrrcrrr}
	 	Nx_1,& \dots,& Nx_{k-2},& N(x_{k-1}+\cdots+x_{l}),& Nx_{l+1},& \dots,& Nx_{m} \\
	 	\tau_1,& \dots,& \tau_{k-2},& \tau_{k-1},& \tau_{l+1},& \dots,& \tau_{m}
	\end{array}\right)}
	\right\} \\
	&\hspace{5cm}\cdot \left\{
	\sum_{\substack{
		\tau_i^N=\sigma_i\sigma_{k-1}^{-1} \\
		k\leqslant i\leqslant l}}
	B{\scriptsize\left(\begin{array}{crc}
		Nx_k,& \dots,& Nx_l \\
		\tau_k,& \dots,& \tau_l \\
	\end{array}\right)}
	\right\} \\
	&= \sum_{1\leqslant k\leqslant l<m}
	m_N(A){\scriptsize\left(\begin{array}{rrrcrrr}
	 	x_1,& \dots,& x_{k-1},& x_k+\cdots+x_{l+1},& x_{l+2},& \dots,& x_{m} \\
	 	\sigma_1,& \dots,& \sigma_{k-1},& \sigma_{l+1},& \sigma_{l+2},& \dots,& \sigma_{m}
	\end{array}\right)} \\
	&\hspace{5cm}\cdot m_N(B){\scriptsize\left(\begin{array}{rrr}
		x_k,& \dots,& x_l \\
		\sigma_k\sigma_{l+1}^{-1},& \dots,&\sigma_l\sigma_{l+1}^{-1} \\
	\end{array}\right)} \\
	&\quad -\sum_{1< k\leqslant l\leqslant m}
	m_N(A){\scriptsize\left(\begin{array}{rrrcrrr}
	 	x_1,& \dots,& x_{k-2},& x_{k-1}+\cdots+x_{l},& x_{l+1},& \dots,& x_{m} \\
	 	\sigma_1,& \dots,& \sigma_{k-2},& \sigma_{k-1},& \sigma_{l+1},& \dots,& \sigma_{m}
	\end{array}\right)} \\
	&\hspace{5cm}\cdot m_N(B){\scriptsize\left(\begin{array}{rrr}
		x_k,& \dots,& x_l \\
		\sigma_k\sigma_{k-1}^{-1},& \dots,& \sigma_l\sigma_{k-1}^{-1} \\
	\end{array}\right)}. \\
	\intertext{By putting $\alpha=\varia{x_1,\ \dots,\ x_{k-1}}{\sigma_1,\ \dots,\ \sigma_{k-1}}$, $\beta=\varia{x_k,\ \dots,\ x_l}{\sigma_k,\ \dots,\ \sigma_l}$ and $\gamma=\varia{x_{l+1},\ \dots,\ x_m}{\sigma_{l+1},\ \dots,\ \sigma_m}$, we get}
	&= \sum_{\substack{\vecx_m=\alpha\beta\gamma \\
		\beta,\gamma\neq\emptyset}}
	m_N(A)(\alpha\urflex{\beta}{\gamma})
	m_N(B)(\llflex{\beta}{\gamma})
	-\sum_{\substack{\vecx_m=\alpha\beta\gamma \\
		\alpha,\beta\neq\emptyset}}
	m_N(A)(\ulflex{\alpha}{\beta}\gamma)
	m_N(B)(\lrflex{\alpha}{\beta}) \\
	&= \arit_u(m_N(B))(m_N(A))^m(\vecx_m).
\end{align*}
Hence we get
\begin{align*}
	m_N(\ari_u(A,B))
	&= m_N(\arit_u(B)(A))-m_N(\arit_u(A)(B))+m_N(A\times B)-m_N(B\times A) \\
	&= \arit_u(m_N(B))(m_N(A))-\arit_u(m_N(A))(m_N(B)) \\
	&\hspace{1cm}+m_N(A)\times m_N(B)-m_N(B)\times m_N(A) \\
	&= \ari_u(m_N(A),m_N(B)).
\end{align*}
Therefore, $m_N$ is a Lie algebra homomorphism.

It is obvious that the map $i_N$ is a Lie algebra homomorphism. 
Since both $m_N$ and $i_N$ are Lie algebra homomorphisms,
$\ker(i_N-m_N)$ forms Lie algebra.

By the definition of $\ARID(\Gamma)$, we have 
$\ARID(\Gamma)=\underset{N\bigm |  |\Gamma|}{\bigcap}\ker(i_N-m_N)$. 
Therefore $\ARID(\Gamma)$ forms a Lie subalgebra under the $\ari_u$-bracket.
\end{proof}

\begin{cor}\label{cor:ARID alal Lie algebra}
The $\mathbb Q$-linear subspace $\ARID(\Gamma)_{\underline\al/\underline\al}$ forms a filtered Lie subalgebra of 
$\ARI(\Gamma)$ under the $\ari_u$-bracket.
\end{cor}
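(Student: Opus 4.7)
The plan is to observe that this corollary is essentially immediate from the preceding results, since by Definition \ref{def:ARID} we have the set-theoretic identity
\begin{equation*}
\ARID(\Gamma)_{\underline\al/\underline\al}=\ARID(\Gamma)\cap\ARI(\Gamma)_{\underline\al/\underline\al}.
\end{equation*}
The right-hand side is an intersection of two $\mathbb Q$-linear subspaces of $\ARI(\Gamma)$ each of which has already been shown to be closed under the $\ari_u$-bracket: the first by Proposition \ref{prop:ARID Lie algebra} (just proved), and the second by Proposition \ref{ARIalal Lie algebra}.

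Concretely, I would argue as follows. First, note that $\ARID(\Gamma)_{\underline\al/\underline\al}$ is a $\mathbb Q$-linear subspace of $\ARI(\Gamma)$, being the intersection of two such subspaces. Next, given $A,B\in\ARID(\Gamma)_{\underline\al/\underline\al}$, both $A$ and $B$ lie in $\ARID(\Gamma)$, so $\ari_u(A,B)\in\ARID(\Gamma)$ by Proposition \ref{prop:ARID Lie algebra}; similarly both $A$ and $B$ lie in $\ARI(\Gamma)_{\underline\al/\underline\al}$, so $\ari_u(A,B)\in\ARI(\Gamma)_{\underline\al/\underline\al}$ by Proposition \ref{ARIalal Lie algebra}. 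Hence $\ari_u(A,B)$ lies in the intersection, that is in $\ARID(\Gamma)_{\underline\al/\underline\al}$.

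Finally, to see that the filtration is inherited, I would note that the depth filtration on $\ARI(\Gamma)$ restricts to a filtration on any subspace, and that both Propositions \ref{prop:ARID Lie algebra} and \ref{ARIalal Lie algebra} yield \emph{filtered} Lie subalgebras; thus the induced depth filtration on $\ARID(\Gamma)_{\underline\al/\underline\al}$ is compatible with the $\ari_u$-bracket. There is no real obstacle here; the whole point is that the content of the corollary has already been packaged into the two propositions cited above, so the proof consists essentially of pointing at the intersection.
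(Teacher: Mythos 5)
Your proof is correct and takes essentially the same approach as the paper, which also deduces the corollary directly from Propositions \ref{ARIalal Lie algebra} and \ref{prop:ARID Lie algebra} by viewing $\ARID(\Gamma)_{\underline\al/\underline\al}$ as the intersection of two filtered Lie subalgebras.
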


\begin{proof}
It is a direct consequence of Proposition \ref{ARIalal Lie algebra} and
Proposition \ref{prop:ARID Lie algebra}.
\end{proof}

%%%%%%%%%%%%%%%%%%%%%%%%%%%%%%%%%%%%%%%%%%%%%%%%%%%%%%%%%%%%%%%%%%%%%%
%%%%%%%%%%%%%%%%%%%%%%%%%%%%%%%%%%%%%%%%%%%%%%%%%%%%%%%%%%%%%%%%%%%%%%
\section{Kashiwara-Vergne Lie algebra}\label{sec:Kashiwara-Vergne Lie algebra}
%We introduce the Kashiwara-Vergne graded Lie algebra $\krv(\Gamma)_\bullet$
%associated with a finite abelian group $\Gamma$  in
%\S\ref{subsec:KV condition} and 
%give a mould theoretic interpretation %(Theorem \ref{thm:reform:krv})
%in \S\ref{subsec:Mould theoretic reformulation:graded version}
%by introducing $\ARI(\Gamma)_{\pspush/\pusnu}$.
%In \S\ref{sec:Kashiwara-Vergne bigraded Lie algebra}
%we introduce its bigraded variant $\lkrv(\Gamma)_{\bullet\bullet}$ 
%and show that it forms a bigraded Lie algebra. %in Proposition \ref{prop:krv bigraded Lie}.
%%of the Kashiwara-Vergne Lie algebra 
%In \S\ref{sec:Mould theoretic reformulation:bigraded version},
%we give its mould theoretic interpretation
%in terms of the Lie algebra $\ARI(\Gamma)_{\push/ \pusnu}$
%which was introduced in Definition \ref{def:ARIpushpusnu}.
%%our previous section (Proposition\ref{thm:reform:krv:bigrade}).

We introduce 
the Kashiwara-Vergne bigraded Lie algebra $\lkrv(\Gamma)_{\bullet\bullet}$
associated with a finite abelian group $\Gamma$
and give its  mould theoretical interpretation by using
$\ARI(\Gamma)_{\push/ \pusnu}$.
%which was introduced in our previous section. %Definition \ref{def:ARIpushpusnu}.

\subsection{$\Gamma$-variant of  the KV condition}\label{subsec:KV condition}
%We recall the definition Kashiwara-Vergne graded Lie algebra $\krv_\bullet$
%introduced in \cite{AET} and \cite{AT}.
%We introduce  Kashiwara-Vergne graded Lie algebra $\krv(\Gamma)_\bullet$ 
%associated with a finite abelian group $\Gamma$.
We investigate a variant of the defining conditions of  Kashiwara-Vergne graded Lie algebra associated with a finite abelian group $\Gamma$
(cf. Definition \ref{defn: KV condition})
and explain its mould theoretical interpretation in Theorem \ref{thm:reform:krv}.

%Let $\Gamma$ be a cyclic abelian group with order $N\geqslant 1$.
Let $\mathbb L=\oplus_{w\geqslant 1}\mathbb L_w$ be the free graded Lie $\mathbb Q$-algebra generated by $N+1$ variables $x$ and $y_\sigma$ ($\sigma\in\Gamma$) with $\deg x=\deg y_\sigma=1$.
Here $\mathbb L_w$ is the $\mathbb Q$-linear space generated by Lie monomials whose total degree is $w$.
Occasionally we regard $\mathbb L$ as a bigraded Lie algebra 
$\mathbb L_{\bullet\bullet}=\oplus_{w,d}\mathbb L_{w,d}$,
where $\mathbb L_{w,d}$ is the $\mathbb Q$-linear space generated by Lie monomials whose {\it weight} (the total degree) is $w$ and {\it depth} (the degree with respect to all $y_\sigma$) is $d$.
We encode $\mathbb L$ with  a structure of filtered graded  Lie algebra
by the filtration $\Fil_{\D}^d \mathbb L_w:=\oplus_{N\geqslant d}\mathbb L_{w,N}$ for $d>0$
and denote the associated bigraded Lie algebra by 
$\gr_{\D}\mathbb L=\oplus_{w,d}\gr^d_{\D}\mathbb L_w$
with $\gr_{\D}^d\mathbb L_w=
\Fil_{\D}^d \mathbb L_w/\Fil_{\D}^{d+1} \mathbb L_w$.

The $N+1$-variable  non-commutative polynomial algebra $\mathbb A=\mathbb Q\langle x,y_\sigma;\sigma\in\Gamma\rangle$ is regarded as 
the universal enveloping algebra of $\mathbb L$ and is encoded with the induced degree. 
Similarly $\mathbb A$ is encoded with a structure of bigraded algebra;
$\mathbb A_{\bullet\bullet}=\oplus_{w,d}\mathbb A_{w,d}$.
By putting $\Fil_{\D}^d \mathbb A_w:=\oplus_{N\geqslant d}\mathbb A_{w,N}$
for $d>0$,
we also encode $\mathbb A$ with  a structure of filtered  graded algebra.
We define the action of $\tau\in \Gamma$ on $\mathbb A$ (hence on $\mathbb L$) by
$$
\tau(x)=x \text{ and }\tau(y_\sigma)=y_{\tau\sigma}.
$$
%for each $\sigma\in\Gamma$, we denote $t_\sigma$ to be the automorphism of
%$\mathbb A$ (hence of $\mathbb L$) sending $x\mapsto x$ and $y_\tau\mapsto y_{\tau\sigma}$.
For any $h\in\mathbb A$, we denote
$$h=h_xx+\sum_\alpha h_{y_\alpha} y_\alpha=xh^x+\sum_\alpha y_\alpha h^{y_\alpha}.$$

%We also employ the following notations of \cite{R}:
We denote $\pi_Y$ to be  the composition of the natural projection and inclusion: 
$$\pi_Y:\mathbb A\twoheadrightarrow \mathbb A/\mathbb A\cdot x
\simeq \mathbb Q\oplus (\oplus_{\sigma\in\Gamma}\mathbb A y_\sigma)
\hookrightarrow\mathbb A$$
and the $\mathbb Q$-linear isomorphism $\q$ on $\mathbb A$ defined by
$$
\q(x^{e_0}y_{\sigma_1}x^{e_1}y_{\sigma_2}\cdots x^{e_{r-1}}y_{\sigma_{r}}x^{e_r})
=x^{e_0}y_{\sigma_1}x^{e_1}y_{\sigma_2\sigma_1^{-1}}\cdots
x^{e_{r-1}} y_{\sigma_r\sigma_{r-1}^{-1}}x^{e_r}
$$
(cf. \cite{R}).
%We note that particularly we have
%\begin{equation}\label{eq:q tau formula}
%\q(y_\tau h)=y_\tau\q(\tau^{-1}(h))
%\end{equation}
%for any $h\in \mathbb A$.
The $\mathbb Q$-linear endomorphism 
$$\anti:\mathbb A\to \mathbb A$$
is the palindrome (backwards-writing) operator in $\mathbb A_w$  (cf. \cite[Definition 1.3]{S}).
%We put $\qcheck:=\anti\circ\q\circ\anti$.

We put $\mathrm{Cyc}(\mathbb A)$ to be 
$\mathbb Q$-linear space generated by cyclic words of $\mathbb A$ 
and  $\tr:\mathbb A\twoheadrightarrow \mathrm{Cyc}(\mathbb A)$
to be the trace map, the natural projection to
$\mathrm{Cyc}(\mathbb A)$ (cf. \cite{AT}).

\begin{defn}%[\cite{AT}, \cite{AET}]
\label{defn: KV condition}
%Set-theoretically
%the {\it Kashiwara-Vergne graded Lie algebra} means 
We define 
the graded $\mathbb Q$-linear space $\krv(\Gamma)_\bullet=\oplus_{w> 1}\krv(\Gamma)_w$,
%\footnote{
%It was denoted by $\krv_{2}$ in \cite{AT}.
%For our convenience we neglect the degree 1 part.
%}
where
%Set-theoretically  
its degree $w$-part $\krv(\Gamma)_w$ is defined to be the set of Lie elements  $F\in\mathbb L_w$ such that
there exists $G=G(F)$ in $\mathbb L_w$ with
\begin{align}\tag{KV1}
\label{KV1}
&[x,G]+\sum_{\tau\in\Gamma}[y_\tau,\tau(F)]=0,\\
%\end{equation}
%and $c\in\mathbb Q$ with
%\begin{equation}
\tag{KV2}
\label{KV2}
%%%\tr (G_x x+\sum_{\tau\in\Gamma} \tau(F)_{y_\tau}y_\tau )
%%%%=0
%%%=c\cdot \tr\left((x+\sum_{\tau\in\Gamma} y_\tau)^w-x^w-\sum_{\tau\in\Gamma}y_\tau^w\right)
%\tr \circ\qcheck (\sum_{\tau\in\Gamma} (\tau(F)_{y_\tau}-G_{y_\tau})y_\tau)
%%=0.
%%\tr\circ\q\circ\pi_Y(F(z;(y_\sigma)))
%=c\cdot \tr\left((x+\sum_{\tau\in\Gamma} y_\tau)^w-x^w-\sum_{\tau\in\Gamma}y_\tau^w\right).
%\end{equation}
& \tr\circ\q\circ\pi_Y(F(z;(y_\sigma)))
=0
%c\cdot \tr\left((x+\sum_{\tau\in\Gamma} y_\tau)^w-x^w-\sum_{\tau\in\Gamma}y_\tau^w\right)
 \end{align}
%for some $c\in\mathbb Q$ and
with $z=-x-\sum_{\sigma\in\Gamma} y_\sigma$.
%when we write $G=G_x\cdot x+\sum_\sigma G_{y_\sigma} \cdot y_\sigma$ and
%$\tau( F)=\tau(F)_x\cdot x+\sum_\sigma \tau(F)_{y_\sigma} \cdot y_\sigma$ and in $\mathbb A$.
%Here $\tr$ is the trace map from $\mathbb A$ 
%to the $\mathbb Q$-linear space $\mathrm{Cyc}(\mathbb A)$ generated by cyclic words
%(cf. \cite{AT}).
\end{defn}

%We have $\krv_2=\{0\}$.
We note that such $G=G(F)$  uniquely exists when $w>1$. 
%and
%it is calculated in \cite[Theorem 1.1]{S}.
%%to be
%%$$%G=G(F):=
%%s'(P)=
%%\sum_{i\geqslant 0}\frac{(-1)^i}{i!}x^iy\partial_x^i(P)
%%$$
%%for $P=F_x$
%%by the derivation $\partial_x$ of $\mathbb A$ such that $\partial_x(x)=1$ and
%%$\partial_x(y)=0$ 
%%as explained in \cite{S} \S 2. 
For $d\geqslant 1$, we put $\Fil_{\D}^d\krv(\Gamma)_w$ to be the subspace of $\krv(\Gamma)_w$ 
consisting of $F\in \Fil_{\D}^d\mathbb L_w$. 
%Then $\krv(\Gamma)_\bullet$ forms a filtered Lie algebra with the filtration $\{ \Fil_{\D}^d\krv(\Gamma)_\bullet\}_d$.
By  \eqref{KV2}, 
$$\krv(\Gamma)_w=\Fil_{\D}^2\krv(\Gamma)_w.$$

\begin{lem}
Assume  that $\Gamma=\{e\}$.
Let $F\in\mathbb L_w$ satisfying \eqref{KV1} with $w\geqslant 2$.
Then \eqref{KV2} for $F$ is equivalent to 
\begin{equation}\label{eq:original KV2}
\tr(G_xx+F_yy)=0.
\end{equation}
%which is the original condition in \cite{AT}. 
\end{lem}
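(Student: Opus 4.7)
The plan is to compute $\tr\circ\pi_Y(F(z,y))$ explicitly, rewrite $\tr(G_x x + F_y y)$ using tracefreeness, and then reduce their equality to an identity extracted from (KV1). For $\Gamma = \{e\}$ the map $\q$ is the identity, so (KV2) reads $\tr\,\pi_Y(F(z,y)) = 0$ with $z = -x-y$. Let $\phi : \mathbb A \to \mathbb A$ be the algebra involution with $\phi(x) = z$ and $\phi(y) = y$; since $\phi$ preserves the Lie subalgebra $\mathbb L$, we have $\phi(F) \in \mathbb L_w$ and hence $\tr\phi(F) = 0$. For any $h$ of positive degree, the right decomposition $h = h_x x + h_y y$ gives $\tr\,\pi_Y(h) = \tr(h_y y) = \tr(h) - \tr(h_x x)$. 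Applied to $h = \phi(F)$, together with the direct expansion
\[
\phi(F) = \phi(F_x)\phi(x) + \phi(F_y)\phi(y) = -\phi(F_x)\,x + (\phi(F_y) - \phi(F_x))\,y,
\]
which shows $(\phi(F))_x = -\phi(F_x)$, this yields
\[
\tr\,\pi_Y(F(z,y)) = \tr\bigl(\phi(F_x)\cdot x\bigr).
\]

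On the other side, since $F, G \in \mathbb L_w$ are tracefree, $\tr(F_x x) + \tr(F_y y) = 0$ and $\tr(G_x x) + \tr(G_y y) = 0$, which together rewrite
\[
\tr(G_x x + F_y y) = \tr(G_x x) - \tr(F_x x) = \tr\bigl((G_x - F_x)\,x\bigr).
\]
Thus the lemma reduces to the cyclic-word identity
\[
\tr\bigl((\phi(F_x) + F_x - G_x)\,x\bigr) = 0
\]
whenever (KV1) holds.

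To exploit (KV1), the next step is to rewrite $[x,G] + [y,F] = 0$ as $xG + yF = Gx + Fy$ and right-decompose both sides term by term, producing the two identities
\[
xG_x + yF_x = G_x x + G_y y, \qquad xG_y + yF_y = F_x x + F_y y
\]
in $\mathbb A$. Applying $\phi$ to the first of these, using $\phi(x) = -x-y$, yields a $\phi$-transformed analogue relating $\phi(F_x)$, $\phi(G_x)$ and $\phi(G_y)$. The main obstacle is then the last step: combining the original and $\phi$-transformed identities together with the cyclicity of $\tr$ to exhibit $(\phi(F_x) + F_x - G_x)\,x$ as a commutator, equivalently to show the vanishing of its class in $\mathrm{Cyc}(\mathbb A)$. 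The cleanest route appears to be a coefficient-by-coefficient combinatorial argument exploiting both (KV1)-identities in tandem; a more conceptual alternative is to identify $(F, G)$ with a tangential derivation $D$ of $\mathbb L_2$ given by $D(x) = [x,G]$, $D(y) = [y,F]$ and $D(x+y) = 0$, to recognise $\tr(G_x x + F_y y)$ as the Alekseev--Torossian divergence $\mathrm{div}(D)$, and to read the lemma as the divergence formula for $D$ expressed via the substitution $x \mapsto -x-y$.
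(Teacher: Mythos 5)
Your computations on both sides are correct: with your involution $\phi\colon x\mapsto z,\ y\mapsto y$ one does have $\tr\circ\pi_Y(F(z,y)) = \tr(\phi(F_x)x)$ and $\tr(G_xx + F_yy) = \tr((G_x - F_x)x)$, so the lemma does reduce to showing that (KV1) forces the class of $(\phi(F_x) + F_x - G_x)x$ to vanish in $\mathrm{Cyc}(\mathbb A)$. But you never establish this identity: you only outline two possible routes (a direct coefficient-by-coefficient argument, or an Alekseev--Torossian divergence reinterpretation) and explicitly leave the step open. Since that vanishing is the actual content of the lemma, the proposal is incomplete as it stands. It is also not evident that the $\phi$-transformed (KV1) identity alone closes the gap: $\tr(\phi(F_x)x)$ sits in the $\phi$-twisted picture while $\tr((G_x-F_x)x)$ does not, and bridging them requires a conversion between left and right coefficients of Lie elements that your plan does not supply.

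The paper finishes by exactly such a conversion, via the palindrome operator $\anti$. Since $\phi$ descends to a bijection of $\mathrm{Cyc}(\mathbb A)$, (KV2) is equivalent to $\tr((F_y - F_x)y)=0$. Because $F\in\mathbb L_w$, one has $F_x = (-1)^{w-1}\anti(F^x)$ and $F_y = (-1)^{w-1}\anti(F^y)$, and since $\anti$ also induces a bijection of $\mathrm{Cyc}(\mathbb A)$, the condition becomes $\tr((F^y - F^x)y)=0$. Your two right-decomposed (KV1) identities (which amount to $G = xG_x + yF_x$ and $F = xG_y + yF_y$) yield precisely $F^y = F_y$ and $F^x = G_y$; substituting gives $\tr((F_y - G_y)y)=0$, and $\tr(G)=0$ turns this into $\tr(F_yy + G_xx)=0$. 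So the missing ingredient in your argument is the $\anti$-conversion $F_x=(-1)^{w-1}\anti(F^x)$, $F_y=(-1)^{w-1}\anti(F^y)$ between right and left coefficients of a Lie element; with it in hand, the (KV1) identities you already derived close the proof in a few lines.
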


\begin{proof}
By
$
\tr\circ\q\circ\pi_Y(F(z,y))=\tr\circ\pi_Y(F(z,y))=
\tr(-F_x(z,y)y+F_y(z,y)y),
$
the condition
\eqref{KV2} is equivalent to 
$$\tr((F_y-F_x)y)=0.$$

By \eqref{KV1}, we have
$Gx-xG=yF-Fy$.
So $F_y=F^y$ and $G_y=F^x$.
By $F\in\mathbb L_w$, $F_x=(-1)^{w-1}\anti(F^x)$ and
$F_y=(-1)^{w-1}\anti(F^y)$.
By $G\in\mathbb L_w$, we have $\tr(G_xx+G_yy)=0$.
Therefore
$$\tr((F_y-F_x)y)=(-1)^{w-1}\tr((F^y-F^x)y)
=(-1)^{w-1}\tr(F_yy-G_yy)=(-1)^{w-1}\tr(F_yy+G_xx),
$$
whence we get the claim.
\end{proof}

\begin{rem}
Since \eqref{eq:original KV2} agrees with original defining condition (KV2)
%of $\krv^0_{2}$ %($n\geqslant 0$) 
in \cite{AT},
%By the above lemma, 
we see that our $\krv(\Gamma)_\bullet$ with $\Gamma=\{e\}$
recovers the original Kashiwara-Vergne Lie algebra denoted by $\krv^0_{2}$ %($n\geqslant 0$) 
in \cite{AT} and
the depth>1-part of $\krv$ in \cite[Definition 3]{RS}.
We do not know how their Lie algebras 
$\krv^0_{n+1}$ ($n\geqslant 0$) in \cite{AT} and also
$\mathrm{krv}_{n+1}$ in \cite{AKKN}
are related to our $\krv(\Gamma)_\bullet$.
It is also not clear if our $\krv(\Gamma)_\bullet$ forms a Lie algebra or not.
%\begin{rem}
%\Add{
%The original  Kashiwara-Vergne Lie algebra denoted by
%$\krv^0_{n+1}$ ($n\geqslant 0$) in \cite{AT} and
%the depth $>1$ part of 
%%the Kashiwara-Vergne Lie algebra denoted in 
%$\mathrm{krv}_{n+1}$ in \cite{AKKN}
%is equal to 
%the set of $((F_\tau)_{\tau\in\Gamma},G)$ with $\Gamma=\mathbb Z/n$
%satisfying \eqref{KV1} and \eqref{KV2} replaced  $\tau(F)$ with $F_\tau$.
%}
%the above  $\krv(\Gamma)_\bullet$ 
%And the original  Kashiwara-Vergne Lie algebra denoted by
%$\krv^0_{n+1}$ ($n\geqslant 0$) in \cite{AT}
%is its Lie subalgebra defined by  $c=0$.
%is defined to be the graded $\mathbb Q$-linear space whose degree $w$-part 
%$(\krv^0_{n+1})_w$ is the set of $(n+1)$-tuple
%$(F_1,\dots,F_n, G)\in (\mathbb L_w)^{n+1}$ with $\Gamma=\mathbb Z/n$
%satisfying 
%$$[x,G]+\sum_{i\in\mathbb Z/n}[y_i,F_i]=0 
%\quad\text{ and  }\quad
%\tr (G_x x+\sum_{i\in\mathbb Z/n} (F_i)_{y_i}y_i)=0,$$
%which was shown to be  a  Lie subalgebra of $\sder$.
\end{rem}

%The Lie algebra structure of $\krv(\Gamma)_\bullet$ is defined to be compatible with that of $\sder$ under the embedding $F\in\krv(\Gamma)_\bullet$ to $D_{F, G(F)}\in\sder$.

%%%%%%%%%%%%%%%%%%%%%%%%%%%%%%%%%%%%%%%%%%%%%%%%%%%%%%%%%%%%
%\subsection{Mould theoretic reformulation}\label{subsec:Mould theoretic reformulation:graded version}
%We give a reformulation of $\krv(\Gamma)_\bullet$ in terms of moulds in Theorem \ref{thm:reform:krv}.
%by introducing $\ARI_{\pspush/\pusnu}$ in Definition \ref{def:ARI-pspushpusnu}.
%We reproduce here Schneps' arguments in \cite{S}
%since it is necessary  to extend them to
%%which will be developed to 
%the bigraded setting in \S \ref{sec:Mould theoretic reformulation:bigraded version}.
%
%%\begin{screen}
%%To do:
%%\begin{itemize}
%%\item \eqref{KV1} $\Longleftrightarrow$ \cite{S}(A15)$\overset{!}{\Longleftrightarrow}$
%%Ecalle identity (A8) for $\ma^m_{\tilde f}$.
%%\item \eqref{KV2} "${\Longleftrightarrow}$" push-constant for $f_y$ for $A=0$
%%"${\Longleftrightarrow}$" for some $A=0$, \\
%%$$\sum_{i\in\Z/m\Z}\swap^{-1}\circ\ma^m_{\tilde f}(v_{i+1},\dots,v_{i+m})=A
%%\sum_{i_1+\cdots+i_m=w}v_1^{i_1-1}\cdots v_m^{i_m-1}$$
%%\end{itemize}
%%\end{screen}
%
%We start with a  review on the map $\ma$ (cf. \cite[Ch.3]{S-ARIGARI}).

Let $h\in\mathbb A_w$ be a degree $w$ homogeneous polynomial
with
$h=\sum_{r=0}^wh^r$ % \qquad \text{and}\qquad 
and
\begin{equation}\label{eq: word expansion}
h^r=  \sum_{(\sigma_1,\dots,\sigma_r)\in\Gamma^{\oplus r}}\sum_{(e_0,\dots,e_r)\in\mathcal{E}_w^r}
a(h:{}_{\sigma_1,\dots,\sigma_r}^{e_0,\dots,e_r})x^{e_0}y_{\sigma_1}\cdots y_{\sigma_r}x^{e_r}
\in\mathbb A_{w,r}
\end{equation}
where $\mathcal{E}_w^r=\{(e_0,\dots,e_r)\in\N_0^{r+1} \mid \sum_{i=0}^r e_i=w-r \}$.
%As is explained in 
\begin{defn}\label{def:ma}
By following \cite[Appendix A]{S},
we associate a mould 
$$\ma_h=(\ma^0_h,\ma^1_h,\ma^2_h,\dots,\ma^w_h,0,0,\dots) \in\mathcal M(\mathcal F;\Gamma)$$
which is defined by %$\ma^0_h=h$ when $r=0$ and $h$ otherwise, and 
$\ma^r_h=\{\ma^r_h({}_{\sigma_1,\dots,\sigma_r}^{u_1,\dots,u_r})\}_{(\sigma_1,\dots,\sigma_r)\in\Gamma^{\oplus r}}$
with
%\begin{screen}
\begin{align*}
&
\ma^r_h({}_{\sigma_1,\dots,\sigma_r}^{u_1,\dots,u_r})
=\vimo^r_h({}_{\qquad \sigma_1,\dots,\sigma_r}^{0,u_1,u_1+u_2,\dots,u_1+\cdots+u_r}), \\
&
\vimo^r_h({}^{z_0,\dots,z_r}_{\sigma_1,\dots,\sigma_r})
=\sum_{(e_0,\dots,e_r)\in\mathcal{E}_w^r}
a(h:{}_{\sigma_1^{-1},\dots,\sigma_r^{-1}}^{e_0,\dots,e_r})
z_0^{e_0}z_1^{e_1}z_2^{e_2}\cdots z_r^{e_r}.
%&{\color{blue}
%\vimo^r_h({}^{z_0,\dots,z_r}_{\sigma_1,\dots,\sigma_r})
%=\sum_{(e_0,\dots,e_r)\in\mathcal{E}_w^r}
%a(h:{}_{\sigma_1,\dots,\sigma_r}^{e_0,\dots,e_r})
%z_0^{e_0}z_1^{e_1}z_2^{e_2}\cdots z_r^{e_r}
%}.
\end{align*}
%\end{screen}
\end{defn}
We start with the following  technical lemma which  is required to our later arguments.

\begin{lem}
When $h\in\mathbb L_{w,r}$, we have
\begin{equation}\label{eq:translation invariance of vimo}
\vimo^r_h({}^{z_0,\dots,z_r}_{\sigma_1,\dots,\sigma_r})
=\vimo^r_h({}^{0,z_1-z_0,\dots,z_r-z_0}_{\qquad \sigma_1,\dots,\sigma_r}),
\end{equation}
\begin{equation}\label{eq:mantar invariance for ma}
\mantar\circ\ma_h^r=\ma_h^r.
\end{equation}
\end{lem}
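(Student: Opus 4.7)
The plan is to prove \eqref{eq:translation invariance of vimo} by importing a translation symmetry from the free Lie algebra, and then derive \eqref{eq:mantar invariance for ma} by combining \eqref{eq:translation invariance of vimo} with the palindrome identity $\anti(h)=(-1)^{w-1}h$ for $h\in\mathbb L_w$. Throughout I will assume $r\geqslant 1$; for $r=0$ with $w\geqslant 2$ both statements are vacuous since $\mathbb L_{w,0}=0$.

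For \eqref{eq:translation invariance of vimo}, I would work in $\mathbb A[c]=\mathbb Q[c]\otimes_{\mathbb Q}\mathbb A$ with a central indeterminate $c$ and consider the $\mathbb Q[c]$-algebra endomorphism $\phi_c$ defined by $x\mapsto x+c$ and $y_\sigma\mapsto y_\sigma$. A simple induction on bracket length shows that $\phi_c$ restricts to the identity on every Lie element $h\in\mathbb L_{w,r}$ with $r\geqslant 1$: since $c$ is central, $[c,-]=0$, and so in each bracket expression containing at least one $y_\sigma$, the $c$-summands produced by $\phi_c[u,v]=[\phi_c(u),\phi_c(v)]$ cancel and only $[u,v]$ survives. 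On the other hand, expanding $\phi_c$ on the word basis replaces each factor $x^{e_i}$ in a monomial $x^{e_0}y_{\sigma_1}\cdots y_{\sigma_r}x^{e_r}$ by $(x+c)^{e_i}$, which under the word-to-polynomial correspondence of Definition \ref{def:ma} is precisely the simultaneous shift $z_i\mapsto z_i+c$ applied to the $\vimo$-polynomial. Combining the two statements gives $\vimo^r_h(z_0+c,\dots,z_r+c;\sigma)=\vimo^r_h(z_0,\dots,z_r;\sigma)$ as an identity in $\mathbb Q[c,z_0,\dots,z_r]$, and specializing $c=-z_0$ yields \eqref{eq:translation invariance of vimo}.

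For \eqref{eq:mantar invariance for ma}, the palindrome identity $\anti(h)=(-1)^{w-1}h$ follows by an easy induction from $\anti[u,v]=-[\anti(u),\anti(v)]$ together with $\anti(x)=x$ and $\anti(y_\sigma)=y_\sigma$. Since $\anti$ reverses $x^{e_0}y_{\sigma_1}\cdots y_{\sigma_r}x^{e_r}$ to $x^{e_r}y_{\sigma_r}\cdots y_{\sigma_1}x^{e_0}$, a direct coefficient comparison in Definition \ref{def:ma} yields $\vimo^r_{\anti(h)}(z_0,\dots,z_r;\sigma_1,\dots,\sigma_r)=\vimo^r_h(z_r,\dots,z_0;\sigma_r,\dots,\sigma_1)$, and combining with the palindrome identity produces the self-symmetry $\vimo^r_h(z_r,\dots,z_0;\sigma_r,\dots,\sigma_1)=(-1)^{w-1}\vimo^r_h(z_0,\dots,z_r;\sigma_1,\dots,\sigma_r)$. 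I would then plug this into the definition of $\mantar(\ma^r_h)$: writing $S_j=u_1+\cdots+u_j$ and $S=S_r$, the partial sums coming from the reversed arguments $u_r,\dots,u_1$ are $T_j=S-S_{r-j}$; the self-symmetry converts them to $S,S-S_1,\dots,0$ at the cost of $(-1)^{w-1}$, and \eqref{eq:translation invariance of vimo} recenters these to $0,-S_1,\dots,-S_r$. Since the restriction of $\vimo^r_h$ to $z_0=0$ is homogeneous of degree $w-r$ in $(z_1,\dots,z_r)$ (every surviving monomial has $e_0=0$ and $\sum_{i\geqslant 1}e_i=w-r$), negating the $z_i$'s absorbs a further factor $(-1)^{w-r}$. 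The total sign $(-1)^{r-1}(-1)^{w-1}(-1)^{w-r}=(-1)^{2w-2}=1$ then delivers \eqref{eq:mantar invariance for ma}. The only step requiring care is this sign bookkeeping, which I do not expect to be a serious obstacle.
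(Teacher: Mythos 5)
Your proof is correct. For \eqref{eq:mantar invariance for ma} it takes essentially the same route as the paper: derive the self-symmetry $\vimo^r_h(z_0,\dots,z_r;\sigma_1,\dots,\sigma_r)=(-1)^{w-1}\vimo^r_h(z_r,\dots,z_0;\sigma_r,\dots,\sigma_1)$ from $\anti(h)=(-1)^{w-1}h$, then recover $\mantar\circ\ma^r_h=\ma^r_h$ by chaining that with \eqref{eq:translation invariance of vimo} and the degree-$(w-r)$ homogeneity of $\vimo^r_h$; the signs $(-1)^{r-1}(-1)^{w-1}(-1)^{w-r}=1$ check out. (The order in which you apply translation and homogeneity is the reverse of the paper's, but these two steps commute.)

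For \eqref{eq:translation invariance of vimo} your argument is genuinely different. The paper proves it by induction on bracket length, writing $h=[f,g]$ and verifying translation invariance of the explicit factorization $\vimo^r_h=\vimo^s_f\cdot\vimo^t_g-\vimo^t_g\cdot\vimo^s_f$. You instead adjoin a central scalar $c$ and observe that the algebra endomorphism $\phi_c:x\mapsto x+c,\ y_\sigma\mapsto y_\sigma$ fixes every Lie element of depth $\geqslant 1$ (each $c$-contribution in a bracket vanishes by centrality), while on the word expansion it effects precisely the simultaneous substitution $z_i\mapsto z_i+c$ on $\vimo^r_h$; equating and setting $c=-z_0$ gives the result. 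The conceptual content (both approaches boil down to the same centrality phenomenon for the generator $x$) is the same, but your version localizes all the inductive work in the clean statement $\phi_c|_{\mathbb L_{\geqslant 1}}=\id$, which is arguably more transparent than the paper's direct polynomial manipulation. One small technical point worth spelling out when you write this up: $\vimo^r_{\phi_c(h)}$ is not literally defined by Definition 2.15 unless you either grade by $\deg c=1$, or simply expand $\vimo^r_h(z_0+c,\dots,z_r+c;\sigma)$ via the binomial theorem and note that the coefficient of each $z_0^{f_0}\cdots z_r^{f_r}$ with $\sum f_i<w-r$ vanishes because $\phi_c(h)=h$ has no word of weight $<w$; this is what makes the shift identity $\vimo^r_h(z+c;\sigma)=\vimo^r_h(z;\sigma)$ rigorous. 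You also correctly flag the excluded case $r=0$, $w=1$, $h=x$, for which the identity genuinely fails — a point the paper leaves implicit.
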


\begin{proof}
The proof of
\eqref{eq:translation invariance of vimo}
can be done by induction on degree
%completely 
in the same way to the arguments in 
\cite{S} p.71. %, that is. it can be proved by induction on degree.
Assume $h\in\mathbb L_{w,r}$ is given by $[f,g]$ for some 
$f$ and $g\in\mathbb L$ with  depth $s$ and $t$ respectively.
Then by definition, we have
$$
\vimo_h^r({}^{z_0,\dots,z_r}_{\sigma_1,\dots,\sigma_r})
=\vimo_f^s({}^{z_0,\dots,z_s}_{\sigma_1,\dots,\sigma_s})
\vimo_g^t({}^{z_{s},\dots,z_{s+t}}_{\sigma_{s+1},\dots,\sigma_{s+t}})
-
\vimo_g^t({}^{z_0,\dots,z_t}_{\sigma_1,\dots,\sigma_t})
\vimo_f^s({}^{z_{t},\dots,z_{s+t}}_{\sigma_{t+1},\dots,\sigma_{s+t}}).
$$
While by our induction assumption we have
\begin{align*}
&\vimo_h^r({}^{0,z_1-z_0,\dots,z_r-z_0}_{\qquad \sigma_1,\dots,\sigma_r}) \\
&=\vimo_f^s({}^{0,z_1-z_0,\dots,z_s-z_0}_{\qquad\sigma_1,\dots,\sigma_s})
\vimo_g^t({}^{z_{s}-z_0,\dots,z_{s+t}-z_0}_{\qquad\sigma_{s+1},\dots,\sigma_{s+t}}) \\
&\qquad\qquad \qquad \qquad \qquad  \qquad \qquad 
-
\vimo_g^t({}^{0,z_1-z_0,\dots,z_t-z_0}_{\qquad\sigma_1,\dots,\sigma_t})
\vimo_f^s({}^{z_{s}-z_0,\dots,z_{s+t}-z_0}_{\qquad\sigma_{t+1},\dots,\sigma_{s+t}})       \\
&=\vimo_f^s({}^{z_0,\dots,z_s}_{\sigma_1,\dots,\sigma_s})
\vimo_g^t({}^{z_{s},\dots,z_{s+t}}_{\sigma_{s+1},\dots,\sigma_{s+t}})
-
\vimo_g^t({}^{z_0,\dots,z_t}_{\sigma_1,\dots,\sigma_t})
\vimo_f^s({}^{z_{t},\dots,z_{s+t}}_{\sigma_{t+1},\dots,\sigma_{s+t}})  \\
&=\vimo_h^r({}^{z_0,\dots,z_r}_{\sigma_1,\dots,\sigma_r}).
\end{align*}

The equation \eqref{eq:mantar invariance for ma} is a formal generalization of
\cite[Lemma A.2]{S}. We give a short proof below.
Since $h\in\mathbb L_w$, we have
$h=(-1)^{w-1}\anti(h)$.
Thus by definition, we have
$
\vimo_h^r({}^{z_0,\dots,z_r}_{\sigma_1,\dots,\sigma_r})
=(-1)^{w-1}\vimo_h^r({}^{z_r,\dots,z_0}_{\sigma_r,\dots,\sigma_1})
$.
Therefore
\begin{align*}
\ma_h^r&({}^{z_1,\dots,z_r}_{\sigma_1,\dots,\sigma_r})
=\vimo_h^r({}^{0,z_1,z_1+z_2,\dots,z_1+\cdots+z_r}_{\qquad \sigma_1,\dots,\sigma_r})
=(-1)^{w-1}\vimo_h^r({}^{z_1+\cdots+z_r, \dots, z_1+z_2,z_1,0}_{\qquad \sigma_r,\dots,\sigma_1})\\
&=(-1)^{r-1}\vimo_h^r({}^{-z_1-\cdots-z_r, \dots, -z_1-z_2,-z_1,0}_{\qquad \quad\sigma_r,\dots,\sigma_1})
=(-1)^{r-1}\vimo_h^r({}^{0, z_r, z_r+z_{r-1},\dots, z_r+\cdots+z_1}_{\qquad \quad\sigma_r,\dots,\sigma_1}) \\
&=(-1)^{r-1}\ma_h^r({}^{z_r,\dots,z_1}_{\sigma_r,\dots,\sigma_1})
=\mantar\circ\ma_h^r({}^{z_1,\dots,z_r}_{\sigma_1,\dots,\sigma_r}).
\end{align*}
Here in the third equality we use that $\vimo_h^r$ is homogeneous with degree $w-r$
and the fourth equality is by \eqref{eq:translation invariance of vimo}.
\end{proof}

\begin{defn}
Let $\tder$ be the set of tangential derivation of $\mathbb L$,
the derivation  $D_{\{F_\sigma\}_{\sigma}, G}$ of $\mathbb L$
defined by $x\mapsto [x,G]$ and $y_\sigma\mapsto [y_\sigma,F_\sigma]$ for some $F_\sigma,G\in \mathbb L$.
It forms a Lie algebra by the bracket
\begin{equation}\label{bracket}
%D_{F_3,G_3}=
%[D_{\{F_\sigma\}_{\sigma}, G}, D_{\{F_\sigma\}_{\sigma}, G}]
[D_{\{F^{(1)}_{\sigma}\},G^{(1)}},D_{\{F_\sigma^{(2)}\},G^{(2)}}]
=D_{\{F^{(1)}_{\sigma}\},G^{(1)}}\circ D_{\{F^{(2)}_{\sigma}\},G^{(2)}}
-D_{\{F^{(2)}_{\sigma}\},G^{(2)}}\circ D_{\{F^{(1)}_{\sigma}\},G^{(1)}}.
\end{equation}
The action of $\Gamma$ on $\mathbb L$ induces the $\Gamma$-action on $\tder$.
We denote its invariant part by $\tder^\Gamma$.
We mean $\sder$ to be the set of special derivations,
tangential derivations such that $D_{\{F_{\sigma}\},G}(x+\sum_\sigma y_\sigma)=0$
and $\sder^\Gamma$ to be 
its intersection with $\tder^\Gamma$,
both of which forms a Lie subalgebra of $\tder$.
We put
$\mt:=\oplus_{(w,d)\neq (1,0)}\mathbb L_{w,d}$. It forms a Lie algebra
by the bracket
\begin{equation}\label{eq:mt bracket}
\{f_1,f_2\}=D_{\{\sigma(f_1)\},0}(f_2)-D_{\{\sigma(f_2)\},0}(f_1)+[f_1,f_2],
\end{equation}
in other words, $D_{\{\sigma(\{f_1,f_2\})\},0}:=[D_{\{\sigma(f_1)\},0},D_{\{\sigma(f_2)\},0}]$.
\end{defn}

We occasionally regard  $\mt$  as a Lie subalgebra of $\tder$ 
by $f\mapsto D_{\{\sigma(f)\},0}$.
We note that the condition \eqref{KV1} is equivalent to $D_{\{\sigma(F)\},G}\in\sder^\Gamma$.

We regard
$\sder^\Gamma$ and $\mt$ as filtered  graded Lie algebras
%with the filtration give 
by encoding them with
$\Fil_\D^d\sder^\Gamma_w=\{D_{\{\sigma(F)\},G}\in\sder^\Gamma \bigm|
F\in\Fil_\D^d\mathbb L_w \}$ and
$\Fil_\D^d\mt=\{D_{\{\sigma(F)\},0}\in\mt \bigm|
F\in\Fil_\D^d\mathbb L_w \}$.
%When $D_{\{\sigma(F)\},G}\in\Fil_\D^d\sder^\Gamma$, we have $F\in\Fil_\D^d\mathbb L_w $.
%By \eqref{KV1}, we have $G\in\Fil_\D^{d+1}\mathbb L_w$.
The following is required in the next section.
\begin{lem}
We have  a  natural graded Lie algebra homomorphism
\begin{equation}\label{eq:res}
\res:\gr_\D(\sder^\Gamma)\to\mt
\end{equation}
sending $D_{\{\tau(F)\},G}\mapsto D_{\{\tau(\bar F)\},0}$.
\end{lem}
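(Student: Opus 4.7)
The plan is to define $\res$ at the filtered level by extracting the leading depth-$d$ component $\bar F$ of $F$ from the data $D = D_{\{\tau(F)\},G} \in \Fil_\D^d\sder^\Gamma$, check that this assignment descends to the associated graded, and then verify bracket compatibility via a direct computation on the $y_\tau$-component.

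First I would settle well-definedness of the set-theoretic assignment $D_{\{\tau(F)\},G} \mapsto D_{\{\tau(F)\},0}$ as a map $\sder^\Gamma \to \mt$. If two data sets $(F,G)$ and $(F',G')$ give rise to the same derivation, then $[y_\tau,\tau(F-F')]=0$ for every $\tau \in \Gamma$, so $\tau(F-F')$ lies in the centralizer $\mathbb Q y_\tau$ of $y_\tau$ in the free Lie algebra, forcing $F-F' \in \mathbb Q y_e$. Since $D_{\{\tau(y_e)\},0}=0$ in $\tder$ (as $[y_\sigma,\sigma(y_e)]=[y_\sigma,y_\sigma]=0$), the target is independent of the chosen representative. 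Using the identification $\gr_\D^d\mathbb L_w \cong \mathbb L_{w,d}$, this induces $\res$, and for $F \in \Fil_\D^{d+1}\mathbb L$ the leading component vanishes, giving descent to $\gr_\D(\sder^\Gamma)$.

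Next I would verify Lie bracket compatibility. For $D_i=D_{\{\tau(F_i)\},G_i}$ with $F_i \in \Fil_\D^{d_i}\mathbb L$, a standard Leibniz-and-Jacobi manipulation, together with the $\Gamma$-invariance $D_i \circ \tau = \tau \circ D_i$, yields
\begin{equation*}
[D_1,D_2](y_\tau) = [y_\tau,\tau(F_{12})], \qquad F_{12} := [F_1,F_2] + D_1(F_2) - D_2(F_1),
\end{equation*}
identifying the $F$-datum of $[D_1,D_2]$ with $F_{12} \in \Fil_\D^{d_1+d_2}\mathbb L$. The relation $[x,G_i] = -\sum_\tau[y_\tau,\tau(F_i)] \in \Fil_\D^{d_i+1}$, combined with the fact that the centralizer of $x$ in $\mathbb L$ is $\mathbb Q x$, forces $G_i \in \Fil_\D^{d_i+1}\mathbb L$. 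Hence $D_i$ raises depth by exactly $d_i$ on each $y$-letter but by at least $d_i+1$ on each $x$-letter. Applying Leibniz, only the $y$-substitutions contribute to $D_1(F_2)$ modulo $\Fil_\D^{d_1+d_2+1}$, giving $D_1(F_2) \equiv D_{\{\sigma(\bar F_1)\},0}(\bar F_2)$ and symmetrically $D_2(F_1) \equiv D_{\{\sigma(\bar F_2)\},0}(\bar F_1)$. Combined with $[F_1,F_2] \equiv [\bar F_1,\bar F_2]$, this yields $\bar F_{12} = \{\bar F_1,\bar F_2\}$ in the sense of \eqref{eq:mt bracket}, establishing that $\res$ intertwines the two brackets.

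The main obstacle is this last depth-tracking argument: one must confirm that substituting $D_1(x) = [x,G_1]$ into $F_2$ genuinely contributes one depth higher than substituting $D_1(y_\sigma) = [y_\sigma,\sigma(F_1)]$. This rests entirely on the KV1-forced gain of one extra depth by $G_i$, which is the structural feature distinguishing $\sder^\Gamma$ from the ambient $\tder^\Gamma$ and precisely the reason $\res$ lands inside $\mt$ (viewed inside $\tder^\Gamma$ via $f \mapsto D_{\{\sigma(f)\},0}$) with its $\{,\}$-bracket, rather than in a larger target.
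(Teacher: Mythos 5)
Your proof follows essentially the same route as the paper's: both extract the $F$-datum $F_3 = [F_1,F_2] + D_1(F_2) - D_2(F_1)$ of $[D_1,D_2]$ from the tangential-derivation bracket, use \eqref{KV1} to deduce $G_i \in \Fil_\D^{d_i+1}\mathbb L$, and conclude that the $G$-contribution dies at leading depth so that $\bar F_3$ equals the $\{\cdot,\cdot\}$-bracket of \eqref{eq:mt bracket}. The one addition in your write-up -- checking that the assignment is insensitive to the ambiguity $F \mapsto F + \lambda y_e$ in the presentation $D_{\{\tau(F)\},G}$ -- is a reasonable well-definedness point that the paper passes over in silence; it is a nice touch, and your observation that $D_{\{\tau(y_e)\},0}=0$ correctly disposes of it.
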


\begin{proof}
Let $D_i=D_{\{\tau(F_i)\},G_i}\in \Fil_\D^{d_i}\sder^\Gamma_{w_i}$ for $i=1,2$.
Put $D_3=[D_1,D_2]$. Since it belongs to $\sder^\Gamma$,
$D_3$ is expressed as $D_{\{\tau(F_3)\},G_3}$ for some $F_3\in\mathbb L$
and $G_3\in\mathbb L$.

By \eqref{KV1}, we have  $\tau(F_i)\in\Fil^{d_i}_{\D}\mathbb L_{w_i}$,
$G_i\in\Fil^{d_i+1}_{\D}\mathbb L_{w_i}$ ($i=1,2$).
Since
$F_3$ is calculated to be
$$
F_3=D_{\{\tau(F_1)\},G_1}(F_2)-D_{\{\tau(F_2)\},G_2}(F_1)+[F_1,F_2]
$$
by \eqref{bracket},
we see that $F_3\in\Fil^{d_1+d_2}_{\D}\mathbb L_{w_1+w_2}$.
The residue class $\bar F_3\in \gr^{d_1+d_2}_{\D}\mathbb L_{w_1+w_2}$ is 
calculated as
\begin{equation}\label{eq:abuse bracket}
\bar F_3=D_{\{\tau(\bar F_1)\},0}(\bar F_2)-D_{\{\tau(\bar F_2)\},0}(\bar F_1)+[\bar F_1, \bar F_2],
\end{equation}
where  $\bar F_1\in\gr^{d_1}_{\D}\mathbb L_{w_1}$ and $\bar F_2\in\gr^{d_2}_{\D}\mathbb L_{w_2}$ are the residue classes of $F_1$ and $F_2$.
Therefore our map is a Lie algebra homomorphism.
\end{proof}

\begin{prop}%[cf. {\cite[Theorem 3.4.2]{S-ARIGARI}}]
\label{prop:MT=ARIal}
The map $\ma$ sending $h\to\ma_h$ induces 
%an algebra isomorphism
%\begin{equation}\label{ma isom}
%\ma: \mathbb A\simeq \mathcal M(\mathcal F;\Gamma)^{\fin,\pol}.
%\end{equation}
%where $\mathbb A$ is  equipped with the ordinary multiplication and
%$\mathcal M(\mathcal F)^{\fin,\pol}$ with  the multiplication given in Definition \ref{def:mould}.
%Furthermore it restricts to 
a filtered graded Lie algebra isomorphism
\begin{equation}\label{eq:hom ma}
\ma: \mt\simeq \ARI(\Gamma)^{\fin,\pol}_{\al}
\end{equation}
where 
%$\mt:=\oplus_{(w,d)\neq (1,0)}\mathbb L_{w,d}$ is the Lie subalgebra of $\tder$ with the bracket
%\begin{equation}\label{eq:mt bracket}
%\{f_1,f_2\}=D_{\{\sigma(f_1)\},0}(f_2)-D_{\{\sigma(f_2)\},0}(f_1)+[f_1,f_2],
%\end{equation}
%in other words, $D_{\{\sigma(\{f_1,f_2\})\},0}:=[D_{\{\sigma(f_1)\},0},D_{\{\sigma(f_2)\},0}]$,
$\ARI(\Gamma)^{\fin,\pol}_{\al}$ (cf. Definition \ref{defn:fin,pol})
is the Lie algebra (cf. Proposition \ref{ARIal Lie algebra})
equipped with the $\ari_u$-bracket.
%given by \eqref{ari-bracket}.
\end{prop}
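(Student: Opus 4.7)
My plan is to split the assertion into three parts: (a) the map $\ma\colon\mt\to\ARI(\Gamma)^{\fin,\pol}_\al$ is well defined; (b) it is a bijection; (c) it intertwines the two brackets and respects the weight grading and depth filtration. Finiteness and polynomial-valuedness of $\ma_h$ are immediate from Definition \ref{def:ma}. The key input for alternality is the Ree/Dynkin characterization of Lie elements in the free associative algebra: $h\in\mathbb A_{w,r}$ lies in $\mathbb L_{w,r}$ iff the pairing of $h$ with every non-trivial shuffle $w_1\shuffle w_2$ of words in $\{x,y_\sigma\}$ vanishes. Via the formula for $\vimo_h$ together with the substitution $z_i=u_1+\cdots+u_i$, this linear condition on the coefficients $a(h;\cdot)$ translates into exactly the alternality relation \eqref{eq:alternal} for $\ma_h$. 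The translation-invariance \eqref{eq:translation invariance of vimo} is precisely what legitimizes this change of variables on shuffled arguments.

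Injectivity is clear since the values of $\ma_h$ recover all coefficients $a(h;\cdot)$, hence $h$ itself. For surjectivity, given $M\in\ARI(\Gamma)^{\fin,\pol}_\al$, I would run the construction in reverse: extract candidate coefficients from the polynomial expansion of $M$, assemble an element $h\in\mathbb A$, and invoke Ree once more to conclude $h\in\mathbb L$ from the alternality of $M$. Strict compatibility with the depth filtration and the weight grading is built into the definition: for $h\in\mathbb L_{w,r}$ the only non-zero component $\ma_h^r$ is a polynomial of degree $w-r$ in $u_1,\dots,u_r$, so $\ma$ sends $\Fil_\D^d\mathbb L_w$ onto the subspace of moulds vanishing in depths $<d$ and of total weight $w$.

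The main obstacle is the bracket identity $\ma_{\{f_1,f_2\}}=\ari_u(\ma_{f_1},\ma_{f_2})$. By \eqref{eq:mt bracket} and \eqref{ari-bracket} this decomposes into two statements. First, $\ma_{[f_1,f_2]}=[\ma_{f_1},\ma_{f_2}]$ (product in the mould algebra), which follows from the direct observation that concatenation of words on the Lie side becomes the mould product $\times$ after reading coefficients. Second, and this is the technical heart,
$$\ma_{D_{\{\tau(f_2)\},0}(f_1)}=\arit_u(\ma_{f_2})(\ma_{f_1}).$$
To prove this I would expand $f_1$ in the word basis, apply the substitution $y_\tau\mapsto[y_\tau,\tau(f_2)]$ letter by letter, and re-express the resulting contributions in $\vimo$-coordinates. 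The two summands coming from the commutator $[y_\tau,\tau(f_2)]$ then match the two flexion sums of Definition \ref{def:aritu} term by term: the inserted Lie word $\tau(f_2)$ placed to the right or left of $y_\tau$ corresponds to the $\urflex{\beta}{\gamma}/\llflex{\beta}{\gamma}$ and $\ulflex{\alpha}{\beta}/\lrflex{\alpha}{\beta}$ flexion pairs respectively, and the sign in $\arit_u$ records the commutator sign. Again \eqref{eq:translation invariance of vimo} is essential, since the flexion operations on the variable side produce sums of arguments that only agree with the derivation action after one uses translation-invariance of $\vimo$ for Lie elements.
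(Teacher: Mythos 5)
Your plan follows essentially the same route as the proof the paper defers to, namely \cite[Theorem 3.4.2]{S-ARIGARI}: the shuffle/Ree criterion for Lie elements against alternality of the mould, and the key intertwining identities $\ma_{[f_1,f_2]}=[\ma_{f_1},\ma_{f_2}]$ and $\ma_{D_{\{\sigma(f_2)\},0}(f_1)}=\arit_u(\ma_{f_2})(\ma_{f_1})$, both of which hinge on the translation invariance \eqref{eq:translation invariance of vimo}.

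One step in your sketch is stated incorrectly, though it is repairable. You claim injectivity is ``clear since the values of $\ma_h$ recover all coefficients $a(h:\cdot)$.'' This is not true at the level of $\mathbb A$: since $\ma_h^r$ is $\vimo_h^r$ evaluated at $z_0=0$, the factor $z_0^{e_0}$ annihilates every coefficient $a(h:{}^{e_0,\dots,e_r}_{\sigma_1,\dots,\sigma_r})$ with $e_0>0$, i.e.\ every word beginning with $x$. (Concretely, $\ma_{x}=0$ and $\ma_{x^2}=0$, which is exactly why $\mathbb L_{1,0}=\mathbb Q x$ is excluded from $\mt$.) For $h\in\mathbb L_{w,r}$ the missing coefficients are recovered because $\vimo_h$ is translation invariant, $\vimo^r_h({}^{z_0,\dots,z_r}_{\sigma_1,\dots,\sigma_r})=\vimo^r_h({}^{0,z_1-z_0,\dots,z_r-z_0}_{\qquad\sigma_1,\dots,\sigma_r})$, so the $e_0=0$ slice determines all of $\vimo_h$ and hence $h$. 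You already invoke \eqref{eq:translation invariance of vimo} for the bracket computation and for legitimizing the change of variables; injectivity needs it as well, and your argument should say so explicitly rather than treating it as immediate coefficient bookkeeping.
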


\begin{proof}
It can be proved completely in a same way to that of {\cite[Theorem 3.4.2]{S-ARIGARI}}.
\end{proof}

We prepare the following technical lemma which is  %an analogue \cite[Proposition 2.2]{S}.
required to  the proof of a reformulation of \eqref{KV1}
in Lemma \ref{lem:refo1:KV1}.

\begin{lem}\label{lem:aux H=[x,G]}
Let $H\in\mathbb L_w$ with $w\geqslant 2$.
Assume that $H$ has no words starting with any $y_\sigma$ and ending in any $y_\tau$.
Then  there exists  $G\in\mathbb L_{w-1}$ such that $H=[x,G]$.
%$f=[x,\sec(P)]$ with $P=\sum_\tau f_{y_\tau}y_\tau$.
\end{lem}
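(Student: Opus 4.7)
The plan is to translate the problem to the mould side via the isomorphism $\ma\colon \mt \xrightarrow{\sim} \ARI(\Gamma)^{\fin,\pol}_{\al}$ of Proposition \ref{prop:MT=ARIal}. Since the assumption on starting/ending letters is a monomial-by-monomial condition, I first reduce to the depth-homogeneous case $H\in\mathbb L_{w,d}$. When $d=0$ we have $\mathbb L_{w,0}=0$ (as $w\geqslant 2$), so $H=0=[x,0]$; hence I may assume $d\geqslant 1$, in which case the desired $G\in\mathbb L_{w-1,d}$ will lie in $\mt$.

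The key observation is that $\mathrm{ad}(x)$ corresponds via $\ma$ to multiplication by $-(u_1+\cdots+u_d)$ on depth-$d$ moulds. Indeed, expanding $[x,F]=xF-Fx$ at the coefficient level yields
\[
 \vimo^d_{[x,F]}\bigl({}^{z_0,\ldots,z_d}_{\sigma_1,\ldots,\sigma_d}\bigr)=(z_0-z_d)\,\vimo^d_F\bigl({}^{z_0,\ldots,z_d}_{\sigma_1,\ldots,\sigma_d}\bigr),
\]
so after the substitution $z_0=0,\ z_i=u_1+\cdots+u_i$ one has $\ma^d_{[x,F]}=-(u_1+\cdots+u_d)\,\ma^d_F$. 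On the other hand, the hypothesis on $H$ translates into $(u_1+\cdots+u_d)\mid \ma^d_H$: in the defining expansion of $\ma^d_H$ the factor $0^{e_0}$ forces $e_0=0$, and imposing $u_1+\cdots+u_d=0$ further forces $e_d=0$; the surviving coefficients $a(H\colon {}_{\sigma_1^{-1},\ldots,\sigma_d^{-1}}^{0,e_1,\ldots,e_{d-1},0})$ are precisely those of monomials of $H$ that start with some $y_\sigma$ and end with some $y_\tau$. These vanish by hypothesis, so $\ma^d_H$ vanishes on the hyperplane $\{u_1+\cdots+u_d=0\}$ and is therefore divisible by $u_1+\cdots+u_d$ in $\mathbb Q[u_1,\ldots,u_d]^{\oplus\Gamma^{\oplus d}}$.

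Set $M:=-\ma^d_H/(u_1+\cdots+u_d)$. This is a polynomial-valued depth-$d$ mould; since $u_1+\cdots+u_d$ is symmetric in its arguments, multiplication by it preserves alternality, so $M$ inherits alternality from $\ma^d_H$. Proposition \ref{prop:MT=ARIal} then yields $G\in\mt$ with $\ma^d_G=M$, and a degree count ($\deg \ma^d_H = w-d$) places $G$ in $\mathbb L_{w-1,d}$. Combining with the formula for $\mathrm{ad}(x)$ one obtains $\ma^d_{[x,G]}=-(u_1+\cdots+u_d)\,\ma^d_G=\ma^d_H$, and injectivity of $\ma$ gives $H=[x,G]$. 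The main obstacle is the translation step: recognizing that the combinatorial hypothesis on the monomials of $H$ is equivalent to the polynomial divisibility of $\ma^d_H$ by $u_1+\cdots+u_d$. Once this is in place, Proposition \ref{prop:MT=ARIal} together with the simple mould-formula for $\mathrm{ad}(x)$ finish the argument.
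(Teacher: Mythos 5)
Your proof is correct but takes a genuinely different route from the paper's. The paper stays entirely on the Lie-series side: it introduces the $\mathbb Q$-linear operator $\sec(h)=\sum_{i\geqslant 0}\frac{(-1)^i}{i!}\partial_x^i(h)x^i$, uses \cite[Proposition 4.2.2]{R} to write $H=\sec\bigl(\sum_\sigma H_{y_\sigma}y_\sigma\bigr)$, factors $\sum_\sigma H_{y_\sigma}y_\sigma=xP$ by the hypothesis, computes directly that $\sec(xP)=[x,\sec(P)]$, and quotes \cite[Proposition 2.2]{S} for the Lie-ness of $G=\sec(P)$. You instead transport everything through $\ma$, using that $\mathrm{ad}(x)$ becomes multiplication by $-(u_1+\cdots+u_d)$ in depth $d$ and that the hypothesis on $H$ becomes divisibility of $\ma^d_H$ by $u_1+\cdots+u_d$. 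Your version is conceptually cleaner --- on the mould side the equation $H=[x,G]$ is a single polynomial division, and permutation-invariance of $u_1+\cdots+u_d$ makes alternality of the quotient immediate --- but it leans on Proposition \ref{prop:MT=ARIal}, whose proof the paper only cites from \cite[Theorem 3.4.2]{S-ARIGARI}, whereas the paper's own argument stays independent of the mould machinery (this lemma is used en route to the mould translation of $\krv(\Gamma)$, and the authors presumably preferred one-directional dependencies). Since Proposition \ref{prop:MT=ARIal} is stated before this lemma, your argument is not circular; to be fully explicit you should add that vanishing on the irreducible hyperplane $\{u_1+\cdots+u_d=0\}$ yields divisibility in $\mathbb Q[u_1,\dots,u_d]$, and that both $H$ and $[x,G]$ lie in $\mt$ so the injectivity of $\ma$ on $\mt$ closes the argument.
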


\begin{proof}
The proof goes on the same way to the proof of  \cite[Proposition 2.2]{S}.
Define  the derivation $\partial_x$ of $\mathbb A$ sending $x\mapsto 1$ and
$y_\sigma\mapsto 0$ and the $\mathbb Q$-linear endomorphism $\sec$ of $\mathbb A$ by
$\sec(h):=\sum_{i\geqslant 0}\frac{(-1)^i}{i!}\partial_x^i(h)x^i$
for $h\in\mathbb A$.
Let us write  $H=H_xx+\sum_\sigma H_{y_\sigma}y_\sigma$.
Then by our assumption,  we have $P\in\mathbb A$ such that  $xP=\sum_\sigma H_{y_\sigma}y_\sigma$.
Then by \cite{R} Proposition 4.2.2 and 
$\partial_x^i(xP)=i\partial_x^{i-1}(P)+x\partial_x^{i}(P)$, we have
\begin{align*}
H&=\sec(\sum_\sigma H_{y_\sigma}y_\sigma)=\sec(xP)
=\sum_{i\geqslant 0}\frac{(-1)^i}{i!}\partial_x^i(xP)x^i\\ 
&=\sum_{i\geqslant 1}\frac{(-1)^i}{(i-1)!}\partial_x^{i-1}(P)x^i
+x\sum_{i\geqslant 0}\frac{(-1)^i}{i!}\partial_x^i(P)x^i
=-\sec(P)x+x\sec(P)=[x,\sec(P)].
\end{align*}
It remains to show that $G=\sec(P)$ is in $\mathbb L$,
which follows from exactly the same  arguments  to the last half of
the proof of  \cite[Proposition 2.2]{S}.
\end{proof}

Let $F=F(x;(y_\sigma))\in\mathbb L$. As in \cite{S}, we put
\begin{equation}\label{eq:f and F}
f(x;(y_\sigma))=F(z;(y_\sigma)) \quad \text{ and } \quad\tilde f(x;(y_\sigma))=f(x;(-y_\sigma))
\end{equation}
with $z=-x-\sum_{\sigma}y_\sigma$.

%We call $g\in\mathbb A_w$  {\it twisted anti-palindromic} when $$\alpha(g)_{y_\beta}=(-1)^w\anti(\beta(g)^{y_\alpha})$$
%for any $\alpha, \beta\in\Gamma$,
%where $\anti$ is the palindrome (backwords-writing) operator (cf. \cite[Definition 1.3]{S}).
%

The following generalizes the equivalence between (i) and (v)
in \cite[Theorem 2.1]{S}.

\begin{lem}%[{\cite[Thereom 2.1]{S}}]
\label{lem:refo1:KV1}
Let $F\in\mathbb L_w$ with $w\geqslant 1$. 
Then saying \eqref{KV1} for $F$ is equivalent to 
saying that
%$\tilde f_y+\tilde f_x$ is anti-palindromic when we write $\tilde f=\tilde f_xx+\tilde f_yy$.
\begin{equation}\label{eq:refo1:KV1}
\{\tilde f_{y_{\gamma}}+\tilde f_x\}
=(-1)^{w-1}\gamma\cdot\anti\{\tilde f_{y_{\gamma^{-1}}}+\tilde f_x\}
\end{equation}
for all $\gamma\in\Gamma$.
\end{lem}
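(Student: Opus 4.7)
The plan is to apply Lemma \ref{lem:aux H=[x,G]} to $H = -\sum_\tau[y_\tau,\tau(F)]$, which lies in $\mathbb{L}_{w+1}$. That lemma turns \eqref{KV1} into the combinatorial condition that no monomial of $H$ simultaneously starts with some letter $y_\sigma$ and ends with some letter $y_\tau$. Expanding $[y_\tau,\tau(F)]= y_\tau\tau(F)-\tau(F)y_\tau$ and using that $\tau(F)$ starts (resp.\ ends) with $y_\sigma$ iff $F$ starts (resp.\ ends) with $y_{\tau^{-1}\sigma}$, the doubly-bookended part of $H$ is
$$y_\sigma\bigl(\sigma(F_{y_{\sigma^{-1}\tau}}) - \tau(F^{y_{\tau^{-1}\sigma}})\bigr)y_\tau.$$
Setting $\gamma=\sigma^{-1}\tau$, the vanishing of this for all $\sigma,\tau\in\Gamma$ is equivalent to the family of identities $F_{y_\gamma} = \gamma\cdot F^{y_{\gamma^{-1}}}$ for $\gamma\in\Gamma$.

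Next I rewrite $F^{y_{\gamma^{-1}}}$ in terms of $F_{y_{\gamma^{-1}}}$. Since $F\in\mathbb{L}_w$ satisfies $\anti(F)=(-1)^{w-1}F$ and since $\anti$ interchanges left-stripping and right-stripping via $(\anti(h))_{y_\gamma}=\anti(h^{y_\gamma})$, we get $F^{y_{\gamma^{-1}}}=(-1)^{w-1}\anti(F_{y_{\gamma^{-1}}})$, turning \eqref{KV1} into
$$F_{y_\gamma} = (-1)^{w-1}\gamma\cdot\anti(F_{y_{\gamma^{-1}}}).$$
Now apply the algebra substitution $\phi:x\mapsto z=-x-\sum_\sigma y_\sigma$, $y_\sigma\mapsto y_\sigma$. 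Expanding
$$f = F(z;(y_\sigma)) = -\phi(F_x)\cdot x + \sum_\gamma\bigl[\phi(F_{y_\gamma})-\phi(F_x)\bigr]\cdot y_\gamma$$
gives the key identification $\phi(F_{y_\gamma})=f_{y_\gamma}-f_x$. Because $\phi$ fixes each $y_\sigma$ and sends $z$ to itself, it commutes with both $\anti$ and the $\Gamma$-action, so the equation transports to $f_{y_\gamma}-f_x = (-1)^{w-1}\gamma\cdot\anti(f_{y_{\gamma^{-1}}}-f_x)$. Finally the sign-flip $\epsilon:y_\sigma\mapsto -y_\sigma$ also commutes with $\anti$ and with $\Gamma$; tracking signs gives $\tilde f_x=\epsilon(f_x)$ and $\tilde f_{y_\gamma}=-\epsilon(f_{y_\gamma})$, hence $\epsilon(f_{y_\gamma}-f_x)=-(\tilde f_{y_\gamma}+\tilde f_x)$, and the identity becomes exactly \eqref{eq:refo1:KV1}. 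Every step is reversible, so the equivalence follows.

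The main obstacle is the bookkeeping of group-indices and signs across three layers: the $\Gamma$-action shifts indices, the palindrome $\anti$ reverses words and contributes $(-1)^{w-1}$, and the two substitutions $\phi,\epsilon$ interact nontrivially with the stripping operators $h\mapsto h_{y_\gamma}$ and $h\mapsto h^{y_\gamma}$. The critical combinatorial identity $(\tau(F))^{y_\sigma}=\tau(F^{y_{\tau^{-1}\sigma}})$ underpins the index matching $\gamma=\sigma^{-1}\tau$, while the palindrome relation $F_{y_\gamma}=(-1)^{w-1}\anti(F^{y_\gamma})$ supplies precisely the $(-1)^{w-1}$ appearing in \eqref{eq:refo1:KV1}.
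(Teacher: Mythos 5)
Your proof is correct and follows essentially the same route as the paper's: you reduce \eqref{KV1} via Lemma \ref{lem:aux H=[x,G]} to the vanishing of the $y$-bracketed part of $H$, rewrite that vanishing as $F_{y_\gamma}=\gamma\cdot F^{y_{\gamma^{-1}}}$ (equivalent, after the substitution $\gamma=\alpha^{-1}\beta$ and abelianity, to the paper's $\alpha(F)_{y_\beta}=\beta(F)^{y_\alpha}$), apply the palindrome identity from $\anti(F)=(-1)^{w-1}F$, and then transport through the involution $x\mapsto z$ and the sign flip $y_\sigma\mapsto -y_\sigma$. You are in fact more explicit than the paper at the end: the paper compresses the index change $\gamma=\alpha^{-1}\beta$, the commutation of $\anti$ and the $\Gamma$-action with the substitution, and the $\epsilon$-flip into a single ``from which our claim follows.'' One small slip in your justification: you write that $\phi$ ``sends $z$ to itself,'' but $\phi(z)=x$ (indeed $\phi$ is an involution, not the identity on $z$); the commutation you actually need is $\gamma(z)=z$ (so $\phi$ commutes with the $\Gamma$-action), while commutation with $\anti$ holds simply because $\phi$ is a degree-preserving substitution. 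This does not affect the validity of the argument.
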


\begin{proof}
Firstly, we show that \eqref{KV1} for $F$ is equivalent to %$F_y=F^y$
$\alpha(F)_{y_\beta}=\beta(F)^{y_\alpha}$ for any $\alpha, \beta\in \Gamma$.
%when we write $F=F_xx+\sum_\alpha F_\alpha y_\alpha=xF^x+\sum_\alpha y_\alpha F^\alpha$.
Set $H=\sum_\sigma[y_\sigma,\sigma(F)]$.
Then  we have
\begin{equation}\label{eqn for H}
H=\sum_{\alpha,\beta} 
y_\alpha\left\{
\alpha (F)_{y_\beta} - \beta(F)^{y_\alpha}
\right\}y_\beta
+ \sum_{\alpha}y_\alpha \alpha(F)_xx
- \sum_{\beta}x\beta(F)^xy_\beta. %=xG^xx+yG^yx-xG_xx-xG_yy.
\end{equation}

\begin{itemize}
\item
Assume \eqref{KV1} for $F$.
Then 
\begin{equation}\label{eqn for H 2}
H=Gx-xG. % since we have $[y,F]+[x,G]=0$. 
\end{equation}
So $H$ has no words starting with any $y_\sigma$ and ending in any $y_\tau$.
By \eqref{eqn for H}, we have $\alpha(F)_{y_\beta}=\beta(F)^{y_\alpha}$. % $F_y=F^y$.
%%and write $F=F_xx+F_yy=F^xx+F^yy$ and $G=G_xx+G_yy=G^xx+G^yy$.
\item
Conversely assume $\alpha(F)_{y_\beta}=\beta(F)^{y_\alpha}$. % $F_y=F^y$.
Then  $H$ has no words starting with any $y_\sigma$ and ending in any $y_\tau$.
By Lemma \ref{lem:aux H=[x,G]},
%\Erase{\cite[Proposition 2.2]{S}},
there is a $G\in\mathbb L_{w}$
such that $H=[G,x]$.
%which is calculated to be $G(F)=s'(F_x)$.
Whence we get \eqref{KV1}.
\end{itemize}

Secondly,
by $\alpha(F)\in\mathbb L_w$, we have $\alpha(F)^{y_\beta}=(-1)^{w-1}\anti(\alpha(F)_{y_\beta})$.
Therefore $\alpha(F)_{y_\beta}=\beta(F)^{y_\alpha}$ 
is equivalent to %the anti-palindrome for $F_y$.
$\alpha(F)_{y_\beta}=(-1)^{w-1}\anti(\beta(F)_{y_\alpha})$.

Lastly,
%by \cite[Proposition 2.6]{S}, 
by \eqref{eq:f and F}
we have
$
f_{y_\alpha}(z;(y_\sigma))-f_x(z;(y_\sigma))=F_{y_\alpha}.
$
Hence
%\begin{align*}
\begin{equation*}
\alpha(F)_{y_\beta}=\alpha(F_{y_{\alpha^{-1}\beta}})=
\alpha\left\{f_{y_{\alpha^{-1}\beta}}(z;(y_\sigma))-f_x(z;(y_\sigma))\right\} 
\end{equation*}
%\\
%\beta(F)^{y_\alpha}=&\beta(F^{y_{\alpha\beta^{-1}}})=
%\beta\{f(z;(y_\sigma))^{y_{\alpha\beta^{-1}}}-f(z;(y_\sigma))^x\},
%\end{align*}
Whence
$\alpha(F)_{y_\beta}=(-1)^{w-1}\anti(\beta(F)_{y_\alpha})$
is equivalent to 
$$
\alpha\left\{f_{y_{\alpha^{-1}\beta}}(z;(y_\sigma))-f_x(z;(y_\sigma))\right\} 
=(-1)^{w-1}\anti\left(
\beta\left\{f_{y_{\alpha\beta^{-1}}}(z;(y_\sigma))-f_x(z;(y_\sigma))\right\} 
\right)
$$
from which our claim follows.
%Therefore the anti-palindrome for $F_y$ is equivalent to
%the anti-palindrome for $f_y-f_x$, so for $\tilde f_y+\tilde f_x$.
\end{proof}

The following reformulation is suggested by the arguments in \cite[Appendix A]{S}.

\begin{lem}\label{lem:refo2:KV1}
Let $\tilde f\in\mathbb L_w$ with $w>1$. %Put  $\tilde f=f(x,-y)$.
Then %anti-palindrome for $\tilde f_y+\tilde f_x$
\eqref{eq:refo1:KV1} for all $\gamma\in\Gamma$
is equivalent to  the following 
%Ecalle's relation which he calls the 
{\it senary relation}
\footnote{
It is because it consists of 6 terms, 3 terms on each hand sides.
}
(cf. \cite[(3.64)]{E-flex})
\begin{equation}\label{senary relation}
\teru( M)^r=\push\circ\mantar\circ\teru\circ\mantar( M)^r
\end{equation}
for $1\leqslant r \leqslant w$
with $ M=\ma_{\tilde f}$.

\end{lem}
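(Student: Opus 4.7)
The plan is to transport both sides of \eqref{eq:refo1:KV1} through the mould assignment $h \mapsto \ma_h$ of Definition \ref{def:ma} and to recognize the resulting identity of moulds, slice by slice in $\gamma \in \Gamma$, as the senary relation \eqref{senary relation}. This generalizes the argument sketched in \cite[Appendix A]{S} for $\Gamma = \{e\}$, with the additional bookkeeping forced by the finite abelian group.

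First, I would compute $\ma_{g}$ for $g := \tilde f_{y_\gamma} + \tilde f_x$ in terms of $M := \ma_{\tilde f}$. Writing $\tilde f^r$ in the canonical basis as in \eqref{eq: word expansion}, the passage to $\tilde f_{y_\gamma} + \tilde f_x$ amounts to stripping the final letter of every monomial, distinguishing whether that letter is a $y_\gamma$ or a (positive) power of $x$. Unwinding Definition \ref{def:ma}, the contribution from words ending in $y_\gamma$ at depth $r+1$ becomes, after the substitution $u_i = z_i - z_{i-1}$, the slice of $M^{r+1}$ at last index $\sigma_{r+1}=\gamma$, while the contribution from words ending in $x$ at depth $r$ manifests as the finite-difference $\tfrac{1}{u_r}\{M^{r}(u_1,\ldots,u_{r-1}+u_r)-M^r(u_1,\ldots,u_r)\}$ coming from the extra factor $z_r^{e_r}$ in $\vimo$. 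Re-indexing the two pieces, the depth-$r$ component of $\ma_g$ matches exactly the $\sigma_{r+1}=\gamma$ slice of $\teru(M)^{r+1}$.

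Second, I would translate the right-hand side $(-1)^{w-1}\gamma\cdot\anti\{\tilde f_{y_{\gamma^{-1}}}+\tilde f_x\}$. The palindrome operator $\anti$ on degree-$w$ elements of $\mathbb L$ corresponds, under $\ma$, to $\mantar$ together with the sign $(-1)^{w-1}$; this is essentially the content of \eqref{eq:mantar invariance for ma}. Therefore $\anti\{\tilde f_{y_{\gamma^{-1}}}+\tilde f_x\}$ passes to $\mantar\circ\teru\circ\mantar(M)$, evaluated at the slice corresponding to $\gamma^{-1}$ before the inner $\mantar$. The outer $\Gamma$-action by $\gamma$ shifts $y_\alpha\mapsto y_{\gamma\alpha}$ on every letter; tracing this through the inversion $\sigma_i\mapsto\sigma_i^{-1}$ in Definition \ref{def:ma} and the twist $\q$, it produces precisely the $\sigma_m^{-1}$-conjugation of all indices that defines $\push$. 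The sign $(-1)^{w-1}$ cancels against the pair of $\mantar$'s applied in degree $w$, and the whole right-hand side is identified with the $\sigma_{r+1}=\gamma$-slice of $\push\circ\mantar\circ\teru\circ\mantar(M)^{r+1}$.

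Once these two computations are in place, varying $\gamma$ over $\Gamma$ is the same as varying the last group-index on both sides, and varying $r$ exhausts the depths up to $w$, so the conjunction of \eqref{eq:refo1:KV1} over all $\gamma$ is equivalent to \eqref{senary relation} for all $1\leqslant r\leqslant w$. Both directions of the equivalence follow because $\ma$ is injective on $\mathbb L_w$ (a consequence of Proposition \ref{prop:MT=ARIal} restricted to the generators). The main obstacle is the combinatorial bookkeeping in the second step: one must check monomial by monomial that the $\Gamma$-action on $\mathbb L$ composed with palindromic reversal produces, at the level of $\ma$, exactly the $\sigma_m^{-1}$-shift of $\push$ sandwiched between two $\mantar$'s, with all inversions and signs cancelling correctly. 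Organizing this verification around the explicit basis \eqref{eq: word expansion} and repeatedly invoking \eqref{eq:translation invariance of vimo} to absorb constant shifts in the $z$-variables should make the match transparent.
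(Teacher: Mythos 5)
Your first identification does not hold in the form needed to establish the equivalence. Writing $g=\tilde f_{y_\gamma}+\tilde f_x$, a direct unwinding of Definition~\ref{def:ma} together with \eqref{vimo y formula} and \eqref{vimo x formula} gives
\begin{equation*}
\ma^r_g(u_1,\ldots,u_r;\sigma)
=\teru(M)^{r+1}\bigl(u_1,\ldots,u_r,\,-(u_1+\cdots+u_r);\,\sigma,\gamma\bigr),
\end{equation*}
i.e.\ the restriction of $\teru(M)^{r+1}$ to the hyperplane $u_1+\cdots+u_{r+1}=0$ (equivalently $z_{r+1}=0$) with last group-index $\gamma$, not a restriction of group-indices alone. (Your finite-difference $\tfrac{1}{u_r}\{M^{r}(u_1,\ldots,u_{r-1}+u_r)-M^r(u_1,\ldots,u_r)\}$ is the $\teru$-difference, whereas the $\tilde f_x$-contribution is $\tfrac{1}{z_r}\{\vimo^r_{\tilde f}(z_0,\ldots,z_r)-\vimo^r_{\tilde f}(z_0,\ldots,z_{r-1},0)\}$ from \eqref{vimo x formula}: the denominator is the accumulated $z_r$, not $u_r$, and the second argument freezes the last slot to $0$; these two shapes coincide only on the hyperplane above.) The senary relation \eqref{senary relation} at depth $r+1$, however, is an equality of rational functions in all $r+1$ variables $u_1,\ldots,u_{r+1}$, and establishing it on the codimension-one slice $z_{r+1}=0$ does not imply it globally. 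Your sketch proposes the same slice argument for the right-hand side, so even if the palindrome-to-$\mantar$ and $\Gamma$-action-to-$\push$ bookkeeping were carried out correctly, the conclusion would only be the restricted version of \eqref{senary relation}.

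The paper avoids this loss of a variable by never comparing $\ma_g$ directly against $\teru(M)$ in the $u$-picture. Instead it first uses the $\mantar$-invariance $\mantar(M)=M$ from \eqref{eq:mantar invariance for ma} to rewrite \eqref{senary relation} as $\swap\circ\teru(M)=\swap\circ\push\circ\mantar\circ\teru(M)$, then expresses both sides in $\vimo$-form in the $v$-variables, and then applies the translation-invariance \eqref{eq:translation invariance of vimo} with the substitution $z_0=-v_1$, $z_i=v_{r-i+1}-v_1$. Under $\swap$ the last $u$-slot becomes $v_1-v_2$, which after this substitution is $-z_{r-1}$, so the $\teru$-denominator lines up with the $\vimo_{\tilde f_x}$-denominator without dropping any variable; only then is the resulting expression compared to the $\vimo$-form of \eqref{eq:refo1:KV1}, and the two match after a shift $r\mapsto r\pm1$. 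The absence of $\swap$ and of the $\mantar$-invariance reduction is the missing idea in your proposal, and without them the depth-and-denominator bookkeeping cannot close.
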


\begin{proof}
%In this proof, we use the same equation number employed in \cite[Appendix A]{S}.
%
%As is explained in the first two paragraphs of
%the proof of \cite[Proposition A.3]{S},
%the anti-palindrome for $\tilde f_y+\tilde f_x$ 
%is reformulated to 
%\begin{equation*}%\tag{A15}\label{A15}
%\vimo_{\tilde f_y^r+\tilde f_x^r}^r  (z_0,\dots,z_r) 
%= (-1)^{w-1}
%\vimo_{\anti(\tilde f_y^r+\tilde f_x^r)}^r  (z_0,\dots,z_r)
%\end{equation*}
%for $0\leqslant r \leqslant w-1$. % (we must include $m=0$)
%In the proof of \cite[Proposition A.3]{S}, its left hand side is calculated to be
%\begin{equation}\tag{A19}\label{A19}
%%\vimo_{\tilde f_x^m+\tilde f_y^m}^m  (z_0,\dots,z_m) = 
%\vimo_{\tilde f^{r+1}}^{r+1}  (z_0,\dots,z_r,0)+
%\frac{1}{z_r}\left\{
%\vimo^m_{\tilde f^r}(z_0,\dots,z_{r-1},z_r)-\vimo^r_{\tilde f^r}(z_0,\dots,z_{r-1},0)
%\right\}
%\end{equation}
%and the right hand side is calculated to be
%\begin{equation}\tag{A20}\label{A20}
%(-1)^{w-1}\left[
%\vimo_{\tilde f^{r+1}}^{r+1}  (z_r,\dots,z_0,0)+
%\frac{1}{z_0}\left\{
%\vimo^r_{\tilde f^r}(z_r,\dots,z_{1},z_0)-\vimo^r_{\tilde f^r}(z_r,\dots,z_{1},0)
%\right\}
%\right].
%\end{equation}
%Here we note that the first term  corresponds to  $\tilde f^r_y$ and the second  one corresponds to $\tilde f^r_x$ in both equations.
%Whence \eqref{KV1} is equivalent to \eqref{A19}=\eqref{A20} for $1\leqslant r \leqslant w-1$.

%On the other hand,
Because $M=\ma_{\tilde f}$ is $\mantar$ invariant for $\tilde f\in\mathbb L_w$
by \eqref{eq:mantar invariance for ma},
%\Add{(explain why; by the same arguments to \cite[Lemma A.2]{S})}, 
the senary relation \eqref{senary relation} is equivalent to
\begin{equation}\label{senary relation of mantar invariant}
\swap\circ\teru( M)^r
=\swap\circ\push\circ\mantar\circ\teru( M)^r.
\end{equation}
For simplicity, we denote $\sigma_i\cdots\sigma_j$ by $\sigma_{i,j}$ for $1\leqslant i\leqslant j\leqslant r$. By definition, its left hand side is calculated to be
\begin{align}\label{LHS of senary of mantar invariant}
	&\vimo^r_{\tilde f}
	{\scriptsize\left(\begin{array}{rrrrr}
		0,& v_r,& \dots,& v_2,& v_1 \\
		& \sigma_{1,r},& \dots,& \sigma_{1,2},& \sigma_1
	\end{array}\right)} \\
	&\hspace{0cm}+\frac{1}{v_1-v_2}\left\{\vimo^{r-1}_{\tilde f}
		{\scriptsize\left(\begin{array}{rrrrr}
			0,& v_r,& \dots,& v_3,& v_1 \\
			& \sigma_{1,r},& \dots,& \sigma_{1,3},& \sigma_{1,2}
		\end{array}\right)}
	-\vimo^{r-1}_{\tilde f}
	{\scriptsize\left(\begin{array}{rrrrr}
		0,& v_r,& \dots,& v_3,& v_2 \\
		& \sigma_{1,r},& \dots,& \sigma_{1,3},& \sigma_{1,2}
	\end{array}\right)}\right\}. \nonumber
\end{align}
By 
$\vimo_{\tilde f^r}^r\varia{z_0,\dots,z_r}{\sigma_1,\dots,\sigma_r} =(-1)^{w-r}\vimo_{\tilde f^r}^r\varia{-z_0,\dots,-z_r}{\ \ \sigma_1,\dots,\sigma_r}$
 and definition, its right hand side is calculated to be
\begin{align}\label{RHS of senary of mantar invariant}
	&(-1)^{w-1}\left[\vimo^r_{\tilde f}
	{\scriptsize\left(\begin{array}{rrrrrr}
		0,& v_3-v_2,& \dots,& v_r-v_2,& -v_2,& v_1-v_2 \\
		& \sigma_{2,2},& \dots,& \sigma_{2,r-1},& \sigma_{2,r},& \sigma_1^{-1}
	\end{array}\right)}\right. \\
	&\hspace{0cm}\left.+\frac{1}{v_1}\left\{\vimo^{r-1}_{\tilde f}
		{\scriptsize\left(\begin{array}{rrrrr}
			0,& v_3-v_2,& \dots,& v_r-v_2,& v_1-v_2 \\
			& \sigma_{2,2},& \dots,& \sigma_{2,r-1},& \sigma_{2,r}
		\end{array}\right)}
	-\vimo^{r-1}_{\tilde f}
	{\scriptsize\left(\begin{array}{rrrrr}
		0,& v_3-v_2,& \dots,& v_r-v_2,& -v_2 \\
		& \sigma_{2,2},& \dots,& \sigma_{2,r-1},& \sigma_{2,r}
	\end{array}\right)}\right\}\right]. \nonumber
\end{align}
By $\vimo_{\tilde f^r}^r\varia{z_0,\dots,z_r}{\sigma_1,\dots,\sigma_r} =\vimo_{\tilde f^r}^r\varia{0,z_1-z_0,\dots,z_r-z_0}{\qquad\sigma_1,\dots,\sigma_r}$ 
in \eqref{eq:translation invariance of vimo}
%\Add{(explain why, maybe by the same arguments to \cite{S} p.71)} 
and the change of variables 
%\begin{equation}\tag{A21}\label{A21}
$z_0=-v_1$, $z_1=v_r-v_1$, $\dots$, $z_{r-1}=v_2-v_1$ and $\gamma=\sigma_1$, $\tau_1=\sigma_{1,r}$, $\dots$, $\tau_{r-1}=\sigma_{1,2}$, the formula \eqref{LHS of senary of mantar invariant} is equal to
\begin{align}\label{LHS of senary of mantar invariant-2}
	\vimo^{r}_{{\tilde f}^{r}}\varia{z_0,\dots,z_{r-1},0}{\tau_1,\dots,\tau_{r-1},\gamma}
	+\frac{1}{z_{r-1}}\left\{
		\vimo^{r-1}_{\tilde f^{r-1}}\varia{z_0,\dots,z_{r-1}}{\tau_1,\dots,\tau_{r-1}}
		-\vimo^{r-1}_{\tilde f^{r-1}}\varia{z_0,\dots,z_{r-2},0}{\tau_1,\dots,\tau_{r-2},\tau_{r-1}}
	\right\},
\end{align}
and \eqref{RHS of senary of mantar invariant} is equal to
\begin{align}\label{RHS of senary of mantar invariant-2}
	&(-1)^{w-1}\left[
	\vimo^{r}_{{\tilde f}^{r}}\varia{z_{r-1},\dots,z_0,0}{\gamma^{-1}\tau_{r-1},\dots,\gamma^{-1}\tau_1,\gamma^{-1}} \right. \\
	&\hspace{1cm}\left.+\frac{1}{z_0}\left\{
		\vimo^{r-1}_{\tilde f^{r-1}}\varia{z_{r-1},\dots,z_1,z_0}{\gamma^{-1}\tau_{r-1},\dots,\gamma^{-1}\tau_1}
		-\vimo^{r-1}_{\tilde f^{r-1}}\varia{z_{r-1},\dots,z_1,0}{\gamma^{-1}\tau_{r-1},\dots,\gamma^{-1}\tau_1}
	\right\}
	\right]. \nonumber
\end{align}
Whence \eqref{senary relation} is equivalent to \eqref{LHS of senary of mantar invariant-2}=\eqref{RHS of senary of mantar invariant-2}
for $1\leqslant r \leqslant w$ and $\tau_1,\dots,\tau_{r-1},\gamma\in\Gamma$.

On the other hand, \eqref{eq:refo1:KV1} for all $\gamma\in\Gamma$
is equivalent to
\footnote{
Here ${\tilde f_x^r}$ and ${\tilde f_y^r}$ mean $({\tilde f_x})^r$
and $({\tilde f_y})^r$.
} 
\begin{align}\label{eq:refo1:KV1-ver:2}
	&\vimo^r_{\tilde f_{y_{\gamma}}^r}\varia{z_0,\dots,z_r}{\tau_1,\dots,\tau_r}
		+\vimo^r_{\tilde f_x^r}\varia{z_0,\dots,z_r}{\tau_1,\dots,\tau_r} \\
	&=(-1)^{w-1}\left[\vimo^r_{\gamma\cdot\anti(\tilde f_{y_{\gamma^{-1}}}^r)}
		\varia{z_0,\dots,z_r}{\tau_1,\dots,\tau_r}
		+\vimo^r_{\gamma\cdot\anti(\tilde f_x^r)}
		\varia{z_0,\dots,z_r}{\tau_1,\dots,\tau_r}\right] \nonumber
\end{align}
for $\gamma\in\Gamma$ and $1\leqslant r \leqslant w$.
By definition, we have
\begin{align*}
	\vimo^{r+1}_{(\tilde f_{y_{\gamma}})^ry_{\gamma}}\varia{z_0,\dots,z_{r+1}}{\tau_1,\dots,\tau_{r+1}}
	=\delta_{\tau_{r+1},\gamma}\vimo^{r+1}_{{\tilde f}^{r+1}}\varia{z_0,\dots,z_r,0}{\tau_1,\dots,\tau_r,\gamma}
\end{align*}
where $\delta_{\tau_{r+1},\gamma}
:=\left\{\begin{array}{ll}
	1 & (\tau_{r+1}=\gamma), \\
	0 & (\mbox{otherwise}).
\end{array}\right.$
So by using this, we get
\begin{align}\label{vimo y formula}
	\vimo^r_{\tilde f_{y_{\gamma}}^r}\varia{z_0,\dots,z_{r}}{\tau_1,\dots,\tau_{r}}
	=\vimo^{r+1}_{(\tilde f_{y_{\gamma}})^ry_{\gamma}}\varia{z_0,\dots,z_r,0}{\tau_1,\dots,\tau_r,\gamma}
	=\vimo^{r+1}_{{\tilde f}^{r+1}}\varia{z_0,\dots,z_r,0}{\tau_1,\dots,\tau_r,\gamma}.
\end{align}
By ${\tilde f}^r={(\tilde f_{x})}^rx+\sum_{\gamma\in\Gamma}{(\tilde f_{y_{\gamma}})}^{r-1}y_{\gamma}$ and $\vimo^r_{(\tilde f_x)^rx}\varia{z_0,\dots,z_r}{\tau_1,\dots,\tau_r}=z_r\cdot\vimo^r_{\tilde f_x^r}\varia{z_0,\dots,z_r}{\tau_1,\dots,\tau_r}$, we have
\begin{align}\label{vimo x formula}
	\vimo^r_{\tilde f_x^r}\varia{z_0,\dots,z_r}{\tau_1,\dots,\tau_r}
	=\frac{1}{z_r}\left\{
		\vimo^r_{\tilde f^r}\varia{z_0,\dots,z_r}{\tau_1,\dots,\tau_r}
		-\vimo^r_{\tilde f^r}\varia{z_0,\dots,z_{r-1},0}{\tau_1,\dots,\tau_{r-1},\tau_r}
	\right\}.
\end{align}
Hence, by \eqref{vimo y formula} and \eqref{vimo x formula}, the left hand side of \eqref{eq:refo1:KV1-ver:2} is calculated to be
\begin{align}\label{LHS of eq:refo1:KV1-ver:2}
	\vimo^{r+1}_{{\tilde f}^{r+1}}\varia{z_0,\dots,z_r,0}{\tau_1,\dots,\tau_r,\gamma}
	+\frac{1}{z_r}\left\{
		\vimo^r_{\tilde f^r}\varia{z_0,\dots,z_r}{\tau_1,\dots,\tau_r}
		-\vimo^r_{\tilde f^r}\varia{z_0,\dots,z_{r-1},0}{\tau_1,\dots,\tau_{r-1},\tau_r}
	\right\},
\end{align}
and the left hand side of \eqref{eq:refo1:KV1-ver:2} is calculated to be
\begin{align}\label{RHS of eq:refo1:KV1-ver:2}
	&(-1)^{w-1}\left[
	\vimo^{r+1}_{{\tilde f}^{r+1}}\varia{z_r,\dots,z_0,0}{\gamma^{-1}\tau_r,\dots,\gamma^{-1}\tau_1,\gamma^{-1}} \right. \\
	&\hspace{1cm}\left.+\frac{1}{z_0}\left\{
		\vimo^r_{\tilde f^r}\varia{z_r,\dots,z_1,z_0}{\gamma^{-1}\tau_r,\dots,\gamma^{-1}\tau_1}
		-\vimo^r_{\tilde f^r}\varia{z_r,\dots,z_1,0}{\gamma^{-1}\tau_r,\dots,\gamma^{-1}\tau_1}
	\right\}
	\right]. \nonumber
\end{align}
So \eqref{eq:refo1:KV1} for all $\gamma\in\Gamma$
is equivalent to \eqref{LHS of eq:refo1:KV1-ver:2}=\eqref{RHS of eq:refo1:KV1-ver:2} for $1\leqslant r+1 \leqslant w$ and $\tau_1,\dots,\tau_{r},\gamma\in\Gamma$. This is nothing but \eqref{LHS of senary of mantar invariant-2}=\eqref{RHS of senary of mantar invariant-2} for $r-1$ instead of $r$.
Therefore we get the equivalence between \eqref{eq:refo1:KV1} %\eqref{KV1} 
and
\eqref{senary relation}.
\end{proof}

%The following reformulation is influenced by the arguments in \cite[\S 3]{S}.
Next we consider the condition \eqref{KV2}.

\begin{lem}\label{lem:refo10:kv2}
Let $F\in\mathbb L_w$. % with $w>2$. % satisfying \eqref{KV1}.
Then \eqref{KV2} for $F$ is equivalent to 
%The condition \eqref{reformualtion-quasi-push-inv} for $f\in\mathbb L_{w}$$ $ is equivalent to
\begin{equation}\label{eq:refo10:kv2}
\sum_{i\in\Z/r\Z}
a(\q\circ\pi_Y(f):{}^{e_{i},e_{i+1},\dots,e_{i+r-1},0}_{\sigma_{i+1},\sigma_{i+2},\dots,\sigma_{i+r}})=0
%wc
\end{equation}
for each $1\leqslant r\leqslant w$, $(\sigma_1,\dots,\sigma_r)\in\Gamma^r$ and
$(e_0,\dots,e_r)\in\mathcal{E}_w^r$
with $f$ as in \eqref{eq:f and F}.
%with the same $c\in\mathbb Q$.
\end{lem}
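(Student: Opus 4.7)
The plan is to interpret the condition $\tr\circ\q\circ\pi_Y(f)=0$ coefficient-wise on cyclic equivalence classes of words in $\mathbb A$. Since the trace map $\tr\colon\mathbb A\twoheadrightarrow\mathrm{Cyc}(\mathbb A)$ identifies monomials that differ by a cyclic rotation, for any $h\in\mathbb A$ the equation $\tr(h)=0$ holds if and only if, for every cyclic equivalence class $C$ of words in $\mathbb A$, the sum $\sum_{w\in C}a(h;w)$ vanishes in $\mathbb Q$.

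First I would locate the support of $h:=\q\circ\pi_Y(f)$. The projection $\pi_Y$ annihilates all words ending in $x$, so $\pi_Y(f)$ is a $\mathbb Q$-linear combination of words of the form $x^{e_0}y_{\sigma_1}x^{e_1}\cdots x^{e_{r-1}}y_{\sigma_r}$ (i.e.\ those with the rightmost exponent $e_r=0$). The map $\q$ only relabels the $\sigma$-indices of the $y$-letters and preserves the $x$-exponent pattern of each monomial, so $h$ has support in the same set of ``$y$-ending'' words. Consequently only such words among the elements of a cyclic class $C$ contribute nonzero terms to $\sum_{w\in C}a(h;w)$.

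Next, fix a word $W=x^{e_0}y_{\sigma_1}x^{e_1}\cdots y_{\sigma_r}$ with $e_r=0$. Among the cyclic rotations of $W$, exactly $r$ of them end in a $y$-letter: cutting $W$ immediately after $y_{\sigma_k}$ for $k=0,1,\dots,r-1$ produces
\[
x^{e_k}y_{\sigma_{k+1}}x^{e_{k+1}}\cdots y_{\sigma_r}x^{e_0}y_{\sigma_1}\cdots x^{e_{k-1}}y_{\sigma_k},
\]
whose exponent and index sequences are precisely the cyclic shifts $(e_k,e_{k+1},\dots,e_{k-1},0)$ and $(\sigma_{k+1},\dots,\sigma_k)$ indexed by $i=k$ in \eqref{eq:refo10:kv2}. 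All other cyclic rotations of $W$ end in $x$ and therefore have zero coefficient in $h$. Summing the $h$-coefficients over these $r$ rotations reproduces exactly the left-hand side of \eqref{eq:refo10:kv2}, so the trace condition reduces, cyclic class by cyclic class, to the stated family of identities as $r$, $(e_i)$ and $(\sigma_i)$ range over all admissible data.

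The main obstacle is purely bookkeeping: one must carefully match the $r$ indices $i\in\mathbb Z/r\mathbb Z$ in \eqref{eq:refo10:kv2} with the $y$-ending rotations of $W$ and confirm that the remaining rotations do not contribute. A minor subtlety is that $W$ may have a nontrivial cyclic symmetry of order $d\mid r$; in that case the $r$ summands in \eqref{eq:refo10:kv2} repeat with multiplicity $r/d$, giving $(r/d)$ times the sum over the genuine cyclic class. Since we work over $\mathbb Q$, this rescaling does not affect the vanishing, and the equivalence between \eqref{KV2} and \eqref{eq:refo10:kv2} follows.
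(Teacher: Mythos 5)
Your proposal is correct and follows essentially the same route as the paper: reduce \eqref{KV2} to $\tr\bigl(\q\circ\pi_Y(f)\bigr)=0$, pair against cyclic classes, observe that among the $w$ cyclic rotations of a word only the $r$ ending in a $y$-letter have nonzero coefficient in $\q\circ\pi_Y(f)$, and absorb the multiplicity arising from cyclic symmetry into a harmless nonzero rational factor. The paper packages this through the pairing $\langle \tr(h)\mid\tr(W)\rangle_{\mathrm{Cyc}(\mathbb A)}$ and the constant $c(W)/w$, while you argue coefficient-by-coefficient, but the content is identical.
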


\begin{proof}
It is immediate that \eqref{KV2} for $F$ is equivalent to 
\begin{equation}\label{reformualtion-quasi-push-inv}
\tr(\q\circ\pi_Y(f))=0.
\end{equation}

For $W=u_1\cdots u_w\in\mathbb A_w$ with $u_i=x$ or $y_\sigma$ ($\sigma\in\Gamma$),
we define  its cyclic permutation by $\cp(W):=u_2\cdots u_wu_1\in\mathbb A_w$.
We put $c(W)$ to be the number of its cycles. It divides $w$.
Then the natural pairing in $\mathrm{Cyc}(\mathbb A)$ is calculated by the one in $\mathbb A$
as follows: 
%\begin{align*}
%\langle \tr &\{(x+\sum_{\tau\in\Gamma}{y_\tau})^w-x^w-\sum_{\tau\in\Gamma}y_\tau^w\}\Bigm|\tr (W)\rangle_{\mathrm{Cyc}(\mathbb A)} \\ 
%&=\bigl\langle
%(x+\sum_{\tau\in\Gamma}{y_\tau})^w-x^w-\sum_{\tau\in\Gamma}y_\tau^w\Bigm|
%\sum_{i=0}^{c(W)-1}\cp^i(W)
%\bigr\rangle_{\mathbb A}
%=c(W).
%\end{align*}
%While 
When $W=x^{e_0}y_{\sigma_1}\cdots x^{e_{r-1}}y_{\sigma_r}$,
we have
\begin{align*}
\langle \tr &(
\q\circ\pi_Y(f)
)
%\sum_{\tau\in\Gamma}\tau(f)_{y_\tau}y_\tau
\Bigm|\tr (W)\rangle_{\mathrm{Cyc}(\mathbb A)} 
=\bigl\langle
\q\circ\pi_Y(f)
%\sum_{\tau\in\Gamma}\q\circ\pi_Y(f)
\Bigm|
\sum_{i=0}^{c(W)-1}\cp^i(W)
\bigr\rangle_{\mathbb A} \\
&=\frac{c(W)}{w}\bigl\langle
\q\circ\pi_Y(f)
%\sum_{\tau\in\Gamma}\q\circ\pi_Y(f)
\bigm|
\sum_{i=0}^{w-1}\cp^i(W)
\bigr\rangle_{\mathbb A} 
=\frac{c(W)}{w}\sum_{i\in\Z/r\Z}a(\q\circ\pi_Y(f)
%\sigma_{i}(f)_{y_{\sigma_i}}y_{\sigma_i}
:{}^{e_{i},\dots,e_{i+r-1},0}_{\sigma_{i+1},\dots,\sigma_{i+r}}).
\end{align*}
%Since $f^r\in\mathbb L_{w,r}$, we have
%$$
%f^r=(-1)^{w+1}
%\sum_{(\sigma_1,\dots,\sigma_r)\in\Gamma^r}
%\sum_{(e_0,\dots,e_r)\in\mathcal{E}_w^r} 
%a(f:{}_{\sigma_1,\dots,\sigma_r}^{e_0,\dots,e_r})
%x^{e_r}y_{\sigma_r}\cdots y_{\sigma_1}x^{e_0}.
%$$
%Therefore the condition \eqref{reformualtion-quasi-push-inv} for $f$ is equivalent to saying
%$$
%\sum_{i\in\Z/r\Z}a(\sigma_{i}(f)_{y_{\sigma_i}}y_{\sigma_i}
%%\sigma_{i}(f)
%:{}^{e_{i},\dots,e_{i+r-1},0}_{\sigma_{i+1},\dots,\sigma_{i+r}})%=0
%=wc
%$$
%for all $(e_0,\dots,e_{r-1},0)\in\mathcal{E}_w^r$ and all $1\leqslant r\leqslant w$.
%Here $\{i+1,\dots, i+r\}$ is chosen to be $\{0,\dots,r-1\}$.
%\begin{align*}
%\text{By  }&
%a(\sigma_{i}(f)_{y_{\sigma_i}}y_{\sigma_i}
%:{}^{e_{i},\dots,e_{i+r-1},0}_{\sigma_{i+1},\dots,\sigma_{i+r}})
%=
%a(\sigma_{i}(f)_{y_{\sigma_i}}
%:{}^{e_{i},e_{i+1},\dots,e_{i+r-1}}_{\sigma_{i+1},\sigma_{i+2},\dots,\sigma_{i+r-1}}) \\
%&=a(\sigma_{i}(f_{y_1})
%:{}^{e_{i},e_{i+1},\dots,e_{i+r-1}}_{\sigma_{i+1},\sigma_{i+2},\dots,\sigma_{i+r-1}})
%=a(f_{y_1}:{}^{e_{i},e_{i+1},\dots,e_{i+r-1}}_{\sigma_i^{-1}\sigma_{i+1},\sigma_i^{-1}\sigma_{i+2},\dots,\sigma_i^{-1}\sigma_{i+r-1}})\\
%&=a(f_{y_1}y_1:{}^{e_{i},e_{i+1},\dots,e_{i+r-1},0}_{\sigma_i^{-1}\sigma_{i+1},\sigma_i^{-1}\sigma_{i+2},\dots,\sigma_i^{-1}\sigma_{i+r-1},1})
%=a(f:{}^{e_{i},e_{i+1},\dots,e_{i+r-1},0}_{\sigma_i^{-1}\sigma_{i+1},\sigma_i^{-1}\sigma_{i+2},\dots,\sigma_i^{-1}\sigma_{i+r-1},1}),
%\end{align*}
So we get the claim.
\end{proof}

\begin{lem}\label{lem:refo2:KV2}
Let $f\in\mathbb L_w$. %with $w>2$. %Put  $\tilde f=f(x,-y)$.
% whose $F$ satisfying \eqref{KV1}.
Then \eqref{eq:refo10:kv2} for $f$ is equivalent to
the pus-neutrality \eqref{pus-neutral} for $\swap(\ma_{\tilde f})$, i.e.
\begin{equation}\label{reform:KV2}
\sum_{i\in\Z/r\Z}%\pus^i\circ\sigma_i\circ
\pus^i\circ\swap( M)^r
%\varia{v_1,\dots,v_r}{\sigma_1,\dots, \sigma_r}
\varia{\sigma_{1},\dots,\sigma_{r}}{v_{1},\dots,v_{r}}
%{1,\sigma_i^{-1}\sigma_{i-1},\sigma_{i-1}^{-1}\sigma_{i-2},\dots,\sigma_{i+2}^{-1}\sigma_{i+1}}
=0
%(-1)^{w+r+1}w c\sum_{i_1+\cdots+i_r=w}v_1^{i_1-1}\cdots v_r^{i_r-1}
\end{equation}
with $ M=\ma_{\tilde f}$
%and the same $c\in \mathbb Q$ 
for all $1\leqslant r\leqslant w$
and 
%$(\tau_1,\dots,\tau_r)\in\Gamma^r$ 
$\sigma_i\in\Gamma$ ($1\leqslant i\leqslant r$).
%satisfying $\tau_1\cdots\tau_r=1$.
\end{lem}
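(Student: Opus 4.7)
The plan is to compute $\swap(\ma_{\tilde f})^r$ explicitly in terms of coefficients of $f$, then extract the coefficient of each monomial in $v_1,\dots,v_r$ from the pus-cyclic sum and match it with \eqref{eq:refo10:kv2}.

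First I would unfold Definition \ref{def:ma} through \eqref{eq:swap}. The telescoping $0+v_r+(v_{r-1}-v_r)+\cdots+(v_{r-k+1}-v_{r-k+2})=v_{r-k+1}$ of the $z$-arguments gives
\[
\swap(\ma_{\tilde f})^r\varia{\sigma_1,\dots,\sigma_r}{v_1,\dots,v_r}
=\sum_{(0,e_1,\dots,e_r)\in\mathcal E_w^r}a(\tilde f:{}^{0,e_1,\dots,e_r}_{(\sigma_1\cdots\sigma_r)^{-1},\dots,\sigma_1^{-1}})\,v_r^{e_1}v_{r-1}^{e_2}\cdots v_1^{e_r},
\]
where the restriction $e_0=0$ is forced by the factor $z_0^{e_0}=0^{e_0}$ in $\vimo$.

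Next I would shift the trailing zero from the first exponent slot to the last. Since $\tilde f\in\mathbb L_w$, the palindrome identity $a(\tilde f:{}^{e_0,\dots,e_r}_{\tau_1,\dots,\tau_r})=(-1)^{w-1}a(\tilde f:{}^{e_r,\dots,e_0}_{\tau_r,\dots,\tau_1})$ combined with $a(\tilde f:\cdot)=(-1)^r a(f:\cdot)$ rewrites the coefficient with the $0$ now sitting at the $e_r$-slot and lower indices $(\sigma_1^{-1},(\sigma_1\sigma_2)^{-1},\dots,(\sigma_1\cdots\sigma_r)^{-1})$. The definition of $\q$ identifies this sequence with the $\q$-image of $(\sigma_1^{-1},\dots,\sigma_r^{-1})$, and the vanishing trailing exponent permits replacing $f$ by $\pi_Y(f)$. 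After the reindexing $e_k'=e_{r+1-k}$, the expression becomes
\[
\swap(\ma_{\tilde f})^r\varia{\sigma_1,\dots,\sigma_r}{v_1,\dots,v_r}
=(-1)^{w+r-1}\!\!\sum_{(e_1,\dots,e_r,0)\in\mathcal E_w^r}\!\!a(\q\circ\pi_Y(f):{}^{e_1,\dots,e_r,0}_{\sigma_1^{-1},\dots,\sigma_r^{-1}})\,v_1^{e_1}v_2^{e_2}\cdots v_r^{e_r}.
\]

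Finally I would apply $\pus^i$ and sum over $i\in\Z/r\Z$. Since $\pus^i$ rotates the $\sigma$'s and $v$'s jointly, the parenthetical identity in \eqref{pus-neutral} turns the sum into $\sum_{k\in\Z/r\Z}\swap(\ma_{\tilde f})^r\varia{\sigma_{k+1},\dots,\sigma_{k+r}}{v_{k+1},\dots,v_{k+r}}$. Extracting the coefficient of a fixed monomial $v_1^{e_1}\cdots v_r^{e_r}$ then gives $(-1)^{w+r-1}\sum_{k\in\Z/r\Z}a(\q\circ\pi_Y(f):{}^{e_{k+1},\dots,e_{k+r},0}_{\sigma_{k+1}^{-1},\dots,\sigma_{k+r}^{-1}})$, with indices taken mod $r$. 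Pus-neutrality \eqref{reform:KV2} is equivalent to the vanishing of every such coefficient; after relabeling $\sigma_i\leftrightarrow\sigma_i^{-1}$ and shifting the cyclic index by one, this coincides with \eqref{eq:refo10:kv2}. The delicate point of the argument is the simultaneous bookkeeping of the palindrome identity for Lie elements, the $\tilde f\leftrightarrow f$ sign $(-1)^r$, and the $\q$-relabeling; once these three are coherently aligned the equivalence drops out by reading off coefficients.
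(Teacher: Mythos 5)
Your proposal is correct and follows the paper's own proof essentially step for step: unfold $\swap\circ\ma_{\tilde f}$ via $\vimo$ to force $e_0=0$, use the anti-palindrome identity for $\tilde f\in\mathbb L_w$ plus the sign $(-1)^r$ from $\tilde f\leftrightarrow f$ to move the zero to the last slot, apply the $\q$-relabeling of lower indices, then read off the coefficient of each monomial in the cyclic pus-sum and match it with \eqref{eq:refo10:kv2}. The only deviations are superficial (the index labelling $e_1,\dots,e_r$ vs $e_0,\dots,e_{r-1}$, and writing $(-1)^{w-1}$ for $(-1)^{w+1}$), so there is no substantive difference in method or content.
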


\begin{proof}
We decompose $\tilde f=\sum_r \tilde f^r$ and describe
$\tilde f^r\in  \mathbb L_{w,r}$ as in \eqref{eq: word expansion}.
%$$\tilde f^r=
%%=\sum_{(e_0,\dots,e_r)\in\mathcal{E}_w^r}
%%a(\tilde f)_{e_0,\dots,e_r}x^{e_0}y\cdots yx^{e_r}
%%\in\mathbb L_{w,r}.
%\sum_{(\sigma_1,\dots,\sigma_r)\in\Gamma^r}\sum_{(e_0,\dots,e_r)\in\mathcal{E}_w^r}
%a(\tilde f:{}_{\sigma_1,\dots,\sigma_r}^{e_0,\dots,e_r})
%x^{e_0}y_{\sigma_1}\cdots y_{\sigma_r}x^{e_r}
%\in\mathbb L_{w,r}.$$
%By $\tilde f^r\in\mathbb L_{w,r}$, we have $\anti(\tilde f^r)=(-1)^{w+1}\tilde f^r$. So
%\begin{align*}
%\tilde f^r
%&=(-1)^{w+1}
%\sum_{(\sigma_1,\dots,\sigma_r)\in\Gamma^r}
%\sum_{(e_0,\dots,e_r)\in\mathcal{E}_w^r} 
%a(\tilde f:{}_{\sigma_1,\dots,\sigma_r}^{e_0,\dots,e_r})
%x^{e_r}y_{\sigma_r}\cdots y_{\sigma_1}x^{e_0}\\
%&=(-1)^{w+1}
%\sum_{(\sigma_1,\dots,\sigma_r)\in\Gamma^r}
%\sum_{(e_0,\dots,e_r)\in\mathcal{E}_w^r} 
%a(\tilde f:{}_{\sigma_r,\dots,\sigma_1}^{e_r,\dots,e_0})
%x^{e_0}y_{\sigma_1}\cdots y_{\sigma_r}x^{e_r}.
%\end{align*}
Then
\begin{equation*}
\ma^r_{\tilde f}({}^{u_1,\dots,u_r}_{\sigma_1,\dots,\sigma_r})
=\underset{e_0=0}{\sum_{(e_0,\dots,e_r)\in\mathcal{E}_w^r}}
a(\tilde f:{}_{\sigma_1^{-1},\dots,\sigma_r^{-1}}^{e_0,\dots,e_r})
u_1^{e_{1}}(u_1+u_2)^{e_{2}}\cdots (u_1+\cdots+ u_r)^{e_{r}}.
\end{equation*}
By $\tilde f^r\in\mathbb L_{w,r}$, we have 
$$
a(\tilde f:{}^{e_0,\dots,e_{r}}_{\sigma_1,\dots,\sigma_r})
=(-1)^{w+1} a(\tilde f:{}^{e_r,\dots,e_{0}}_{\sigma_r,\dots,\sigma_1})
$$
because $\anti(\tilde f^r)=(-1)^{w+1}\tilde f^r$. So
\begin{align*}
\swap\circ \ma^r_{\tilde f}({}_{v_1,\dots,v_r}^{\sigma_1,\dots,\sigma_r})
&=\underset{e_0=0}{\sum_{(e_0,\dots,e_r)\in\mathcal{E}_w^r}}
a(\tilde f:{}_{(\sigma_1\cdots\sigma_r)^{-1},\dots,(\sigma_1\sigma_2)^{-1},\sigma_1^{-1}}^{\qquad e_0,e_1,\dots,e_r})
v_r^{e_{1}}v_{r-1}^{e_{2}}\cdots v_1^{e_{r}} \\
&=(-1)^{w+1}\underset{e_0=0}{\sum_{(e_0,\dots,e_r)\in\mathcal{E}_w^r}}
a(\tilde f:{}_{\sigma_1^{-1}, (\sigma_1\sigma_2)^{-1},\dots,(\sigma_1\cdots\sigma_r)^{-1}}^{\qquad e_r,\dots,e_1,e_0})
v_1^{e_{r}}v_2^{e_{r-1}}\cdots v_r^{e_{1}} \\
&=(-1)^{w+1}\underset{e_r=0}{\sum_{(e_0,\dots,e_r)\in\mathcal{E}_w^r}}
a(\tilde f:{}_{\sigma_1^{-1}, (\sigma_1\sigma_2)^{-1},\dots,(\sigma_1\cdots\sigma_r)^{-1}}^{\qquad\quad e_0,\dots,e_{r-1},e_r})
v_1^{e_{0}}v_2^{e_{1}}\cdots v_r^{e_{r-1}} \\
&=(-1)^{w+r+1}\underset{e_r=0}{\sum_{(e_0,\dots,e_r)\in\mathcal{E}_w^r}}
a(f:{}_{\sigma_1^{-1}, (\sigma_1\sigma_2)^{-1},\dots,(\sigma_1\cdots\sigma_r)^{-1}}^{\qquad e_0,\dots,e_{r-1},e_r})
v_1^{e_{0}}v_2^{e_{1}}\cdots v_r^{e_{r-1}} \\
&=(-1)^{w+r+1}\underset{e_r=0}{\sum_{(e_0,\dots,e_r)\in\mathcal{E}_w^r}}
a(\q\circ \pi_Y(f):{}_{\sigma_1^{-1}, \sigma_2^{-1},\dots,\sigma_r^{-1}}^{e_0,\dots,e_{r-1},0})
v_1^{e_{0}}v_2^{e_{1}}\cdots v_r^{e_{r-1}}. 
\end{align*}
Here in the last equality we use 
$$
a(\q\circ\pi_Y(h):{}_{\quad \tau_1,\dots,\tau_r}^{e_0,\dots,e_{r-1},0})
=a(h:{}_{\tau_1,\tau_1\tau_2,\dots,\tau_1\cdots\tau_r}^{\ e_0,\dots,e_{r-1},0})
$$
for any $h\in\pi ({\mathbb A})$.
Therefore we obtain 
\begin{align*}
\sum_{i\in\Z/r\Z}\pus^i & \circ\swap\circ\ma^r_{\tilde f}({}_{v_1,\dots,v_r}^{\sigma_1,\dots,\sigma_r}) \\&
=(-1)^{w+r+1}\sum_{i\in\Z/r\Z}\underset{e_r=0}{\sum_{(e_0,\dots,e_r)\in\mathcal{E}_w^r}}
a(\q\circ \pi_Y(f):{}_{\sigma_{i+1}^{-1},\dots,
\sigma_{i+r}^{-1}}^{ e_i,\dots,e_{i+r-1}, \ 0})
v_1^{e_{0}}v_2^{e_{1}}\cdots v_r^{e_{r-1}}.
%v_{i+1}^{e_0}v_{i+2}^{e_1}\cdots v_{i+r}^{e_{r-1}}\\
%&=(-1)^{w+1}\sum_{i\in\Z/r\Z}\underset{e_r=0}{\sum_{(e_0,\dots,e_{r-1},e_r)\in\mathcal{E}_w^r}}
%a(\tilde f:{}_{\sigma_{i+1}\cdots\sigma_{i+r},%\sigma_1\cdots\sigma_{r-1},
%\dots,\sigma_{i+1}\sigma_{i+2},
%\sigma_{i+1}}^{\qquad\quad e_i,\dots,e_{i+r-1}, \quad e_r})v_1^{e_0}v_2^{e_1}\cdots v_r^{e_{r-1}}.
\end{align*}
%which means
%$$
%\swap\circ\ma^r_{\tilde f}({}^{\qquad v_1,\dots,v_r}_{
%1,\sigma_r^{-1}\sigma_{r-1},\sigma_{r-1}^{-1}\sigma_{r-2},\dots,\sigma_2^{-1}\sigma_1})
%=(-1)^{w+1}\underset{e_r=0}{\sum_{(e_0,\dots,e_r)\in\mathcal{E}_w^r}}
%a(\tilde f:{}_{\sigma_r^{-1}\sigma_1,\dots,\sigma_r^{-1}\sigma_{r-1},1}^{\quad e_0,\dots,e_r})v_1^{e_0}v_2^{e_1}\cdots v_r^{e_{r-1}}.
%$$
%As is explained in the second line of the proof of \cite[Lemma 3.3]{S},
%the condition \eqref{reformualtion-quasi-push-inv} is equivalent to saying that
%$f_y$ is push-constant  for $w\alpha$ (cf. \cite[Definition 1.4]{S}),
%which is equivalent to saying
%$$
%\sum_{i\in\Z/r\Z}a(f)_{e_{i+1},\dots,e_{i+r},0}=w\alpha
%$$
%for all $(e_0,\dots,e_{r-1},0)\in\mathcal{E}_w^r$ and all $r$.
%Here $\{i+1,\dots, i+r\}$ is chosen to be $\{0,\dots,r-1\}$.
%We have Lemma \ref{lem:refo10:kv2}:
%Therefore our claim means
%\begin{equation*}
%	\sum_{i\in\Z/r\Z}
%	a(f:{}^{\ \qquad\quad e_{i},\ \dots, \quad e_{i+r-1},\quad 0}_{\sigma_{i+1}\cdots\sigma_{i+r},\ \dots,\ \sigma_{i+1}\sigma_{i+2},\ \sigma_{i+1}})=(-1)^rwc
%\end{equation*}
%for each $1\leqslant r\leqslant w$, $(\sigma_1,\dots,\sigma_r)\in\Gamma^r$ and
%$(e_0,\dots,e_{r-1},0)\in\mathcal{E}_w^r$ and $c\in\mathbb Q$.
%
%By using this and 
%$a(f:{}^{e_0,\dots,e_{r}}_{\sigma_1,\dots,\sigma_r})=(-1)^r a(\tilde f:{}^{e_0,\dots,e_{r}}_{\sigma_1,\dots,\sigma_r})$,
%we get 
%\begin{equation*}
%\sum_{i\in\Z/r\Z}\swap( M)^r
%\varia{v_{i+1},\dots,v_{i+r}}{\sigma_{i+1}, \dots,\sigma_{i+r}}=%0
%(-1)^{w+r+1}w c\sum_{i_1+\cdots+i_r=w}v_1^{i_1-1}\cdots v_r^{i_r-1},
%\end{equation*}
It is immediate to see that it is equivalent to \eqref{eq:refo10:kv2}.
\end{proof}

The following definition of
the mould version of $\krv_\bullet$ is suggested by our previous lemmas.

\begin{defn}\label{def:ARI-pspushpusnu}
We define the $\mathbb Q$-linear space
%$\ARI_{\KVi\ast \KVii}$ to be the set of moulds $ M$
$\ARI(\Gamma)_{\pspush/\pusnu}$
%\footnote{
%Here $\pspush$ stands for the pseudo-push invariance, that is,
%Ecalle's senary relation
%\eqref{senary relation} because it becomes the push-invaraince
% \eqref{push-invariant} 
%if we neglect $\teru$ in \eqref{senary relation}.
%%when $\mantar( M)$ is $\teru$-invariant.
%}
to be the subset of moulds $ M$ in $\ARI$
which satisfy the senary relation \eqref{senary relation} and 
whose $\swap$ satisfy the pus-neutrality 
%\Erase{ are pus-neutral \eqref{pus-neutral}, i.e.}
%for $\swap( M)$, i.e.
%$$\sum_{i\in\Z/m\Z}\swap( M)^m(v_{i+1},\dots,v_{i+m})=0$$
%which is 
\eqref{reform:KV2}. %  with $c=0$.
\end{defn}

The $\mathbb Q$-linear space $\ARI(\Gamma)_{\pspush/\pusnu}$
is filtered by the depth filtration $\{\Fil^m_\D\}_m$.
We note that $\Fil^2_\D\ARI(\Gamma)_{\pspush/\pusnu}\cap \ARI(\Gamma)_\al^{\fin,\pol}$
is identified with the finite and depth >1-part of
$\ARI^\pol_{\al+\mathrm{sen*circconst}}$
in \cite[Proposition 28]{RS}
when $\Gamma=\{e\}$.
It is because their formula (79), actually (81), agrees with
\eqref{senary relation} for a mould in $\ARI(\Gamma)_\al^{\fin,\pol}$
by Lemma \ref{lem:Embd:1}.

%By definition, 
There is an embedding of $\mathbb Q$-linear space 
\begin{equation}\label{eq:inclusion grARI}
\gr_{\D}\ARI(\Gamma)_{\pspush/\pusnu}\hookrightarrow
\ARI(\Gamma)_{\push/\pusnu}
\end{equation}
that is, the associated graded quotient $\gr_{\D}\ARI(\Gamma)_{\pspush/\pusnu}$
of the filtered $\mathbb Q$-linear space
$\ARI(\Gamma)_{\pspush/\pusnu}$ %, \{\Fil_{\D}^m\ARI_{\pspush/\pusnu}\}_m)$
is embedded to $\ARI(\Gamma)_{\push/\pusnu}$ introduced in Definition \ref{def:ARIpushpusnu}.
It is because %the associated graded quotient of Ecalle's senary relation
%\lq $\gr_{\D}$\eqref{senary relation}' becomes 
the push-invariance \eqref{push-invariant} is associated graded with
\eqref{senary relation}
by $\gr_{\D}(\teru)=\id$ and $\mantar\circ\mantar=\id$.

%Recall that the depth $>1$ part of $\krv(\Gamma)_\bullet$ is denoted by $\Fil_{\D}^{2}\krv(\Gamma)_\bullet$ .

\begin{thm}\label{thm:reform:krv}
The map sending $F\in\mathbb A\mapsto\ma_{\tilde f}\in\mathcal M({\mathcal F};\Gamma)$ induces an isomorphism %between
of filtered $\mathbb Q$-linear spaces
\begin{equation}\label{eq: KRV=ARI al+sena/pusnu}
%\Fil_{\D}^{2}
\krv(\Gamma)_\bullet\simeq%\Fil^2_{\D}
\ARI(\Gamma)_{\pspush/\pusnu}\cap\ARI(\Gamma)_\al^{\fin,\pol}.
\end{equation}
\end{thm}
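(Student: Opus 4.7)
The plan is to chain together the equivalences already established in Lemmas \ref{lem:refo1:KV1}--\ref{lem:refo2:KV2} with the mould isomorphism from Proposition \ref{prop:MT=ARIal}. First I would observe that the assignment $F \mapsto \tilde f$ in \eqref{eq:f and F} (i.e., substituting $x \mapsto z = -x - \sum_\sigma y_\sigma$ and then $y_\sigma \mapsto -y_\sigma$) is a grade-preserving $\mathbb{Q}$-linear automorphism of $\mathbb{L}_w$ which strictly preserves the depth filtration $\Fil_{\D}^\bullet$ because only the $x$-variable is affected by any depth-mixing; more precisely $F \in \Fil_{\D}^d \mathbb{L}_w$ iff $\tilde f \in \Fil_{\D}^d \mathbb{L}_w$. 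Composing with the isomorphism $\ma: \mt \xrightarrow{\sim} \ARI(\Gamma)_\al^{\fin,\pol}$ of Proposition \ref{prop:MT=ARIal} gives a filtered $\mathbb{Q}$-linear bijection between $\mathbb{L}_w$ (with $w > 1$) and the weight-$w$ part of $\ARI(\Gamma)_\al^{\fin,\pol}$, so it remains only to match the two defining conditions on each side.

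Next I would handle the equivalence (KV1) $\Longleftrightarrow$ senary relation \eqref{senary relation} for $M = \ma_{\tilde f}$. By Lemma \ref{lem:refo1:KV1}, \eqref{KV1} for $F$ is equivalent to the symmetry relation \eqref{eq:refo1:KV1} for $\tilde f$ holding for every $\gamma \in \Gamma$. By Lemma \ref{lem:refo2:KV1}, this in turn is equivalent to the senary relation \eqref{senary relation} holding for $\ma_{\tilde f}$ in every depth $1 \leqslant r \leqslant w$. Chaining these gives exactly what is needed.

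Similarly, for (KV2), Lemma \ref{lem:refo10:kv2} reduces \eqref{KV2} to the cyclic-sum identity \eqref{eq:refo10:kv2} on the coefficients of $\q \circ \pi_Y(f)$, and Lemma \ref{lem:refo2:KV2} translates \eqref{eq:refo10:kv2} into the pus-neutrality \eqref{reform:KV2} of $\swap(\ma_{\tilde f})$. Combining these equivalences shows that under the bijection $F \mapsto \ma_{\tilde f}$, the pair of conditions (KV1)+(KV2) defining $\krv(\Gamma)_w$ corresponds exactly to the pair (senary relation)+(pus-neutrality of swap) defining the intersection $\ARI(\Gamma)_{\pspush/\pusnu} \cap \ARI(\Gamma)_\al^{\fin,\pol}$ in weight $w$, yielding the claimed isomorphism as $\mathbb{Q}$-linear spaces.

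Finally, I would verify the filtration compatibility: both $\krv(\Gamma)_\bullet$ and $\ARI(\Gamma)_{\pspush/\pusnu} \cap \ARI(\Gamma)_\al^{\fin,\pol}$ carry the depth filtration $\Fil_\D^\bullet$, and the map $\ma$ from Proposition \ref{prop:MT=ARIal} is already filtered, with $\Fil_\D^d \mt$ mapped onto the subspace of moulds $M$ vanishing in depths $< d$. Since $F \in \Fil_\D^d \mathbb{L}_w$ iff $\tilde f \in \Fil_\D^d \mathbb{L}_w$ iff $\ma_{\tilde f}$ vanishes in depths $< d$, the bijection identifies the two filtrations. I do not expect any substantive obstacle here, as the bulk of the technical work has been distributed across the preparatory lemmas; the only delicate point is confirming that the senary relation is the \emph{exact} translation of (KV1) (not merely implied by it), which is guaranteed because Lemma \ref{lem:refo2:KV1} was stated as an equivalence using the $\mantar$-invariance of $\ma_{\tilde f}$ that holds automatically for Lie elements by \eqref{eq:mantar invariance for ma}.
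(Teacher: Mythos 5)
Your proposal is correct and takes essentially the same approach as the paper's own proof: factoring the map as $F\mapsto\tilde f\mapsto\ma_{\tilde f}$, invoking Proposition \ref{prop:MT=ARIal} for the filtered linear isomorphism $\mt\simeq\ARI(\Gamma)_\al^{\fin,\pol}$, and then chaining Lemmas \ref{lem:refo1:KV1}, \ref{lem:refo2:KV1}, \ref{lem:refo10:kv2}, \ref{lem:refo2:KV2} to match the two defining conditions. You have merely spelled out explicitly the content that the paper compresses into ``by our previous lemmas.''
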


\begin{proof}
The restriction of our map %to $\mt$ 
decomposes as
\begin{equation}\label{eq:krv into ARIal}
F\in\krv(\Gamma)_\bullet %\mt
\mapsto {\tilde f}\in\mt \mapsto \ma_{\tilde f}\in\ARI(\Gamma)_\al^{\fin,\pol}.
\end{equation}
By %$\tilde f=F(-x+y,y)$ and 
Proposition \ref{prop:MT=ARIal},
we see that it gives an isomorphism $\mt\simeq\ARI(\Gamma)_\al^{\fin,\pol}$ as (actually filtered) $\mathbb Q$-linear spaces.
%  gives a bijection
%between $\oplus_{(w,d)\neq (1,0)}\mathbb L_{w,d}$ and $\ARI_\al^{\fin,\pol}$.
%\Add{(Isomorphism as Lie algebras? References?)}
%The condition \eqref{pus-neutral} for $m=1$ implies $ M^1(x_1)=0$, which 
%corresponds to  %$\alpha=0$, i.e. %that means,
%$\tilde f\in\Fil^2_{\D}\mathbb L$.
%While $\tilde f\in\Fil^2_{\D}\mathbb L$ implies $\alpha=0$.
Thus we get the claim by our previous lemmas.
\end{proof}

\begin{rem}
When $\Gamma=\{e\}$, the above isomorphism \eqref{eq: KRV=ARI al+sena/pusnu} can be recovered by the composition 
of the isomorphisms (29),  (71) and Proposition 28
\footnote{
It looks that there is a slip on the isomorphism on \cite[Proposition 28]{RS} 
because its right hand side is completed {\it by depth} while its left hand side is not.
}
in \cite{RS}.
\end{rem}

%It turns that %$\ARI_{\pspush/\pusnu}^{\fin,\pol}$ 
%the right hand side
%forms a Lie algebra.
%
%\begin{cor}\label{cor:reform krv}
%The set $\ARI(\Gamma)_{\pspush/\pusnu}\cap\ARI(\Gamma)_\al^{\fin,\pol}$
%forms a Lie algebra under the $\ari_u$-bracket.
%\end{cor}
%
%\begin{proof}
%The map in the above theorem decomposes as
%\begin{equation}\label{eq:krv to ARIal}
%F\in \Fil_{\D}^{2}\krv(\Gamma)_\bullet\mapsto D_{\{\sigma(F)\},G}\in\sder^\Gamma\mapsto
%f\in\mt\mapsto
%\tilde f\in\mt\mapsto\ma_{\tilde f}\in\ARI(\Gamma)_\al^{\fin,\pol}.
%\end{equation}
%The first map is a Lie algebra homomorphism by definition.
%The second map is so because it is a restriction of the Lie algebra hommorphism
%$\sder^\Gamma\to\tder$ which is induced by the change of variables $x\leftrightarrow z$ on $\mathbb L$.
%The third one is so because it is induce by the change of variable $y\mapsto -y$.
%The last map is so by Proposition \ref{prop:MT=ARIal}.
%Hence the bracket \eqref{bracket} corresponds to the $\ari_u$-bracket.
%\end{proof}
%Actually it is given by  Ecalle's ari-bracket (!?).
%While we do not know 
The authors are not sure
%It would be an interesting problem to see 
if the bigger space $\ARI(\Gamma)_{\pspush/\pusnu}$ is equipped with a structure of Lie algebra under the $\ari_u$-bracket
or not
although we show that a related space $\ARI(\Gamma)_{\push/\pusnu}$ forms a Lie algebra
in Theorem \ref{thm: ARIpushpusnu Lie}.
%By considering a $\Gamma$-variant of the map ${\rm Adari(pal)}:\ARI_{\underline{\al}/ \underline{\al}}\rightarrow\ARI_{\underline{\al}/ \underline{\rm il}}$ introduced in \cite[\S 4.7]{E-flex}, we may be able to show that $\ARI(\Gamma)_{\pspush/\pusnu}$ forms a Lie algebra under the $\ariu$-bracket.
%
%\begin{screen}
%To do: Does $\ARI_{\pspush/\pusnu}$  form a Lie algebra under the ari-bracket?
%\end{screen}

%%%%%%%%%%%%%%%%%%%%%%%%%%%%%%%%%%%%%%%%%%%%%%%%%%%%%%%%%%%%
\subsection{Kashiwara-Vergne bigraded Lie algebra}\label{sec:Kashiwara-Vergne bigraded Lie algebra}
Based on our arguments in the previous subsection,
we introduce
a $\Gamma$-variant $\lkrv(\Gamma)_{\bullet\bullet}$
of the Kashiwara-Vergne bigraded Lie algebra $\lkrv$  (\cite{RS})
in Definition \ref{defn:lrkv}
%of the graded Lie algebra $\krv(\Gamma)_\bullet$ and show that it forms a bigraded Lie algebra.
and give a mould theoretical interpretation
in Theorem \ref{thm:reform:krv:bigrade}.
As a corollary we show that it forms a Lie algebra in Theorem \ref{thm:lkrv Lie alg}.

\begin{defn}\label{defn:lrkv}
Kashiwara-Vergne bigraded Lie algebra is defined to be the bigraded $\mathbb Q$-linear space 
$\lkrv(\Gamma)_{\bullet\bullet}=\oplus_{w>1,d>0}\lkrv(\Gamma)_{w,d}$,
where $\lkrv(\Gamma)_{w,d}$ is the $\mathbb Q$-linear space
consisting of $\bar F\in \gr^d_{\D}\mathbb L_w$ whose lift
$F\in\Fil^d_{\D}\mathbb L_w$ satisfies the following relations,
\lq the {leading-terms}' of \eqref{KV1} and \eqref{KV2}, 
%$$f:=F(z,y)\in\mathbb L_{w,m}$$ 
%such that  there exists a certain $G=G(F)\in\mathbb L_w$ 
\begin{align}\tag{LKV1}
\label{DKV1}
& [x,G]+\sum_{\tau\in\Gamma}[y_\tau,\tau(F)] \equiv 0 \mod  \Fil_{\D}^{d+2} \mathbb L_{w+1},\\
%\end{equation}
%with $G=G(F)\in\mathbb L_w$ 
%and, when $d>1$, 
%\begin{equation}
\tag{LKV2}
\label{DKV2}
& \tr\circ\q\circ\pi_Y(F)
%\tr (G_x x+\sum_{\tau\in\Gamma}\tau(F)_{y_\tau} y_\tau) 
\equiv 0 \mod  \tr(\Fil_{\D}^{d+1} \mathbb A_{w}) 
\end{align}
for a certain $G=G(F)\in\mathbb L_w$. 
\end{defn}
%
%We have $\lkrv_{w,d}=\{0\}$ when $w=2$.
We note that such $G\in\mathbb L_{w}$ is in $\Fil_{\D}^{d+1} \mathbb L_{w}$ and
is uniquely determined modulo $\Fil_{\D}^{d+2} \mathbb L_{w}$
by \eqref{DKV1}.
%%We denote the $(w,m+1)$-part of $G$ by $G^{(w,m+1)}$
%%and the degree $(w,m)$-part of $F$ by $F^{(w,m)}$,
%%which is actually equal to $(-1)^{w-m}f$.
%%There is a well-defined inclusion 
%%$$\lkrv_{\bullet\bullet}\hookrightarrow\tder$$
%%sending $F\in\lkrv_{w,m}$
%%to $D_{F^{(w,m)},G^{(w,m+1)}}\in\tder$.
%%We transmit  the Lie bracket \eqref{bracket} of $\tder$
%%by the inclusion.
%%The space $\lkrv_{\bullet\bullet}$ is embedded to $\tder$
%%by the correspondence $F\mapsto D_{F,G(F)}$.
We note that, by  \eqref{DKV2} and $\dim\mathbb L_{w,1}=1$, we have
%\begin{prop}
 $$\lkrv_{w,d}=\{0\} \qquad  \text{ for  } d=1.$$

\begin{rem}
When $\Gamma=\{e\}$, our definition of $\lkrv(\Gamma)_{\bullet\bullet}$  does not agree with
that of $\mathfrak{lkv}$ in \cite[Definition 5 and 10]{RS},
which is constructed as a Lie polynomial version of $\ARI(\Gamma)_{\push/\pusnu}$.
However their depth>1-parts  are eventually isomorphic to
our depth>1-parts 
via Theorem \ref{thm:reform:krv:bigrade}. 
%$\ARI_{\push/\pusnu}$.
\end{rem}

%\begin{rem}
By definition, there is an inclusion of bigraded $\mathbb Q$-linear spaces
\begin{equation}\label{eq:inclusion grKRV}
\gr_{\D}\krv(\Gamma)_\bullet \hookrightarrow
\lkrv(\Gamma)_{\bullet\bullet},
\end{equation}
which generalizes \cite[Proposition 2]{RS},
that is, %the (depth) degree >1 part of
the associated graded $\mathbb Q$-linear space $\gr_{\D}\krv(\Gamma)_\bullet$
of the filtered Lie algebra $\krv(\Gamma)_\bullet$ %, \{\Fil_{\D}^m\krv\}_m)$
is embedded to $\lkrv(\Gamma)_{\bullet\bullet}$.
We do not know if it is an isomorphism.
%\end{rem}

%%%%%%%%%%%%%%%%%%%%%%%%%%%%%%%%%%%%%%%%%%%%%%%%%%%%%%%%%%%%
%\subsection{Mould theoretic reformulation}\label{sec:Mould theoretic reformulation:bigraded version}
%We give a reformulation of the definition of $\lkrv(\Gamma)_{\bullet\bullet}$ in terms of moulds in Theorem \ref{thm:reform:krv:bigrade}.
%Our arguments are extensions of the ones in \S \ref{subsec:Mould theoretic reformulation:graded version}.

For $F\in\Fil^d_{\D}\mathbb L_w$,
we put $\f$ to be the element in $\mathbb L_{w,d}$ corresponding to
$(-1)^{w-d}\bar F\in\gr^m_{\D}\mathbb L_w$ under the natural identification
$\gr^d_{\D}\mathbb L_w\simeq \mathbb L_{w,d}$.
By abuse of notation,
\begin{equation}\label{eq:new f}
\f=(-1)^{w-d}\bar F.
\end{equation}
We write $\f=\f_xx+\sum_\sigma\f_{y_\sigma}y_\sigma$.
We also put
$$\tilde \f=\f(x,(-y_\sigma)_\sigma)$$
and write $\tilde\f=\tilde\f_xx+\sum_\sigma\tilde\f_{y_\sigma}y_\sigma$.
%We use the following reformulation of $\lkrv_{\bullet\bullet}$.
%
%\begin{lem}
%The space $\lkrv_{w,m}$ can be identified with the $\mathbb Q$-linear space
%consisting of $F\in\Fil^m_{\D}\mathbb L_w$ with
%$$f:=F(z,y)\in\mathbb L_{w,m}$$ 
%such that  there exists a certain $G=G(F)\in\mathbb L_w$ 
%satisfying \eqref{DKV1} and \eqref{DKV2}
%\end{lem}
%
%\begin{proof}
%\end{proof}

The following is a bigraded variant of Lemma \ref{lem:refo1:KV1}.

\begin{lem}\label{lem:refo1:DKV1}
Let  $F\in\Fil^d_{\D}\mathbb L_w$ % be with $f\in\mathbb L_{w,m}$
for $w\geqslant 1$. 
Then \eqref{DKV1} for $F$ is equivalent to 
%the anti-palindrome for $\tilde\f_y$.
%modulo $F_{\D}^{m}\mathbb L_{w-1}$.
\begin{equation}\label{eq::refo1:LKV1}
%\alpha(\tilde\f)_{y_\beta}= (-1)^{w-1}\anti(\beta(\tilde\f)_{y_\alpha})
\tilde\f_{y_\gamma}= (-1)^{w-1}\gamma(\anti(\tilde\f_{y_{{\gamma}^{-1}}}))
\end{equation}
for any %$\alpha, \beta$
$\gamma\in\Gamma$.
\end{lem}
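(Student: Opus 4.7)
The plan is to mirror the proof of Lemma \ref{lem:refo1:KV1} while working modulo $\Fil^{d+2}_{\D}\mathbb L_{w+1}$ and extracting only the leading-depth information. First I would expand $H':=\sum_{\tau\in\Gamma}[y_\tau,\tau(F)]$ exactly as in \eqref{eqn for H}; since $F\in\Fil^d_{\D}\mathbb L_w$ the whole of $H'$ automatically sits in $\Fil^{d+1}_{\D}\mathbb A_{w+1}$. The key observation is that $[x,G]=xG-Gx$ contains no monomials that both start and end with a $y$-letter, so the ``$yy$-type'' piece $\sum_{\alpha,\beta}y_\alpha(\alpha(F)_{y_\beta}-\beta(F)^{y_\alpha})y_\beta$ must vanish modulo $\Fil^{d+2}_{\D}\mathbb A_{w+1}$. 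This translates to $\alpha(\bar F)_{y_\beta}=\beta(\bar F)^{y_\alpha}$ in $\gr^{d-1}_{\D}\mathbb A_{w-1}$ for every $\alpha,\beta\in\Gamma$.

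Conversely, I would produce the required $G$ by applying Lemma \ref{lem:aux H=[x,G]} to the leading-depth component of $H'$: under the vanishing just established, this leading part (a Lie element in $\mathbb L_{w+1,d+1}$) has no ``starts and ends with $y$'' words, so it can be written as $[x,\bar G]$ for some $\bar G\in\mathbb L_{w,d+1}$, and any lift $G\in\Fil^{d+1}_{\D}\mathbb L_w$ of $\bar G$ will then satisfy \eqref{DKV1}. At this point the problem is reduced to the identity $\alpha(\bar F)_{y_\beta}=\beta(\bar F)^{y_\alpha}$.

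Next, using that $\bar F\in\mathbb L_{w,d}$ is a homogeneous Lie element and hence $\bar F^{y_\sigma}=(-1)^{w-1}\anti(\bar F_{y_\sigma})$, I would rewrite this identity as $\alpha(\bar F_{y_{\alpha^{-1}\beta}})=(-1)^{w-1}\beta\cdot\anti(\bar F_{y_{\beta^{-1}\alpha}})$, apply $\alpha^{-1}$ and set $\gamma=\alpha^{-1}\beta$ to obtain $\bar F_{y_\gamma}=(-1)^{w-1}\gamma\cdot\anti(\bar F_{y_{\gamma^{-1}}})$. Finally I would pass to $\tilde\f$: substituting $\f=(-1)^{w-d}\bar F$ and using that $\f_{y_\gamma}\in\mathbb L_{w-1,d-1}$ together with $\tilde\f=\f(x,(-y_\sigma))$ gives $\tilde\f_{y_\gamma}=(-1)^d\f_{y_\gamma}$, so the common factors $(-1)^{w-d}$ and $(-1)^d$ cancel on both sides and the identity becomes \eqref{eq::refo1:LKV1}.

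The main obstacle I expect is the careful depth and sign bookkeeping: verifying in particular that the ``lift'' step really produces some $G\in\Fil^{d+1}_{\D}\mathbb L_w$ compatible with the depth filtration, since Lemma \ref{lem:aux H=[x,G]} is stated in the pure graded setting; in the bigraded situation we only get to prescribe $G$ modulo $\Fil^{d+2}$, which is exactly what \eqref{DKV1} requires. Once this is nailed down, the sign gymnastics needed to pass from $\bar F$ to $\tilde\f$ through the intermediate identity are essentially mechanical and should require no new ideas beyond those already present in the proof of Lemma \ref{lem:refo1:KV1}.
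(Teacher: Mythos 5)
Your proposal mirrors the paper's own argument step for step: you extract the same mod-$\Fil^{d+2}_{\D}$ condition on the $yy$-piece of $\sum_\tau[y_\tau,\tau(F)]$, invoke Lemma~\ref{lem:aux H=[x,G]} for the converse on the leading-depth component, use $\alpha(F)^{y_\beta}=(-1)^{w-1}\anti(\alpha(F)_{y_\beta})$, and finish by passing to $\f$ via \eqref{eq:new f} and then to $\tilde\f$. The sign and depth bookkeeping you flag as a potential obstacle is handled correctly (the factors $(-1)^{w-d}$ cancel and $\tilde\f_{y_\gamma}=(-1)^d\f_{y_\gamma}$ cancels on both sides), so this is the same proof, just with the final normalization steps spelled out a bit more explicitly than in the paper.
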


\begin{proof}
The proof goes  similarly to  Lemma \ref{lem:refo1:KV1}.

Firstly, we show that \eqref{DKV1} for $F$ is equivalent to
$\alpha(F)_{y_\beta}\equiv \beta(F)^{y_\alpha} \mod \Fil_{\D}^{d}\mathbb L_{w-1}$.
%when we write $F=F_xx+F_yy=F^xx+F^yy$.
Set $H=\sum_\sigma [y_\sigma,\sigma(F)]\in  \Fil_{\D}^{d+1}\mathbb L_{w+1}$.
%Then  we have
%\begin{equation}\label{eqn for H}
%H=yF_yy+yF_xx-yF^yy-xF^xy. %=xG^xx+yG^yx-xG_xx-xG_yy.
%\end{equation}

Assume \eqref{DKV1} for $F$.
Then $H\equiv Gx-xG\mod \Fil_{\D}^{d+2}\mathbb L_{w+1}$. %since we have $[y,F]+[x,G]=0$. 
So the depth $(d+1)$-part of $H$ has no words starting and ending in any  $y_\sigma$.
By \eqref{eqn for H}, we have 
$\alpha(F)_{y_\beta}\equiv \beta(F)^{y_\alpha}
%F_y\equiv F^y 
\mod \Fil_{\D}^{d}\mathbb L_{w-1}$.
%and write $F=F_xx+F_yy=F^xx+F^yy$ and $G=G_xx+G_yy=G^xx+G^yy$.

Conversely assume $\alpha(F)_{y_\beta}\equiv \beta(F)^{y_\alpha}
%F_y\equiv F^y 
\mod \Fil_{\D}^{d}\mathbb L_{w-1}$.
Then the depth $(d+1)$-part of $H$ has no words starting and ending in any $y_\sigma$.
%\Erase{By \cite[Proposition 2.2]{S}}
By Lemma \ref{lem:aux H=[x,G]}, there is a $G\in\mathbb L_{w}$
which express this part as $[G,x]$, i.e.
%such that 
$H \equiv [G,x]\mod \Fil_{\D}^{d+2}\mathbb L_{w+1}$.
%which is calculated to be 
%$G(F)\equiv s'(F_x)\mod \Fil_{\D}^{m+2}\mathbb L_{w}$.
Whence we get \eqref{DKV1}.

Secondly,
by $\alpha(F)\in\mathbb L$, we have $\alpha(F)^{y_\beta}=(-1)^{w-1}\anti(\alpha (F)_{y_\beta})$.
Therefore $\alpha(F)_{y_\beta}\equiv \beta(F)^{y_\alpha}
%F_y\equiv F^y 
\mod \Fil_{\D}^{d}\mathbb L_{w-1}$
is equivalent to %the anti-palindrome for $F_y$ modulo $\Fil_{\D}^{d}\mathbb L_{w-1}$, i.e.
$$\alpha(F)_{y_\beta}\equiv (-1)^{w-1}\anti(\beta(F)_{y_\alpha}) \mod \Fil_{\D}^{d}\mathbb L_{w-1}.$$

Lastly,
by \eqref{eq:new f} and $\alpha(F)_{y_\beta}=\alpha(F_{y_{\alpha^{-1}\beta}})$,
%by \cite{S} Proposition 2.6, we have $f_y-f_x=F_y(-x-y,y)$.
%Therefore 
%the anti-palindrome for $F_y$ modulo $\Fil_{\D}^{d}\mathbb L_{w-1}$ 
it is equivalent to
%the anti-palindrome for $\f_y$ %modulo $\Fil_{\D}^{d}\mathbb L_{w-1}$
%by $f_x\in \Fil_{\D}^{d}\mathbb L_{w-1}$.
%Since $f_y\in \mathbb L_{w-1,d-1}$, 
%actually it is equivalent to the anti-palindrome for $f_y$,
%$\alpha(\f)_{y_\beta}= (-1)^{w-1}\anti(\beta(\f)_{y_\alpha})$,
$\alpha(\f_{y_{\alpha^{-1}\beta}})= (-1)^{w-1}\anti(\beta(\f_{y_{\alpha\beta^{-1}}}))$,
so for $\tilde \f_y$.
\end{proof}

The following might be regarded as  a bigraded variant of  Lemma \ref{lem:refo2:KV1}.

\begin{lem}%[cf. \cite{S} Appendix A]
\label{lem:refo2:DKV1}
Let $\tilde \f\in\mathbb L_{w,d}$ with $w>1$. %Put  $\tilde f=f(x,-y)$.
Then \eqref{eq::refo1:LKV1} for any $\gamma\in\Gamma$
%anti-palindrome for $\tilde \f_y$
is equivalent to $\push$-invariance \eqref{push-invariant}
%\begin{equation}\label{push-invariant}
% M=\push( M)
%\end{equation}
for $ M=\ma_{\tilde \f}$.
\end{lem}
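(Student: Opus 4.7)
The plan is to specialize the computation in the proof of Lemma \ref{lem:refo2:KV1} to the pure-depth setting. Since $\tilde\f \in \mathbb L_{w,d}$ has pure depth $d$, the mould $M = \ma_{\tilde\f}$ is concentrated in depth $d$, that is, $M^r = 0$ for $r \neq d$. Because $\push$ preserves depth, the condition $\push(M) = M$ is automatic at $r \neq d$ and reduces to a single identity at $r = d$, namely
$$M^d\varia{-u_1-\cdots-u_d,\ u_1,\ \dots,\ u_{d-1}}{\sigma_d^{-1},\ \sigma_1\sigma_d^{-1},\ \dots,\ \sigma_{d-1}\sigma_d^{-1}} = M^d\varia{u_1,\ \dots,\ u_d}{\sigma_1,\ \dots,\ \sigma_d}.$$

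First I would expand \eqref{eq::refo1:LKV1} in terms of $\vimo$, using exactly the formulas \eqref{vimo y formula} and \eqref{vimo x formula}, exactly as \eqref{eq:refo1:KV1} was expanded into \eqref{LHS of eq:refo1:KV1-ver:2} and \eqref{RHS of eq:refo1:KV1-ver:2}. The crucial simplification is that the correction terms of the form $\frac{1}{z_r}\{\vimo^r_{\tilde\f^r}(\cdots) - \vimo^r_{\tilde\f^r}(\cdots)\}$ at $r = d-1$ vanish, since $\tilde\f^{d-1} = 0$, while the leading term $\vimo^{r+1}_{\tilde\f^{r+1}}$ is nonzero only when $r+1 = d$. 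Hence \eqref{eq::refo1:LKV1} for all $\gamma \in \Gamma$ collapses to the single family of identities
$$\vimo^d_{\tilde\f}(z_0, \dots, z_{d-1}, 0;\, \tau_1, \dots, \tau_{d-1}, \gamma) = (-1)^{w-1}\vimo^d_{\tilde\f}(z_{d-1}, \dots, z_0, 0;\, \gamma^{-1}\tau_{d-1}, \dots, \gamma^{-1}\tau_1, \gamma^{-1})$$
for $\gamma, \tau_1, \dots, \tau_{d-1} \in \Gamma$, which is the pure-depth incarnation of \eqref{LHS of senary of mantar invariant-2}$=$\eqref{RHS of senary of mantar invariant-2}. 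From the same collapse viewpoint, the senary relation \eqref{senary relation} itself reduces at $r=d$ to $M^d = \push(M)^d$, via $\teru(M)^d = M^d$ (the $\vimo^{d-1}$ correction being zero) and $\mantar(M)^d = M^d$ by \eqref{eq:mantar invariance for ma}.

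Second I would match the above family of $\vimo$-identities with the reduced push-invariance of $M^d$ displayed above. Using the translation invariance \eqref{eq:translation invariance of vimo} (applicable since $\tilde\f$ is Lie) together with the change of variables $z_0 = -v_1$, $z_{r-i}=v_{i+1}-v_1$ for $1\leqslant i \leqslant r-1$ (the same substitution preceding \eqref{LHS of senary of mantar invariant-2}), and invoking the homogeneity of $\vimo^d_{\tilde\f}$ of degree $w-d$ to absorb a sign, both sides transform into the two sides of the push-invariance identity for $\ma^d_{\tilde\f}$. The main obstacle is merely the bookkeeping of the signs and $\Gamma$-labels through these substitutions; no genuinely new input is required beyond the machinery already set up in the proof of Lemma \ref{lem:refo2:KV1}.
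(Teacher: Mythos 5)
Your proposal follows essentially the same route as the paper's proof: apply the $\vimo$-machinery from Lemma \ref{lem:refo2:KV1} (in particular \eqref{vimo y formula}), reduce \eqref{eq::refo1:LKV1} to the single family of $\vimo^d_{\tilde\f}$-identities \eqref{eqn:another refo:anti-pal}, and then match that family with push-invariance of $\ma_{\tilde\f}$ via the translation invariance \eqref{eq:translation invariance of vimo}, $\mantar$-invariance of $\vimo^d_{\tilde\f}$, and the change of variables preceding \eqref{LHS of senary of mantar invariant-2}. The conclusion and the mechanism are correct.

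Two small points deserve tightening, both arising from your framing "exactly as \eqref{eq:refo1:KV1} was expanded." First, you invoke \eqref{vimo x formula} as part of the expansion, but condition \eqref{eq::refo1:LKV1} contains no $\tilde\f_x$ term at all, unlike \eqref{eq:refo1:KV1}; the $\vimo^{d-1}$ correction terms you speak of therefore never appear, rather than appearing and then vanishing. Only \eqref{vimo y formula} is needed, which is precisely what the paper does when passing to \eqref{eqn:another refo:anti-pal}. Second, the observation that the senary relation "reduces at $r=d$ to $M^d=\push(M)^d$" is correct but needs a caveat if left as stated: for $M=\ma_{\tilde\f}$ concentrated in depth $d$, the senary relation also imposes a nontrivial constraint at $r=d+1$ (the $\frac{1}{u_{d+1}}\{M^d(\cdots)-M^d(\cdots)\}$ identity coming from $\teru$), so push-invariance is strictly weaker than the full senary relation even in pure depth. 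This is consistent with \eqref{eq::refo1:LKV1} being strictly weaker than \eqref{eq:refo1:KV1}: the "leading-term" condition drops exactly the $\tilde\f_x$ part. Your main line of argument avoids this pitfall, but the senary-relation aside should not be read as "senary $\iff$ push in pure depth."
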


%The above equation \eqref{push-invariant} can be regarded as Ecalle's identity
%\eqref{senary relation} with $\teru$ neglected.

\begin{proof}
The proof goes similarly to Lemma \ref{lem:refo2:KV1}.
The condition  \eqref{eq::refo1:LKV1} for $\gamma\in\Gamma$
%The anti-palindrome for $\tilde \f_y$ 
is reformulated to 
\begin{equation*}
%\vimo_{\tilde \f_y^{r-1}}^{r-1}  (z_0,\dots,z_{r-1}) 
%= (-1)^{w-1}
%\vimo_{\anti(\tilde \f_y^{r-1})}^{r-1}  (z_0,\dots,z_{r-1}),
\vimo^{r}_{\tilde \f_{y_{\gamma}}^{r}}\varia{z_0,\dots,z_{r}}{\tau_1,\dots,\tau_{r}}
%		+\vimo^r_{\tilde f_x^r}\varia{z_0,\dots,z_r}{\tau_1,\dots,\tau_r} \\
=(-1)^{w-1}\vimo^{r}_{\gamma\cdot\anti(\tilde \f_{y_{\gamma^{-1}}}^{r})}
		\varia{z_0,\dots,z_{r}}{\tau_1,\dots,\tau_{r}}
%+\vimo^r_{\gamma\cdot\anti(\tilde f_x^r)}\varia{z_0,\dots,z_r}{\tau_1,\dots,\tau_r}\right] \nonumber
\end{equation*}
for $1\leqslant r \leqslant w$, 
which is equivalent to
\begin{equation}\label{eqn:another refo:anti-pal}
%\vimo_{\tilde \f^{r}}^{r}  (z_0,\dots,z_{r-1},0)=
%(-1)^{w-1}\vimo_{\tilde \f^{r}}^{r}  (z_{r-1},\dots,z_0,0).
\vimo^{r+1}_{\tilde \f^{r+1}}\varia{z_0,\dots,z_{r},0}{\tau_1,\dots,\tau_{r},\gamma}
=(-1)^{w-1}\vimo^{r+1}_{\tilde \f^{r+1}}
		\varia{\qquad z_{r},\dots,z_{0},0}{\gamma^{-1}\tau_{r},\dots,\gamma^{-1}\tau_{1},\gamma^{-1}}
\end{equation}
for $1\leqslant r+1 \leqslant w$.

On the other hand, \eqref{push-invariant} can be shown to be equivalent to 
$$
%\vimo_{\tilde \f^r}^r(0,v_r,\dots,v_1)=
%(-1)^{w-1}\vimo_{\tilde \f^r}^r  (v_2,v_3,\dots,v_r,0,v_1)
\vimo^{r}_{\tilde \f^{r}}\varia{z_0,\dots,z_{r-1},0}{\tau_1,\dots,\tau_{r-1},\gamma}
=(-1)^{w-1}\vimo^{r}_{\tilde \f^{r}}
		\varia{\qquad z_{r-1},\dots,z_{0},0}{\gamma^{-1}\tau_{r-1},\dots,\gamma^{-1}\tau_{1},\gamma^{-1}}
$$
for $1\leqslant r \leqslant w$,
%By \eqref{A13} and the change of variable, we see that it 
which turns to be equivalent to
\eqref{eqn:another refo:anti-pal}.

\end{proof}

%The following is a bigraded variant of Lemma \ref{lem:refo1:KV2}.
%
%\begin{lem}\label{lem:refo1:DKV2}
%Let $F\in\Fil^d_{\D}\mathbb L_w$ for $w>2$. %and $d>1$.
%%Let  $F\in\mathbb L_w$  be with $f\in\mathbb L_{w,m}$ for $w>2$. 
%Assume \eqref{DKV1} for $F$.
%Then \eqref{DKV2} for $F$ is equivalent to 
%\begin{equation}\label{reformualtion-push-inv}
%\tr(\sum_{\tau\in\Gamma}\tau(\f)_{y_\tau}y_\tau)=0.
%%\alpha\cdot\tr\left((x+y)^w-x^w-y^w\right)
%\end{equation}
%%with %$z=-x-y$ and 
%%the same $\alpha\in\mathbb Q$ for $f(x,y)=F(z,y)$.
%%Actually $\alpha$ must be $0$ if $w$ is even.
%\end{lem}
%
%\begin{proof}
%By the same computations to the proof of Lemma \ref{lem:refo1:KV2},
%we have
%$$
%\tr (G_x x+\sum_\tau \tau(F)_{y_\tau} y_\tau)%\equiv
%\equiv (-1)^{w}\anti_{\mathrm{Cyc}}\left(\tr\left(\sum_\tau(\tau(F)_x-\tau(F)_{y_\tau})y_\tau\right)\right)
%\mod  \tr(\Fil_{\D}^{d+1} \mathbb A_{w}).
%$$
%Since $F_x\in \Fil_{\D}^{d} \mathbb L_{w-1}$, we have
%\begin{align*}
%\tr\left(\sum_\tau(\tau(F)_x-\tau(F)_{y_\tau})y_\tau\right)
%%\tr\left((F_x-F_y)y\right)
%&\equiv -\tr\left(\sum_\tau\tau(F)_{y_\tau}y_\tau\right) \\
%&= (-1)^{w-d+1}\tr(\sum_\tau\tau(\f)_{y_\tau}y_\tau)
%\mod \tr(\Fil_{\D}^{d+1} \mathbb A_{w}).
%\end{align*}
%Therefore \eqref{DKV2} is equivalent to 
%$\tr(\sum_\tau\tau(\f)_{y_\tau}y_\tau)\in\tr(\Fil_{\D}^{d+1} \mathbb A_{w})$,
%which is equivalent to
%\eqref{reformualtion-push-inv} 
%by $\tau(\f)_{y_\tau}y_\tau\in \mathbb A_{w,d}$.
%%Hence we \eqref{reformualtion-push-inv}  for $\tilde \f$.
%\end{proof}

The following is a bigraded variant of Lemma \ref{lem:refo2:KV2}.

\begin{lem}\label{lem:refo2:DKV2}
Then \eqref{DKV2} for $F\in\Fil^d_{\D}\mathbb L_w$ is equivalent to 
%Let $\f\in\mathbb L_{w,d}$ with $w>2$. % and $d>1$.
%Then \eqref{reformualtion-push-inv} for $\tilde\f$ is equivalent to %$\swap( M)$ being 
the $\pus$-neutrality \eqref{pus-neutral} for $ M=\swap(\ma_{\tilde \f})$,
i.e. \eqref{reform:KV2} for all $r\geqslant 1$,
%\begin{equation*}%\label{reform:DKV2}
%\sum_{i\in\Z/m\Z}\pus^i\circ\swap(\mathbb \ma_{\tilde \f})^m
%(v_1,\dots,v_m)=0. 
%\end{equation*}
%for $ M=\ma_{\tilde f}$.
\end{lem}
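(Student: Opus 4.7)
The plan is to mirror the two-step argument used for Lemmas \ref{lem:refo10:kv2} and \ref{lem:refo2:KV2}, but now interpreted at the leading depth $d$. First I would reduce \eqref{DKV2} to a purely bihomogeneous statement about $\f\in\mathbb L_{w,d}$. Since both $\pi_Y$ and $\q$ preserve the depth filtration and $F\in\Fil_\D^d\mathbb L_w$, the element $\tr\circ\q\circ\pi_Y(F)$ lies in $\tr(\Fil_\D^d\mathbb A_w)$ and has no terms of depth strictly less than $d$. Thus \eqref{DKV2} only sees the depth-$d$ residue; under the identification $\gr_\D^d\mathbb A_w\simeq\mathbb A_{w,d}$ this residue equals $\tr\circ\q\circ\pi_Y(\bar F)=(-1)^{w-d}\,\tr\circ\q\circ\pi_Y(\f)$ by \eqref{eq:new f}, so \eqref{DKV2} is equivalent to $\tr\circ\q\circ\pi_Y(\f)=0$ in $\tr(\mathbb A_{w,d})$. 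Applying the sign substitution $y_\sigma\mapsto -y_\sigma$, which multiplies every depth-$d$ word by the overall scalar $(-1)^d$, this is in turn equivalent to $\tr\circ\q\circ\pi_Y(\tilde\f)=0$ in $\tr(\mathbb A_{w,d})$.

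From here I would simply rerun the two stages of the non-graded proofs applied to the bihomogeneous element $\tilde\f\in\mathbb L_{w,d}$. The trace-cyclicity computation carried out in Lemma \ref{lem:refo10:kv2}, restricted to the single depth $r=d$, rewrites the above as the coefficient relations
\begin{equation*}
\sum_{i\in\Z/d\Z} a\bigl(\q\circ\pi_Y(\tilde\f):{}^{e_i,e_{i+1},\ldots,e_{i+d-1},0}_{\sigma_{i+1},\sigma_{i+2},\ldots,\sigma_{i+d}}\bigr)=0
\end{equation*}
for all $(\sigma_1,\dots,\sigma_d)\in\Gamma^d$ and $(e_0,\dots,e_d)\in\mathcal E_w^d$. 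These are precisely the relations that the proof of Lemma \ref{lem:refo2:KV2} converts into the $\pus$-neutrality \eqref{reform:KV2} of $\swap(\ma_{\tilde\f})$ at the single level $r=d$; because that conversion is done term by term, using only the coefficients $a(\tilde\f:{}^{e_0,\ldots,e_d}_{\sigma_1,\ldots,\sigma_d})$ of the single bihomogeneous component $\tilde\f=\tilde\f^d$, it transfers verbatim to the bigraded setting.

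Finally, since $\tilde\f\in\mathbb L_{w,d}$ is concentrated in depth $d$, the mould $\ma_{\tilde\f}$ is supported only at length $d$, that is $\ma_{\tilde\f}^r=0$ for $r\neq d$, so \eqref{reform:KV2} for all $r\geqslant 1$ reduces to its unique nontrivial instance $r=d$ and the chain of equivalences closes. I do not expect a genuine obstacle here beyond careful sign bookkeeping: one must verify that the overall scalar $(-1)^{w-d}(-1)^d$ relating $\tr\circ\q\circ\pi_Y(F)$ to $\tr\circ\q\circ\pi_Y(\tilde\f)$ is indeed a nonzero constant (so the two vanishing statements coincide), and that the formulas in the proof of Lemma \ref{lem:refo2:KV2} depend only on the coefficients of the bihomogeneous piece $\tilde f^d$ of $\tilde f$, so that the $\tilde f\rightsquigarrow\tilde\f$ replacement is legitimate.
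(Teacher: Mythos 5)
Your argument is correct and follows essentially the same route as the paper's proof: reduce \eqref{DKV2} to the vanishing of the depth-$d$ residue $\tr\circ\q\circ\pi_Y(\f)$, note that $\ma_{\tilde\f}$ is concentrated in depth $d$ so only $r=d$ matters, and then run the cyclicity and coefficient computations of Lemmas \ref{lem:refo10:kv2}--\ref{lem:refo2:KV2} for the single bihomogeneous piece $\tilde\f\in\mathbb L_{w,d}$. One small bookkeeping slip worth flagging: the Lemma~\ref{lem:refo2:KV2}-style computation relates $\pus$-neutrality of $\swap(\ma_{\tilde\f})$ to coefficient relations for $\q\circ\pi_Y(\f)$, not for $\q\circ\pi_Y(\tilde\f)$ as you wrote; since those two differ in depth $d$ only by the nonzero scalar $(-1)^d$, the vanishing conditions coincide and your chain of equivalences still closes, but the detour through the sign substitution $y_\sigma\mapsto -y_\sigma$ is unnecessary.
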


\begin{proof}
We note that actually only the terms for $r=d$ contributes in the above equation.
%Denote
%$\tilde \f=\sum_m\sum_{(e_0,\dots,e_m)\in\mathcal{E}_w^m}
%a(\tilde \f)_{e_0,\dots,e_m}x^{e_0}y\cdots yx^{e_m}
%\in\mathbb A_{w}$.
Decompose $\tilde \f$ as in
\eqref{eq: word expansion}.
Then by the arguments in the proof of  Lemma \ref{lem:refo2:KV2},
\eqref{reform:KV2} is equivalent to
%$$\sum_{i\in\Z/m\Z}a(\tilde \f)_{e_{i+1},\dots,e_{i+m},0}=0$$
$$
\sum_{i\in\Z/r\Z}
a(\q\circ \pi_Y(\f):{}_{\sigma_{i+1}^{-1},\dots,
\sigma_{i+r}^{-1}}^{ e_i,\dots,e_{i+r-1}, \ 0})=0
%a(\tilde \f:{}^{\qquad e_{i},e_{i+1},\dots,e_{i+m-1},0}_{\sigma_i^{-1}\sigma_{i+1},\sigma_i^{-1}\sigma_{i+2},\dots,\sigma_i^{-1}\sigma_{i+m-1},1})=0.
$$
for all $(\sigma_1,\dots,\sigma_r)\in\Gamma^r$ and
$(e_0,\dots,e_{r-1},0)\in\mathcal{E}_w^r$.
It is nothing but
$$
\tr\circ\q\circ\pi_Y(\f)=0,
$$
which is equivalent to \eqref{DKV2}.
%By $a(\f)_{e_{i+1},\dots,e_{i+m},0}=(-1)^ma(\tilde \f)_{e_{i+1},\dots,e_{i+m},0}$,
%it is equivalent to 
%\begin{equation*}\label{reformualtion-push-inv}
%\tr(\sum_{\tau\in\Gamma}\tau(\f)_{y_\tau}y_\tau)=0.
%\end{equation*}
%by $\f\in\mathbb L_{w,d}$.
\end{proof}

%\begin{proof}
%It follows from Lemma \ref{lem:refo1:DKV2}.
%\end{proof}
%The following definition of
%the mould  version of $\krv_{\bullet\bullet}$ is suggested by our previous lemmas.
%Let us denote the subset of $\ARI_{\push/ \pusnu}$
%consisting of finite polynomial-valued moulds  by $\ARI_{\push/ \pusnu}^{\fin,\pol}$
%Let us denote  the depth $>1$ part of $\lkrv_{\bullet\bullet}$ by 
%$\Fil_{\D}^{2}\lkrv_{\bullet\bullet}$ .
%$\lkrv_{\bullet,\bullet>1}$

\begin{thm}\label{thm:reform:krv:bigrade}
The map sending $\bar F\in\gr_\D\mathbb L\mapsto\ma_{\tilde\f}\in\mathcal M(\mathcal F;\Gamma)$ induces an isomorphism
of  bigraded $\mathbb Q$-linear spaces
%depth-graded Lie algebras
%between
$$%\Fil_{\D}^{2}
\lkrv(\Gamma)_{\bullet\bullet}\simeq
\ARI(\Gamma)_{\push/ \pusnu}\cap \ARI(\Gamma)_\al^{\fin,\pol}.
$$
\end{thm}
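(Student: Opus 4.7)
The plan is to mirror the proof of Theorem \ref{thm:reform:krv}, replacing its filtered identification by a bigraded one. By \eqref{eq:new f} the assignment $\bar F\mapsto\f$ is a $\mathbb Q$-linear isomorphism between $\gr^d_\D\mathbb L_w$ and the bihomogeneous piece $\mathbb L_{w,d}\subset\mt$; composing with the (filtered graded, and therefore bihomogeneously graded) isomorphism $\mt\xrightarrow{\sim}\ARI(\Gamma)^{\fin,\pol}_\al$ of Proposition \ref{prop:MT=ARIal} produces a $\mathbb Q$-linear bijection
$$
\Phi:\gr_\D\mathbb L\xrightarrow{\sim}\ARI(\Gamma)^{\fin,\pol}_\al,\qquad \bar F\longmapsto \ma_{\tilde\f},
$$
preserving the bigrading by weight and depth.

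The next step is to identify the image $\Phi\bigl(\lkrv(\Gamma)_{\bullet\bullet}\bigr)$ by translating each of the defining conditions \eqref{DKV1} and \eqref{DKV2} into a mould-theoretic condition on $\ma_{\tilde\f}$. Lemmas \ref{lem:refo1:DKV1} and \ref{lem:refo2:DKV1} combine to show that \eqref{DKV1} is equivalent to the $\push$-invariance \eqref{push-invariant} of $\ma_{\tilde\f}$, and Lemma \ref{lem:refo2:DKV2} shows that \eqref{DKV2} is equivalent to the $\pus$-neutrality \eqref{pus-neutral} of $\swap(\ma_{\tilde\f})$. Intersecting the two characterizations yields
$$
\Phi\bigl(\lkrv(\Gamma)_{\bullet\bullet}\bigr)=\ARI(\Gamma)_{\push/\pusnu}\cap\ARI(\Gamma)^{\fin,\pol}_\al,
$$
which is the asserted isomorphism.

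A preliminary point to dispose of is the well-definedness of \eqref{DKV1} and \eqref{DKV2} on $\gr^d_\D\mathbb L_w$: modifying the lift $F\in\Fil^d_\D\mathbb L_w$ by an element of $\Fil^{d+1}_\D\mathbb L_w$ alters the left-hand side of \eqref{DKV1} only within $\Fil^{d+2}_\D\mathbb L_{w+1}$ and that of \eqref{DKV2} only within $\tr(\Fil^{d+1}_\D\mathbb A_w)$, so both conditions depend only on $\bar F$. The substantive ingredients are Lemmas \ref{lem:refo2:DKV1} and \ref{lem:refo2:DKV2}, which are already in hand; once they are invoked the entire argument reduces to composing three bijections. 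The subtlest bookkeeping point I expect to face is tracking the sign $(-1)^{w-d}$ introduced in \eqref{eq:new f}, which is precisely what makes the palindrome-antipode relation supplied by Lemma \ref{lem:refo1:DKV1} match the $\push$-invariance of $\ma_{\tilde\f}$ on the nose; beyond this, the proof is essentially formal.
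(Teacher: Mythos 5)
Your proposal is correct and takes essentially the same route as the paper: decompose the map through $\mt$ via $\bar F\mapsto\tilde\f\mapsto\ma_{\tilde\f}$, invoke Proposition \ref{prop:MT=ARIal} for the second arrow, and use Lemmas \ref{lem:refo1:DKV1}, \ref{lem:refo2:DKV1} and \ref{lem:refo2:DKV2} to translate \eqref{DKV1} and \eqref{DKV2} into $\push$-invariance and $\pus$-neutrality. Your explicit check that the conditions descend to the graded quotient is a useful clarification but does not change the argument.
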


\begin{proof}
%By our previous lemmas we get that the above claimed spaces are isomorphic as
%graded $\mathbb Q$-linear spaces.
%\Add{Explain \cite[Corollary 3.3.4]{S-ARIGARI}.}
Our map decomposes as
\begin{equation}\label{eq:gr krv to ARIal}
\bar F\in\lkrv(\Gamma)_{\bullet\bullet}\mapsto {\tilde \f}\in\mt
\mapsto \ma_{\tilde\f}\in\ARI(\Gamma)_\al^{\fin,\pol}.
\end{equation}
The first map is injective and the second one is isomorphic by 
Proposition \ref{prop:MT=ARIal}.
Our claim follows by our previous lemmas.
\end{proof}

As a generalization of \cite[Proposition 1]{RS}, we obtain the following.

\begin{thm}\label{thm:lkrv Lie alg}
The space $\lkrv(\Gamma)_{\bullet\bullet}$ forms a 
bigraded Lie algebra under the bracket \eqref{eq:mt bracket}.
\end{thm}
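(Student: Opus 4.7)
The plan is to transport the Lie algebra structure through the bigraded isomorphism of Theorem \ref{thm:reform:krv:bigrade}, which factors as
\[
\lkrv(\Gamma)_{\bullet\bullet} \xrightarrow{\bar F \mapsto \tilde\f} \mt \xrightarrow{\ma} \ARI(\Gamma)_\al^{\fin,\pol},
\]
with overall image $\ARI(\Gamma)_{\push/\pusnu} \cap \ARI(\Gamma)_\al^{\fin,\pol}$. First I would observe that this target image is a bigraded Lie subalgebra of $(\ARI(\Gamma), \ari_u)$: it is the intersection of the Lie subalgebras $\ARI(\Gamma)_\al$ (Proposition \ref{ARIal Lie algebra}) and $\ARI(\Gamma)_{\push/\pusnu}$ (Theorem \ref{thm: ARIpushpusnu Lie}), further cut down by the condition of being finite and polynomial-valued, which is $\ari_u$-closed by inspection of Definition \ref{def:aritu}.

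Next, Proposition \ref{prop:MT=ARIal} tells us that $\ma$ is a bigraded Lie algebra isomorphism from $(\mt, \{\cdot,\cdot\})$ onto $(\ARI(\Gamma)_\al^{\fin,\pol}, \ari_u)$. Consequently the $\ma$-preimage of the Lie subalgebra identified in the previous step is a bigraded Lie subalgebra of $\mt$ under \eqref{eq:mt bracket}; by Theorem \ref{thm:reform:krv:bigrade} this preimage is precisely the image of $\lkrv(\Gamma)_{\bullet\bullet}$ under the injection $\bar F \mapsto \tilde\f$. Since the sign involution $y_\sigma \mapsto -y_\sigma$ acts as the scalar $(-1)^d$ on each $\mathbb L_{w,d}$ and is therefore a bigraded Lie algebra automorphism of $(\mt, \{\cdot,\cdot\})$, the assignment $\bar F \mapsto \tilde\f$ differs from the natural inclusion $\bar F \mapsto \f$ only by a scalar on each bigraded piece. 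Hence $\lkrv(\Gamma)_{\bullet\bullet}$, viewed inside $\mt$ via either injection, is a bigraded Lie subalgebra under \eqref{eq:mt bracket}, and bigradedness is preserved since every map and bracket in sight respects the pair (weight, depth).

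There is no substantive obstacle remaining here: the nontrivial work, namely the $\ari_u$-closure of push-invariance combined with pus-neutrality of the swap (Theorem \ref{thm: ARIpushpusnu Lie}), the $\ari_u$-closure of alternality (Proposition \ref{ARIal Lie algebra}), and the translation of brackets via $\ma$ (Proposition \ref{prop:MT=ARIal}), has already been carried out upstream. The only mild point requiring care is the equivalence of the two candidate injections $\bar F \mapsto \f$ and $\bar F \mapsto \tilde\f$ into $\mt$, which is handled by the scalar-sign remark above.
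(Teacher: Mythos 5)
Your proposal follows essentially the same route as the paper's own proof: both use the factorization of the isomorphism from Theorem \ref{thm:reform:krv:bigrade} through $\mt$, the Lie algebra isomorphism $\ma$ from Proposition \ref{prop:MT=ARIal}, and the fact that the mould-side target is a Lie subalgebra via Theorem \ref{thm: ARIpushpusnu Lie} and Proposition \ref{ARIal Lie algebra}. Your remark that $\bar F \mapsto \tilde\f$ differs from the natural inclusion by a bigraded scalar sign (so is a Lie map for the bracket \eqref{eq:mt bracket}) is a useful unpacking of what the paper leaves as "it is easy to see that the first map forms a Lie algebra homomorphism."
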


\begin{proof}
As for the morphism \eqref{eq:gr krv to ARIal},
the second map is Lie algebra homomorphism by Proposition \ref{prop:MT=ARIal}.
It is easy to see that the first map  forms a Lie algebra homomorphism
when we encode $\lkrv(\Gamma)_{\bullet\bullet}$ with the bracket \eqref{eq:mt bracket}.
Thus our claim follows because it is shown that 
%Since the Lie bracket on the first space is given by \eqref{eq:abuse bracket} and that on the second one is given by \eqref{eq:mt bracket},
%we get that the first map is Lie algebra homomorphism by $\tilde \f=(-1)^{w-d}\bar F(x,(-y_\sigma))$.
%By Theorem \ref{thm:reform:krv}, we see that the second map is also Lie algebra homomorphism.
%Therefore our map forms a Lie algebra isomorphism. 
%
%Since  $\tau(F_i)\in\Fil^{d_i}_{\D}\mathbb L_{w_i}$,
%$G_i\in\Fil^{d_i+1}_{\D}\mathbb L_{w_i}$ ($i=1,2$)
%and
%$F_3$ is calculated to be
%$$
%F_3=D_{\{\tau(F_1)\},G_1}(F_2)-D_{\{\tau(F_2)\},G_2}(F_1)+[F_1,F_2]
%$$
%by \eqref{bracket},
%we see that $F_3\in\Fil^{d_1+d_2}_{\D}\mathbb L_{w_1+w_2}$
%and $\bar F_3\in \gr^{d_1+d_2}_{\D}\mathbb L_{w_1+w_2}$ is determined only by $\bar F_1\in\gr^{d_1}_{\D}\mathbb L_{w_1}$ and $\bar F_2\in\gr^{d_2}_{\D}\mathbb L_{w_2}$
%because it is calculated as
%\begin{equation}\label{eq:abuse bracket}
%\bar F_3=D_{\{\tau(\bar F_1)\},0}(\bar F_2)-D_{\{\tau(\bar F_2)\},0}(\bar F_1)+[\bar F_1, \bar F_2].
%\end{equation}
%Therefore our Lie bracket is well-defined.
%\end{proof}
%
%From the above theorem
%we recover  Proposition \ref{prop:krv bigraded Lie} since 
$\ARI(\Gamma)_{\push/ \pusnu}$ forms a Lie algebra by Theorem \ref{thm: ARIpushpusnu Lie}
and so $\ARI(\Gamma)_\al$  does by Lemma \ref{ARIal Lie algebra}.
\end{proof}

\begin{rem}
By  \eqref{eq:inclusion grARI}, \eqref{eq:inclusion grKRV}, Theorem \ref{thm:reform:krv} and Theorem \ref{thm:reform:krv:bigrade},
we obtain the following commutative diagram %\eqref{CD:krv} 
of Lie algebras.
\begin{equation}\label{CD:krv}
\xymatrix{ 
\gr_{\D}\krv(\Gamma)_{\bullet}\ar^{\!\!\!\!\!\!\!\!\!\!\!\!\!\!\!\!\!\!\!\!\!\!\!\!\!\!\!\!\!\!\!\!\!\!\!\!\!\!\simeq}[r]\ar@{^{(}->}[d]& \gr_{\D}(\ARI(\Gamma)_{\pspush/ \pusnu}\cap \ARI(\Gamma)_\al^{\fin,\pol})\ar@{^{(}->}[d]\\ 
%\Fil_{\D}^{2}
\lkrv(\Gamma)_{\bullet\bullet}
\ar^{\!\!\!\!\!\!\!\!\!\!\!\!\!\!\!\!\!\!\!\!\!\!\!\!\!\!\!\!\!\!\!\!\!\!\!\simeq}[r]&\ARI(\Gamma)_{\push/ \pusnu}\cap \ARI(\Gamma)_\al^{\fin,\pol}
}
\end{equation}
The diagram is commutative because 
\eqref{eq:gr krv to ARIal} is associated graded with \eqref{eq:krv into ARIal}.
%\eqref{eq:krv to ARIal}.
\end{rem}
%\begin{screen}
%To do: 
%\begin{itemize}
%\item  Show  that $\ARI_{\mathrm{push}/ \mathrm{pusnu}}$  forms a Lie algebra under the ari-bracket.
%\begin{itemize}
%\item $\DKVi$:  OK
%\item $\DKVii$: OK
%\end{itemize}
%\item Correct $\ARI_{\DKVi/ \DKVii}$ by $\ARI_{\mathrm{push}/ \mathrm{pusnu}}$
%(cf. \cite{E-flex} \S 2.5).\quad(finished?)
%\end{itemize}
%\end{screen}
%

Similarly to  Definition \ref{def:ARID}, we impose a distribution relation on 
$\lkrv(\Gamma)$.

\begin{defn}\label{def:LKRVD}
For $N\geqslant 1$ and $\Gamma^N=\{g^N\in\Gamma\ |\  g\in\Gamma\}$,
we consider the map $i_N:\lkrv(\Gamma)_{\bullet\bullet}\to\lkrv(\Gamma^N)_{\bullet\bullet}$
which  is  induced by 
\begin{equation*}
i_N(x)=x, \qquad
i_N(y_{\tau})=
\begin{cases}
y_{\tau}& \text{ when } \tau\in\Gamma^N, \\
0 & \text{ when } \tau\not\in\Gamma^N
\end{cases}
\end{equation*}
and also the map $m_N:\lkrv(\Gamma)_{\bullet\bullet}\to\lkrv(\Gamma^N)_{\bullet\bullet}$
which  is  induced by
\begin{equation*}
m_N(x)=Nx, \qquad
m_N(y_{\sigma})=y_{\sigma^N}
\end{equation*}
for $\sigma\in\Gamma$.
%Here $|\Gamma_N|$ is the  order of the $N$-torsion subgroup of $\Gamma$.
We define the following $\mathbb Q$-linear subspace
$$
\lkrvd(\Gamma)_{\bullet\bullet}:=\{\bar F\in\lkrv(\Gamma)_{\bullet\bullet}\bigm | i_N(\bar F)=m_N(\bar F) \text{ for all }
N\geqslant 1\}.
$$
\end{defn}

As a corollary of Theorems \ref{thm:reform:krv:bigrade} and \ref{thm:lkrv Lie alg},
we obtain the following corollary.

\begin{cor}\label{cor:reform:lkrvd}
The space $\lkrvd(\Gamma)$ forms a bigraded Lie algebra and
we have the following isomorphism of bigraded Lie algebras
%The map sending $\bar F\in\gr_\D\mathbb L\mapsto\ma_{\tilde\f}\in\mathcal M(\mathcal F;\Gamma)$ induces an isomorphism
%of  bigraded Lie algebras.
%depth-graded Lie algebras
%between
$$%\Fil_{\D}^{2}
\lkrvd(\Gamma)_{\bullet\bullet}\simeq
\ARI(\Gamma)_{\push/ \pusnu}\cap \ARID(\Gamma)_\al^{\fin,\pol}.
$$
\end{cor}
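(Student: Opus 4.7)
The plan is to promote the isomorphism established in Theorem \ref{thm:reform:krv:bigrade} to the distribution-constrained subspaces and then to deduce the Lie-algebra structure by intersecting with $\ARID(\Gamma)$, whose closure under $\ari_u$ is already known from Proposition \ref{prop:ARID Lie algebra}.

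First I would verify that under the mould-ization map $\bar F\mapsto\ma_{\tilde\f}$ of Theorem \ref{thm:reform:krv:bigrade}, the polynomial-side operators $i_N$ and $m_N$ of Definition \ref{def:LKRVD} intertwine with the mould-side operators $i_N$ and $m_N$ of Definition \ref{def:ARID}. For $i_N$ this is essentially tautological: killing all letters $y_\tau$ with $\tau\notin\Gamma^N$ on the polynomial side corresponds, after applying $\ma$, to restricting the indexing sequences $(\sigma_1,\dots,\sigma_m)$ to $(\Gamma^N)^{\oplus m}$. For $m_N$ one inspects the word-coefficient formula in Definition \ref{def:ma}: substituting $x\mapsto Nx$ and $y_\sigma\mapsto y_{\sigma^N}$ rescales each monomial by $N^{w-r}$ and re-indexes group labels by the $N$-power map, which matches precisely the $Nx_i$-evaluation together with the summation $\sum_{\tau_i^N=\sigma_i}$ on the mould side. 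Once these two compatibilities are checked, the subspace cut out by $i_N=m_N$ on each side corresponds bijectively under the isomorphism, so that $\lkrvd(\Gamma)_{\bullet\bullet}=\bigcap_{N\mid|\Gamma|}\ker(i_N-m_N)\cap\lkrv(\Gamma)_{\bullet\bullet}$ is carried isomorphically onto $\ARI(\Gamma)_{\push/\pusnu}\cap\ARID(\Gamma)_\al^{\fin,\pol}$, which gives the asserted linear isomorphism.

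For the Lie-algebra claim, by Theorem \ref{thm:lkrv Lie alg} the ambient space $\lkrv(\Gamma)_{\bullet\bullet}$ is already a bigraded Lie algebra under \eqref{eq:mt bracket}, and via Proposition \ref{prop:MT=ARIal} this bracket transports to the $\ari_u$-bracket on the mould side. Proposition \ref{prop:ARID Lie algebra} (whose proof actually shows that $i_N$ and $m_N$ are Lie homomorphisms) guarantees that $\ARID(\Gamma)$ is closed under $\ari_u$, so combined with Theorem \ref{thm: ARIpushpusnu Lie} and Proposition \ref{ARIal Lie algebra} the intersection $\ARI(\Gamma)_{\push/\pusnu}\cap\ARID(\Gamma)_\al^{\fin,\pol}$ is a Lie subalgebra. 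Transporting back through the isomorphism then shows that $\lkrvd(\Gamma)_{\bullet\bullet}$ is a Lie subalgebra of $\lkrv(\Gamma)_{\bullet\bullet}$.

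The only genuinely technical point, and the main obstacle to watch out for, is the naturality check in the second paragraph: one must keep careful track of the sign $(-1)^{w-d}$ appearing in the relation \eqref{eq:new f}, of the tilde $\tilde\f=\f(x,(-y_\sigma)_\sigma)$ on the polynomial side, and of the inverses $\sigma_i^{-1}$ that enter the coefficient expansion of $\ma_h$ in Definition \ref{def:ma}. None of these interact badly with $i_N$ or $m_N$, but they must be tracked symbol-by-symbol. After that step, everything is a formal consequence of the Lie-algebra closure results already established earlier in the paper.
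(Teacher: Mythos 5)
Your proposal is correct and is essentially the paper's own argument: the paper likewise deduces the isomorphism and the Lie-algebra structure from the commutativity of the $i_N$- and $m_N$-diagrams between $\lkrv(\Gamma)_{\bullet\bullet}$ and $\ARI(\Gamma)$, together with Theorem \ref{thm:reform:krv:bigrade}, Proposition \ref{prop:ARID Lie algebra} and the Lie-algebra results already established. Your second paragraph simply spells out the intertwining that the paper states as a commutative diagram without further verification.
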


\begin{proof}
By definition  we see that both $i_N$ and $m_N$ form Lie algebra homomorphisms
and by Proposition \ref{ARI Lie algebra}
$\ARI(\Gamma)$ forms a Lie algebra.
Therefore our claim follows from
the commutativity of the following diagrams and 
Proposition \ref{prop:ARID Lie algebra}:
\begin{equation*}
\xymatrix{ 
\lkrv(\Gamma)_{\bullet\bullet}\ar[r]\ar[d]^{i_N}& \ARI(\Gamma)\ar[d]^{i_N}\\ 
\lkrv(\Gamma^N)_{\bullet\bullet}\ar[r]&\ARI(\Gamma^N),
}
\qquad
\xymatrix{ 
\lkrv(\Gamma)_{\bullet\bullet}\ar[r]\ar[d]^{m_N}& \ARI(\Gamma)\ar[d]^{m_N}\\ 
\lkrv(\Gamma^N)_{\bullet\bullet}\ar[r]&\ARI(\Gamma^N).
}
\end{equation*}
\end{proof}

%%%%%%%%%%%%%%%%%%%%%%%%%%%%%%%%%%%%%%%%%%%%%%%%%%%%%%%%%%%%%%%%%%%%%%
%%%%%%%%%%%%%%%%%%%%%%%%%%%%%%%%%%%%%%%%%%%%%%%%%%%%%%%%%%%%%%%%%%%%%%
\section{Dihedral Lie algebra}\label{sec:dihedral Lie algebra}
By using mould theoretic interpretations of 
the bigraded Lie algebra %$D(\Gamma)_{\bullet\bullet}$
${\mathbb D}(\Gamma)_{\bullet\bullet}$ ($\Gamma$: a finite abelian group)
with a dihedral symmetry 
and of the Kashiwara-Vergne bigraded Lie algebra 
$\lkrv(\Gamma)_{\bullet\bullet}$,
we realize an embedding $\Fil_{\D}^{2}{\mathbb D}(\Gamma)_{\bullet\bullet}\hookrightarrow\lkrv(\Gamma)_{\bullet\bullet}$
which extends the result of \cite{RS}.

\subsection{Dihedral bigraded Lie algebra}
We recall the definition of the dihedral bigraded Lie algebra 
$D(\Gamma)_{\bullet\bullet}$ introduced by Goncharov \cite{G}
and introduce a related Lie algebra ${\mathbb D}(\Gamma)_{\bullet\bullet}$
which contains $D(\Gamma)_{\bullet\bullet}$
in Definitions \ref{defn: Goncharov dihedral bigraded Lie algebra} and \ref{defn: alal dihedral Lie algebra}. %in the case when $G=\{e\}$
%and then explain its mould theoretic reformulation.
%which follows from \cite{M} and \cite{S}.

We call an element $f=f(t_1,\dots,t_{m+1})$ in $\mathbb Q[t_1,\dots,t_{m+1}]$ {\it translation invariant} when 
$f(t_1,\dots,t_{m+1})=f(t_1+c,\dots,t_{m+1}+c)$ for any $c\in\mathbb Q$.
%a $\mathbb Q$-linear combinations of 
%$(t_2-t_1)^{n_2-1}\cdots (t_{m+1}-t_1)^{n_{m+1}-1}$
%with $w=n_1+\cdots+n_m$ and 
%with $n_i\geqslant 1$
We often denote this  by $f(t_1:\dots:t_{m+1})$.
We consider a set of collections
\begin{equation}\label{eq:undertilde Z}
\undertilde Z=\left\{\undertilde Z(g_1, \dots, g_m,g_{m+1} |t_1: \dots :t_{m+1})\right\}_{g_1,\dots,g_m\in\Gamma}
\end{equation}
with 
$$g_{m+1}=(g_1\cdots g_m)^{-1}$$
of 
translation invariant element  in $\mathbb Q[t_1,\dots,t_{m+1}]$. 
For such $\undertilde Z$, we associate
\begin{align}\label{eq:Goncharov swap}
\tilde Z&(g_1:\cdots:g_{m+1}|t_1,\dots,t_{m+1}):= \\ 
&\undertilde Z(g_1^{-1}g_2,g_2^{-1}g_3,\dots,g_m^{-1}g_{m+1},g_{m+1}^{-1}g_1
|t_1:t_1+t_2: \dots :t_1+\cdots+t_m:0)  \notag
\end{align}
with any $g_1,\dots, g_{m+1}\in\Gamma$ and $t_1+\cdots+t_{m+1}=0$ and also
\begin{align}
\notag
Z(g_1:&\cdots:g_{m+1}|t_1:\dots:t_{m+1}) \\
\label{eq: Z1}
&:=\tilde Z(g_1:\cdots:g_{m+1}|
t_1-t_{m+1},t_2-t_1,\dots ,t_m-t_{m-1}, t_{m+1}-t_m) \\ 
\label{eq: Z2}
& \ =\undertilde Z(g_1^{-1}g_2,g_2^{-1}g_3,\dots,g_m^{-1}g_{m+1},g_{m+1}^{-1}g_1
|t_1:t_2: \dots :t_m:t_{m+1})  
\end{align}
with any $g_1,\dots, g_{m+1}\in\Gamma$ and any $t_1, \dots, t_{m+1}$.

\begin{defn}[\cite{G}]
\label{defn: Goncharov dihedral bigraded Lie algebra}
Set-theoretically 
the {\it dihedral bigraded Lie algebra}
means the $\mathbb Q$-linear bigraded space
$$D(\Gamma)_{\bullet\bullet}=\bigoplus_{w,m}D(\Gamma)_{w,m},$$
where the bidegree $(w,m)$-part $D(\Gamma)_{w,m}$ is defined to be
the set $\undertilde Z$ in \eqref{eq:undertilde Z} 
%\begin{equation}\label{eq:undertilde Z}
%\undertilde Z=\left\{\undertilde Z(g_1, \dots, g_m,g_{m+1} |t_1: \dots :t_{m+1})\right\}_{g_1,\dots,g_m\in\Gamma}
%\end{equation}
%with 
%$$g_{m+1}=(g_1\cdots g_m)^{-1}$$
of 
translation invariant elements 
in $\mathbb Q[t_1,\dots,t_{m+1}]$ 
%$m+1$ vairiable (commutative) polynomials 
%which  is  and 
with total degree $w-m$
%are $\mathbb Q$-linear combinations of 
%$(t_1-t_{m+1})^{n_1-1}\cdots (t_m-t_{m+1})^{n_m-1}$
%with $w=n_1+\cdots+n_m$ and $n_i\geqslant 1$
%$F(t_1:\dots:t_m)=f(t_1-t_{m+1},t_2-t_{m+1},\dots, t_m-t_{m+1})$
%with $f=f(x_1,\dots,x_m)\in\mathbb Q[x_1,\dots,x_m]$
%and  with total degree $w-m$
satisfying 

\noindent
(a). {\it the double shuffle relation}, that is,

(a-i).
{\it the harmonic product}
\begin{equation}\label{harmonic product}
\sum\nolimits_{\sigma\in\Sha_{p,q}}\undertilde Z(
g_{\sigma(1)},\dots,g_{\sigma(m)},g_{m+1}|
t_{\sigma(1)}:\dots:t_{\sigma(m)}:t_{m+1})=0
\end{equation}

(a-ii). {\it the shuffle product}
\begin{equation}\label{shuffle product}
\sum\nolimits_{\sigma\in\Sha_{p,q}}
\tilde Z(g_{\sigma(1)}:\dots:g_{\sigma(m)}:g_{m+1}|
t_{\sigma(1)},\dots,t_{\sigma(m)}, t_{m+1})=0
\end{equation}
for any $p,q\geqslant 1$ with $p+q=m$.

%\noindent
%(b). {\it the inversion relation} (\lq the distribution relation for $N=-1$')
%\begin{equation}\label{inversion relation}
%Z(g_1: \dots:g_{m+1} |t_1: \dots :t_{m+1})
%=Z(g_1^{-1}: \dots:g_{m+1}^{-1} |-t_1: \dots :-t_{m+1}).
%\end{equation}

\noindent
(b). {\it the distribution relation} for $N\in\mathbb Z$
such that $|N|$ divides $|\Gamma|$,
\begin{equation}\label{distribution relation}
Z(g_1: \dots :g_{m+1} |t_1:\dots:t_{m+1})
=\frac{1}{|\Gamma_N|}\sum_{h_i^N=g_i}Z(h_1: \dots: h_{m+1} |Nt_1:\dots:Nt_{m+1})
\end{equation}
except that  a constant in $t$ is allowed when $m=1$ and $g_1=g_2$.
Here $|\Gamma_N|$ is the  order of the $N$-torsion subgroup of $\Gamma$
%$$
%\tilde Z(g_1: \dots :g_{m+1} |t_1, \dots ,t_{m+1})
%= Z(g_1: \dots :g_{m+1} | t_1: t_1+t_2: \dots :t_1+\cdots+ t_{m}:0)
%$$
%$$
%\undertilde Z(g_1, \dots ,g_{m+1} |t_1: \dots :t_{m+1})
%= Z(1: g_1: g_1g_2: \dots :g_1\cdots g_{m+1} | t_1: t_2: \dots : t_{m+1})
%$$
%with $g_1\cdots g_{m+1}=1$
%with $t_1+\cdots+t_m+t_{m+1}=0$
and $\Sha_{p,q}$ is defined by \eqref{shuffle permutation}.

\noindent
(c). Additionally  we put 
\begin{equation}\label{eq:additional}
Z(e:e|0:0)=0
\end{equation}
for a technical reason.
\end{defn}

Its Lie algebra structure is explained in \cite{G} \S\S 4-5.
In \cite[Theorem 4.1]{G},
it is shown that %any element $\undertilde Z\in D_{\bullet\bullet}$ satisfies
the double shuffle relation implies the dihedral symmetry relations:

\begin{thm}[\cite{G} Theorem 4.1]
\label{ds to dihedral}
%In \cite[Theorem 4.1]{G},
%it is shown that %any element $\undertilde Z\in D_{\bullet\bullet}$ satisfies
The double shuffle relation implies
{\it the dihedral symmetry relations},
which consist of 
{\it the cyclic symmetry relation}
\begin{equation*}\label{cyclic symmetry relation}
Z(g_1:g_2:\cdots :g_{m+1}|t_1:t_2:\dots:t_{m+1})=
Z(g_2:\dots:g_{m+1}:g_1|t_2:\dots:t_{m+1}:t_1),
\end{equation*}
{\it the inversion relation} (\lq the distribution relation for $N=-1$')
\begin{equation*}\label{inversion relation}
Z(g_1: \dots:g_{m+1} |t_1: \dots :t_{m+1})
=Z(g_1^{-1}: \dots:g_{m+1}^{-1} |-t_1: \dots :-t_{m+1}).
\end{equation*}
{\it the reflection relation}
\begin{equation*}\label{reflection relation}
Z(g_1:g_2:\cdots :g_{m+1}|t_1:\dots:t_m:t_{m+1})=
(-1)^{m+1}Z(g_{m+1}:\cdots : g_1|-t_m:\dots:-t_1:-t_{m+1}),
\end{equation*}
%and  also {\it the inversion relation} \eqref{inversion relation} 
for $m\geqslant 2$.
\end{thm}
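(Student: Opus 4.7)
The plan is to derive the three symmetry relations in order: cyclic symmetry, then reflection, then inversion, leveraging the interplay between the harmonic relation \eqref{harmonic product} satisfied by $\undertilde{Z}$ and the shuffle relation \eqref{shuffle product} satisfied by $\tilde Z$, tied together by the change of variables \eqref{eq:Goncharov swap}.

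First I would derive the cyclic symmetry. The key is to apply both shuffle identities in the extreme case $p=1$, $q=m-1$. The harmonic relation \eqref{harmonic product} then expresses
$\sum_{i=1}^{m} \undertilde{Z}(g_1,\dots,g_{i-1},g_0,g_i,\dots,g_m,g_{m+1}|t_1:\dots:t_{i-1}:t_0:t_i:\dots:t_{m+1})=0$
as an insertion over all positions of a single letter into a word of length $m$; the shuffle relation \eqref{shuffle product} yields the analogous statement at the level of $\tilde Z$. Translating the $\tilde Z$-identity back to $\undertilde Z$ via \eqref{eq:Goncharov swap} converts that insertion into a \emph{cyclic} insertion on the group side. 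Matching the two identities, and using the translation invariance of $\undertilde Z$ together with the constraint $g_1\cdots g_{m+1}=e$ to normalize one coordinate, forces
$Z(g_1:\cdots:g_{m+1}|t_1:\cdots:t_{m+1})=Z(g_2:\cdots:g_{m+1}:g_1|t_2:\cdots:t_{m+1}:t_1).$

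Second, for the reflection relation, I would exploit the antisymmetry inherent in the shuffle product. Writing the shuffle identity for $\tilde Z(g_1|t_1)\shuffle \tilde Z(g_2:\cdots:g_{m+1}|t_2,\dots,t_{m+1})$ on the left versus the symmetric identity obtained by first applying cyclic symmetry to the right factor and then shuffling, and iterating, one obtains an expression for $\tilde Z(g_1:\cdots:g_{m+1}|t_1,\dots,t_{m+1})$ in terms of $\tilde Z(g_{m+1}:\cdots:g_1|-t_{m+1},\dots,-t_1)$ with the sign $(-1)^{m+1}$ coming from reversal of $m+1$ letters in the shuffle algebra. Translating through \eqref{eq: Z1}--\eqref{eq: Z2} gives the reflection identity as stated.

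Third, the inversion relation emerges by combining cyclic symmetry and reflection: applying reflection rotates the order and negates the arguments, while successive use of cyclic symmetry restores the original order; the net effect, in view of $g_{m+1}=(g_1\cdots g_m)^{-1}$, is precisely the substitution $g_i\mapsto g_i^{-1}$, $t_i\mapsto -t_i$.

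The main obstacle I expect is the careful bookkeeping in the first step: the swap \eqref{eq:Goncharov swap} is nontrivial, intertwining multiplication of group elements with partial sums of the $t$-variables, so matching a shuffle-type sum of $\tilde Z$'s against a harmonic-type sum of $\undertilde Z$'s requires a precise combinatorial identification of the two sums of insertions. Once this step is clean the remaining two relations follow by relatively formal manipulations; the auxiliary normalization \eqref{eq:additional} is only needed to eliminate the residual constant ambiguity in depth one.
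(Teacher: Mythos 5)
The paper does not actually prove this statement; it is quoted directly from Goncharov [G, Theorem 4.1]. The closest the paper comes to a proof is the mould-theoretic analogue in Lemmas \ref{lem:Embd:1}, \ref{lem:Embd:2}, and \ref{lem:Embd:3}, which establish \eqref{eqn:cyc}, \eqref{eqn:inv}, and \eqref{eqn:ref} for $\tswap(M)$ with $M\in\ARI(\Gamma)_{\underline{\al}/\underline{\al}}$. Comparing against that chain is the right benchmark for your proposal.

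Your order (cyclic $\Rightarrow$ reflection $\Rightarrow$ inversion) differs from the paper's, which is: mantar-invariance (Lemma \ref{lem:Embd:1}) $\Rightarrow$ the ``shift-with-inversion'' relation \eqref{eqn:G70} (Lemma \ref{lem:Embd:2}) $\Rightarrow$ cyclic \eqref{eqn:cyc} $\Rightarrow$ inversion \eqref{eqn:inv} $\Rightarrow$ reflection \eqref{eqn:ref}. A reordering by itself would be fine, but your sketch has a genuine gap in step 2. The reflection relation negates all $t$'s and inverts all group elements, and that feature cannot come from iterating the shuffle identity and cyclic symmetry. The shuffle anti-palindrome (mantar, which is your ``$(-1)^{m+1}$ from reversal'') only reverses the order of the letters; cyclic symmetry only shifts them. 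Neither negates $t$'s or inverts $g$'s. The negation enters precisely because $\swap$ (and likewise the change of variables \eqref{eq:Goncharov swap}) is built from \emph{successive differences}; applying mantar to $M$ and then reading the result through $\swap$ reverses the differencing direction and thus produces $-v$'s and $\sigma^{-1}$'s. This is exactly what Lemma \ref{lem:Embd:2} isolates, and your proposal never identifies this interaction. Without it, your derivation of reflection does not close, and consequently your step 3 (inversion from cyclic + reflection) is also left hanging. Note moreover that in the paper's Lemma \ref{lem:Embd:3} the derivation of cyclic symmetry \eqref{eqn:cyc} itself invokes \eqref{eqn:G70}: after expanding the length-one-times-length-$(m-1)$ alternality of $\swap(M)$ in two ways, the tail term is rewritten using Lemma \ref{lem:Embd:2} before both identities can be matched. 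So even your step 1, which relies on ``matching'' the two $p=1$ shuffle sums directly, appears to need the same intermediate relation. The fix is to first prove the shift-with-inversion relation \eqref{eqn:G70} (or the Goncharov analogue, combining ``anti-palindromy from the $\tilde Z$-shuffle'' with the change of variables \eqref{eq:Goncharov swap}), and only then derive cyclic, inversion, and reflection, in whatever order.
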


The following reformulation of the above dihedral symmetry relations
is useful in our later arguments.
\begin{rem}
The transformation \eqref{eq: Z2} allows us to rewrite
the above dihedral symmetry relations as follows:

\noindent
{\it the cyclic symmetry relation}
\begin{equation}\label{refo cyclic symmetry relation}
\undertilde Z(g_1,g_2,\dots ,g_{m+1}|t_1:t_2:\dots:t_{m+1})=
\undertilde Z(g_2,\dots,g_{m+1},g_1|t_2:\dots:t_{m+1}:t_1),
\end{equation}
{\it the inversion relation}
\begin{equation}\label{refo inversion relation}
\undertilde Z(g_1,g_2,\dots ,g_{m+1}|t_1:t_2:\dots:t_{m+1})=
\undertilde Z(g_1^{-1},g_2^{-1},\dots,g_{m+1}^{-1}|-t_1:-t_2:\dots:-t_{m+1}),
\end{equation}
{\it the reflection relation}
\begin{equation}\label{refo reflection relation}
\undertilde Z(g_1,\dots, g_m ,g_{m+1}|t_1:\dots:t_m:t_{m+1})=
(-1)^{m+1}\undertilde Z(g_m^{-1},\dots , g_1^{-1},g_{m+1}^{-1}|-t_{m}:\dots:-t_1:-t_{m+1})
\end{equation}
with $g_1 g_2\cdots g_{m+1}=1$.
\end{rem}

\begin{defn}\label{defn: alal dihedral Lie algebra}
Goncharov \cite[\S 4.5 and \S 5.2]{G} also introduced a related Lie algebra 
$$D'(\Gamma)_{\bullet\bullet}=\oplus_{w,m}{D'}(\Gamma)_{w,m}$$
which consists of the collections $\undertilde Z$ 
satisfying the shuffle product \eqref{shuffle product},
the cyclic symmetry relation \eqref{cyclic symmetry relation} and
the additional condition \eqref{eq:additional} (cf. \cite[Proposition 4.6]{G}).
%$Z(e:\dots:e|0:\dots:0)=0$.
For our purpose, we consider its $\mathbb Q$-linear subspace
$${\mathbb D}(\Gamma)_{\bullet\bullet}=\oplus_{w,m}{\mathbb D}(\Gamma)_{w,m}$$
which consists of elements in $D'(\Gamma)_{\bullet\bullet}$ 
satisfying the double shuffle relations (a).
%It is explained in \cite{G} that $D_{\bullet\bullet}$ is embedded to 
%$\tder$ and forms its Lie subalgebra.
\end{defn}

By Theorem \ref{ds to dihedral}, we have
$$
D'(\Gamma)_{\bullet\bullet}\supset
\mathbb D(\Gamma)_{\bullet\bullet}\supset
D(\Gamma)_{\bullet\bullet}.
$$

%%%%%%%%%%%%%%%%%%%%%%%%%%%%%%%%%%%%%%%%%%%%%%%
\subsection{Mould theoretic reformulation}
We explain a reformulation of ${\mathbb D}(\Gamma)_{\bullet\bullet}$ and
$D(\Gamma)_{\bullet\bullet}$ in terms of moulds in 
Theorem \ref{thm:reform:dihedral}
and Corollary \ref{cor:reform:dihedral} respectively.
%, which also follows from \cite{M,S}.
%

Let $\undertilde Z$ be a collection \eqref{eq:undertilde Z}
of translation invariant elements.
%=\undertilde Z(t_1:\dots:t_{m+1})\in\mathbb Q[t_1,\dots,t_{m+1}]$ be translation invariant.
%Let $\undertilde Z=\undertilde Z(t_1:\dots:t_{m+1})\in D_{w,m}$. 
We associate a mould
$${M}_{\undertilde Z}=({M}_{\undertilde Z}^i)_{i\in\mathbb Z_{\geqslant 0}}\in \mathcal M(\mathcal F;\Gamma)$$
by
${M}_{\undertilde Z}^i=0$ for $i\neq m$ and
$${M}_{\undertilde Z}^m({}^{u_1,\dots,u_m}_{g_1,\cdots,g_m})=\tilde Z(g_1:\cdots:g_{m}:1|u_1,\dots,u_{m+1}).$$

%\begin{lem}
%Let $\undertilde Z=\undertilde Z(t_1:\dots:t_{m+1})\in\mathbb Q[t_1,\dots,t_{m+1}]$ %a commutative $m+1$-variable polynomial which is 
%%Assume that it is 
%be translation invariant. %, i.e. $\undertilde Z(t_1:\dots:t_{m+1})=\undertilde Z(t_1+\alpha:\dots:t_{m+1}+\alpha) $ for any $\alpha\in\mathbb Q$.
%Then \eqref{shuffle product} for $\tilde Z$ is equivalent to the alternality
%\eqref{eq:alternal} for ${M}_{\undertilde Z}$. 
%\end{lem}
%%
%%\begin{proof}
%%It is clear by definition of $ M_{\undertilde{Z}}$.
%%% because we have $\undertilde Z(t_1:\dots:t_{m+1})=\undertilde Z(t_1-t_{m+1}:\dots:t_m-t_{m+1}:0)$.
%%\end{proof}
%%
%%\begin{lem}
%%Let $\undertilde Z=\undertilde Z(t_1:\dots:t_{m+1})\in\mathbb Q[t_1,\dots,t_{m+1}]$ be translation invariant.
%%%Assume \eqref{shuffle product}  for $\tilde Z$.
%%Then \eqref{harmonic product} for $\undertilde Z$
%%is equivalent to the alternality
%%\eqref{eq:alternal} for $\swap({M}_{\undertilde Z})$.
%%\end{lem}
%%
%%\begin{proof}
%%It follows from the calculations
%%$\swap({M}_{\undertilde Z})^m(v_1,\dots,v_m)
%%={\tilde Z}(v_m,v_{m-1}-v_m,\dots,v_1-v_2)
%%=\undertilde Z(v_m:\cdots:v_1:0)
%%$.
%%\end{proof}
%

The following is a generalization of the results in \cite{M}, \cite{S-ARIGARI}
which treat the case of $\Gamma=\{e\}$.

\begin{thm}%[\cite{M}, \cite{S-ARIGARI}]
\label{thm:reform:dihedral}
The map sending $M:\undertilde Z\in D'(\Gamma)_{\bullet\bullet}
\mapsto M_{\undertilde Z}\in \ARI(\Gamma)$ 
forms a Lie algebra homomorphism
and  it induces an isomorphism between
\begin{equation}\label{eq:Fil2D=Fil2ARIalal}
\Fil^2_\D {\mathbb D}(\Gamma)_{\bullet\bullet}\simeq
\Fil^2_\D \ARI(\Gamma)_{\underline{\al}/ \underline{\al}}^{\fin,\pol}.
\end{equation}
Here the left hand side means the depth>1-part of ${\mathbb  D}(\Gamma)_{\bullet\bullet}$
and the right hand side means the finite polynomial-valued part of
the subset of 
$\ARI(\Gamma)_{\underline{\al}/\underline{\al}}$
(cf. Definition \ref{def:ARIalal})
consisting of $M$ with depth>1.
% and $\swap M$ satisfying \Add{the distribution relations....}
\end{thm}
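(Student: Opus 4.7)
The plan is to follow the template established for $\Gamma=\{e\}$ in \cite{M, S-ARIGARI}, verifying that the insertion of the finite abelian group $\Gamma$ affects only the bookkeeping of indices. First I would make explicit the dictionary. Unwinding \eqref{eq:Goncharov swap} with $g_{m+1}=1$ gives
\[
M_{\undertilde Z}^m\varia{u_1,\dots,u_m}{\alpha_1,\dots,\alpha_m}
=\undertilde Z(\alpha_1^{-1}\alpha_2,\dots,\alpha_{m-1}^{-1}\alpha_m,\alpha_m^{-1},\alpha_1\mid u_1:u_1+u_2:\cdots:u_1+\cdots+u_m:0),
\]
and a parallel computation using the swap formula \eqref{eq:swap} yields
\[
\swap(M_{\undertilde Z})^m\varia{\sigma_1,\dots,\sigma_m}{v_1,\dots,v_m}
=\undertilde Z(\sigma_m^{-1},\dots,\sigma_1^{-1},\sigma_1\cdots\sigma_m\mid v_m:v_{m-1}:\cdots:v_1:0).
\]
Translation invariance of $\undertilde Z$ makes both sides well-defined, while the total-degree constraint on $\undertilde Z$ makes $M_{\undertilde Z}$ finite and polynomial-valued. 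Under these rewritings, the shuffle product \eqref{shuffle product} becomes exactly the alternality \eqref{eq:alternal} of $M_{\undertilde Z}$ (the $(m{+}1)$-st slot is preserved by shuffles of the first $m$ entries) and the harmonic product \eqref{harmonic product} becomes the alternality of $\swap(M_{\undertilde Z})$; the cyclic symmetry \eqref{refo cyclic symmetry relation}, automatic for every element of $D'(\Gamma)_{\bullet\bullet}$, corresponds to the push-invariance of $M_{\undertilde Z}$, which in depth $\geqslant 2$ is in any case a consequence of bialternality. The depth-$1$ symmetry appearing in Definition \ref{def:ARIalal} and the technical condition \eqref{eq:additional} are both vacuous on $\Fil^2_\D$.

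Next, for the Lie algebra homomorphism statement I would match Goncharov's bracket on $D'(\Gamma)_{\bullet\bullet}$ from \cite[\S\S 4--5]{G} with the $\ari_u$-bracket of Definition \ref{def:aritu}. The computation is the same formal flexion manipulation as in \cite[Theorem 3.4.2]{S-ARIGARI} and its $\Gamma$-variant Proposition \ref{prop:MT=ARIal}: the flexions $\urflex{\beta}{\gamma}$ and $\llflex{\beta}{\gamma}$ encode precisely the substitutions arising in Goncharov's bracket after the change of variables \eqref{eq: Z2}, and the group elements $\alpha_i$ track through as indexing labels only, so the argument extends verbatim.

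Finally, for the isomorphism on $\Fil^2_\D$ I would construct an explicit inverse: given $N\in\Fil^2_\D\ARI(\Gamma)_{\underline{\al}/\underline{\al}}^{\fin,\pol}$, define $\undertilde Z_N$ by solving the first displayed formula above for $\undertilde Z$. The substitution $(h_1,\dots,h_{m+1})\leftrightarrow(\alpha_1,\dots,\alpha_m)$ given by $\alpha_1=h_{m+1}$, $\alpha_{i+1}=\alpha_i h_i$ is bijective on tuples with total product $1$, so the definition is consistent. Translation invariance of $\undertilde Z_N$ is forced by the shape of the substitution (only differences of consecutive $t$-coordinates appear on the mould side), polynomiality and finiteness transfer from $N$, and the shuffle, harmonic, and cyclic-symmetry relations for $\undertilde Z_N$ are equivalent to the alternality of $N$, of $\swap(N)$, and to the push-invariance implied by bialternality respectively. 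The main obstacle is the bracket compatibility in the previous paragraph: Goncharov's bracket is most naturally phrased in the $Z$-variables \eqref{eq: Z1} using the cocycle structure on $\Gamma$, while the $\ari_u$-bracket lives in the $u$-variables via flexions, so reconciling the two presentations requires careful tracking of how group multiplications compose under flexions; once this is carried out, the rest of the argument is structurally identical to the $\Gamma=\{e\}$ case and yields the isomorphism \eqref{eq:Fil2D=Fil2ARIalal}.
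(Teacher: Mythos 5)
Your handling of the $\mathbb Q$-linear isomorphism is essentially the paper's argument: the dictionary and your computation of $\swap(M_{\undertilde Z})$ match, the observation that shuffle corresponds to alternality of $M_{\undertilde Z}$ and harmonic to alternality of $\swap(M_{\undertilde Z})$ is correct, and the disposal of the depth-$1$ condition and of \eqref{eq:additional} on $\Fil^2_\D$ is right. The explicit inverse via the substitution $\alpha_1=h_{m+1}$, $\alpha_{i+1}=\alpha_i h_i$ is a nice touch, though not strictly needed once one knows both sides are cut out by the same two linear conditions.

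The gap is in the Lie algebra homomorphism claim. You propose to match Goncharov's bracket against $\ari_u$ by a direct flexion computation, asserting the argument ``extends verbatim'' from the proof that $\ma:\mt\to\ARI(\Gamma)^{\fin,\pol}_\al$ is a Lie isomorphism. But that result (Proposition \ref{prop:MT=ARIal}) concerns a different map; it does not by itself say anything about the map $M:\undertilde Z\mapsto M_{\undertilde Z}$. The Lie structure on $D'(\Gamma)_{\bullet\bullet}$ is not given by an independent explicit formula in the $Z$-variables that one can straightforwardly ``match''---it is defined by pullback along Goncharov's realization $\xi'_\Gamma:D'(\Gamma)_{\bullet\bullet}\to\gr_\D\sder^\Gamma$ (\cite[Theorem 5.2]{G}). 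You implicitly concede this when you call reconciling the two bracket presentations ``the main obstacle'' and then wave it away with ``once this is carried out.'' The paper avoids the direct comparison altogether: it shows that $M=\iota\circ\ma\circ\res\circ\xi'_\Gamma$, where $\iota$ is the inversion twist $({}^{u}_{g})\mapsto({}^{u}_{g^{-1}})$, $\res:\gr_\D\sder^\Gamma\to\mt$ is the Lie homomorphism of \eqref{eq:res}, and $\ma$ is \eqref{eq:hom ma}; each factor is already known to respect brackets, so $M$ does too. Establishing the identity $M(\undertilde Z)=\iota\circ\ma\circ\res\circ\xi'_\Gamma(\undertilde Z)$ requires identifying the Lie element $h$ with $\ma(h)=M_{\undertilde Z}$ produced by Proposition \ref{prop:MT=ARIal} against the $F$ appearing in $\xi'_\Gamma$, which the paper does via the $\vimo$ formalism and the cyclic symmetry relation. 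Your proposal neither performs the direct bracket computation you gesture at nor sets up this factorization, so the Lie compatibility is left unproven.
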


\begin{proof}
%In \cite[Theorem 7.3]{M}, the isomorphism between $D_{\bullet\bullet}$ and
%Brown's Lie algebra $\mathfrak{ls}$ (\cite{B}) is constructed.
%In \cite[Theorem 3.4.3]{S-ARIGARI},  the isomorphism  between 
%$\mathfrak{ls}$ and $\ARI_{\underline{\al}/ \underline{\al}}^{\fin,\pol}$ is constructed.
%Our map is obtained by combination of these two.
It is immediate that  that \eqref{shuffle product} is equivalent to
the condition for ${M}_{\undertilde Z}$ being in $\ARI(\Gamma)_\al$.
By \eqref{eq:Goncharov swap}, we have
\begin{align*}
\swap(& M^m_{\undertilde Z})\varia{g_1,\dots,g_m}{v_1,\dots,v_m}\\
&=\tilde Z(g_1\cdots g_m:g_1\cdots g_{m-1}:\dots: g_1:1|v_m,v_{m-1}-v_m,\dots,v_2-v_3,v_1-v_2,-v_1)\\
&=\undertilde Z(g_m^{-1},g_{m-1}^{-1},\dots,g_1^{-1},g_1\cdots g_m\ |\ v_m:\dots:v_2:v_1:0).
\end{align*}
Thus we see that \eqref{harmonic product} is equivalent to
the condition for $\swap({M}_{\undertilde Z})$ being in $\overline{\ARI}(\Gamma)_\al$.
%It is clear that \eqref{distribution relation} corresponds to the one for
%$\swap(M_{\undertilde Z})$.
Therefore our map forms a $\mathbb Q$-linear isomorphism \eqref{eq:Fil2D=Fil2ARIalal}
by Theorem \ref{ds to dihedral}.
%\Add{Add the arguments for distribution..}

Since $M_{\undertilde Z}$ is in $\ARI(\Gamma)_\al^{\fin,\pol}$
when $\undertilde Z\in D'(\Gamma)_{\bullet\bullet}$,
by Proposition \ref{prop:MT=ARIal}
there is an $h\in\mathbb L$ with depth $m$ such that
$$\ma(h)=M_{\undertilde Z},$$
that is,
$$
\ma_h^m({}^{u_1,\dots,u_m}_{g_1,\cdots,g_m})=\tilde Z(g_1:\cdots:g_{m}:1|u_1,\dots,u_{m+1})
$$
with $u_1+\cdots+ u_m+u_{m+1}=0$.
By \eqref{eq:translation invariance of vimo} and \eqref{eq: Z1}, we have
$$
\vimo^m_h({}^{u_0,\dots,u_m}_{g_1,\cdots,g_m})=
Z(g_1:\cdots:g_{m}:1|u_1:\dots :u_{m}:u_0).
$$

In \cite[Theorem 5.2]{G},  $D'(\Gamma)_{\bullet\bullet}$ is realized as  a Lie subalgebra
of $\sder^\Gamma$ under the morphism 
\begin{equation}\label{eq:map xi prime}
\xi'_\Gamma: D'(\Gamma)_{\bullet\bullet}\to \gr_\D\sder^\Gamma
\end{equation}
sending each 
$\undertilde Z=\left\{\undertilde Z(g_1,\dots,g_{m+1}|t_1:\dots:t_{m+1})\right\}_{g_1,\dots,g_{m+1}\in\Gamma}
\in D_{w,m}(\Gamma)$
%$\undertilde Z(g_1, \dots, g_m,g_{m+1} |t_1: \dots :t_{m+1})$ 
to the residue class of $D_{\{\sigma(F)\}_\sigma,G(F)}\in\sder^\Gamma$
in $\gr_\D^m\sder^\Gamma$
with 
\begin{align*}
F&=|\Gamma|^{-1}
%\sum_{g_1,\dots,g_{m+1}\in\Gamma}
\sum_{n_1,\dots,n_{m+1}>0
\atop g_1,\dots,g_{m+1}\in\Gamma}
I_{n_1,\dots,n_{m+1}}(g_1:\cdots:g_{m+1})X^{n_1-1}Y_{g_1^{-1}g_2}\cdots 
X^{n_{m}-1}
Y_{g_1^{-1}g_{m+1}}X^{n_{m+1}-1} \\
%\in\mathbb L_{w,m} \\
&=
\sum_{n_0,\dots,n_{m}>0
\atop g_1,\dots,g_{m}\in\Gamma}
I_{n_0,\dots,n_{m}}(1:g_1:\cdots:g_{m})X^{n_0-1}Y_{g_1}\cdots 
X^{n_{m-1}-1}
Y_{g_{m}}X^{n_{m}-1}\in\mathbb L_{w,m}
\end{align*}
when the associated element
$Z=\left\{Z(g_1:\cdots:g_{m+1}|t_1:\dots:t_{m+1})\right\}_{g_1,\dots,g_{m+1}\in\Gamma}
$ given by \eqref{eq: Z2} is expressed as 
$$
Z(g_1:\cdots:g_{m+1}|t_1:\dots:t_{m+1}):
=\sum_{n_1,\dots,n_{m+1}>0}I_{n_1,\dots,n_{m+1}}(g_1:\cdots: g_{m+1})t_1^{n_1-1}\cdots t_{m+1}^{n_{m+1}-1}
$$
in $\mathbb Q[t_1,\dots,t_{m+1}]$.
We note that $I_{n_1,\dots,n_{m+1}}(g_1:\cdots: g_{m+1})$ is uniquely determined.
By combining the Lie algebra homomorphisms $\xi'_\Gamma$ with \eqref{eq:res} and \eqref{eq:hom ma},
we obtain
$$
\ma\circ\res\circ\xi'_\Gamma(\undertilde Z)=\ma(F).
$$

By definition, we have
\begin{align*}
\vimo_F^m({}^{u_1,\dots,u_m}_{g_1^{-1},\dots,g_m^{-1}})
&=%|\Gamma|
\sum_{n_1,\dots,n_{m+1}>0}I_{n_0,\dots,n_{m}}(1:g_1:\cdots:g_{m})
u_0^{n_0-1}\cdots u_m^{n_m-1} \\
&=%|\Gamma|\cdot 
Z(1:g_1:\cdots:g_m|u_0:\cdots: u_m). \\
\intertext{By the cyclic symmetry relation \eqref{cyclic symmetry relation}, }
&=%|\Gamma|\cdot 
Z(g_1:\cdots:g_m:1|u_1:\cdots: u_m:u_0) 
=%|\Gamma|\cdot 
\vimo_h^m({}^{u_1,\dots,u_m}_{g_1,\dots,g_m}).
\end{align*}
Therefore we have
$
F(x, (y_\sigma)_\sigma)=%|\Gamma|\cdot 
h(x,(y_{\sigma^{-1}})_\sigma).
$
So 
$$M(\undertilde Z)=\ma(h)=%|\Gamma|^{-1}
\iota\circ\ma\circ\res\circ\xi'_\Gamma(\undertilde Z)$$
where $\iota$ is the map defined by 
$(\iota M)^m({}^{u_1,\dots,u_m}_{g_1,\dots,g_m})=
M^m({}^{u_1,\dots,u_m}_{g_1^{-1},\dots,g_m^{-1}})$
which forms a Lie algebra homomorphism.
Since $\iota$, $\ma$ in \eqref{eq:hom ma}, 
$\res$ in \eqref{eq:res} and 
$\xi'_\Gamma$ in \eqref{eq:map xi prime}
%\Add{(but not for the multiplication by ${|\Gamma|}^{-1}$)}
are all Lie algebra homomorphisms,
%\Erase{
we see that $M$ is so. Whence we get our claims.
%}
\end{proof}

Since 
$\ARI(\Gamma)_{\underline{\al}/ \underline{\al}}$,
and hence the right hand side of \eqref{eq:Fil2D=Fil2ARIalal},
%$\Fil^2_\D \ARI(\Gamma)_{\underline{\al}/ \underline{\al}}^{\fin,\pol}$
forms a Lie algebra by Proposition \ref{ARIalal Lie algebra},
we learn that 
$\Fil^2_\D {\mathbb D}(\Gamma)_{\bullet\bullet}$
forms a Lie subalgebra of $D'(\Gamma)_{\bullet\bullet}$.

\begin{cor}\label{cor:reform:dihedral}
The map $M$ in Theorem \ref{thm:reform:dihedral} induces 
a Lie algebra isomorphism between
\begin{equation}\label{eq:D=ARIDalal}
\Fil^2_\D {D}(\Gamma)_{\bullet\bullet}\simeq
\Fil^2_\D \ARID(\Gamma)_{\underline{\al}/ \underline{\al}}^{\fin,\pol}.
\end{equation}
Here the left hand side means the depth>1-part of ${D}(\Gamma)_{\bullet\bullet}$
and the right hand side means the finite polynomial-valued part of
the subset of 
$\ARID(\Gamma)_{\underline{\al}/\underline{\al}}$
(cf. Definition \ref{def:ARID})
consisting of $M$ with depth>1.

\end{cor}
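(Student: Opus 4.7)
The plan is to deduce this corollary by restricting the Lie algebra isomorphism already established in Theorem \ref{thm:reform:dihedral}. That theorem supplies an isomorphism $M: \Fil^2_\D \mathbb{D}(\Gamma)_{\bullet\bullet} \stackrel{\sim}{\to} \Fil^2_\D \ARI(\Gamma)_{\underline{\al}/\underline{\al}}^{\fin,\pol}$ of Lie algebras. Since $D(\Gamma)_{\bullet\bullet} \subset \mathbb{D}(\Gamma)_{\bullet\bullet}$ is cut out by the additional distribution relation \eqref{distribution relation}, and on the mould side $\ARID(\Gamma) \subset \ARI(\Gamma)$ is cut out precisely by the relations $i_N(M)=m_N(M)$ for $N$ dividing $|\Gamma|$ (Definition \ref{def:ARID}), it is enough to verify that these two sets of conditions correspond to each other under $M$. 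Once this correspondence is in hand, the corollary follows by restricting the isomorphism of Theorem \ref{thm:reform:dihedral} and invoking Corollary \ref{cor:ARID alal Lie algebra}, which guarantees that $\ARID(\Gamma)_{\underline{\al}/\underline{\al}}$ (and hence its $\fin,\pol$ part) is a Lie subalgebra of $\ARI(\Gamma)$.

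To carry out the matching, I would unfold the formula
\[
M_{\undertilde Z}^m\varia{u_1,\dots,u_m}{\sigma_1,\dots,\sigma_m} = \tilde Z(\sigma_1:\cdots:\sigma_m:1\,|\,u_1,\dots,u_{m+1}),
\]
and rewrite it in terms of $\undertilde Z$ using \eqref{eq:Goncharov swap} together with the passage \eqref{eq: Z1}--\eqref{eq: Z2} between $Z$ and $\undertilde Z$. For $\sigma_i \in \Gamma^N$ the corresponding arguments $g_i$ in the dihedral presentation all lie in $\Gamma^N$, and the sum $\sum_{\tau_i^N=\sigma_i}$ defining $m_N$ can be reparametrized, via the substitution relating the indices $\sigma_i$ and $g_i = \sigma_{i-1}^{-1}\sigma_i$ (with $\sigma_0=1$), as a sum over lifts $h_i$ of the $g_i$ appearing on the right-hand side of \eqref{distribution relation}. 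The scaling $u_i \mapsto N u_i$ on the mould side must be checked to agree with $t_i \mapsto N t_i$ on the dihedral side through the change of variables \eqref{eq: Z1}, and the normalization factor $|\Gamma_N|^{-1}$ in \eqref{distribution relation} needs to be tracked to compensate for the redundant parameter (the extra lift of $g_{m+1}$) that is invisible on the mould side.

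The main obstacle is this combinatorial bookkeeping of indices and the $|\Gamma_N|^{-1}$ factor: one must reconcile Goncharov's sum over the $(m+1)$-tuple $(h_1,\dots,h_{m+1})$ subject to the implicit constraint $h_1\cdots h_{m+1}\in\Gamma_N$ with the mould side's unconstrained sum over the $m$-tuple $(\tau_1,\dots,\tau_m)$. Once this is done, the Lie algebra homomorphism $M$ from Theorem \ref{thm:reform:dihedral} restricts to an injection $\Fil^2_\D D(\Gamma)_{\bullet\bullet}\hookrightarrow \Fil^2_\D \ARID(\Gamma)_{\underline{\al}/\underline{\al}}^{\fin,\pol}$, and surjectivity is automatic since any mould on the right gives rise, through the inverse of Theorem \ref{thm:reform:dihedral}, to an element of $\Fil^2_\D \mathbb{D}(\Gamma)_{\bullet\bullet}$ which, by the matching just established, satisfies the distribution relation and therefore lies in $\Fil^2_\D D(\Gamma)_{\bullet\bullet}$.
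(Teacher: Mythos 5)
Your proposal is correct and follows essentially the same route as the paper: restrict the isomorphism of Theorem \ref{thm:reform:dihedral} and verify that the distribution relation \eqref{distribution relation} corresponds exactly to the condition $i_N(M_{\undertilde Z})=m_N(M_{\undertilde Z})$, then invoke Corollary \ref{cor:ARID alal Lie algebra} for the Lie-algebra structure. You actually spell out more of the index/normalization bookkeeping than the paper does (the paper simply rewrites \eqref{distribution relation} at the $\undertilde Z$-level, absorbing the $|\Gamma_N|^{-1}$ because the $\undertilde Z$-sum is already over the $|\Gamma_N|^m$ constrained tuples with $h_1\cdots h_{m+1}=1$, and asserts the match); the one small imprecision in your sketch is calling the constraint $h_1\cdots h_{m+1}\in\Gamma_N$ rather than $h_1\cdots h_{m+1}=1$, but that does not affect the argument.
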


\begin{proof}
Since the distribution relation
\eqref{distribution relation} 
is equivalent to
$$
\undertilde Z(g_1, \dots ,g_{m+1} |t_1:\dots:t_{m+1})
=%\frac{1}{|\Gamma_N|}
\sum_{h_i^N=g_i}\undertilde Z(h_1, \dots, h_{m+1} |Nt_1:\dots:Nt_{m+1}),
$$
which corresponds to 
$i_N(M_{\undertilde Z})=m_N(M_{\undertilde Z})$,
%the one for $\swap(M_{\undertilde Z})$, that is, 
that is, $M_{\undertilde Z}\in\ARID(\Gamma)$.
%the claim follows from Theorem \ref{thm:reform:dihedral}.
%\Add{By Theorem\ref{}, we see that the left hand side of \eqref{eq:D=ARIDalal}forms a Lie algebra.}
Since 
$\ARID(\Gamma)_{\underline{\al}/ \underline{\al}}$,
and hence the right hand side of \eqref{eq:D=ARIDalal},
%$\Fil^2_\D \ARI(\Gamma)_{\underline{\al}/ \underline{\al}}^{\fin,\pol}$
forms a Lie algebra by 
Corollary \ref{cor:ARID alal Lie algebra}, and
$\Fil^2_\D {\mathbb D}(\Gamma)_{\bullet\bullet}$
forms a Lie subalgebra of $D(\Gamma)_{\bullet\bullet}$ by \cite[Theorem 5.2]{G},
our claim follows.
\end{proof}
%
%Let us denote  by $\Fil_{\D}^{2}\ARI_{\underline{\al}/ \underline{\al}}^{\fin,\pol}$
%the subset of $\ARI_{\underline{\al}/ \underline{\al}}^{\fin,\pol}$ %(cf. Definition \ref{def:ARIalal})
%%consisting of finite polynomial-valued moulds with depth $>1$
%whose depth $0$ and $1$ parts are both $0$.
%We  also denote that the depth $>1$ part of $D_{\bullet\bullet}$ 
%by $\Fil_{\D}^{2}D_{\bullet\bullet}$.
%Particularly by restricting the above isomorphism,  we obtain the Lie algebra isomorphism
%%
%%
%%\begin{prop}\label{prop:reform:dihedral}
%%The map sending $\undertilde Z\mapsto M_{\undertilde Z}$ induces an isomorphism %between
%\begin{equation}\label{eq:Fil2D=Fil2ARIalal}
%\Fil_{\D}^{2}D_{\bullet\bullet}\simeq\Fil_{\D}^{2}\ARI_{\underline{\al}/ \underline{\al}}^{\fin,\pol}.
%\end{equation}
%%\end{prop}
%%
%%\begin{proof}
%%This follows from our previous lemmas and \cite[Theorem 4.1]{G} saying that
%%the double shuffle relation implies the inversion relation for $m>1$.
%%\end{proof}
%%
%%\begin{rem}
%%By the results in  \cite{M} Thorem 7.3 and  \cite{S} Theorem 3.4.3,
%%the above isomorphism are extended to the Lia algebra isomorphism
%%$$D_{\bullet\bullet}\simeq\ARI_{\underline\al/ \underline\al}^{\fin,\pol}.$$
%%\end{rem}
%
%%%%%%%%%%%%%%%%%%%%%%%%%%%%%%%%%%%%%%%%%%%%%%%%%%%%%%%%%%%%
\subsection{Embedding}
In this subsection, we construct an embedding 
$\Fil_{\D}^{2}\ARI(\Gamma)_{\underline{\al}/ \underline{\al}}\hookrightarrow
\ARI(\Gamma)_{\push/ \pusnu}$ in Theorem \ref{thm:embedding}.
As a corollary, we get an embedding
$\Fil_{\D}^{2}{\mathbb D}(\Gamma)_{\bullet\bullet}\hookrightarrow\lkrv(\Gamma)_{\bullet\bullet}$
% from Goncharov's dihedral bigraded Lie algebra to the bigraded kashiwara-Vergne Lie algebra 
in Corollary \ref{cor:embedding}.
%
%%\begin{screen}
%%To do:
%%%\begin{enumerate}
%%%\item Give a reformulation of dihedral Lie algebra  in terms of moulds.
%%%\item 
%%Explain Ecalle-Goncharov correspondence
%%\begin{itemize}
%%\item \cite{E-flex} push-invariant $\Longleftrightarrow$ \cite{G}(64) cyclic symmetry relation  $\Longleftrightarrow$ \eqref{eqn:cyc}
%%\item \cite{E-flex} mantar-invariant $\Longleftrightarrow$\cite{G}(65) reflection relation $\Longleftrightarrow$ \eqref{eqn:ref}
%%\item \cite{E-flex} neg-invariant $\Longleftritghtarrow$\cite{G}(66) inversion relation $\Longleftrightarrow$ \eqref{eqn:inv}
%%\end{itemize}
%%%\Add{It looks done below, but state it more clearly!}
%%%\end{enumerate}
%%\end{screen}
%

The following generalizes {\cite[Lemma 2.5.3]{S-ARIGARI}}.

\begin{lem} 
\label{lem:Embd:1}
Any mould $M\in\ARI(\Gamma)_{\al}$ is $\mantar$-invariant 
(cf. Notation \ref{nota:mould operations}), namely,  
for $m\geqslant1$ and $\sigma_1,\dots,\sigma_m\in\Gamma$, we have
\begin{equation}\label{mantar invariant}
	M^m\varia{x_1,\ \dots,\ x_m}{\sigma_1,\ \dots,\ \sigma_m} = (-1)^{m-1}M^m\varia{x_m,\ \dots,\ x_1}{\sigma_m,\ \dots,\ \sigma_1}.
\end{equation}
\end{lem}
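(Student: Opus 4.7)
My plan is to derive \eqref{mantar invariant} from alternality by invoking the antipode of the shuffle Hopf algebra on $\mathcal{A}_X$. First, I would observe that $\mathcal{A}_X$, endowed with the shuffle product $\shuffle$ and the deconcatenation coproduct $\Delta(u_1\cdots u_m)=\sum_{k=0}^{m}(u_1\cdots u_k)\otimes(u_{k+1}\cdots u_m)$, is a commutative graded connected Hopf algebra whose antipode is $S(u_1\cdots u_m)=(-1)^{m}u_m\cdots u_1$. The antipode axiom $\shuffle\circ(S\otimes\mathrm{id})\circ\Delta=\eta\circ\epsilon$, applied to $w=u_1\cdots u_m$ with $m\geqslant 1$, will then produce the key identity
\[
\sum_{k=0}^{m}(-1)^{k}(u_k\cdots u_1)\shuffle(u_{k+1}\cdots u_m)=0
\]
inside $\mathcal{A}_X$.

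Next, isolating the extreme terms $k=0$ and $k=m$ and writing $u_i=\varia{x_i}{\sigma_i}$, I would rearrange this identity as
\[
u_1\cdots u_m+(-1)^{m}u_m\cdots u_1=-\sum_{k=1}^{m-1}(-1)^{k}\bigl\{(u_k\cdots u_1)\shuffle(u_{k+1}\cdots u_m)\bigr\},
\]
so that every summand on the right is a shuffle of two non-empty words of $X_{\Z}^{\bullet}$. Expanding each such shuffle into a linear combination of words and then substituting into $M$, the alternality condition \eqref{eq:alternal} (applied with $\omega=(u_k,\dots,u_1)$ and $\eta=(u_{k+1},\dots,u_m)$ for each $k$) annihilates the entire right-hand side. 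What remains is
\[
M^{m}\varia{x_1,\dots,x_m}{\sigma_1,\dots,\sigma_m}+(-1)^{m}M^{m}\varia{x_m,\dots,x_1}{\sigma_m,\dots,\sigma_1}=0,
\]
which after rearrangement is precisely \eqref{mantar invariant}.

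The one step that is not entirely formal is the antipode identity displayed in the first paragraph, and this is the main (and essentially only) obstacle. It can either be invoked as the standard Hopf-algebraic fact for the shuffle bialgebra $(\mathcal{A}_X,\shuffle,\Delta)$, or, in the self-contained spirit of the paper, be derived in a few lines by induction on $m$ using the defining recursion $u\omega\,\shuffle\,v\eta=u(\omega\,\shuffle\,v\eta)+v(u\omega\,\shuffle\,\eta)$ of the shuffle product. Note that the $\Gamma$-action plays no role in the argument: each $\varia{x_i}{\sigma_i}$ is treated as an indivisible letter of $X$ throughout, so the $\Gamma$-indices reverse along with the $x$-variables automatically, and the proof applies uniformly for every finite abelian $\Gamma$.
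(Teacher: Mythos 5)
Your proof is correct and takes essentially the same route as the paper: the key identity $\sum_{k=0}^{m}(-1)^{k}(u_k\cdots u_1)\shuffle(u_{k+1}\cdots u_m)=0$, which you package as the antipode axiom for the shuffle Hopf algebra, is exactly what the paper derives by hand via the telescoping cancellation from the shuffle recursion. The paper starts from the alternating sum $\sum_{i=1}^{m-1}(-1)^{i-1}\,\mathpzc{Sh}(M)((\omega_i,\dots,\omega_1);(\omega_{i+1},\dots,\omega_m))=0$ and computes it directly, whereas you invoke the Hopf identity first and then apply alternality to kill the interior terms, but the algebraic content and the sign bookkeeping are identical.
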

\begin{proof}
For simplicity, we denote $\omega_i:=\binom{x_i}{\sigma_i}$. By using alternality of $M$, we have
\begin{align*}
	\sum_{i=1}^{m-1}(-1)^{i-1}
	\left\{ \sum_{\alpha\in X_{\Z}^\bullet}\Sh{(\omega_i,\dots,\omega_1)}{(\omega_{i+1},\dots,\omega_m)}{\alpha}M^m(\alpha) \right\}
	=0.
\end{align*}
Here, we calculate the left hand side as follows:
\begin{align*}
	\sum_{i=1}^{m-1}&(-1)^{i-1}
		\left\{ \sum_{\alpha\in X_{\Z}^\bullet}\Sh{(\omega_i,\dots,\omega_1)}{(\omega_{i+1},\dots,\omega_m)}{\alpha}M^m(\alpha) \right\} \\
	=&\sum_{i=1}^{m-1}(-1)^{i-1}
		\left\{ \sum_{\alpha\in X_{\Z}^\bullet}\Sh{(\omega_{i-1},\dots,\omega_1)}{(\omega_{i+1},\dots,\omega_m)}{\alpha}M^m(\omega_i,\alpha) \right. \\
	&\left.+ \sum_{\alpha\in X_{\Z}^\bullet}\Sh{(\omega_i,\dots,\omega_1)}{(\omega_{i+2},\dots,\omega_m)}{\alpha}M^m(\omega_{i+1},\alpha) \right\} \\
	=&\sum_{i=1}^{m-1}(-1)^{i-1}
		\sum_{\alpha\in X_{\Z}^\bullet}\Sh{(\omega_{i-1},\dots,\omega_1)}{(\omega_{i+1},\dots,\omega_m)}{\alpha}M^m(\omega_i,\alpha) \\
	&- \sum_{i=2}^{m}(-1)^{i-1}
		\sum_{\alpha\in X_{\Z}^\bullet}\Sh{(\omega_{i-1},\dots,\omega_1)}{(\omega_{i+1},\dots,\omega_m)}{\alpha}M^m(\omega_i,\alpha) \\
	=&M^m(\omega_1,\omega_2,\dots,\omega_m) 
		-(-1)^{m-1}
		M^m(\omega_m,\omega_{m-1},\dots,\omega_1).
\end{align*}
So we obtain \eqref{mantar invariant}.
\end{proof}

We define the parallel translation map $\h:\overline{\ARI}(\Gamma)\rightarrow \overline{\ARI}(\Gamma)$ by
\begin{align*}
	\h(M)^m(\vecy_m):=
	\left\{\begin{array}{ll}
	M^m(\vecy_m) & (m=0,1), \\
	M^{m-1}{\scriptsize\left(\begin{array}{rrr}
	\sigma_2,& \dots,& \sigma_m \\
	y_2-y_1,& \dots,& y_m-y_1
\end{array}\right)} & (m\geqslant2),
	\end{array}\right.
\end{align*}
for $M\in \overline{\ARI}(\Gamma)$. For our simplicity, we put
$$\tswap:=\h\circ\swap.$$

\begin{lem}\label{lem:Embd:2}
Let $M\in \ARI(\Gamma)_{\underline{\al}/ \underline{\al}}$ and $m\geqslant2$ and $\sigma_1,\dots,\sigma_m\in\Gamma$ with $\sigma_1\cdots\sigma_m=1$. Then we have
\begin{equation}\label{eqn:G70}
	\tswap(M)^m\varia{\sigma_1,\ \dots,\ \sigma_m}{x_1,\ \dots,\ x_m}
	= \tswap(M)^m\varia{\sigma_2^{-1},\ \dots,\ \ \sigma_m^{-1},\ \sigma_1^{-1}}{-x_2,\ \dots,\ -x_m,\ -x_1}.
\end{equation}
\end{lem}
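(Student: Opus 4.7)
The plan is to unfold both sides of \eqref{eqn:G70} via the definitions of $\tswap=\h\circ\swap$ and $\swap$ (cf.\ \eqref{eq:swap}), reducing the identity to an equality of two explicit values of $M^{m-1}$ on sequences in $X_\Z^\bullet$. Concretely, by definition of $\h$, the LHS equals $\swap(M)^{m-1}\varia{\sigma_2,\dots,\sigma_m}{x_2-x_1,\dots,x_m-x_1}$, and unfolding $\swap$ while using the hypothesis $\sigma_1\sigma_2\cdots\sigma_m=1$ (so that e.g.\ $\sigma_2\cdots\sigma_m=\sigma_1^{-1}$) yields
\[
M^{m-1}\varia{\sigma_1^{-1},\,\sigma_2\cdots\sigma_{m-1},\,\dots,\,\sigma_2}{x_m-x_1,\,x_{m-1}-x_m,\,\dots,\,x_2-x_3}.
\]
Applying the same procedure to the RHS, after the substitutions $\sigma_i\mapsto\sigma_{i+1}^{-1}$, $\sigma_m\mapsto\sigma_1^{-1}$, $x_i\mapsto -x_{i+1}$, $x_m\mapsto -x_1$, and after the simplifications $\sigma_3^{-1}\cdots\sigma_m^{-1}\sigma_1^{-1}=\sigma_2$ etc., produces a parallel explicit $M^{m-1}$-value, so that \eqref{eqn:G70} becomes a purely combinatorial identity between two values of $M^{m-1}$.

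Next I would invoke Lemma~\ref{lem:Embd:1} (mantar-invariance of $M$, coming from $M\in\ARI(\Gamma)_\al$) to rewrite one side so that the two sequences share the same reading direction, reducing the problem to an identity whose content is entirely the alternality of $\swap(M)$. In the base case $m=3$, the unfolded identity reads $M^2\varia{\sigma_1^{-1},\,\sigma_2}{x_3-x_1,\,x_2-x_3}=M^2\varia{\sigma_2,\,\sigma_3^{-1}}{x_2-x_1,\,x_1-x_3}$; one application of the depth-$2$ alternality of $\swap(M)$, namely $N^2\varia{\alpha,\beta}{a,b}+N^2\varia{\beta,\alpha}{b,a}=0$ with $(\alpha,\beta,a,b)=(\sigma_3^{-1},\sigma_1^{-1},x_2-x_3,x_2-x_1)$, translates via the $\swap$-formula into $-M^2\varia{\sigma_2,\,\sigma_1^{-1}}{x_2-x_3,\,x_3-x_1}$, which a single mantar step from Lemma~\ref{lem:Embd:1} converts into the LHS.

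The general case is obtained by iterating this two-step pattern (one depth-$2$ alternality of $\swap(M)$ followed by one mantar step on $M$) along the length of the group-product chain: each step collapses partial products of the form $\sigma_2\cdots\sigma_k$ and $\sigma_{k+1}^{-1}\cdots\sigma_m^{-1}$ by means of the abelian constraint $\sigma_1\sigma_2\cdots\sigma_m=1$, and telescopes LHS and RHS to a common form. The main technical obstacle is the bookkeeping of how the iterated shuffle relations permute the group indices and the variable differences, and organizing them into a clean telescoping pattern; however no new conceptual ingredient beyond bialternality of $M$ and the abelian constraint is required, consistently with the fact that \eqref{eqn:G70} is the mould-theoretic incarnation of a dihedral-type reflection symmetry (cf.\ Theorem~\ref{ds to dihedral} and Theorem~\ref{thm:reform:dihedral}).
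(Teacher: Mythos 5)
Your identification of the ingredients ($\mantar$-invariance of $M$ from Lemma~\ref{lem:Embd:1}, alternality of $\swap(M)$, the abelian constraint $\sigma_1\cdots\sigma_m=1$, unfolding $\tswap=\h\circ\swap$) is correct, and your $m=3$ computation checks out. The gap is in the passage to general $m$: the proposed ``iterating this two-step pattern along the length of the group-product chain'' is not a well-defined induction and does not reflect the actual structure of the argument, which is uniform in $m$ with no iteration at all.

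Concretely: after unfolding $\tswap$ and $\swap$, the LHS is a single evaluation of $M^{m-1}$. One application of $\mantar$-invariance of $M$ reverses that evaluation, incurring a sign $(-1)^{m-2}$. You then refold the result as an evaluation of $\swap(M)^{m-1}$ (using $\sigma_1\cdots\sigma_m=1$ to rewrite the first group entry $\sigma_2\cdots\sigma_m$ as $\sigma_1^{-1}$) and apply $\mantar$-invariance of $\swap(M)$ \emph{at depth $m-1$}; this is directly available because the proof of Lemma~\ref{lem:Embd:1} works verbatim for alternal moulds in $\overline{\ARI}(\Gamma)$. The second reversal cancels the sign and produces exactly the $\swap$-evaluation that refolds via $\h$ into the RHS. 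So the fact you are missing is that $\mantar$-invariance of $\swap(M)$ holds at every depth and subsumes the depth-by-depth telescoping you were planning; your $m=3$ computation is not the base case of an induction but already the complete proof, and the identical two-$\mantar$ argument closes the general case immediately.
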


\begin{proof}
We have
\begin{align*}
	\tswap(M)^m&\varia{\sigma_1,\ \dots,\ \sigma_m}{x_1,\ \dots,\ x_m}  \\
	=&\swap(M)^{m-1}\varia{\ \quad\sigma_2,\qquad\sigma_3,\ \dots,\ \quad \sigma_{m-1},\ \quad \sigma_m}
	{x_2-x_1,\ x_3-x_1,\ \dots,\ x_{m-1}-x_1,\ x_m-x_1} \\
	=&M^{m-1}\varia{x_m-x_1,\ x_{m-1}-x_m,\ \dots,\ x_2-x_3}
	{\sigma_2\cdots\sigma_m,\ \sigma_2\cdots\sigma_{m-1},\ \dots,\quad \sigma_2}.
	\intertext{By using \eqref{mantar invariant}, we get}
	=&(-1)^{m-2}M^{m-1}\varia{x_2-x_3,\ \dots,\ x_{m-1}-x_m,\ x_m-x_1}
	{\ \quad \sigma_2,\ \dots,\ \sigma_2\cdots\sigma_{m-1},\ \sigma_2\cdots\sigma_m} \\ 
	=&(-1)^{m-2}\swap(M)^{m-1}\varia{\sigma_2\cdots\sigma_m,\quad \sigma_m^{-1},\ \dots,\quad \sigma_3^{-1}}
	{x_2-x_1,\ x_2-x_m,\ \dots,\ x_2-x_3}. 
	\intertext{By \eqref{mantar invariant}, $\sigma_1\cdots\sigma_m=1$ and
	$M\in\ARI(\Gamma)_{\underline{\al}/ \underline{\al}}$, we calculate}
	=&\swap(M)^{m-1}\varia{\quad \sigma_3^{-1},\ \dots,\ \quad \sigma_m^{-1},\quad\sigma_1^{-1}}
	{x_2-x_3,\ \dots,\ x_2-x_m,\ x_2-x_1} \\
	=&\tswap(M)^m\varia{\sigma_2^{-1},\ \sigma_3^{-1},\ \dots,\ \ \sigma_m^{-1},\ \sigma_1^{-1}}
	{-x_2,\ -x_3,\ \dots,\ -x_m,\ -x_1}.
\end{align*}
Therefore, we obtain the claim.
\end{proof}

The following three relations should be called as the cyclic symmetry relation, the inversion relation and the reflection relation respectively (compare them with \eqref{refo cyclic symmetry relation}, \eqref{refo inversion relation} and \eqref{refo reflection relation}
with $m+1$ replaced with $m$).

\begin{lem}\label{lem:Embd:3}
Let $m\geqslant3$ and $M\in\ARI(\Gamma)_{\underline{\al}/ \underline{\al}}$. Then, for $\sigma_1,\dots,\sigma_m\in\Gamma$ with $\sigma_1\cdots\sigma_m=1$, we have the following:
\begin{align}
	\label{eqn:cyc}&\tswap(M)^m\varia{\sigma_1,\ \sigma_2,\ \dots,\ \sigma_m}{x_1,\ x_2,\ \dots,\ x_m}
	= \tswap(M)^m\varia{\sigma_2,\ \dots,\ \sigma_m,\ \sigma_1}{x_2,\ \dots,\ x_m,\ x_1}, \\
	\label{eqn:inv}&\tswap(M)^m\varia{\sigma_1,\ \sigma_2,\ \dots,\ \sigma_m}{x_1,\ x_2,\ \dots,\ x_m}
	 = \tswap(M)^m\varia{\sigma_1^{-1},\ \sigma_2^{-1},\ \dots,\ \sigma_m^{-1}}{-x_1,\ -x_2,\ \dots,\ -x_m}, \\
	\label{eqn:ref}&
	\tswap(M)^m\varia{\sigma_1,\ \dots,\ \sigma_{m-1},\ \sigma_m}{x_1,\ \dots,\ x_{m-1},\ x_m} 
	= (-1)^{m}\tswap(M)^{m}\varia{\ \ \sigma_{m-1}^{-1},\ \dots,\ \sigma_{1}^{-1},\ \ \sigma_m^{-1}}{-x_{m-1},\ \dots,\ -x_{1},\ -x_m}.
\end{align}
\end{lem}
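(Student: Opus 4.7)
The plan is to derive the three dihedral symmetries from the main relation $(*)$ of Lemma \ref{lem:Embd:2}---cyclic shift by one with inversion and negation of all entries---together with a supplementary reflection extracted from the $\mantar$-invariance of $\swap(M)$, which holds because $\swap(M)\in\overline{\ARI}(\Gamma)_\al$ by bialternality and Lemma \ref{lem:Embd:1}. Since $\tswap(M)^m$ is independent of $\sigma_1$ and depends on $x_1$ only through the differences $x_i-x_1$, the $\mantar$-invariance of $\swap(M)^{m-1}$ translates directly into the auxiliary reflection fixing position one:
\[
(\sharp)\quad\tswap(M)^m\varia{\sigma_1,\sigma_2,\ldots,\sigma_m}{x_1,x_2,\ldots,x_m}=(-1)^m\tswap(M)^m\varia{\sigma_1,\sigma_m,\sigma_{m-1},\ldots,\sigma_2}{x_1,x_m,x_{m-1},\ldots,x_2}.
\]
This $(\sharp)$, together with $(*)$ and the constraint $\sigma_1\cdots\sigma_m=1$ used to simplify partial products of the $\sigma_i$'s, forms the starting point for all three derivations.

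For \eqref{eqn:inv} with $m$ odd, iterating $(*)$ a total of $m$ times collapses the cyclic shifts to the identity modulo $m$, while the $m$ successive inversions and negations compose to a single inversion and negation; for \eqref{eqn:cyc} with $m$ odd, iterating $(*)$ a total of $m+1$ times produces a shift by one with no residual inversion or negation, since $m+1$ is even. The reflection \eqref{eqn:ref} is then obtained for any $m$ by conjugating $(\sharp)$ by a suitable power of $(*)$ so that its fixed position moves from $1$ to $m$, and finally applying \eqref{eqn:inv} to extract the inversion and negation factors that appear during the conjugation.

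The main obstacle will be the even-$m$ case of \eqref{eqn:cyc}. Regarded as signed permutations, the group generated by $(*)$ and $(\sharp)$ contains no element that is a cyclic shift by one with trivial inversion and negation when $m$ is even; all rotations in it with trivial inversion/negation are by an even amount. To close this case, one must additionally invoke Lemma \ref{lem:Embd:1} applied to $M$ itself, which produces the further symmetry
\begin{align*}
\tswap(M)^m&\varia{\sigma_1,\sigma_2,\ldots,\sigma_m}{x_1,x_2,\ldots,x_m} \\
&=(-1)^{m-2}\tswap(M)^m\varia{\sigma_2^{-1},\sigma_1^{-1},\sigma_m^{-1},\ldots,\sigma_3^{-1}}{x_1,x_2,x_1+x_2-x_m,\ldots,x_1+x_2-x_3}
\end{align*}
incorporating a nontrivial affine change of the $x$-variables rather than a pure signed permutation. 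Combining this with $(*)$ and $(\sharp)$, again under the constraint $\sigma_1\cdots\sigma_m=1$, should deliver \eqref{eqn:cyc} for all $m$ and thus complete the proof.
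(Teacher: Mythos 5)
Your group-theoretic analysis of the even-$m$ obstruction is correct: regarded as signed permutations (with an inversion/negation flag) modulo the translation invariance enjoyed by $\tswap$, the group generated by the relation $(*)$ of Lemma~\ref{lem:Embd:2} and your reflection $(\sharp)$ contains no element acting as a pure cyclic shift by one with trivial inversion and negation when $m$ is even, so these alone cannot yield \eqref{eqn:cyc}. However, your proposed additional relation $(\flat)$ does not close the gap, because it is not independent of $(*)$ and $(\sharp)$: it \emph{is} $(\sharp)\circ(*)$ after normalizing by translation invariance. Indeed, apply $(*)$ and then shift all bottom-row entries by $x_1+x_2$ (which $\tswap$ permits), obtaining
\begin{equation*}
\tswap(M)^m\varia{\sigma_2^{-1},\ \dots,\ \sigma_m^{-1},\ \sigma_1^{-1}}{x_1,\ x_1+x_2-x_3,\ \dots,\ x_1+x_2-x_m,\ x_2};
\end{equation*}
applying $(\sharp)$ to this (the reflection fixing position~$1$, carrying sign $(-1)^m=(-1)^{m-2}$) reproduces exactly the right-hand side of your $(\flat)$. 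So $(\flat)$ carries no information beyond $(*)$ and $(\sharp)$, and the even-$m$ case of \eqref{eqn:cyc} remains unproved in your argument.

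The paper closes this gap by using the full alternality of $\swap(M)$, not merely the $\mantar$-invariance it implies. One shuffles the single-letter word $\varia{\sigma_1}{x_1}$ against $\varia{\sigma_2,\ \dots,\ \sigma_m}{x_2,\ \dots,\ x_m}$ and evaluates $\tswap(M)^m$ on the result. Peeling the shuffle from the front, only the summand with $\varia{\sigma_1}{x_1}$ in first position survives, since the remaining contributions collapse, after reducing $\tswap$ to $\swap$, to an alternality relation for $\swap(M)^{m-1}$ and hence vanish. Peeling from the back, only the summand with $\varia{\sigma_1}{x_1}$ in last position survives, after using Lemma~\ref{lem:Embd:2} to transport the last entry to the front (at the cost of inversion and negation) and then invoking alternality of $\swap(M)$ once more. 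Equating the two evaluations gives \eqref{eqn:cyc} for all $m\geqslant 3$ at once, regardless of parity; \eqref{eqn:inv} and \eqref{eqn:ref} then follow formally from \eqref{eqn:cyc}, Lemma~\ref{lem:Embd:2}, and $\mantar$-invariance, much as you envisioned. You should replace your even-$m$ argument by this alternality computation.
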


\begin{proof}
It is easy to get \eqref{eqn:inv} from \eqref{eqn:G70} and \eqref{eqn:cyc} and to get \eqref{eqn:ref} from \eqref{mantar invariant}, \eqref{eqn:cyc} and \eqref{eqn:inv}.
So it is enough to prove \eqref{eqn:cyc}. 
%Let $M\in\ARI_{\underline{\al}/ \underline{\al}}$ and $m\geqslant3$.
% we calculate the following formula in two ways:
%\begin{equation*}
%	\sum_{\alpha\in X_{\Z}^\bullet}
%	\Sh{(x_1)}{(x_2,\dots,x_m)}{\alpha}h(\swap(M))^m(\alpha).
%\end{equation*}
%Let $m\geqslant3$. 

Firstly, we have
\begin{align*}
	&\sum_{\alpha\in Y_{\Z}^\bullet}
	\Sh{\varia{\sigma_1}{x_1}}{\varia{\sigma_2,\ \dots,\ \sigma_m}{x_2,\ \dots,\ x_m}}{\alpha}\tswap(M)^m(\alpha) \\
	& =\tswap(M)^m\varia{\sigma_1,\ \sigma_2,\ \dots,\ \sigma_m}{x_1,\ x_2,\ \dots,\ x_m}
	+\sum_{\alpha\in Y_{\Z}^\bullet}
	\Sh{\varia{\sigma_1}{x_1}}{\varia{\sigma_3,\ \dots,\ \sigma_m}{x_3,\ \dots,\ x_m}}{\alpha}\tswap(M)^m(\varia{\sigma_2}{x_2},\alpha) \\
	& =\tswap(M)^m\varia{\sigma_1,\ \dots,\ \sigma_m}{x_1,\ \dots,\ x_m} \\
	&\hspace{2cm}+\sum_{\alpha\in Y_{\Z}^\bullet}
	\Sh{\varia{\ \ \ \sigma_1}{x_1-x_2}}{\varia{\ \quad\sigma_3,\ \dots,\qquad \sigma_m}{x_3-x_2,\ \dots,\ x_m-x_2}}{\alpha}\swap(M)^{m-1}(\alpha).
\end{align*}
So by using alternality of $\swap(M)$, we obtain
\begin{equation}\label{first cyclic term}
	\sum_{\alpha\in Y_{\Z}^\bullet}
	\Sh{\varia{\sigma_1}{x_1}}{\varia{\sigma_2,\ \dots,\ \sigma_m}{x_2,\ \dots,\ x_m}}{\alpha}
	\tswap(M)^m(\alpha)
	=\tswap(M)^m\varia{\sigma_1,\ \dots,\ \sigma_m}{x_1,\ \dots,\ x_m}.
\end{equation}
Secondly, we have
\begin{align*}
	&\sum_{\alpha\in Y_{\Z}^\bullet}
	\Sh{\varia{\sigma_1}{x_1}}{\varia{\sigma_2,\ \dots,\ \sigma_m}{x_2,\ \dots,\ x_m}}{\alpha}
	\tswap(M)^m(\alpha) \\
	& =\tswap(M)^m\varia{\sigma_2,\ \dots,\ \sigma_m,\ \sigma_1}{x_2,\ \dots,\ x_m,\ x_1}
	+\sum_{\alpha\in Y_{\Z}^\bullet}
	\Sh{\varia{\sigma_1}{x_1}}{\varia{\sigma_2,\ \dots,\ \sigma_{m-1}}{x_2,\ \dots,\ x_{m-1}}}{\alpha}\tswap(M)^m(\alpha,\varia{\sigma_m}{x_m}).
	\intertext{By applying Lemma \ref{lem:Embd:2} to the second term, we get}
	& =\tswap(M)^m\varia{\sigma_2,\ \dots,\ \sigma_m,\ \sigma_1}{x_2,\ \dots,\ x_m,\ x_1} \\
	&\hspace{2cm}+\sum_{\alpha\in Y_{\Z}^\bullet}
	\Sh{\varia{\sigma_1}{x_1}}{\varia{\sigma_2,\ \dots,\ \sigma_{m-1}}{x_2,\ \dots,\ x_{m-1}}}{\alpha}
	\tswap(M)^m(\varia{\ \sigma_m^{-1}}{-x_m},\alpha_-) \\
	& =\tswap(M)^m\varia{\sigma_2,\ \dots,\ \sigma_m,\ \sigma_1}{x_2,\ \dots,\ x_m,\ x_1} \\
	&\hspace{2cm}+\sum_{\alpha\in Y_{\Z}^\bullet}
	\Sh{\varia{\quad\sigma_1}{x_1-x_m}}{\varia{\ \ \quad\sigma_2,\ \dots,\qquad \sigma_{m-1}}{x_2-x_m,\ \dots,\ x_{m-1}-x_m}}{\alpha}\swap(M)^{m-1}(\alpha_-).
\end{align*}
Here, the symbol $\alpha_-$ is $\varia{\tau_1^{-1},\dots,\tau_m^{-1}}{-u_1,\dots,-u_m}$ for $\alpha=\varia{\tau_1,\dots,\tau_m}{u_1,\dots,u_m}$ and for $\binom{\tau_1}{u_1},\dots,\binom{\tau_m}{u_m}\in Y_{\Z}$. So by using alternality of $\swap(M)$, we obtain
\begin{equation}\label{second cyclic term}
	\sum_{\alpha\in Y_{\Z}^\bullet}
	\Sh{\varia{\sigma_1}{x_1}}{\varia{\sigma_2,\ \dots,\ \sigma_m}{x_2,\ \dots,\ x_m}}{\alpha}
	\tswap(M)^m(\alpha)
	=\tswap(M)^m\varia{\sigma_2,\ \dots,\ \sigma_m,\ \sigma_1}{x_2,\ \dots,\ x_m,\ x_1}.
\end{equation}
Therefore, by combining \eqref{first cyclic term} and \eqref{second cyclic term}, we obtain \eqref{eqn:cyc}.
\end{proof}

\begin{lem}
For any mould $M\in\ARI(\Gamma)_{\underline\al/\underline\al}$,
%$M\in\ARI_{\underline{\al}/ \underline{\al}}$,
%	by using the equations \eqref{eqn:cyc}, \eqref{eqn:inv} and \eqref{eqn:ref}, we get that 
	its $\swap(M)$ is {\it neg-invariant} and {\it mantar-invariant}
	(cf. Notation \ref{nota:mould operations}) 
	%(cf. \S\ref{subsec:moulds}),
	namely we have  
	\begin{align}
	\label{eqn:inv2}&\swap(M)^m\varia{\sigma_1,\ \dots,\ \sigma_m}{x_1,\ \dots,\ x_m}
	 = \swap(M)^m\varia{\sigma_1^{-1},\ \dots,\ \sigma_m^{-1}}{-x_1,\ \dots,\ -x_m}, \\
	\label{eqn:ref2}&\swap(M)^m\varia{\sigma_1,\ \dots,\ \sigma_m}{x_1,\ \dots,\ x_m}
	 = (-1)^{m-1}\swap(M)^m\varia{\sigma_m^{-1},\ \dots,\ \sigma_1^{-1}}{-x_m,\ \dots,\ -x_1},
\end{align}
for $m\geqslant 0$.
\end{lem}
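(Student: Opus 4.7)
The plan is to reduce both target identities for $\swap(M)$ at depth $m$ to the $\tswap$-identities at depth $m+1$ established in Lemma \ref{lem:Embd:3}. The key observation is that for any $\sigma_1,\dots,\sigma_m\in\Gamma$ and $x_1,\dots,x_m$, introducing a dummy entry
$$\sigma_0:=(\sigma_1\cdots\sigma_m)^{-1},\qquad x_0:=0,$$
we get $\sigma_0\sigma_1\cdots\sigma_m=1$ (the hypothesis of Lemma \ref{lem:Embd:3}) and, directly from the definition of $\h$,
$$\tswap(M)^{m+1}\varia{\sigma_0,\sigma_1,\dots,\sigma_m}{0,x_1,\dots,x_m}=\swap(M)^m\varia{\sigma_1,\dots,\sigma_m}{x_1,\dots,x_m}.$$
So any $\tswap$-identity whose specialization has zero in the first variable slot descends to a $\swap$-identity.

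For the neg-invariance \eqref{eqn:inv2}, I would apply the inversion relation \eqref{eqn:inv} at depth $m+1$ to the above tuple. Since $-x_0=0$, both sides of \eqref{eqn:inv} have zero in the first variable slot and convert directly via the displayed formula to $\swap(M)^m$, yielding \eqref{eqn:inv2} without further rearrangement.

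For the mantar-invariance \eqref{eqn:ref2}, a small rotation is needed because the natural place for the zero slot after applying \eqref{eqn:ref} is position $m+1$, not $1$. Specifically, I would set $\sigma_{m+1}:=(\sigma_1\cdots\sigma_m)^{-1}$, $x_{m+1}:=0$ and apply the reflection \eqref{eqn:ref} at depth $m+1$; then apply the cyclic relation \eqref{eqn:cyc} $m$ times to each side so that the zero entry sits at the front of both $\tswap$'s. Converting to $\swap(M)^m$ then gives
$$\swap(M)^m\varia{\sigma_1,\dots,\sigma_m}{x_1,\dots,x_m}=(-1)^{m+1}\swap(M)^m\varia{\sigma_m^{-1},\dots,\sigma_1^{-1}}{-x_m,\dots,-x_1},$$
which is exactly \eqref{eqn:ref2} since $(-1)^{m+1}=(-1)^{m-1}$.

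The small-depth cases $m=0,1$ fall outside the range of Lemma \ref{lem:Embd:3} (stated for depth $\geqslant 3$) and must be handled separately. For $m=0$ both identities are tautological, and for $m=1$ they reduce to the condition $M^1\varia{x_1}{\sigma_1}=M^1\varia{-x_1}{\sigma_1^{-1}}$ built into Definition \ref{def:ARIalal}. There is no genuine obstacle: the essential combinatorics already lives in Lemmas \ref{lem:Embd:1}--\ref{lem:Embd:3}, and the only work here is careful bookkeeping of the dummy slot $(\sigma_0,x_0)=((\sigma_1\cdots\sigma_m)^{-1},0)$ as the cyclic, inversion, and reflection symmetries permute it.
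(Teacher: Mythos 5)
Your proposal is correct and follows essentially the same route as the paper's proof, which simply states that for $m\geqslant 2$ the first identity follows from \eqref{eqn:inv} and the second from \eqref{eqn:cyc} and \eqref{eqn:ref}, with $m=0,1$ handled by the definition of $\ARI(\Gamma)_{\underline\al/\underline\al}$. You have merely made explicit the dummy-slot reduction $\tswap(M)^{m+1}(\cdots,0,\cdots)=\swap(M)^m(\cdots)$ and the cyclic rotation needed to reposition that slot, which the paper leaves to the reader.
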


\begin{proof}
For $m\geqslant 2$, the first equation follows from \eqref{eqn:inv} and
the second one follows from \eqref{eqn:cyc} and \eqref{eqn:ref}.
For $m=0,1$,  they follow from the definition of $\ARI(\Gamma)_{\underline\al/\underline\al}$. 
\end{proof}

The following can be also found in \cite[Lemma 2.5.5]{S-ARIGARI} 
but we give a different proof below.

\begin{prop}\label{prop:DKV1}
	Any mould $ M\in\ARI(\Gamma)_{\underline\al/\underline\al}$ is push-invariant \eqref{push-invariant}. %, that is,
%	\begin{equation*}
%		 M=\push( M).
%	\end{equation*}
\end{prop}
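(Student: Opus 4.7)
The plan is to reduce $\push$-invariance of $M$ to the cyclic symmetry \eqref{eqn:cyc} of $\tswap(M)$ established in Lemma \ref{lem:Embd:3}. Since $\swap$ is invertible (its inverse can be read off from \eqref{eq:swap}, using that $\Gamma$ is abelian), it suffices to prove $\swap(\push(M)) = \swap(M)$. The low-depth cases are immediate: for $m = 0$ both sides vanish, and for $m = 1$ the identity $\push(M)^1\varia{x_1}{\sigma_1} = M^1\varia{-x_1}{\sigma_1^{-1}}$ reduces to the single-letter condition built into Definition \ref{def:ARIalal}.

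For $m \geqslant 2$, I would first carry out a direct computation of $\swap(\push(M))$ from the definitions of $\swap$ in \eqref{eq:swap} and of $\push$ in Notation \ref{nota:mould operations}. A mechanical change of variables (the only non-trivial input being that $\Gamma$ is abelian, so $\sigma_1 \cdots \sigma_k \cdot \sigma_1^{-1} = \sigma_2 \cdots \sigma_k$) yields the identity
\[
\swap(\push(M))^m\varia{\sigma_1,\ \dots,\ \sigma_m}{v_1,\ \dots,\ v_m}
= \swap(M)^m\varia{\sigma_2,\ \dots,\ \sigma_m,\ (\sigma_1\cdots\sigma_m)^{-1}}{v_2-v_1,\ \dots,\ v_m-v_1,\ -v_1}.
\]

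The next step is to recognise that both $\swap(M)^m\varia{\sigma_1,\dots,\sigma_m}{v_1,\dots,v_m}$ and the right-hand side above are values of $\tswap(M)^{m+1}$ at cyclic shifts of a single $(m+1)$-tuple. By the defining formula $\tswap(M)^{m+1}\varia{\tau_0,\tau_1,\dots,\tau_m}{x_0,x_1,\dots,x_m} = \swap(M)^m\varia{\tau_1,\dots,\tau_m}{x_1-x_0,\dots,x_m-x_0}$, this value is independent of $\tau_0$ and invariant under simultaneous translation of all the $x_i$. Taking the free sigma-entry to be $(\sigma_1\cdots\sigma_m)^{-1}$ (so that the total sigma-product equals $1$) and shifting the $x$-row by $v_1$ when necessary, the two sides become
\[
\tswap(M)^{m+1}\varia{(\sigma_1\cdots\sigma_m)^{-1},\ \sigma_1,\ \dots,\ \sigma_m}{0,\ v_1,\ \dots,\ v_m}
\quad\text{and}\quad
\tswap(M)^{m+1}\varia{\sigma_1,\ \dots,\ \sigma_m,\ (\sigma_1\cdots\sigma_m)^{-1}}{v_1,\ \dots,\ v_m,\ 0},
\]
which differ by a single cyclic rotation. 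Since $(\sigma_1\cdots\sigma_m)^{-1}\sigma_1\cdots\sigma_m = 1$ and $m+1 \geqslant 3$, the cyclic symmetry \eqref{eqn:cyc} of Lemma \ref{lem:Embd:3} equates them, yielding $\swap(\push(M)) = \swap(M)$ and hence $\push(M) = M$.

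The main obstacle will be the initial computation of $\swap(\push(M))$: disentangling the reversal built into $\swap$ from the cyclic relabelling encoded in $\push$ requires careful bookkeeping of both the top and bottom rows. Once this key identity is in hand, the rest is formal — translation invariance of $\tswap$ reframes the problem as a single cyclic shift of an extended $(m+1)$-tuple whose sigma-product is $1$, which is precisely the situation covered by Lemma \ref{lem:Embd:3}, and injectivity of $\swap$ then delivers the conclusion.
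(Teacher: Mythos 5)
Your proof is correct and follows essentially the same route as the paper's: both reduce push-invariance to the cyclic symmetry \eqref{eqn:cyc} of $\tswap(M)^{m+1}$ from Lemma \ref{lem:Embd:3}, using the translation invariance of $\tswap$ to extend the $m$-tuple to an $(m+1)$-tuple with $\sigma$-product $1$. The only cosmetic difference is that you package the change of variables as an identity $\swap(\push(M))=\swap(M)$ and invoke injectivity of $\swap$, whereas the paper unpacks both sides of \eqref{eqn:cyc} directly into values of $M^m$ and specializes the $y$- and $\tau$-variables; the underlying computation is the same.
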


\begin{proof}
	For $m=0,1$, it is obvious because we have $M^1\varia{x_1}{\sigma_1}=M^1\varia{-x_1}{\sigma_1^{-1}}$.
	Assume $m\geqslant2$. By using \eqref{eqn:cyc}, we have
	\begin{equation*}
		\tswap(M)^{m+1}\varia{\tau_1,\ \tau_2,\ \dots,\ \tau_{m+1}}{y_1,\ y_2,\ \dots,\ y_{m+1}}
		 = \tswap(M)^{m+1}\varia{\tau_2,\ \dots,\ \tau_{m+1},\ \tau_1}{y_2,\ \dots,\ y_{m+1},\ y_1},
	\end{equation*}
	for $\binom{\tau_1}{y_1},\dots, \binom{\tau_{m+1}}{y_{m+1}}\in Y_{\Z}$ with $\tau_1\cdots\tau_{m+1}=1$. We calculate the left hand side
	\begin{align*}
		\tswap(M)^{m+1}\varia{\tau_1,\ \tau_2,\ \dots,\ \tau_{m+1}}{y_1,\ y_2,\ \dots,\ y_{m+1}}
		& = \swap(M)^m\varia{\ \quad\tau_2,\ \dots,\ \quad \tau_{m+1}}
		{y_2-y_1,\ \dots,\ y_{m+1}-y_1} \\
		& = M^m
		{\scriptsize\left(\begin{array}{rrrr}
			y_{m+1}-y_1,& y_m-y_{m+1},& \dots,& y_2-y_3 \\
			\tau_2\cdots\tau_{m+1},& \tau_2\cdots\tau_{m},& \dots,& \tau_2
		\end{array}\right)}.
	\end{align*}
	Similarly, we calculate the right hand side
	\begin{align*}
		\tswap(M)^{m+1}\varia{\tau_2,\ \dots,\ \tau_{m+1},\ \tau_1}{y_2,\ \dots,\ y_{m+1},\ y_1}
		& = M^m
		{\scriptsize\left(\begin{array}{rrrrr}
			y_1-y_2,&  y_{m+1}-y_1,&  y_m-y_{m+1},& \dots,& y_3-y_4 \\
			\tau_3\cdots\tau_{m+1}\tau_1,& \tau_3\cdots\tau_{m+1},& \tau_3\cdots\tau_{m},& \dots,& \tau_3
		\end{array}\right)}.
	\end{align*}
	By substituting $y_1:=0$ and $y_i:=x_1+\cdots+x_{m+2-i}$ ($2\leqslant i\leqslant m+1$) and $\tau_1:=\sigma_1^{-1}$ and $\tau_2:=\sigma_m$ and $\tau_j:=\sigma_{m+2-j}\sigma_{m+3-j}^{-1}$ ($3\leqslant j\leqslant m+1$) for any $\sigma_1,\dots,\sigma_m\in\Gamma$, we have
	\begin{align*}
		M^m\varia{x_1,\ x_2,\ \dots,\ x_m}{\sigma_1,\ \sigma_2,\ \dots,\ \sigma_m}
		& = M^m{\scriptsize\left(\begin{array}{rrrr}
			-x_1-\cdots-x_m,& x_1,& \dots,& x_{m-1} \\
			\sigma_m^{-1},& \sigma_1\sigma_m^{-1},& \dots,& \sigma_{m-1}\sigma_m^{-1}
		\end{array}\right)} \\
		& = \push(M)^m\varia{x_1,\ x_2,\ \dots,\ x_m}{\sigma_1,\ \sigma_2,\ \dots,\ \sigma_m}.
	\end{align*}
	Whence we obtain the claim.
\end{proof}

The following generalizes %the claim, which treats the case of $\Gamma=\{e\}$, in 
\cite[Theorem 14]{RS}.
%, that any elements of $\ARI_{\underline{\al}/ \underline{\al}}$ are circ-neutral.

\begin{prop}\label{prop:DKV2}
	For any mould $ M\in\Fil_{\D}^{2}\ARI(\Gamma)_{\underline{\al}/ \underline{\al}}$, its swap $\swap( M)$
	is pus-neutral \eqref{pus-neutral}, that is, for all $m\geqslant 1$ and $\sigma_1,\dots,\sigma_m\in\Gamma$, 
	\begin{equation*}
		\sum_{i\in\Z/m\Z}\pus^i\circ\swap( M)^m
		\varia{\sigma_1,\dots,\sigma_m}{x_1,\dots,x_m}
		=0.
	\end{equation*}
\end{prop}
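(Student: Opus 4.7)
The plan is to dispose of the small cases $m=1,2$ directly and then reduce the general case $m \geq 3$ to a combinatorial cancellation driven by Lemma \ref{lem:Embd:3} and the alternality of $\swap(M)$.

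First, when $m=1$ the hypothesis $M \in \Fil^2_\D \ARI(\Gamma)_{\underline\al/\underline\al}$ forces $M^1 \equiv 0$, hence $\swap(M)^1 \equiv 0$ and the single-term cyclic sum vanishes trivially. When $m=2$, composing the neg-invariance \eqref{eqn:inv2} with the mantar-invariance \eqref{eqn:ref2} of $\swap(M)$ yields
$$\swap(M)^2\varia{\sigma_1,\sigma_2}{v_1,v_2} \;=\; -\,\swap(M)^2\varia{\sigma_2,\sigma_1}{v_2,v_1},$$
which is precisely the $\mathbb Z/2\mathbb Z$-pus-neutrality.

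For $m\geq 3$ I would set $\tau := (\sigma_1\cdots\sigma_m)^{-1}$, which is invariant under cyclic permutation of $(\sigma_1,\dots,\sigma_m)$ since $\Gamma$ is abelian. Applying the cyclic symmetry \eqref{eqn:cyc} of $\tswap(M)^{m+1}$ from Lemma \ref{lem:Embd:3} to the augmented $(m+1)$-tuple whose last group entry is $\tau$ and whose last variable is $0$, and unwinding the definition of $\tswap$, produces the single-shift identity
$$\swap(M)^m\varia{\sigma_1,\dots,\sigma_m}{v_1,\dots,v_m} \;=\; \swap(M)^m\varia{\sigma_2,\dots,\sigma_m,\tau}{v_2-v_1,\dots,v_m-v_1,-v_1}.$$
Iterating this $m$ times transports $\tau$ from the auxiliary slot at the end to position $1$, giving the canonical-form identity
$$\swap(M)^m\varia{\sigma_1,\dots,\sigma_m}{v_1,\dots,v_m} \;=\; \swap(M)^m\varia{\tau,\sigma_1,\dots,\sigma_{m-1}}{-v_m,\;v_1-v_m,\;\dots,\;v_{m-1}-v_m}.$$
Because $\tau$ is invariant under cyclic permutation of the $\sigma_j$'s, applying the canonical form to every pus-shift recasts the target sum as
$$\sum_{i=1}^m \swap(M)^m\varia{\tau,\,\sigma_{i+1},\dots,\sigma_{i-1}}{-v_i,\;v_{i+1}-v_i,\;\dots,\;v_{i-1}-v_i},$$
indices taken modulo $m$, where the tail runs cyclically through the $\sigma_j$'s with $j \neq i$.

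The main obstacle is showing this last sum is zero. Each term begins with the common letter $(\tau,-v_i)$ but is followed by an $(m-1)$-letter tail $\omega_i$ whose word structure depends on $i$, so the sum is not a single instance of the length-$1$/length-$(m-1)$ alternality identity \eqref{eq:alternal}. The plan is to apply the shuffle identity \eqref{eq:alternal} of the alternal mould $\swap(M) \in \overline{\ARI}(\Gamma)_\al$ (which holds by bialternality of $M$) to each tail $\omega_i$, expressing each term in the canonical-form sum as a negative sum of the $(\tau,-v_i)$-insertions at positions $2,\dots,m$; then one applies the cyclic-shift identity again to match these insertions across distinct values of $i$, yielding pairwise cancellations. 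This is the technical core and generalizes the combinatorial manipulation of \cite[Theorem 14]{RS} from the trivial-group case $\Gamma=\{e\}$ to an arbitrary finite abelian~$\Gamma$.
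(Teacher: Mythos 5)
Your opening reductions are sound and in fact mirror the paper's machinery: the $m=1$ case is trivial from $\Fil^2_\D$, the $m=2$ case follows from the alternality of $\swap(M)$ at depth $2$ (your composition of \eqref{eqn:inv2} and \eqref{eqn:ref2} is a correct but roundabout way of stating this), and the ``single-shift'' and ``canonical-form'' identities you derive from \eqref{eqn:cyc} by lifting $\swap(M)^m$ to $\tswap(M)^{m+1}$ are exactly right. After applying the canonical form termwise you correctly recast the $\pus$-sum as
$$\sum_{i=1}^m \swap(M)^m\varia{\tau,\,\sigma_{i+1},\dots,\sigma_{i-1}}{-v_i,\ v_{i+1}-v_i,\ \dots,\ v_{i-1}-v_i},\qquad \tau=(\sigma_1\cdots\sigma_m)^{-1}.$$

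The genuine gap is in the concluding step. You declare it ``the technical core'' and describe the mechanism as ``pairwise cancellations,'' but leave it unexecuted, and the description is not accurate. If you apply the $(m-1,1)$-alternality relation of $\swap(M)$ to each canonical term $c_i$ (peeling off $(\tau,-v_i)$ against the tail), every insertion term at position $k\geqslant 2$ can indeed be re-identified via a further $\tswap^{m+1}$-lift and \eqref{eqn:cyc} as one of the \emph{other} canonical terms $c_j$ with $j\neq i$; what results is not a pairwise cancellation but the scalar relation $\sum_i c_i = -(m-1)\sum_j c_j$, i.e.\ $m\sum_i c_i = 0$. To make this rigorous you would additionally have to verify the combinatorial fact that, for each fixed $i$, the map from insertion position $k\in\{2,\dots,m\}$ to the index $j$ of the matching canonical term is a bijection onto $\{1,\dots,m\}\setminus\{i\}$ --- none of which you argue.

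The paper's proof avoids this entirely by running the argument in the other direction. It starts from a statement that is already known to vanish --- the $(m-1,1)$-alternality of $\swap(M)$ applied to the pair of words $(\tau_2,\dots,\tau_m;\,u_2-u_1,\dots,u_m-u_1)$ and $(\tau_0;\,u_0-u_1)$ --- expands the shuffle into $m$ insertion terms, prepends $\binom{\tau_1}{u_1}$ to lift each term to a $\tswap(M)^{m+1}$ evaluation, and then applies \eqref{eqn:cyc} repeatedly to rotate $\binom{\tau_0}{u_0}$ to the front. After specializing $u_0=0$, $u_i=x_i$, $\tau_0=(\sigma_1\cdots\sigma_m)^{-1}$ and $\tau_i=\sigma_i$, the $m$ shuffle terms become exactly the $m$ cyclic $\pus$-shifts of $\swap(M)^m$. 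In other words, the canonical-form sum you arrive at \emph{is} a single alternality shuffle relation --- there is no auxiliary cancellation to check. Recognizing that would close your gap and substantially simplify the final step; as written, your proposal stops short of a complete argument.
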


\begin{proof}
	%Let $M\in\Fil_{\D}^{2}\ARI_{\underline{\al}/ \underline{\al}}$. 
	For $m=0,1$, it is obvious because we have $M^1(x_1)=0$.
	%By the definition, we have $M^1(x_1)=0$. So we prove \eqref{pus-neutral} for 
	Assume $m\geqslant2$. Let $\binom{\tau_0}{u_0},\binom{\tau_1}{u_1},\dots,\binom{\tau_m}{u_m}\in Y_{\Z}$ with $\tau_0\tau_1\cdots\tau_m=1$. By using alternality of $\swap(M)$, we have
	\begin{equation*}
		\sum_{\alpha\in Y_{\Z}^\bullet}
		\Sh{\scriptsize\left(\begin{array}{ccc}
			\tau_2,& \dots,& \tau_m \\
			u_2-u_1,& \dots,& u_m-u_1
		\end{array}\right)}{\scriptsize\left(\begin{array}{c}
			\tau_0 \\
			u_0-u_1
		\end{array}\right)}{\alpha}\swap(M)^m(\alpha)=0.
	\end{equation*}
	On the other hand, we calculate
	\begin{align*}
		&\sum_{\alpha\in Y_{\Z}^\bullet}
		\Sh{\scriptsize\left(\begin{array}{ccc}
			\tau_2,& \dots,& \tau_m \\
			u_2-u_1,& \dots,& u_m-u_1
		\end{array}\right)}{\scriptsize\left(\begin{array}{c}
		    \tau_0\\
			u_0-u_1 
		\end{array}\right)}{\alpha}
		\swap(M)^m(\alpha) \\
		=&\tswap(M)^{m+1}\varia{\tau_1,\ \tau_2,\ \dots,\ \tau_m,\ \tau_0}
			{u_1,\ u_2,\ \dots,\ u_m,\ u_0}
		+\tswap(M)^{m+1}\varia{\tau_1,\ \tau_2,\ \dots,\ \tau_{m-1},\ \tau_0,\ \tau_m}
			{u_1,\ u_2,\ \dots,\ u_{m-1},\ u_0,\ u_m} \\
		&\hspace{0cm}+\cdots+\tswap(M)^{m+1}\varia{\tau_1,\ \tau_2,\ \tau_0,\ \tau_3,\ \dots,\ \tau_m}
			{u_1,\ u_2,\ u_0,\ u_3,\ \dots,\ u_m}
		+\tswap(M)^{m+1}\varia{\tau_1,\ \tau_0,\ \tau_2,\ \dots,\ \tau_m}
			{u_1,\ u_0,\ u_2,\ \dots,\ u_m}.
		\intertext{Repeated applications of \eqref{eqn:cyc} yield}
		=&\tswap(M)^{m+1}\varia{\tau_0,\ \tau_1,\ \tau_2,\ \dots,\ \tau_m}
			{u_0,\ u_1,\ u_2,\ \dots,\ u_m}
		+\tswap(M)^{m+1}\varia{\tau_0,\ \tau_m,\ \tau_1,\ \tau_2,\ \dots,\ \tau_{m-1}}
			{u_0,\ u_m,\ u_1,\ u_2,\ \dots,\ u_{m-1}} \\
		&\hspace{0cm}+\cdots+\tswap(M)^{m+1}\varia{\tau_0,\ \tau_3,\ \dots,\ \tau_m,\ \tau_1,\ \tau_2}
			{u_0,\ u_3,\ \dots,\ u_m,\ u_1,\ u_2}
		+\tswap(M)^{m+1}\varia{\tau_0,\ \tau_2,\ \dots,\ \tau_m,\ \tau_1}
			{u_0,\ u_2,\ \dots,\ u_m,\ u_1} \\
		=&\sum_{i\in\Z/m\Z}\tswap(M)^{m+1}\varia{\tau_0,\ \tau_{i+1},\ \tau_{i+2},\ \dots,\ \tau_{i+m}}
			{u_0,\ u_{i+1},\ u_{i+2},\ \dots,\ u_{i+m}}.
	\end{align*}
	Therefore, by substituting $u_0=0$ and $u_i=x_i$ ($1\leqslant i\leqslant m$) and $\tau_0=(\sigma_1\cdots\sigma_m)^{-1}$ and $\tau_i=\sigma_i$ ($1\leqslant i\leqslant m$), we get the claim.
\end{proof}

\begin{thm}\label{thm:embedding}
There is an embedding
$$
\Fil_{\D}^{2}\ARI(\Gamma)_{\underline{\al}/ \underline{\al}}\hookrightarrow
\ARI(\Gamma)_{\push/ \pusnu}
$$
of  graded Lie algebras.
\end{thm}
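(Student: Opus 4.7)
The plan is to assemble the immediately preceding propositions, since the theorem is essentially a bookkeeping statement once the two nontrivial closure properties have been established. Given any $M \in \Fil_{\D}^{2}\ARI(\Gamma)_{\underline{\al}/\underline{\al}}$, Proposition \ref{prop:DKV1} asserts that $M$ is push-invariant, and Proposition \ref{prop:DKV2} asserts that $\swap(M)$ is pus-neutral. By the very definition (Definition \ref{def:ARIpushpusnu}) of $\ARI(\Gamma)_{\push/\pusnu}$, this places $M$ inside that Lie algebra. Hence the set-theoretic inclusion $\iota:\Fil_{\D}^{2}\ARI(\Gamma)_{\underline{\al}/\underline{\al}} \hookrightarrow \ARI(\Gamma)_{\push/\pusnu}$ is well-defined, and it is tautologically injective since it is the identity on underlying moulds.

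Next I would verify that $\iota$ is a morphism of graded Lie algebras. The target carries the $\ari_u$-bracket and is a Lie subalgebra of $\ARI(\Gamma)$ by Theorem \ref{thm: ARIpushpusnu Lie}. The source inherits the $\ari_u$-bracket from $\ARI(\Gamma)_{\underline{\al}/\underline{\al}}$, which is itself a Lie subalgebra of $\ARI(\Gamma)$ by Proposition \ref{ARIalal Lie algebra}. The filtered piece $\Fil_{\D}^{2}$ is closed under $\ari_u$: indeed, the depth filtration on $\ARI(\Gamma)$ is multiplicative with respect to both $\times$ and the flexion-based derivation $\arit_u$, so
\begin{equation*}
\bigl[\Fil_{\D}^{2}, \Fil_{\D}^{2}\bigr]_{\ari_u} \subseteq \Fil_{\D}^{4} \subseteq \Fil_{\D}^{2},
\end{equation*}
confirming that $\Fil_{\D}^{2}\ARI(\Gamma)_{\underline{\al}/\underline{\al}}$ is in fact a Lie subalgebra of $\ARI(\Gamma)$. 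Since both source and target are Lie subalgebras of $\ARI(\Gamma)$ with the same inherited $\ari_u$-bracket, the inclusion $\iota$ automatically preserves brackets. Compatibility with the weight grading is immediate from the fact that $\iota$ is the identity on moulds.

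There is no real obstacle left at this stage: all the genuine content sits in Propositions \ref{prop:DKV1} and \ref{prop:DKV2}, whose proofs exploit the interplay between the alternality of $M$ and the alternality of $\swap(M)$ through the auxiliary translation operator $\h$ and the dihedral-type identities \eqref{eqn:cyc}, \eqref{eqn:inv}, \eqref{eqn:ref} of Lemma \ref{lem:Embd:3}. The present theorem is therefore a direct corollary, and I expect the write-up to occupy only a few lines combining these references.
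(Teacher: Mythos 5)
Your proof matches the paper's: both reduce the theorem to the push-invariance of bialternal moulds (Proposition \ref{prop:DKV1}) and the pus-neutrality of their swaps in depth $\geqslant 2$ (Proposition \ref{prop:DKV2}), and then observe that the inclusion is the identity on underlying moulds, hence a graded Lie algebra morphism. Your extra paragraph spelling out why the bracket is preserved (both sides inherit the $\ari_u$-bracket from $\ARI(\Gamma)$ and the depth filtration is additive under $\ari_u$) is a correct unpacking of the paper's one-line remark that the homomorphism property is ``immediate.''
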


\begin{proof}
	Let $ M\in\Fil_{\D}^{2}\ARI(\Gamma)_{\underline{\al}/ \underline{\al}}$. Then we have $M\in\ARI(\Gamma)_{\underline\al/\underline\al}$. So by Proposition \ref{prop:DKV1} and Proposition \ref{prop:DKV2},  $M$ is $\push$-invariant %\eqref{push-invariant} 
	and $\swap(M)$ is $\pus$-neutral. % \eqref{pus-neutral}. 
	Therefore, we obtain $ M\in\ARI(\Gamma)_{\push/ \pusnu}$.
	To see that it is a Lie algebra homomorphism is immediate.
\end{proof}

As a corollary, 
by taking an intersection with $\ARI(\Gamma)_\al^{\fin,\pol}$ in the embedding of the above theorem,
we obtain the following  inclusion %from Gonchraov's dihedral bigraded Lie algebra to the bigraded Kashiwara-Vergne Lie algeba 
which generalizes \cite[Theorem 3]{RS}.

\begin{cor}\label{cor:embedding}
There is an embedding
$$
\Fil_{\D}^{2}{\mathbb D}(\Gamma)_{\bullet\bullet}\hookrightarrow\lkrv(\Gamma)_{\bullet\bullet}
$$
of bigraded Lie algebras.
\end{cor}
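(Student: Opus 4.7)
The plan is to assemble the corollary from three ingredients already established in the paper: the mould-theoretic reformulation of $\Fil^2_\D\mathbb D(\Gamma)_{\bullet\bullet}$ from Theorem \ref{thm:reform:dihedral}, the embedding of bialternal moulds into push-invariant/pus-neutral moulds from Theorem \ref{thm:embedding}, and the mould-theoretic reformulation of $\lkrv(\Gamma)_{\bullet\bullet}$ from Theorem \ref{thm:reform:krv:bigrade}. No new computations should be needed.

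First, Theorem \ref{thm:reform:dihedral} gives the isomorphism of bigraded Lie algebras
\[
\Fil^2_\D\mathbb D(\Gamma)_{\bullet\bullet}\xrightarrow{\sim}\Fil^2_\D\ARI(\Gamma)_{\underline\al/\underline\al}^{\fin,\pol},\qquad \undertilde Z\mapsto M_{\undertilde Z},
\]
so any element on the left corresponds to a finite polynomial-valued bialternal mould of depth $\geq 2$. Next, since $M_{\undertilde Z}$ lies in $\Fil^2_\D\ARI(\Gamma)_{\underline\al/\underline\al}$, Theorem \ref{thm:embedding} places it in $\ARI(\Gamma)_{\push/\pusnu}$; combined with the already-established alternality and the fact that $M_{\undertilde Z}$ is finite and polynomial-valued, one obtains
\[
M_{\undertilde Z}\in\ARI(\Gamma)_{\push/\pusnu}\cap\ARI(\Gamma)_\al^{\fin,\pol}.
\]
Finally, Theorem \ref{thm:reform:krv:bigrade} identifies this last intersection with $\lkrv(\Gamma)_{\bullet\bullet}$ as bigraded Lie algebras, yielding the composite
\[
\Fil^2_\D\mathbb D(\Gamma)_{\bullet\bullet}\xrightarrow{\sim}\Fil^2_\D\ARI(\Gamma)_{\underline\al/\underline\al}^{\fin,\pol}\hookrightarrow\ARI(\Gamma)_{\push/\pusnu}\cap\ARI(\Gamma)_\al^{\fin,\pol}\xrightarrow{\sim}\lkrv(\Gamma)_{\bullet\bullet}.
\]
The first and third arrows are isomorphisms of bigraded Lie algebras, and the middle arrow is the restriction of the injective Lie-algebra map of Theorem \ref{thm:embedding} to the subspace of finite polynomial-valued alternal moulds, which is clearly still injective and a Lie homomorphism.

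The only thing to check, and the only potential subtlety, is that the various depth filtrations and bigradings are compatible along this chain: the depth filtration $\Fil^\bullet_\D$ on $\mathbb D(\Gamma)$ matches the depth filtration on moulds under $\undertilde Z\mapsto M_{\undertilde Z}$ by construction, and the weight grading on the dihedral side matches the total polynomial-degree grading coming from $\ma$ via Proposition \ref{prop:MT=ARIal}. Both were verified (implicitly) in the proofs of Theorems \ref{thm:reform:dihedral} and \ref{thm:reform:krv:bigrade}, so no additional work is required here. Thus the corollary reduces to a diagram chase, and I expect the write-up to fit in a few lines invoking the three theorems in sequence.
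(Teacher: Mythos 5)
Your proof is correct and takes essentially the same approach as the paper: the authors' proof is a one-line citation of \eqref{eq:Fil2D=Fil2ARIalal} (from Theorem \ref{thm:reform:dihedral}), Theorem \ref{thm:embedding}, and Theorem \ref{thm:reform:krv:bigrade}, composed exactly as in your chain of maps. You have simply spelled out the diagram chase that the paper leaves implicit.
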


\begin{proof}
It follows from  Theorem \ref{thm:reform:krv:bigrade},
\eqref{eq:Fil2D=Fil2ARIalal}
and Theorem \ref{thm:embedding}.
\end{proof}

By imposing the distribution relation, we also obtain the following.

\begin{cor}\label{cor:embedding2}
There is an embedding
$$
\Fil_{\D}^{2}{D}(\Gamma)_{\bullet\bullet}\hookrightarrow\lkrvd(\Gamma)_{\bullet\bullet}
$$
of bigraded Lie algebras.
\end{cor}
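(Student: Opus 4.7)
The plan is to run the proof of Corollary \ref{cor:embedding} verbatim but restrict everything to the distribution-invariant part. More precisely, I will combine Corollary \ref{cor:reform:dihedral}, which identifies $\Fil^2_\D D(\Gamma)_{\bullet\bullet}$ with $\Fil^2_\D\ARID(\Gamma)_{\underline\al/\underline\al}^{\fin,\pol}$, with Corollary \ref{cor:reform:lkrvd}, which identifies $\lkrvd(\Gamma)_{\bullet\bullet}$ with $\ARI(\Gamma)_{\push/\pusnu}\cap\ARID(\Gamma)_\al^{\fin,\pol}$, and use Theorem \ref{thm:embedding} to bridge the two.

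First I would start with $\undertilde Z\in\Fil^2_\D D(\Gamma)_{\bullet\bullet}$. By Corollary \ref{cor:reform:dihedral}, the associated mould $M_{\undertilde Z}$ lies in $\Fil^2_\D\ARID(\Gamma)_{\underline\al/\underline\al}^{\fin,\pol}$, which in particular is contained in $\Fil^2_\D\ARI(\Gamma)_{\underline\al/\underline\al}$. Theorem \ref{thm:embedding} then shows $M_{\undertilde Z}\in\ARI(\Gamma)_{\push/\pusnu}$. Since $M_{\undertilde Z}$ also belongs to $\ARID(\Gamma)_\al^{\fin,\pol}$ by construction, we obtain
\[
M_{\undertilde Z}\in\ARI(\Gamma)_{\push/\pusnu}\cap\ARID(\Gamma)_\al^{\fin,\pol},
\]
and the latter is isomorphic to $\lkrvd(\Gamma)_{\bullet\bullet}$ via Corollary \ref{cor:reform:lkrvd}. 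This gives the desired map of bigraded $\mathbb Q$-linear spaces, and its injectivity is inherited from the injectivity in Corollary \ref{cor:embedding} (or, more directly, from the isomorphism in Corollary \ref{cor:reform:dihedral}).

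The Lie algebra compatibility requires essentially no additional work: the map $\undertilde Z\mapsto M_{\undertilde Z}$ of Theorem \ref{thm:reform:dihedral} is a Lie algebra homomorphism, the intersection $\ARI(\Gamma)_{\push/\pusnu}\cap\ARID(\Gamma)_\al^{\fin,\pol}$ is a Lie subalgebra of $\ARI(\Gamma)$ by Theorem \ref{thm: ARIpushpusnu Lie}, Proposition \ref{ARIal Lie algebra} and Proposition \ref{prop:ARID Lie algebra}, and the isomorphism of Corollary \ref{cor:reform:lkrvd} is one of Lie algebras. Concatenating these homomorphisms yields the embedding of bigraded Lie algebras.

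Since all non-trivial work has already been done in the cited results, there is no serious obstacle here; the only point worth being explicit about is that the distribution relation on $\undertilde Z$ corresponds exactly, under $\undertilde Z\mapsto M_{\undertilde Z}$, to the equality $i_N(M_{\undertilde Z})=m_N(M_{\undertilde Z})$ that defines $\ARID(\Gamma)$ (and hence $\lkrvd(\Gamma)_{\bullet\bullet}$), as already observed in the proof of Corollary \ref{cor:reform:dihedral}.
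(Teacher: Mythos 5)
Your proposal is correct and follows exactly the same approach as the paper, which simply cites Corollary \ref{cor:reform:lkrvd}, Corollary \ref{cor:reform:dihedral} and Theorem \ref{thm:embedding} without further elaboration; you have just spelled out the intermediate containments and the verification that the map is a Lie algebra homomorphism.
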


\begin{proof}
It follows from 
Corollary \ref{cor:reform:lkrvd},
Corollary \ref{cor:reform:dihedral}
and Theorem \ref{thm:embedding}.
\end{proof}

It looks interesting to see if the map is an isomorphism.

\begin{rem}
By \eqref{eq:Fil2D=Fil2ARIalal}, Theorem \ref{thm:reform:krv:bigrade},
Theorem \ref{thm:embedding} and Corollary \ref{cor:embedding},
we obtain the commutative diagram \eqref{CD:D} of Lie algebras:
\begin{equation}\label{CD:D}
\xymatrix{ 
\Fil_{\D}^{2}{\mathbb D}(\Gamma)_{\bullet\bullet}\ar^{\!\!\!\!\!\!\!\!\!\!\!\!\!\!\!\!\!\!\!\!\!\!\!\!\!\!\!\!\!\!\!\!\!\simeq}[r]\ar@{^{(}->}[d]& \Fil_{\D}^{2}\ARI(\Gamma)_{\underline{\al}/ \underline{\al}}^{\fin,\pol}\ar@{^{(}->}[d]\\ 
%\Fil_{\D}^{2}
\lkrv(\Gamma)_{\bullet\bullet}
\ar^{\!\!\!\!\!\!\!\!\!\!\!\!\!\!\!\!\!\!\!\!\!\!\!\!\!\!\!\!\!\!\!\!\!\!\!\simeq}[r]&\ARI(\Gamma)_{\push/ \pusnu}\cap \ARI(\Gamma)_\al^{\fin,\pol}
}
\end{equation}
\end{rem}

%%\begin{screen}
%%To do: 
%%construct an embedding $\ARI_{\underline{\al}/ \underline{\al}}\hookrightarrow
%%\ARI_{\mathrm{push}/ \mathrm{pusnu}}$. %(without $\pol$-version)?
%%\begin{enumerate}
%%\item $\DKVi$: Proof: $\al/\al$ $\Longrightarrow$ \cite{G}(64) $\Longrightarrow$ 
%%\eqref{push-invariant} push-invariant.
%%\item $\DKVii$: OK (DKV1+alternal)
%%\end{enumerate}
%%\end{screen}
%
%
%%%%%%%%%%%%%%%%%%%%%%%%%%%%%%%%%%%%%%%%%%%%%%%%%%%%%%%%%%%%%%%%%%%%%%
\appendix
\section{On the $\ari$-bracket of  $\ARI(\Gamma)$}\label{sec:appendix}
In this appendix, we give self-contained proofs of
fundamental properties of the $\ari$-bracket of $\ARI(\Gamma)$, that is,
Proposition \ref{ARI Lie algebra}, \ref{ARIal Lie algebra} and \ref{ARIalal Lie algebra},
which are required  in this paper.

\subsection{Proof of Proposition \ref{ARI Lie algebra}}\label{sec:A.1}
We give a proof Proposition \ref{ARI Lie algebra} which claims that  $\ARI(\Gamma)$ forms a Lie algebra,
by showing that it actually forms a pre-Lie algebra.

 We start with three fundamental lemmas 
 %which will be required to the proof 
 which can be proved directly by simple computations.
\begin{lem}\label{lem:flexion's action}
	For $\alpha,\alpha_1,\alpha_2,\beta,\beta_1,\beta_2\in X_{\Z}^\bullet$, we have
	\begin{align*}
		\urflex{\alpha_1\alpha_2}{\beta}=\urflex{\alpha_1}{(\urflex{\alpha_2}{\beta})}, &\quad
		\ulflex{\beta}{\alpha_1\alpha_2}=\ulflex{(\ulflex{\beta}{\alpha_1})}{\alpha_2}, \\
		\lrflex{\alpha}{(\beta_1\beta_2)}=\lrflex{\alpha}{\beta_1}\lrflex{\alpha}{\beta_2}, &\quad
		\llflex{(\beta_1\beta_2)}{\alpha}=\llflex{\beta_1}{\alpha}\llflex{\beta_2}{\alpha}.
	\end{align*}
	Especially, if $\beta_1\neq\emptyset$, we have
	\begin{align*}
		\urflex{\alpha}{(\beta_1\beta_2)}=(\urflex{\alpha}{\beta_1}){\beta_2}, \quad
		\ulflex{(\beta_2\beta_1)}{\alpha}={\beta_2}(\ulflex{\beta_1}{\alpha}),
	\end{align*}
	and if $\alpha_2\neq\emptyset$, we have
	\begin{align*}
		\lrflex{\alpha_1\alpha_2}{\beta}=\lrflex{\alpha_2}{\beta}, &\quad
		\llflex{\beta}{\alpha_2\alpha_1}=\llflex{\beta}{\alpha_2}.
	\end{align*}
%	If we replace $\urflex{}{}$ and $\lrflex{}{}$ to $\ulflex{}{}$ and $\llflex{}{}$, we can get similar claims.
\end{lem}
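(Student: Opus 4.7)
The plan is to verify each identity directly from the definitions of the four flexions, organizing the verifications according to which coordinate (head versus tail) and which layer (indices $a_i$ versus group elements $\sigma_i$) each flexion modifies. This is ultimately a bookkeeping exercise, so the main work is setting up notation so that the six displayed identities (plus the two additional special cases) become visible one-liners.

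First I would observe the general principle: $\urflex{\beta}{\alpha}$ and $\ulflex{\alpha}{\beta}$ act on the upper layer of $\alpha$ by shifting the first (respectively last) entry by $b_1+\cdots+b_n$, while $\lrflex{\beta}{\alpha}$ and $\llflex{\alpha}{\beta}$ act on the lower layer of $\alpha$ by left-multiplying every $\sigma_i$ by $\tau_n^{-1}$ (respectively $\tau_1^{-1}$). In particular, $\urflex{\cdot}{\alpha}$ and $\ulflex{\alpha}{\cdot}$ depend only on the \emph{sum} of the upper entries of their other argument, while $\lrflex{\cdot}{\alpha}$ and $\llflex{\alpha}{\cdot}$ depend only on the \emph{last} (respectively \emph{first}) lower entry. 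With this in mind, the four main identities reduce to: (i) for $\urflex{\alpha_1\alpha_2}{\beta}=\urflex{\alpha_1}{(\urflex{\alpha_2}{\beta})}$, both sides shift the head of $\beta$ by the sum of all upper entries of $\alpha_1\alpha_2$, which splits as (sum of $\alpha_1$'s) plus (sum of $\alpha_2$'s); (ii) the identity for $\ulflex{}{}$ is symmetric, acting on the tail; (iii) $\lrflex{\alpha}{(\beta_1\beta_2)}=\lrflex{\alpha}{\beta_1}\lrflex{\alpha}{\beta_2}$ is immediate since left-multiplication on each $\sigma_i$ by the same group element is componentwise and thus respects concatenation of words; (iv) analogously for $\llflex{}{}$.

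Next I would dispatch the two "especially" claims. For $\urflex{\alpha}{(\beta_1\beta_2)}=(\urflex{\alpha}{\beta_1})\beta_2$ with $\beta_1\neq\emptyset$: the flexion modifies only the first upper entry of $\beta_1\beta_2$, which is the first upper entry of $\beta_1$, and leaves $\beta_2$ untouched; hence the result equals $(\urflex{\alpha}{\beta_1})\beta_2$ by the monoid product. The claim for $\ulflex{(\beta_2\beta_1)}{\alpha}=\beta_2(\ulflex{\beta_1}{\alpha})$ is parallel, using the tail of $\beta_1$. For $\lrflex{\alpha_1\alpha_2}{\beta}=\lrflex{\alpha_2}{\beta}$ with $\alpha_2\neq\emptyset$: the flexion depends only on the last lower entry of the first argument, and when $\alpha_2\neq\emptyset$ this entry is the last lower entry of $\alpha_2$. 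The final claim is analogous, using the first entry.

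There is no substantive obstacle: the only thing to be careful about is the case handling at empty words, which is handled by the very definition $\urflex{\emptyset}{\gamma}=\gamma$, $\urflex{\gamma}{\emptyset}=\emptyset$ (and similarly for the others), so that the "especially" hypotheses $\beta_1\neq\emptyset$ and $\alpha_2\neq\emptyset$ are exactly what is needed to rule out the degenerate branch where the flexion would otherwise be forced to be $\emptyset$. Once these edge cases are acknowledged, each identity is a single line of unpacking definitions.
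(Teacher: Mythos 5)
Your proof is correct and follows essentially the same route as the paper: both unwind the flexion definitions directly in coordinates, and the identities fall out by inspection. Your reformulation of the key observation — that $\urflex{\cdot}{\alpha}$ and $\ulflex{\alpha}{\cdot}$ depend only on the \emph{sum} of the upper entries of the modifier, while $\lrflex{\cdot}{\alpha}$ and $\llflex{\alpha}{\cdot}$ depend only on its \emph{last} (resp.\ \emph{first}) lower entry — is a cleaner way to see all eight identities at once than verifying one representative case and declaring the rest similar, as the paper does, but it is the same underlying computation. (One small imprecision, harmless since $\Gamma$ is abelian: for $\llflex{\alpha}{\beta}$ the lower row is modified by \emph{right}-multiplication $\sigma_i\mapsto\sigma_i\tau_1^{-1}$, not left.)
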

\begin{proof}
Let $\alpha_1=\varia{a_1,\dots,a_l}{\sigma_1,\dots,\sigma_l},\alpha_2=\varia{a_{l+1},\dots,a_{l+m}}{\sigma_{l+1},\dots,\sigma_{l+m}},\beta=\varia{b_1,\dots,b_n}{\tau_1,\dots,\tau_n}$.
By $\alpha_1\alpha_2=\varia{a_1,\dots,a_{l+m}}{\sigma_1,\dots,\sigma_{l+m}}$, we have
\begin{equation*}
	\urflex{\alpha_1\alpha_2}{\beta}
	=\varia{a_1+\cdots+a_{l+m}+b_1,b_2,\dots,b_n}{\quad\qquad\qquad\tau_1,\tau_2,\dots,\tau_n}
	=\urflex{\alpha_1}{\varia{a_{l+1}+\cdots+a_{l+m}+b_1,b_2,\dots,b_n}{\qquad\qquad\qquad\tau_1,\tau_2,\dots,\tau_n}}
	=\urflex{\alpha_1}{(\urflex{\alpha_2}{\beta})}.
\end{equation*}
All the other cases can shown similarly.
\end{proof}

\begin{lem}\label{lem:tri-factorization}
	Let $a,b,c,d,e,f\in X_{\Z}^\bullet$.
	\begin{enumerate}
		\item If we have $a\urflex{b}{c}=def$, one of the following holds:%{\rm (I)}か{\rm (II)}か{\rm (III)}:
		\begin{equation*}
		\begin{array}{rl}
			{\rm (I):}& \mbox{There exists $a_1,a_2,a_3\in X_{\Z}^\bullet$ such that} \\
			&\mbox{\qquad $a=a_1a_2a_3$ and $(d,e,f)=(a_1,a_2,a_3\urflex{b}{c})$,} \\
			{\rm (II):}& \mbox{There exists $a_1,a_2,c_1,c_2\in X_{\Z}^\bullet$ such that} \\
			&\mbox{\qquad $a=a_1a_2$ and $c=c_1c_2$ and $(d,e,f)=(a_1,a_2\urflex{b}{c_1},c_2)$ and $c_1\neq\emptyset$,} \\
			{\rm (III):}& \mbox{There exists $c_1,c_2,c_3\in X_{\Z}^\bullet$ such that} \\
			&\mbox{\qquad $c=c_1c_2c_3$ and $(d,e,f)=(a\urflex{b}{c_1},c_2,c_3)$ and $c_1\neq\emptyset$.}
		\end{array}
		\end{equation*}
	\item If we have $\ulflex{a}{b}{c}=def$, one of the followings holds:%{\rm (I)}か{\rm (II)}か{\rm (III)}:
		\begin{equation*}
		\begin{array}{rl}
			{\rm (I):}& \mbox{There exists $a_1,a_2,a_3\in X_{\Z}^\bullet$ such that} \\
			&\mbox{\qquad $a=a_1a_2a_3$ and $(d,e,f)=(a_1,a_2,\ulflex{a_3}{b}{c})$ and $a_3\neq\emptyset$,} \\
			{\rm (II):}& \mbox{There exists $a_1,a_2,c_1,c_2\in X_{\Z}^\bullet$ such that} \\
			&\mbox{\qquad $a=a_1a_2$ and $c=c_1c_2$ and $(d,e,f)=(a_1,\ulflex{a_2}{b}{c_1},c_2)$ and $a_2\neq\emptyset$,} \\
			{\rm (III):}& \mbox{There exists $c_1,c_2,c_3\in X_{\Z}^\bullet$ such that} \\
			&\mbox{\qquad $c=c_1c_2c_3$ and $(d,e,f)=(\ulflex{a}{b}{c_1},c_2,c_3)$.}
		\end{array}
		\end{equation*}
	\end{enumerate}
	%If we replace $\urflex{}{}$ to $\lrflex{}{}$, $\ulflex{}{}$ and $\llflex{}{}$, we can get similar claims.
\end{lem}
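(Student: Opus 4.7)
The plan is to prove both parts by a direct case analysis on how the cut points of the factorization $def$ sit with respect to the distinguished ``flexed letter'' in the underlying word. For part (1), I first observe from Definition \ref{def:flexion} that $\urflex{b}{c}$ has the same length as $c$ and agrees with $c$ letter by letter except at the first position, whose $x$-component is replaced by $b_1+\cdots+b_n+c_1$. Consequently, setting $s:=l(a)$ and $m:=l(c)$, the word $a\urflex{b}{c}$ has length $s+m$: positions $1,\dots,s$ come from $a$, position $s+1$ is the single flexed letter (when $c\neq\emptyset$), and positions $s+2,\dots,s+m$ agree with the remaining letters of $c$.

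With $p:=l(d)$ and $q:=l(e)$, I will split into three cases according to where $p$ and $p+q$ fall relative to $s$. If $p+q\leqslant s$, both $d$ and $e$ sit inside $a$, so splitting $a$ at positions $p$ and $p+q$ as $a=a_1a_2a_3$ gives $(d,e,f)=(a_1,a_2,a_3\urflex{b}{c})$, which is case (I). If $p\leqslant s<p+q$, then $e$ contains the flexed letter at position $s+1$; writing $a=a_1a_2$ with $l(a_1)=p$ and $c=c_1c_2$ with $l(c_1)=p+q-s\geqslant 1$, the identity $\urflex{b}{(c_1c_2)}=(\urflex{b}{c_1})c_2$ from Lemma \ref{lem:flexion's action} (applicable since $c_1\neq\emptyset$) yields $e=a_2\urflex{b}{c_1}$ and $f=c_2$, which is case (II). Finally, if $p\geqslant s+1$, then $d$ already contains the flexed letter, so writing $c=c_1c_2c_3$ with $l(c_1)=p-s\geqslant 1$ and using the same factorization identity gives $d=a\urflex{b}{c_1}$, $e=c_2$, $f=c_3$, which is case (III).

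Part (2) will follow by the same strategy, with the sole difference that the flexed letter of $\ulflex{a}{b}$ sits at position $s=l(a)$ rather than $s+1$, because $\ulflex{a}{b}$ modifies the last letter of $a$ instead of the first letter of $c$. The three sub-cases are thus distinguished by whether both cuts lie strictly before position $s$ (case (I), with $a_3\neq\emptyset$ forced because $a_3$ must contain the flexed last letter of $a$), whether $e$ contains position $s$ (case (II), with $a_2\neq\emptyset$ for the same reason), or whether $d$ contains position $s$ (case (III)). The required identities $\ulflex{(\beta_2\beta_1)}{\alpha}=\beta_2\ulflex{\beta_1}{\alpha}$ (for $\beta_1\neq\emptyset$) and $\ulflex{\beta}{(\beta_1\beta_2)}=\ulflex{(\ulflex{\beta}{\beta_1})}{\beta_2}$ are again supplied by Lemma \ref{lem:flexion's action}.

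I do not anticipate any serious obstacle: the content is purely combinatorial and each case reduces to bookkeeping with Lemma \ref{lem:flexion's action} once the location of the flexed letter is identified. The only care required is in tracking the non-emptiness conditions ($c_1\neq\emptyset$ in cases (II) and (III) of part (1); $a_3\neq\emptyset$ and $a_2\neq\emptyset$ in cases (I) and (II) of part (2)), which are all forced by the basic principle that the flexed letter must lie entirely within exactly one of the three subwords $d$, $e$, $f$.
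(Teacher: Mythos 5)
Your proof is correct and follows essentially the same approach as the paper's: both rest on the observation that the flexed word is a concatenation of two pieces with a single modified ``boundary'' letter, and the three cases arise by a trichotomy on where the two cut points of $d,e,f$ fall relative to that boundary. The paper presents this pictorially while you spell out the explicit inequalities on $l(d)$ and $l(d)+l(e)$; the one small imprecision is that in part~(2) only the identity $\ulflex{(\beta_2\beta_1)}{\alpha}=\beta_2(\ulflex{\beta_1}{\alpha})$ (for $\beta_1\neq\emptyset$) is actually invoked, not $\ulflex{\beta}{\alpha_1\alpha_2}=\ulflex{(\ulflex{\beta}{\alpha_1})}{\alpha_2}$, but this overcitation does not affect the argument.
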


\begin{proof}
We present a proof for (1).
When $a\urflex{b}{c}=def$,
the following depict all the possible cases of  
the positions of $a$, $\urflex{b}{c}$, $d$, $e$ and $f$.
%the boundary between $d$ and $e$ and the one between $e$ and $f$ are the following three cases:
\begin{equation*}
	\hspace{-2.1cm}
	\overbrace{(\underbrace{\hspace{.4cm}}_{d}|\underbrace{\hspace{.4cm}}_{e}|\hspace{.4cm})}^{a}
		\overbrace{(\hspace{.6cm}\hspace{.6cm})}^{\urflex{b}{c}}
	\hspace{-2.cm}\underbrace{\hspace{1.8cm}}_{f}
		\hspace{.5em} ,\quad
%case-2
	\overbrace{(\underbrace{\hspace{.6cm}}_{d}|\hspace{.6cm})}^{a}
		\overbrace{(\hspace{.6cm}|\underbrace{\hspace{.6cm}}_{f})}^{\urflex{b}{c}}
	\hspace{-2.5cm}\underbrace{\hspace{1.5cm}}_{e}
	\hspace{2.8em} ,\quad
%case-3
	\overbrace{(\hspace{.6cm}\hspace{.6cm})}^{a}
		\overbrace{(\hspace{.4cm}|\underbrace{\hspace{.4cm}}_{e}|\underbrace{\hspace{.4cm}}_{f})}^{\urflex{b}{c}}.
	\hspace{-3.9cm}\underbrace{\hspace{1.8cm}}_{d}
\end{equation*}
The first, the second and the third  cases correspond (I), (II)  and (III) in (1) respectively
\footnote{Note that, in order to decompose $\urflex{b}{c}$, we need the condition $c_1\neq\emptyset$ for the second and third cases.}. 
%Because no two of (I), (II) and (III) occur at the same time, the proof is completed.
The claim for (2) can be proved in the same way.
\end{proof}

\begin{lem}\label{lem:flexions property}
For $a,b,c\in X_{\Z}^\bullet$, the following hold:
%\footnote{The commutativity (1) holds for any two flexions of $\urflex{}{}$, $\ulflex{}{}$, $\lrflex{}{}$ and $\llflex{}{}$.}:% If we replace $\urflex{}{}$ to $\ulflex{}{}$ (or replace $\lrflex{}{}$ to $\llflex{}{}$), we can get similar claims to the composition (2) and the independence (3).}:
\begin{enumerate}
	\item {\rm  (commutativity)}
	\begin{align*}
		&\urflex{a}{(\urflex{b}{c})}= \urflex{b}{(\urflex{a}{c})},\quad
		\ulflex{(\ulflex{c}{a})}{b}= \ulflex{(\ulflex{c}{b})}{a},\quad
		\lrflex{a}{(\lrflex{b}{c})}= \lrflex{b}{(\lrflex{a}{c})},\quad
		\llflex{(\llflex{c}{a})}{b}= \llflex{(\llflex{c}{b})}{a}, \\
%2-term
		&\urflex{a}{(\ulflex{c}{b})}= \ulflex{(\urflex{a}{c})}{b},\quad
		\urflex{a}{(\lrflex{b}{c})}= \lrflex{b}{(\urflex{a}{c})},\quad
		\urflex{a}{(\llflex{c}{b})}= \llflex{(\urflex{a}{c})}{b},\quad
		\ulflex{(\lrflex{b}{c})}{a}= \lrflex{b}{(\ulflex{c}{a})}, \\
%3-term
		&\ulflex{(\llflex{c}{b})}{a}= \llflex{(\ulflex{c}{a})}{b},\quad
		\lrflex{a}{(\llflex{c}{b})}= \llflex{(\lrflex{a}{c})}{b}.
	\end{align*}
	\item {\rm  (composition)}
	\begin{align*}
		&\urflex{(\ulflex{b}{a})}{c}=\urflex{(\urflex{a}{b})}{c}=\urflex{ab}{c},\quad 
		\lrflex{(\llflex{b}{a})}{(\llflex{c}{a})}=\lrflex{(\lrflex{a}{b})}{(\lrflex{a}{c})}=\lrflex{b}{c}, \\
		&\ulflex{c}{(\ulflex{b}{a})}=\ulflex{c}{(\urflex{a}{b})}=\ulflex{c}{ab},\quad 
		\llflex{(\llflex{c}{a})}{(\llflex{b}{a})}=\llflex{(\lrflex{a}{c})}{(\lrflex{a}{b})}=\llflex{c}{b}.
	\end{align*}
	\item {\rm  (independence)}
	\begin{align*}
		&\urflex{(\llflex{b}{a})}{c}=\urflex{(\lrflex{a}{b})}{c}=\urflex{b}{c},\quad 
		\lrflex{(\ulflex{b}{a})}{c}=\lrflex{(\urflex{a}{b})}{c}=\lrflex{b}{c}, \\
		&\ulflex{c}{(\llflex{b}{a})}=\ulflex{c}{(\lrflex{a}{b})}=\ulflex{c}{b},\quad 
		\llflex{c}{(\ulflex{b}{a})}=\llflex{c}{(\urflex{a}{b})}=\llflex{c}{b}.
	\end{align*}
\end{enumerate}
\end{lem}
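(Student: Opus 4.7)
The plan is to organize the eighteen identities by observing that the four flexions decompose into two independent families: $\urflex{\cdot}{\cdot}$ and $\ulflex{\cdot}{\cdot}$ modify only the upper (integer) row, by adding the upper-row sum of the other argument to a single entry (the first or the last, respectively), while $\lrflex{\cdot}{\cdot}$ and $\llflex{\cdot}{\cdot}$ modify only the lower ($\Gamma$-valued) row, by right-multiplying every entry by a single fixed element of $\Gamma$ (the inverse of the last, respectively first, lower entry of the other argument). With these two principles in hand, every identity of Lemma~\ref{lem:flexions property} reduces to a direct unfolding of Definition~\ref{def:flexion}.

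For part~(1), the mixed commutativities on lines 2--6 (which pair an upper-row flexion with a lower-row one) are automatic, since the two operations touch disjoint rows of $c$. The same-row commutativities on line~1 follow because $\urflex{a}{(\urflex{b}{c})}$ and $\urflex{b}{(\urflex{a}{c})}$ both add $\sum_i a_i + \sum_j b_j$ to the first upper entry of $c$, and analogously for $\ulflex{\cdot}{\cdot}$ on the last upper entry and for $\lrflex{\cdot}{\cdot}, \llflex{\cdot}{\cdot}$ which perform two commuting right-multiplications on every lower entry of $c$. The remaining upper--upper identity $\urflex{a}{(\ulflex{c}{b})}=\ulflex{(\urflex{a}{c})}{b}$ acts independently on the first and last upper entries of $c$ when $l(c)\geq 2$, and combines additively on the single entry when $l(c)=1$.

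For part~(2), the key observation is that $\ulflex{b}{a}$, $\urflex{a}{b}$ and $ab$ all share the same upper-row sum $\sum_i a_i + \sum_j b_j$, the same first lower entry (that of $a$) and the same last lower entry (that of $b$). The first column of identities is then immediate from the upper-row principle, and the second column follows from the cancellation that, after the inner twist of $c$ by $(\text{first lower of }a)^{-1}$ (resp.\ $(\text{last lower of }a)^{-1}$), the outer lower-row flexion reads off a group element that is exactly the composition of this twist with $(\text{last lower of }b)^{-1}$, so that the net effect on every lower entry of $c$ is multiplication by $(\text{last lower of }b)^{-1}$, i.e.\ $\lrflex{b}{c}$; the $\llflex{\cdot}{\cdot}$ case is dual. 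Part~(3) is easier still: $\lrflex{b}{a}$ and $\llflex{b}{a}$ do not alter the upper row of $b$, so $\urflex{\cdot}{\cdot}$ and $\ulflex{\cdot}{\cdot}$ cannot detect them, and symmetrically $\urflex{a}{b}$ and $\ulflex{b}{a}$ preserve the lower row of $b$, so $\lrflex{\cdot}{\cdot}$ and $\llflex{\cdot}{\cdot}$ are blind to these modifications. I do not anticipate any substantive obstacle: the lemma is purely combinatorial, and the only work is the organizational one of routing each of the eighteen identities to one of the three principles above (row-disjointness, upper-row-sum invariance, or inner--outer lower-row cancellation), after which each check is a one-line computation.
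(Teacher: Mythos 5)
Your proof is correct, and it takes a genuinely more systematic route than the paper's. The paper simply unfolds Definition~\ref{def:flexion} on three representative identities (one each for commutativity, composition, and independence) and asserts that the remaining fifteen ``can be shown similarly.'' You, by contrast, extract the structural reason \emph{why} they are all similar: the pair $\urflex{\cdot}{\cdot},\ulflex{\cdot}{\cdot}$ acts only on the integer row (adding the total upper sum of the parameter to the first, resp.\ last, entry), while $\lrflex{\cdot}{\cdot},\llflex{\cdot}{\cdot}$ acts only on the $\Gamma$-row (multiplying every entry by the inverse of the last, resp.\ first, lower entry of the parameter). Once stated, each of the eighteen identities falls under one of three observations --- disjointness of rows, invariance of the upper-row sum (resp.\ of the extremal lower entry), and the inner--outer cancellation in the lower-row compositions --- and your handling of the one genuinely two-sided case $\urflex{a}{(\ulflex{c}{b})}=\ulflex{(\urflex{a}{c})}{b}$, distinguishing $l(c)\ge 2$ from $l(c)=1$, is correct. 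This buys completeness: your argument actually proves all eighteen identities uniformly, whereas the paper's proof delegates most of them to the reader. The only caveat, which affects the paper's proof just as much as yours, is that both tacitly assume $a,b,c\neq\emptyset$; the degenerate conventions $\urflex{\gamma}{\emptyset}=\emptyset$, $\llflex{\emptyset}{\gamma}=\emptyset$, etc., make a couple of the composition identities (e.g.\ $\lrflex{(\llflex{b}{a})}{(\llflex{c}{a})}=\lrflex{b}{c}$ with $b=\emptyset$, $a\neq\emptyset$) fail at face value, so strictly speaking the lemma is only used, and only proved, in the non-degenerate regime.
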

\begin{proof}
Let $a=\varia{a_1,\dots,a_l}{\sigma_1,\dots,\sigma_l},b=\varia{b_1,\dots,b_m}{\tau_1,\dots,\tau_m},c=\varia{c_1,\dots,c_n}{\mu_1,\dots,\mu_n}$. 
We give proofs for specific cases because all the other cases
 can  be proved in a similar way.  \\
(1).  We calculate
\begin{align*}
	&\urflex{a}{(\urflex{b}{c})}
	=\urflex{a}{\varia{b_1+\cdots+b_m+c_1,c_2,\dots,c_n}{\qquad\qquad\mu_1,\mu_2,\dots,\mu_n}}
	=\varia{a_1+\cdots+a_l+b_1+\cdots+b_m+c_1,c_2,\dots,c_n}{\qquad\qquad\qquad\qquad\mu_1,\mu_2,\dots,\mu_n} \\
	&\hspace{6cm}=\urflex{b}{\varia{a_1+\cdots+a_l+c_1,c_2,\dots,c_n}{\qquad\qquad\mu_1,\mu_2,\dots,\mu_n}}
	=\urflex{b}{(\urflex{a}{c})}.
\end{align*}
(2). By
$\ulflex{b}{a}=\varia{b_1,\dots,b_{m-1},b_m+a_1,\dots,a_l}{\tau_1,\dots,\tau_{m-1},\tau_m}$
and $ab=\varia{a_1,\dots,a_l,b_1,\dots,b_m}{\sigma_1,\dots,\sigma_l,\tau_1,\dots,\tau_m}$,
we get
$$
\urflex{(\ulflex{b}{a})}{c}
=\varia{a_1+\cdots+a_l+b_1+\cdots+b_m+c_1,c_2,\dots,c_n}{\qquad\qquad\qquad\qquad\mu_1,\mu_2,\dots,\mu_n}
=\urflex{ab}{c}.
$$
On the other hand, by $\llflex{b}{a}=\varia{\ \quad b_1,\dots,\quad b_m}{\tau_1\sigma_1^{-1},\dots,\tau_m\sigma_1^{-1}}$
and $\llflex{c}{a}=\varia{\ \quad c_1,\dots,\quad c_n}{\mu_1\sigma_1^{-1},\dots,\mu_n\sigma_1^{-1}}$, we get
$$
\lrflex{(\llflex{b}{a})}{(\llflex{c}{a})}
=\varia{\ \quad c_1,\dots,\quad c_n}{\mu_1\tau_m^{-1},\dots,\mu_n\tau_m^{-1}}
=\lrflex{b}{c}.
$$
(3). By the above expression of $\llflex{b}{a}$, we get
$$
\urflex{(\llflex{b}{a})}{c}
=\varia{b_1+\cdots+b_m+c_1,c_2,\dots,c_n}{\qquad\qquad\mu_1,\mu_2,\dots,\mu_n}
=\urflex{b}{c}.
$$
\end{proof}

The following formula is essential to prove Proposition \ref{ARI Lie algebra}.
It is stated in \cite[(2.2.10)]{S-ARIGARI}
(where it looks that there is an error on the  signature) 
without a proof.

\begin{prop}\label{prop:arit-ari}
For any $A,B\in\ARI(\Gamma)$, we have
\begin{equation*}
	\aritu(B)\circ\aritu(A)-\aritu(A)\circ\aritu(B)=\aritu(\ariu(A,B)).
\end{equation*}
\end{prop}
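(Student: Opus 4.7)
My plan is to apply both sides to a test mould $C\in\ARI(\Gamma)$ and expand everything explicitly in terms of flexions, reducing the identity to a combinatorial matching of sums indexed by multi-factorizations of $\vecx_m$. Writing out $\arit_u(B)\circ\arit_u(A)(C)(\vecx_m)$ produces a nested double sum: the outer $\arit_u(B)$ decomposes $\vecx_m=\alpha\beta\gamma$ (with $\beta\gamma\neq\emptyset$ for the $\urflex{}{}/\llflex{}{}$ term and $\alpha\beta\neq\emptyset$ for the $\ulflex{}{}/\lrflex{}{}$ term), and then inside, $\arit_u(A)(C)$ is evaluated on the flexed argument, producing another tri-factorization. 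I will apply Lemma \ref{lem:tri-factorization} to each such nested decomposition in order to rewrite the inner factorization as a factorization of $\vecx_m$ itself, so that all terms in $[\arit_u(B),\arit_u(A)](C)(\vecx_m)$ become sums over quadri- or tri-factorizations of $\vecx_m$.

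Once this reorganization is done, the terms split naturally into three groups. First, those arising from case (I) of Lemma \ref{lem:tri-factorization} correspond to \emph{disjoint} $A$- and $B$-decompositions of $\vecx_m$; these collapse, after using the composition and independence identities in Lemma \ref{lem:flexions property}, to $\arit_u(B\times A - A\times B)(C)(\vecx_m) = \arit_u([B,A])(C)(\vecx_m)$, with sign differences from the two halves of the commutator producing exactly the $\times$-terms. Second, the terms from case (III), where one decomposition sits strictly inside the other, recombine into $\arit_u(\arit_u(B)(A))(C)(\vecx_m)$ on the first line and $-\arit_u(\arit_u(A)(B))(C)(\vecx_m)$ on the second, again using the composition formulas $\urflex{(\ulflex{b}{a})}{c}=\urflex{ab}{c}$ and $\ulflex{c}{(\ulflex{b}{a})}=\ulflex{c}{ab}$ (and their lower analogues) of Lemma \ref{lem:flexions property}(2). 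Third, the case (II) contributions---where the two decompositions \emph{interleave}---must cancel between $\arit_u(B)\arit_u(A)$ and $\arit_u(A)\arit_u(B)$; this cancellation is where the commutativity identities of Lemma \ref{lem:flexions property}(1), such as $\urflex{a}{(\llflex{c}{b})}=\llflex{(\urflex{a}{c})}{b}$, are essential.

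Summing, we obtain exactly $\arit_u\bigl(\arit_u(B)(A)-\arit_u(A)(B)+[A,B]\bigr)(C)=\arit_u(\ari_u(A,B))(C)$, which gives the claim since $C$ was arbitrary. The proof of Proposition \ref{ARI Lie algebra} is then immediate: the identity says $\arit_u:\ARI(\Gamma)\to\mathrm{End}_{\mathbb Q}(\ARI(\Gamma))$ endows $\ARI(\Gamma)$ with a (left) pre-Lie structure via $A\cdot B:=-\arit_u(A)(B)+B\times A$ (or an equivalent rearrangement), and the associated commutator bracket $\ari_u$ is therefore a Lie bracket; compatibility with the depth filtration is clear from the fact that every flexion preserves total length.

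The main obstacle is purely combinatorial: Lemma \ref{lem:tri-factorization} applied to each of the four terms in $\arit_u(B)\arit_u(A)(C)$ produces three cases each, hence roughly two dozen raw sums, and each must be tracked through the commutativity/composition/independence rules to see precisely which ones cancel, which combine into the nested $\arit_u\circ\arit_u$ terms, and which produce the residual $\times$-products. Keeping the signs and the non-emptiness constraints $\beta,\gamma\neq\emptyset$ versus $\alpha,\beta\neq\emptyset$ straight throughout this bookkeeping is the delicate point; the remedy is to match the boundary terms (those where some sub-word equals $\emptyset$) carefully against the $A\times B$ and $B\times A$ pieces of $[A,B]$.
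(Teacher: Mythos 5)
Your plan is essentially the paper's own proof: apply both sides to a test mould $C$, expand the nested $\aritu$'s via Lemma \ref{lem:tri-factorization}, and use the flexion identities of Lemmas \ref{lem:flexion's action} and \ref{lem:flexions property} to cancel and regroup the roughly two dozen resulting sums into $\aritu(\ariu(A,B))(C)$. One caveat worth flagging: the clean sorting you sketch --- case (I) $\to$ commutator, case (III) $\to$ nested $\aritu$, case (II) $\to$ cancellation --- does not survive execution, since the paper's cancellations pair terms lying in different cases of the lemma and the $A\times B$, $B\times A$ contributions emerge from boundary sub-cases where intermediate factors vanish, exactly as your closing remark about matching boundary terms anticipates.
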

\begin{proof}
Let $m\geqslant0$. Then we have
\begin{align*}
	&(\aritu(B)\circ\aritu(A)-\aritu(A)\circ\aritu(B))(C)(\vecx_m) \\
	&=\aritu(B)(\aritu(A)(C))(\vecx_m)-\aritu(A)(\aritu(B)(C))(\vecx_m) \\
	&=\sum_{\substack{
		\vecx_m=abc \\
		b,c\neq\emptyset}}
	\aritu(A)(C)(a \urflex{b}{c})B(\llflex{b}{c})
	-\sum_{
		\substack{\vecx_m=abc \\
		a,b\neq\emptyset}}
	\aritu(A)(C)(\ulflex{a}{b} c)B(\lrflex{a}{b}) \\
	&\quad -\sum_{\substack{
		\vecx_m=abc \\
		b,c\neq\emptyset}}
	\aritu(B)(C)(a \urflex{b}{c})A(\llflex{b}{c})
	+\sum_{
		\substack{\vecx_m=abc \\
		a,b\neq\emptyset}}
	\aritu(B)(C)(\ulflex{a}{b} c)A(\lrflex{a}{b}) \\
%3-equal
	&=\sum_{\substack{
		\vecx_m=abc \\
		b,c\neq\emptyset}}
	\left\{
		\sum_{\substack{
			a \urflex{b}{c}=def \\
			e,f\neq\emptyset}}
		C(d \urflex{e}{f})A(\llflex{e}{f})
		-\sum_{\substack{
			a \urflex{b}{c}=def \\
			d,e\neq\emptyset}}
		C(\ulflex{d}{e} f)A(\lrflex{d}{e})
	\right\}
	B(\llflex{b}{c}) \\
	&\quad -\sum_{
		\substack{\vecx_m=abc \\
		a,b\neq\emptyset}}
	\left\{
		\sum_{\substack{
			\ulflex{a}{b} c=def \\
			e,f\neq\emptyset}}
		C(d \urflex{e}{f})A(\llflex{e}{f})
		-\sum_{\substack{
			\ulflex{a}{b} c=def \\
			d,e\neq\emptyset}}
		C(\ulflex{d}{e} f)A(\lrflex{d}{e})
	\right\}
	B(\lrflex{a}{b}) \\
	&\quad -\sum_{\substack{
		\vecx_m=abc \\
		b,c\neq\emptyset}}
	\left\{
		\sum_{\substack{
			a \urflex{b}{c}=def \\
			e,f\neq\emptyset}}
		C(d \urflex{e}{f})B(\llflex{e}{f})
		-\sum_{\substack{
			a \urflex{b}{c}=def \\
			d,e\neq\emptyset}}
		C(\ulflex{d}{e} f)B(\lrflex{d}{e})
	\right\}
	A(\llflex{b}{c}) \\
	&\quad +\sum_{
		\substack{\vecx_m=abc \\
		a,b\neq\emptyset}}
	\left\{
		\sum_{\substack{
			\ulflex{a}{b} c=def \\
			e,f\neq\emptyset}}
		C(d \urflex{e}{f})B(\llflex{e}{f})
		-\sum_{\substack{
			\ulflex{a}{b} c=def \\
			d,e\neq\emptyset}}
		C(\ulflex{d}{e} f)B(\lrflex{d}{e})
	\right\}
	A(\lrflex{a}{b}).
\end{align*}
By using Lemma \ref{lem:tri-factorization}, we have
{\scriptsize
\begin{align*}
	&=\sum_{\substack{
		\vecx_m=abc \\
		b,c\neq\emptyset}}
	\left\{
		\sum_{\substack{
			a =a_1a_2a_3 \\
			a_2\neq\emptyset}}
		C({a_1} \urflex{a_2}{({a_3} \urflex{b}{c})})A(\llflex{a_2}{{a_3} \urflex{b}{c}})
		+\sum_{\substack{
			a=a_1a_2 \\
			c=c_1c_2 \\
			c_1,c_2\neq\emptyset}}
		C({a_1} \urflex{{a_2}\urflex{b}{c_1}}{c_2})A(\llflex{({a_2}\urflex{b}{c_1})}{c_2})
		+\sum_{\substack{
			c=c_1c_2c_3 \\
			c_1,c_2,c_3\neq\emptyset}}
		C(a\urflex{b}{c_1} \urflex{c_2}{c_3})A(\llflex{c_2}{c_3})
	\right. \\
%2-term
	&\left.
	\quad
		-\sum_{\substack{
			c=c_1c_2c_3 \\
			c_1,c_2\neq\emptyset}}
		C(\ulflex{(a\urflex{b}{c_1})}{c_2} c_3)A(\lrflex{a\urflex{b}{c_1}}{c_2})
		-\sum_{\substack{
			a=a_1a_2 \\
			c=c_1c_2 \\
			a_1,c_1\neq\emptyset}}
		C(\ulflex{a_1}{{a_2}\urflex{b}{c_1}} c_2)A(\lrflex{a_1}{({a_2}\urflex{b}{c_1})})
		-\sum_{\substack{
			a =a_1a_2a_3 \\
			a_1,a_2\neq\emptyset}}
		C(\ulflex{a_1}{a_2} {a_3} \urflex{b}{c})A(\lrflex{a_1}{a_2})
	\right\}
	B(\llflex{b}{c}) \\
%3-term
	&-\sum_{
		\substack{\vecx_m=abc \\
		a,b\neq\emptyset}}
	\left\{
		\sum_{\substack{
			a =a_1a_2a_3 \\
			a_2,a_3\neq\emptyset}}
		C({a_1} \urflex{a_2}{(\ulflex{a_3}{b}{c})})A(\llflex{a_2}{\ulflex{a_3}{b}{c}})
		+\sum_{\substack{
			a=a_1a_2 \\
			c=c_1c_2 \\
			a_2,c_2\neq\emptyset}}
		C({a_1} \urflex{\ulflex{a_2}{b}{c_1}}{c_2})A(\llflex{(\ulflex{a_2}{b}{c_1})}{c_2})
		+\sum_{\substack{
			c=c_1c_2c_3 \\
			c_2,c_3\neq\emptyset}}
		C(\ulflex{a}{b}{c_1} \urflex{c_2}{c_3})A(\llflex{c_2}{c_3})
	\right. \\
%4-term
	&\left.
	\quad
		-\sum_{\substack{
			c=c_1c_2c_3 \\
			c_2\neq\emptyset}}
		C(\ulflex{(\ulflex{a}{b}{c_1})}{c_2} c_3)A(\lrflex{\ulflex{a}{b}{c_1}}{c_2})
		-\sum_{\substack{
			a=a_1a_2 \\
			c=c_1c_2 \\
			a_1,a_2\neq\emptyset}}
		C(\ulflex{a_1}{\ulflex{a_2}{b}{c_1}} c_2)A(\lrflex{a_1}{(\ulflex{a_2}{b}{c_1})})
		-\sum_{\substack{
			a =a_1a_2a_3 \\
			a_1,a_2,a_3\neq\emptyset}}
		C(\ulflex{a_1}{a_2} \ulflex{a_3}{b}{c})A(\lrflex{a_1}{a_2})
	\right\}
	B(\lrflex{a}{b}) \\
%5-term
	&-\sum_{\substack{
		\vecx_m=abc \\
		b,c\neq\emptyset}}
	\left\{
		\sum_{\substack{
			a =a_1a_2a_3 \\
			a_2\neq\emptyset}}
		C({a_1} \urflex{a_2}{({a_3} \urflex{b}{c})})B(\llflex{a_2}{{a_3} \urflex{b}{c}})
		+\sum_{\substack{
			a=a_1a_2 \\
			c=c_1c_2 \\
			c_1,c_2\neq\emptyset}}
		C({a_1} \urflex{{a_2}\urflex{b}{c_1}}{c_2})B(\llflex{({a_2}\urflex{b}{c_1})}{c_2})
		+\sum_{\substack{
			c=c_1c_2c_3 \\
			c_1,c_2,c_3\neq\emptyset}}
		C(a\urflex{b}{c_1} \urflex{c_2}{c_3})B(\llflex{c_2}{c_3})
	\right. \\
%6-term
	&\left.
	\quad
		-\sum_{\substack{
			c=c_1c_2c_3 \\
			c_1,c_2\neq\emptyset}}
		C(\ulflex{(a\urflex{b}{c_1})}{c_2} c_3)B(\lrflex{a\urflex{b}{c_1}}{c_2})
		-\sum_{\substack{
			a=a_1a_2 \\
			c=c_1c_2 \\
			a_1,c_1\neq\emptyset}}
		C(\ulflex{a_1}{{a_2}\urflex{b}{c_1}} c_2)B(\lrflex{a_1}{({a_2}\urflex{b}{c_1})})
		-\sum_{\substack{
			a =a_1a_2a_3 \\
			a_1,a_2\neq\emptyset}}
		C(\ulflex{a_1}{a_2} {a_3} \urflex{b}{c})B(\lrflex{a_1}{a_2})
	\right\}
	A(\llflex{b}{c}) \\
%7-term
	&+\sum_{
		\substack{\vecx_m=abc \\
		a,b\neq\emptyset}}
	\left\{
		\sum_{\substack{
			a =a_1a_2a_3 \\
			a_2,a_3\neq\emptyset}}
		C({a_1} \urflex{a_2}{(\ulflex{a_3}{b}{c})})B(\llflex{a_2}{\ulflex{a_3}{b}{c}})
		+\sum_{\substack{
			a=a_1a_2 \\
			c=c_1c_2 \\
			a_2,c_2\neq\emptyset}}
		C({a_1} \urflex{\ulflex{a_2}{b}{c_1}}{c_2})B(\llflex{(\ulflex{a_2}{b}{c_1})}{c_2})
		+\sum_{\substack{
			c=c_1c_2c_3 \\
			c_2,c_3\neq\emptyset}}
		C(\ulflex{a}{b}{c_1} \urflex{c_2}{c_3})B(\llflex{c_2}{c_3})
	\right. \\
%8-term
	&\left.
	\quad
		-\sum_{\substack{
			c=c_1c_2c_3 \\
			c_2\neq\emptyset}}
		C(\ulflex{(\ulflex{a}{b}{c_1})}{c_2} c_3)B(\lrflex{\ulflex{a}{b}{c_1}}{c_2})
		-\sum_{\substack{
			a=a_1a_2 \\
			c=c_1c_2 \\
			a_1,a_2\neq\emptyset}}
		C(\ulflex{a_1}{\ulflex{a_2}{b}{c_1}} c_2)B(\lrflex{a_1}{(\ulflex{a_2}{b}{c_1})})
		-\sum_{\substack{
			a =a_1a_2a_3 \\
			a_1,a_2,a_3\neq\emptyset}}
		C(\ulflex{a_1}{a_2} \ulflex{a_3}{b}{c})B(\lrflex{a_1}{a_2})
	\right\}
	A(\lrflex{a}{b}).
\end{align*}}

By  using Lemma \ref{lem:flexion's action}
\footnote{We apply Lemma \ref{lem:flexion's action} to the 4th, 7th, 16th and 19th terms.}
and
Lemma \ref{lem:flexions property}.(2), (3)
\footnote{Especially, we apply Lemma \ref{lem:flexions property}.(2) to the middle terms of each lines, and apply Lemma \ref{lem:flexions property}.(3) to the first terms of each lines.}
and changing variables, we calculate
{\scriptsize
\begin{align*}
	&=\sum_{\substack{
			\vecx_m=abcde \\
			b,d,e\neq\emptyset}}
		C({a} \urflex{b}{({c} \urflex{d}{e})})A(\llflex{b}{ce})B(\llflex{d}{e})
		+\sum_{\substack{
			\vecx_m=abcde \\
			c,d,e\neq\emptyset}}
		C({a} \urflex{bcd}{e})A(\llflex{({b}\urflex{c}{d})}{e})B(\llflex{c}{de})
		+\sum_{\substack{
			\vecx_m=abcde \\
			b,c,d,e\neq\emptyset}}
		C(a\urflex{b}{c} \urflex{d}{e})A(\llflex{d}{e})B(\llflex{b}{cde}) \\
%2-term
	&\quad
	-\sum_{\substack{
			\vecx_m=abcde \\
			b,c,d\neq\emptyset}}
		C(\ulflex{a\urflex{b}{c}}{d} e)A(\lrflex{ac}{d})B(\llflex{b}{cde})
		-\sum_{\substack{
			\vecx_m=abcde \\
			a,c,d\neq\emptyset}}
		C(\ulflex{a}{bcd} e)A(\lrflex{a}{({b}\urflex{c}{d})})B(\llflex{c}{de})
		-\sum_{\substack{
			\vecx_m=abcde \\
			a,b,d,e\neq\emptyset}}
		C(\ulflex{a}{b} {c} \urflex{d}{e})A(\lrflex{a}{b})B(\llflex{d}{e}) \\
%3-term
	&\quad
	-\sum_{\substack{
			\vecx_m=abcde \\
			b,c,d\neq\emptyset}}
		C({a} \urflex{b}{\ulflex{c}{d}{e}})A(\llflex{b}{ce})B(\lrflex{abc}{d})
		-\sum_{\substack{
			\vecx_m=abcde \\
			b,c,e\neq\emptyset}}
		C({a} \urflex{bcd}{e})A(\llflex{(\ulflex{b}{c}{d})}{e})B(\lrflex{ab}{c})
		-\sum_{\substack{
			\vecx_m=abcde \\
			a,b,d,e\neq\emptyset}}
		C(\ulflex{a}{b}{c} \urflex{d}{e})A(\llflex{d}{e})B(\lrflex{a}{b}) \\
%4-term
	&\quad
	+\sum_{\substack{
			\vecx_m=abcde \\
			a,b,d\neq\emptyset}}
		C(\ulflex{(\ulflex{a}{b}{c})}{d} e)A(\lrflex{ac}{d})B(\lrflex{a}{b})
		+\sum_{\substack{
			\vecx_m=abcde \\
			a,b,c\neq\emptyset}}
		C(\ulflex{a}{bcd} e)A(\lrflex{a}{(\ulflex{b}{c}{d})})B(\lrflex{ab}{c})
		+\sum_{\substack{
			\vecx_m=abcde \\
			a,b,c,d\neq\emptyset}}
		C(\ulflex{a}{b} \ulflex{c}{d}{e})A(\lrflex{a}{b})B(\lrflex{abc}{d}) \\
%5-term
	&-\sum_{\substack{
			\vecx_m=abcde \\
			b,d,e\neq\emptyset}}
		C({a} \urflex{b}{({c} \urflex{d}{e})})B(\llflex{b}{ce})A(\llflex{d}{e})
		-\sum_{\substack{
			\vecx_m=abcde \\
			c,d,e\neq\emptyset}}
		C({a} \urflex{bcd}{e})B(\llflex{({b}\urflex{c}{d})}{e})A(\llflex{c}{de})
		-\sum_{\substack{
			\vecx_m=abcde \\
			b,c,d,e\neq\emptyset}}
		C(a\urflex{b}{c} \urflex{d}{e})B(\llflex{d}{e})A(\llflex{b}{cde}) \\
%6-term
	&\quad
	+\sum_{\substack{
			\vecx_m=abcde \\
			b,c,d\neq\emptyset}}
		C(\ulflex{a\urflex{b}{c}}{d} e)B(\lrflex{ac}{d})A(\llflex{b}{cde})
		+\sum_{\substack{
			\vecx_m=abcde \\
			a,c,d\neq\emptyset}}
		C(\ulflex{a}{bcd} e)B(\lrflex{a}{({b}\urflex{c}{d})})A(\llflex{c}{de})
		+\sum_{\substack{
			\vecx_m=abcde \\
			a,b,d,e\neq\emptyset}}
		C(\ulflex{a}{b} {c} \urflex{d}{e})B(\lrflex{a}{b})A(\llflex{d}{e}) \\
%7-term
	&\quad
	+\sum_{\substack{
			\vecx_m=abcde \\
			b,c,d\neq\emptyset}}
		C({a} \urflex{b}{\ulflex{c}{d}{e}})B(\llflex{b}{ce})A(\lrflex{abc}{d})
		+\sum_{\substack{
			\vecx_m=abcde \\
			b,c,e\neq\emptyset}}
		C({a} \urflex{bcd}{e})B(\llflex{(\ulflex{b}{c}{d})}{e})A(\lrflex{ab}{c})
		+\sum_{\substack{
			\vecx_m=abcde \\
			a,b,d,e\neq\emptyset}}
		C(\ulflex{a}{b}{c} \urflex{d}{e})B(\llflex{d}{e})A(\lrflex{a}{b}) \\
%8-term
	&\quad
	-\sum_{\substack{
			\vecx_m=abcde \\
			a,b,d\neq\emptyset}}
		C(\ulflex{(\ulflex{a}{b}{c})}{d} e)B(\lrflex{ac}{d})A(\lrflex{a}{b})
		-\sum_{\substack{
			\vecx_m=abcde \\
			a,b,c\neq\emptyset}}
		C(\ulflex{a}{bcd} e)B(\lrflex{a}{(\ulflex{b}{c}{d})})A(\lrflex{ab}{c})
		-\sum_{\substack{
			\vecx_m=abcde \\
			a,b,c,d\neq\emptyset}}
		C(\ulflex{a}{b} \ulflex{c}{d}{e})B(\lrflex{a}{b})A(\lrflex{abc}{d}).
\end{align*}}
Cancellation
\footnote{The cancellations occur on the four pairs:  4th and 19th, 6th and 21th, 7th and 16th, 9th and 18th.}
yields
{\scriptsize
\begin{align*}
	&=\sum_{\substack{
			\vecx_m=abcde \\
			b,d,e\neq\emptyset}}
		C({a} \urflex{b}{({c} \urflex{d}{e})})A(\llflex{b}{ce})B(\llflex{d}{e})
		+\sum_{\substack{
			\vecx_m=abcde \\
			c,d,e\neq\emptyset}}
		C({a} \urflex{bcd}{e})A(\llflex{({b}\urflex{c}{d})}{e})B(\llflex{c}{de})
		+\sum_{\substack{
			\vecx_m=abcde \\
			b,c,d,e\neq\emptyset}}
		C(a\urflex{b}{c} \urflex{d}{e})A(\llflex{d}{e})B(\llflex{b}{cde}) \\
%2-term
	&\quad
	\hphantom{-\sum_{\substack{
			\vecx_m=abcde \\
			b,c,d\neq\emptyset}}
		C(\ulflex{a\urflex{b}{c}}{d} e)A(\lrflex{ac}{d})B(\llflex{b}{cde})}
		-\sum_{\substack{
			\vecx_m=abcde \\
			a,c,d\neq\emptyset}}
		C(\ulflex{a}{bcd} e)A(\lrflex{a}{({b}\urflex{c}{d})})B(\llflex{c}{de})
		\hphantom{-\sum_{\substack{
			\vecx_m=abcde \\
			a,b,d,e\neq\emptyset}}
		C(\ulflex{a}{b} {c} \urflex{d}{e})A(\lrflex{a}{b})B(\llflex{d}{e})} \\
%3-term
	&\quad
	\hphantom{-\sum_{\substack{
			\vecx_m=abcde \\
			b,c,d\neq\emptyset}}
		C({a} \urflex{b}{\ulflex{c}{d}{e}})A(\llflex{b}{ce})B(\lrflex{abc}{d})}
		-\sum_{\substack{
			\vecx_m=abcde \\
			b,c,e\neq\emptyset}}
		C({a} \urflex{bcd}{e})A(\llflex{(\ulflex{b}{c}{d})}{e})B(\lrflex{ab}{c})
		\hphantom{-\sum_{\substack{
			\vecx_m=abcde \\
			a,b,d,e\neq\emptyset}}
		C(\ulflex{a}{b}{c} \urflex{d}{e})A(\llflex{d}{e})B(\lrflex{a}{b})} \\
%4-term
	&\quad
	+\sum_{\substack{
			\vecx_m=abcde \\
			a,b,d\neq\emptyset}}
		C(\ulflex{(\ulflex{a}{b}{c})}{d} e)A(\lrflex{ac}{d})B(\lrflex{a}{b})
		+\sum_{\substack{
			\vecx_m=abcde \\
			a,b,c\neq\emptyset}}
		C(\ulflex{a}{bcd} e)A(\lrflex{a}{(\ulflex{b}{c}{d})})B(\lrflex{ab}{c})
		+\sum_{\substack{
			\vecx_m=abcde \\
			a,b,c,d\neq\emptyset}}
		C(\ulflex{a}{b} \ulflex{c}{d}{e})A(\lrflex{a}{b})B(\lrflex{abc}{d}) \\
%5-term
	&-\sum_{\substack{
			\vecx_m=abcde \\
			b,d,e\neq\emptyset}}
		C({a} \urflex{b}{({c} \urflex{d}{e})})B(\llflex{b}{ce})A(\llflex{d}{e})
		-\sum_{\substack{
			\vecx_m=abcde \\
			c,d,e\neq\emptyset}}
		C({a} \urflex{bcd}{e})B(\llflex{({b}\urflex{c}{d})}{e})A(\llflex{c}{de})
		-\sum_{\substack{
			\vecx_m=abcde \\
			b,c,d,e\neq\emptyset}}
		C(a\urflex{b}{c} \urflex{d}{e})B(\llflex{d}{e})A(\llflex{b}{cde}) \\
%6-term
	&\quad
	\hphantom{+\sum_{\substack{
			\vecx_m=abcde \\
			b,c,d\neq\emptyset}}
		C(\ulflex{a\urflex{b}{c}}{d} e)B(\lrflex{ac}{d})A(\llflex{b}{cde})}
		+\sum_{\substack{
			\vecx_m=abcde \\
			a,c,d\neq\emptyset}}
		C(\ulflex{a}{bcd} e)B(\lrflex{a}{({b}\urflex{c}{d})})A(\llflex{c}{de})
		\hphantom{+\sum_{\substack{
			\vecx_m=abcde \\
			a,b,d,e\neq\emptyset}}
		C(\ulflex{a}{b} {c} \urflex{d}{e})B(\lrflex{a}{b})A(\llflex{d}{e})} \\
%7-term
	&\quad
	\hphantom{+\sum_{\substack{
			\vecx_m=abcde \\
			b,c,d\neq\emptyset}}
		C({a} \urflex{b}{\ulflex{c}{d}{e}})B(\llflex{b}{ce})A(\lrflex{abc}{d})}
		+\sum_{\substack{
			\vecx_m=abcde \\
			b,c,e\neq\emptyset}}
		C({a} \urflex{bcd}{e})B(\llflex{(\ulflex{b}{c}{d})}{e})A(\lrflex{ab}{c})
		\hphantom{+\sum_{\substack{
			\vecx_m=abcde \\
			a,b,d,e\neq\emptyset}}
		C(\ulflex{a}{b}{c} \urflex{d}{e})B(\llflex{d}{e})A(\lrflex{a}{b})} \\
%8-term
	&\quad
	-\sum_{\substack{
			\vecx_m=abcde \\
			a,b,d\neq\emptyset}}
		C(\ulflex{(\ulflex{a}{b}{c})}{d} e)B(\lrflex{ac}{d})A(\lrflex{a}{b})
		-\sum_{\substack{
			\vecx_m=abcde \\
			a,b,c\neq\emptyset}}
		C(\ulflex{a}{bcd} e)B(\lrflex{a}{(\ulflex{b}{c}{d})})A(\lrflex{ab}{c})
		-\sum_{\substack{
			\vecx_m=abcde \\
			a,b,c,d\neq\emptyset}}
		C(\ulflex{a}{b} \ulflex{c}{d}{e})B(\lrflex{a}{b})A(\lrflex{abc}{d}).
\end{align*}}

\noindent
Here, for the first term, we have
\begin{align*}
&\sum_{\substack{
	\vecx_m=abcde \\
	b,d,e\neq\emptyset}}
C({a} \urflex{b}{({c} \urflex{d}{e})})A(\llflex{b}{ce})B(\llflex{d}{e}) \\
&=\sum_{\substack{
	\vecx_m=abcde \\
	b,d,e\neq\emptyset \\
	c=\emptyset}}
C({a} \urflex{b}{({c} \urflex{d}{e})})A(\llflex{b}{ce})B(\llflex{d}{e})
+\sum_{\substack{
	\vecx_m=abcde \\
	b,d,e\neq\emptyset \\
	c\neq\emptyset}}
C({a} \urflex{b}{({c} \urflex{d}{e})})A(\llflex{b}{ce})B(\llflex{d}{e}).
\intertext{By using Lemma \ref{lem:flexion's action}, we get}
&=\sum_{\substack{
	\vecx_m=abcde \\
	b,d,e\neq\emptyset \\
	c=\emptyset}}
C({a} \urflex{b}{({c} \urflex{d}{e})})A(\llflex{b}{ce})B(\llflex{d}{e})
+\sum_{\substack{
	\vecx_m=abcde \\
	b,c,d,e\neq\emptyset}}
C({a} \urflex{b}{c}\urflex{d}{e})A(\llflex{b}{c})B(\llflex{d}{e}).
\end{align*}
Therefore, cancellation
\footnote{The cancellations occur on the four pairs when $c\neq\emptyset$:
1st and 11th, 3rd and 9th, 6th and 16th, 8th and 14th.}
yields
{\scriptsize
\begin{align*}
&(\aritu(B)\circ\aritu(A)-\aritu(A)\circ\aritu(B))(C)(\vecx_m) \\
	&=\sum_{\substack{
			\vecx_m=abcde \\
			b,d,e\neq\emptyset \\
			c=\emptyset}}
		C({a} \urflex{b}{({c} \urflex{d}{e})})A(\llflex{b}{ce})B(\llflex{d}{e})
		+\sum_{\substack{
			\vecx_m=abcde \\
			c,d,e\neq\emptyset}}
		C({a} \urflex{bcd}{e})A(\llflex{({b}\urflex{c}{d})}{e})B(\llflex{c}{de})
		\hphantom{
		+\sum_{\substack{
			\vecx_m=abcde \\
			b,c,d,e\neq\emptyset}}
		C(a\urflex{b}{c} \urflex{d}{e})A(\llflex{d}{e})B(\llflex{b}{cde})
		} \\
%2-term
	&\quad
	\hphantom{
		-\sum_{\substack{
			\vecx_m=abcde \\
			a,b,d,e\neq\emptyset}}
		C(\ulflex{a}{b} {c} \urflex{d}{e})A(\lrflex{a}{b})B(\llflex{d}{e})
		}
		-\sum_{\substack{
			\vecx_m=abcde \\
			a,c,d\neq\emptyset}}
		C(\ulflex{a}{bcd} e)A(\lrflex{a}{({b}\urflex{c}{d})})B(\llflex{c}{de})
		\hphantom{
		-\sum_{\substack{
			\vecx_m=abcde \\
			b,c,d\neq\emptyset}}
		C(\ulflex{a\urflex{b}{c}}{d} e)A(\lrflex{ac}{d})B(\llflex{b}{cde})
		} \\
%3-term
	&\quad
	\hphantom{
		-\sum_{\substack{
			\vecx_m=abcde \\
			b,c,d\neq\emptyset}}
		C({a} \urflex{b}{\ulflex{c}{d}{e}})A(\llflex{b}{ce})B(\lrflex{abc}{d})}
		-\sum_{\substack{
			\vecx_m=abcde \\
			b,c,e\neq\emptyset}}
		C({a} \urflex{bcd}{e})A(\llflex{(\ulflex{b}{c}{d})}{e})B(\lrflex{ab}{c})
		\hphantom{
		-\sum_{\substack{
			\vecx_m=abcde \\
			a,b,d,e\neq\emptyset}}
		C(\ulflex{a}{b}{c} \urflex{d}{e})A(\llflex{d}{e})B(\lrflex{a}{b})
		} \\
%4-term
	&\quad
	+\sum_{\substack{
			\vecx_m=abcde \\
			a,b,d\neq\emptyset \\
			c=\emptyset}}
		C(\ulflex{(\ulflex{a}{b}{c})}{d} e)A(\lrflex{ac}{d})B(\lrflex{a}{b})
		+\sum_{\substack{
			\vecx_m=abcde \\
			a,b,c\neq\emptyset}}
		C(\ulflex{a}{bcd} e)A(\lrflex{a}{(\ulflex{b}{c}{d})})B(\lrflex{ab}{c})
		\hphantom{
		+\sum_{\substack{
			\vecx_m=abcde \\
			a,b,c,d\neq\emptyset}}
		C(\ulflex{a}{b} \ulflex{c}{d}{e})A(\lrflex{a}{b})B(\lrflex{abc}{d})
		} \\
%5-term
	&-\sum_{\substack{
			\vecx_m=abcde \\
			b,d,e\neq\emptyset \\
			c=\emptyset}}
		C({a} \urflex{b}{({c} \urflex{d}{e})})B(\llflex{b}{ce})A(\llflex{d}{e})
		-\sum_{\substack{
			\vecx_m=abcde \\
			c,d,e\neq\emptyset}}
		C({a} \urflex{bcd}{e})B(\llflex{({b}\urflex{c}{d})}{e})A(\llflex{c}{de})
		\hphantom{
		-\sum_{\substack{
			\vecx_m=abcde \\
			b,c,d,e\neq\emptyset}}
		C(a\urflex{b}{c} \urflex{d}{e})B(\llflex{d}{e})A(\llflex{b}{cde})
		} \\
%6-term
	&\quad
	\hphantom{
		+\sum_{\substack{
			\vecx_m=abcde \\
			a,b,d,e\neq\emptyset}}
		C(\ulflex{a}{b} {c} \urflex{d}{e})B(\lrflex{a}{b})A(\llflex{d}{e})
		}
		+\sum_{\substack{
			\vecx_m=abcde \\
			a,c,d\neq\emptyset}}
		C(\ulflex{a}{bcd} e)B(\lrflex{a}{({b}\urflex{c}{d})})A(\llflex{c}{de})
		\hphantom{
		+\sum_{\substack{
			\vecx_m=abcde \\
			b,c,d\neq\emptyset}}
		C(\ulflex{a\urflex{b}{c}}{d} e)B(\lrflex{ac}{d})A(\llflex{b}{cde})
		} \\
%7-term
	&\quad
	\hphantom{
		+\sum_{\substack{
			\vecx_m=abcde \\
			b,c,d\neq\emptyset}}
		C({a} \urflex{b}{\ulflex{c}{d}{e}})B(\llflex{b}{ce})A(\lrflex{abc}{d})
		}
		+\sum_{\substack{
			\vecx_m=abcde \\
			b,c,e\neq\emptyset}}
		C({a} \urflex{bcd}{e})B(\llflex{(\ulflex{b}{c}{d})}{e})A(\lrflex{ab}{c})
		\hphantom{
		+\sum_{\substack{
			\vecx_m=abcde \\
			a,b,d,e\neq\emptyset}}
		C(\ulflex{a}{b}{c} \urflex{d}{e})B(\llflex{d}{e})A(\lrflex{a}{b})
		} \\
%8-term
	&\quad
	-\sum_{\substack{
			\vecx_m=abcde \\
			a,b,d\neq\emptyset \\
			c=\emptyset}}
		C(\ulflex{(\ulflex{a}{b}{c})}{d} e)B(\lrflex{ac}{d})A(\lrflex{a}{b})
		-\sum_{\substack{
			\vecx_m=abcde \\
			a,b,c\neq\emptyset}}
		C(\ulflex{a}{bcd} e)B(\lrflex{a}{(\ulflex{b}{c}{d})})A(\lrflex{ab}{c}).
		\hphantom{
		-\sum_{\substack{
			\vecx_m=abcde \\
			a,b,c,d\neq\emptyset}}
		C(\ulflex{a}{b} \ulflex{c}{d}{e})B(\lrflex{a}{b})A(\lrflex{abc}{d})
		}
\end{align*}}
By rearranging each terms and by calculating the terms with
$c=\emptyset$, we have
{\footnotesize
\begin{align*}
	&=\sum_{\substack{
			\vecx_m=abde \\
			b,d,e\neq\emptyset}}
		C({a} \urflex{bd}{e})A(\llflex{b}{e})B(\llflex{d}{e})
		-\sum_{\substack{
			\vecx_m=abde \\
			b,d,e\neq\emptyset}}
		C({a} \urflex{bd}{e})B(\llflex{b}{e})A(\llflex{d}{e}) \\
	&\quad
	+\sum_{\substack{
			\vecx_m=abde \\
			a,b,d\neq\emptyset}}
		C(\ulflex{a}{bd} e)A(\lrflex{a}{d})B(\lrflex{a}{b})
		-\sum_{\substack{
			\vecx_m=abde \\
			a,b,d\neq\emptyset}}
		C(\ulflex{a}{bd} e)B(\lrflex{a}{d})A(\lrflex{a}{b}) \\
	&+\sum_{\substack{
			\vecx_m=abcde \\
			c,d,e\neq\emptyset}}
		C({a} \urflex{bcd}{e})A(\llflex{({b}\urflex{c}{d})}{e})B(\llflex{c}{de})
		-\sum_{\substack{
			\vecx_m=abcde \\
			b,c,e\neq\emptyset}}
		C({a} \urflex{bcd}{e})A(\llflex{(\ulflex{b}{c}{d})}{e})B(\lrflex{ab}{c}) \\
	&-\sum_{\substack{
			\vecx_m=abcde \\
			c,d,e\neq\emptyset}}
		C({a} \urflex{bcd}{e})B(\llflex{({b}\urflex{c}{d})}{e})A(\llflex{c}{de})
		+\sum_{\substack{
			\vecx_m=abcde \\
			b,c,e\neq\emptyset}}
		C({a} \urflex{bcd}{e})B(\llflex{(\ulflex{b}{c}{d})}{e})A(\lrflex{ab}{c}) \\
	&+\sum_{\substack{
			\vecx_m=abcde \\
			a,b,c\neq\emptyset}}
		C(\ulflex{a}{bcd} e)A(\lrflex{a}{(\ulflex{b}{c}{d})})B(\lrflex{ab}{c})
		-\sum_{\substack{
			\vecx_m=abcde \\
			a,c,d\neq\emptyset}}
		C(\ulflex{a}{bcd} e)A(\lrflex{a}{({b}\urflex{c}{d})})B(\llflex{c}{de}) \\
	&-\sum_{\substack{
			\vecx_m=abcde \\
			a,b,c\neq\emptyset}}
		C(\ulflex{a}{bcd} e)B(\lrflex{a}{(\ulflex{b}{c}{d})})A(\lrflex{ab}{c})
		+\sum_{\substack{
			\vecx_m=abcde \\
			a,c,d\neq\emptyset}}
		C(\ulflex{a}{bcd} e)B(\lrflex{a}{({b}\urflex{c}{d})})A(\llflex{c}{de}).
\end{align*}}
Here, the first term is calculated such that
\begin{equation*}
	\sum_{\substack{
			\vecx_m=abde \\
			b,d,e\neq\emptyset}}
		C({a} \urflex{bd}{e})A(\llflex{b}{e})B(\llflex{d}{e})
	=\sum_{\substack{
			\vecx_m=abce \\
			b,c,e\neq\emptyset}}
		C({a} \urflex{bc}{e})A(\llflex{b}{e})B(\llflex{c}{e})
	=\sum_{\substack{
			\vecx_m=abcde \\
			b,c,e\neq\emptyset \\
			d=\emptyset}}
		C({a} \urflex{bcd}{e})A(\llflex{(b\urflex{c}{d})}{e})B(\llflex{c}{de}),
\end{equation*}
The second, third and fourth terms are also calculated respectively as
\begin{align*}
	&-\sum_{\substack{
			\vecx_m=abde \\
			b,d,e\neq\emptyset}}
		C({a} \urflex{bd}{e})B(\llflex{b}{e})A(\llflex{d}{e})
	=-\sum_{\substack{
			\vecx_m=abcde \\
			b,c,e\neq\emptyset \\
			d=\emptyset}}
		C({a} \urflex{bcd}{e})B(\llflex{(b\urflex{c}{d})}{e})A(\llflex{c}{de}), \\
%2-term
	&\quad\sum_{\substack{
			\vecx_m=abde \\
			a,b,d\neq\emptyset}}
		C(\ulflex{a}{bd} {e})A(\lrflex{a}{d})B(\lrflex{a}{b})
	=\sum_{\substack{
			\vecx_m=abcde \\
			a,c,d\neq\emptyset \\
			b=\emptyset}}
		C(\ulflex{a}{bcd} {e})A(\lrflex{a}{(\ulflex{b}{c} {d})})B(\lrflex{ab}{c}), \\
%3-term
	&-\sum_{\substack{
			\vecx_m=abde \\
			a,b,d\neq\emptyset}}
		C(\ulflex{a}{bd} {e})B(\lrflex{a}{d})A(\lrflex{a}{b})
	=-\sum_{\substack{
			\vecx_m=abcde \\
			a,c,d\neq\emptyset \\
			b=\emptyset}}
		C(\ulflex{a}{bcd} e)B(\lrflex{a}{(\ulflex{b}{c}{d})})A(\lrflex{ab}{c}).
\end{align*}
These computations yield
\begin{align*}
	&(\aritu(B)\circ\aritu(A)-\aritu(A)\circ\aritu(B))(C)(\vecx_m) \\
	&=\sum_{\substack{
			\vecx_m=abcde \\
			c,e\neq\emptyset}}
		C({a} \urflex{bcd}{e})A(\llflex{({b}\urflex{c}{d})}{e})B(\llflex{c}{de})
		-\sum_{\substack{
			\vecx_m=abcde \\
			b,c,e\neq\emptyset}}
		C({a} \urflex{bcd}{e})A(\llflex{(\ulflex{b}{c}{d})}{e})B(\lrflex{ab}{c}) \\
	&\quad
	-\sum_{\substack{
			\vecx_m=abcde \\
			c,e\neq\emptyset}}
		C({a} \urflex{bcd}{e})B(\llflex{({b}\urflex{c}{d})}{e})A(\llflex{c}{de})
		+\sum_{\substack{
			\vecx_m=abcde \\
			b,c,e\neq\emptyset}}
		C({a} \urflex{bcd}{e})B(\llflex{(\ulflex{b}{c}{d})}{e})A(\lrflex{ab}{c}) \\
	&\quad
	+\sum_{\substack{
			\vecx_m=abcde \\
			a,c\neq\emptyset}}
		C(\ulflex{a}{bcd} e)A(\lrflex{a}{(\ulflex{b}{c}{d})})B(\lrflex{ab}{c})
		-\sum_{\substack{
			\vecx_m=abcde \\
			a,c,d\neq\emptyset}}
		C(\ulflex{a}{bcd} e)A(\lrflex{a}{({b}\urflex{c}{d})})B(\llflex{c}{de}) \\
	&\quad
	-\sum_{\substack{
			\vecx_m=abcde \\
			a,c\neq\emptyset}}
		C(\ulflex{a}{bcd} e)B(\lrflex{a}{(\ulflex{b}{c}{d})})A(\lrflex{ab}{c})
		+\sum_{\substack{
			\vecx_m=abcde \\
			a,c,d\neq\emptyset}}
		C(\ulflex{a}{bcd} e)B(\lrflex{a}{({b}\urflex{c}{d})})A(\llflex{c}{de})
\end{align*}
\begin{align*}
	&=\sum_{\substack{
		\vecx_m=afe \\
		e\neq\emptyset}}
	C({a} \urflex{f}{e})
	\left\{
	\sum_{\substack{
			f=bcd \\
			c\neq\emptyset}}
		A(\llflex{({b}\urflex{c}{d})}{e})B(\llflex{c}{de})
		-\sum_{\substack{
			f=bcd \\
			b,c\neq\emptyset}}
		A(\llflex{(\ulflex{b}{c}{d})}{e})B(\lrflex{ab}{c})
	\right\} \\
%2-term
	&\quad
	-\sum_{\substack{
		\vecx_m=afe \\
		e\neq\emptyset}}
	C({a} \urflex{f}{e})
	\left\{
	\sum_{\substack{
			f=bcd \\
			c\neq\emptyset}}
		B(\llflex{({b}\urflex{c}{d})}{e})A(\llflex{c}{de})
		-\sum_{\substack{
			f=bcd \\
			b,c\neq\emptyset}}
		B(\llflex{(\ulflex{b}{c}{d})}{e})A(\lrflex{ab}{c})
	\right\} \\
%3-term
	&\quad
	+\sum_{\substack{
		\vecx_m=afe \\
		a\neq\emptyset}}
	C(\ulflex{a}{f} e)
	\left\{
	\sum_{\substack{
			f=bcd \\
			c\neq\emptyset}}
		A(\lrflex{a}{(\ulflex{b}{c}{d})})B(\lrflex{ab}{c})
		-\sum_{\substack{
			f=bcd \\
			c,d\neq\emptyset}}
		A(\lrflex{a}{({b}\urflex{c}{d})})B(\llflex{c}{de})
	\right\} \\
%4-term
	&\quad
	-\sum_{\substack{
		\vecx_m=afe \\
		a\neq\emptyset}}
	C(\ulflex{a}{f} e)
	\left\{
	\sum_{\substack{
			f=bcd \\
			c\neq\emptyset}}
		B(\lrflex{a}{(\ulflex{b}{c}{d})})A(\lrflex{ab}{c})
		-\sum_{\substack{
			f=bcd \\
			c,d\neq\emptyset}}
		B(\lrflex{a}{({b}\urflex{c}{d})})A(\llflex{c}{de})
	\right\}
\end{align*}
\begin{align*}
%2-equal
	&=\sum_{\substack{
		\vecx_m=afe \\
		e\neq\emptyset}}
	C({a} \urflex{f}{e})
	\left\{
	\sum_{\substack{
			f=bcd \\
			c,d\neq\emptyset}}
		A(\llflex{({b}\urflex{c}{d})}{e})B(\llflex{c}{d})
		-\sum_{\substack{
			f=bcd \\
			b,c\neq\emptyset}}
		A(\llflex{(\ulflex{b}{c}{d})}{e})B(\lrflex{b}{c})
		+\sum_{\substack{
			f=bc \\
			c\neq\emptyset}}
		A(\llflex{b}{e})B(\llflex{c}{e})
	\right\} \\
%2-term
	&\quad
	-\sum_{\substack{
		\vecx_m=afe \\
		e\neq\emptyset}}
	C({a} \urflex{f}{e})
	\left\{
	\sum_{\substack{
			f=bcd \\
			c,d\neq\emptyset}}
		B(\llflex{({b}\urflex{c}{d})}{e})A(\llflex{c}{d})
		-\sum_{\substack{
			f=bcd \\
			b,c\neq\emptyset}}
		B(\llflex{(\ulflex{b}{c}{d})}{e})A(\lrflex{b}{c})
		+\sum_{\substack{
			f=bc \\
			c\neq\emptyset}}
		B(\llflex{b}{e})A(\llflex{c}{e})
	\right\} \\
%3-term
	&\quad
	+\sum_{\substack{
		\vecx_m=afe \\
		a\neq\emptyset}}
	C(\ulflex{a}{f} e)
	\left\{
	\sum_{\substack{
			f=bcd \\
			b,c\neq\emptyset}}
		A(\lrflex{a}{(\ulflex{b}{c}{d})})B(\lrflex{b}{c})
		-\sum_{\substack{
			f=bcd \\
			c,d\neq\emptyset}}
		A(\lrflex{a}{({b}\urflex{c}{d})})B(\llflex{c}{d})
		+\sum_{\substack{
			f=cd \\
			c\neq\emptyset}}
		A(\lrflex{a}{d})B(\lrflex{a}{c})
	\right\} \\
%4-term
	&\quad
	-\sum_{\substack{
		\vecx_m=afe \\
		a\neq\emptyset}}
	C(\ulflex{a}{f} e)
	\left\{
	\sum_{\substack{
			f=bcd \\
			b,c\neq\emptyset}}
		B(\lrflex{a}{(\ulflex{b}{c}{d})})A(\lrflex{b}{c})
		-\sum_{\substack{
			f=bcd \\
			c,d\neq\emptyset}}
		B(\lrflex{a}{({b}\urflex{c}{d})})A(\llflex{c}{d})
		+\sum_{\substack{
			f=cd \\
			c\neq\emptyset}}
		B(\lrflex{a}{d})A(\lrflex{a}{c})
	\right\}.
	\intertext{By using Lemma \ref{lem:flexions property}.(2), (3) and Lemma \ref{lem:flexion's action}, we have}
%3-equal
	&=\sum_{\substack{
		\vecx_m=afe \\
		e\neq\emptyset}}
	C({a} \urflex{f}{e})
	\left\{
	(\aritu(B)(A))(\llflex{f}{e})+(A\times B)(\llflex{f}{e})
	-(\aritu(A)(B))(\llflex{f}{e})-(B\times A)(\llflex{f}{e})
	\right\} \\
	&\quad
	+\sum_{\substack{
		\vecx_m=afe \\
		a\neq\emptyset}}
	C(\ulflex{a}{f} e)
	\left\{
	-(\aritu(B)(A))(\lrflex{a}{f})+(B\times A)(\lrflex{a}{f})
	+(\aritu(A)(B))(\lrflex{a}{f})-(A\times B)(\lrflex{a}{f})
	\right\} \\
%4-equal
	&=\sum_{\substack{
		\vecx_m=afe \\
		e\neq\emptyset}}
	C({a} \urflex{f}{e}) \ariu(A,B)(\llflex{f}{e})
	-\sum_{\substack{
		\vecx_m=afe \\
		a\neq\emptyset}}
	C(\ulflex{a}{f} e) \ariu(A,B)(\lrflex{a}{f}) \\
	&=(\aritu(\ariu(A,B))(C))(\vecx_m).
\end{align*}
Therefore, we obtain the claim.
\end{proof}

\begin{defn}[\cite{E-flex} (2.46)]
We consider a binary operation
%\footnote{This product is introduced in \cite[(2.46)]{E-flex}.}
$\preariu:\ARI(\Gamma)^{\otimes2}\rightarrow\ARI(\Gamma)$ 
which is defined by
\begin{equation*}
	\preariu(A,B):=\aritu(B)(A)+A\times B
\end{equation*}
for $A,B\in\ARI(\Gamma)$.
\end{defn}

Then we have $\ariu(A,B)=\preariu(A,B)-\preariu(B,A)$.
%By the previous proposition, we get the following.
\begin{prop}\label{prop:preari}
The pair $(\ARI(\Gamma),\preariu)$ forms a pre-Lie algebra, i.e, the following formula holds:
\begin{align*}
	&\preariu(A,\preariu(B,C))-\preariu(\preariu(A,B),C) \\
	&=\preariu(A,\preariu(C,B))-\preariu(\preariu(A,C),B)
\end{align*}
for $A,B,C\in\ARI(\Gamma)$.
\end{prop}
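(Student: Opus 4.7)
The plan is to establish the pre-Lie identity by a direct expansion, relying on three ingredients already at our disposal: (i) associativity of the product $\times$ on $\ARI(\Gamma)$; (ii) Lemma~\ref{lem:arit derivation}, which says that $\aritu(X)$ is a derivation of $\times$; and (iii) the commutation formula of Proposition~\ref{prop:arit-ari},
\begin{equation*}
\aritu(B)\circ\aritu(A)-\aritu(A)\circ\aritu(B)=\aritu(\ariu(A,B)).
\end{equation*}

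First I would expand the associator
\begin{equation*}
\mathrm{Ass}(A,B,C):=\preariu(A,\preariu(B,C))-\preariu(\preariu(A,B),C)
\end{equation*}
by substituting $\preariu(X,Y)=\aritu(Y)(X)+X\times Y$ on both sides, obtaining eight terms. The two \emph{purely multiplicative} terms $A\times(B\times C)$ and $(A\times B)\times C$ cancel by the associativity of $\times$. Applying Lemma~\ref{lem:arit derivation} to rewrite $\aritu(C)(A\times B)=\aritu(C)(A)\times B+A\times\aritu(C)(B)$ kills the term $A\times\aritu(C)(B)$, and after collecting what is left I expect to find the compact expression
\begin{equation*}
\mathrm{Ass}(A,B,C)=\aritu(\aritu(C)(B))(A)+\aritu(B\times C)(A)-\aritu(C)(\aritu(B)(A))-\aritu(C)(A)\times B-\aritu(B)(A)\times C.
\end{equation*}

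Next, to prove the pre-Lie identity $\mathrm{Ass}(A,B,C)=\mathrm{Ass}(A,C,B)$, I would compute the difference $\mathrm{Ass}(A,B,C)-\mathrm{Ass}(A,C,B)$. The two pure-product terms $-\aritu(C)(A)\times B$ and $-\aritu(B)(A)\times C$ are symmetric in $B,C$ and cancel with their swapped counterparts. The remainder takes the form
\begin{equation*}
\bigl(\aritu(C)\circ\aritu(B)-\aritu(B)\circ\aritu(C)\bigr)(A)-\aritu\bigl(\aritu(C)(B)-\aritu(B)(C)\bigr)(A)-\aritu\bigl(B\times C-C\times B\bigr)(A).
\end{equation*}
Using the definition $\ariu(B,C)=\aritu(C)(B)-\aritu(B)(C)+[B,C]$ together with the linearity of $M\mapsto\aritu(M)$, this is exactly $\aritu(\ariu(B,C))(A)$ minus itself by Proposition~\ref{prop:arit-ari}, hence vanishes.

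Since Proposition~\ref{prop:arit-ari} is already in hand, there is no serious obstacle; the argument is purely formal, a matter of carefully assembling the derivation property, associativity, and the commutation formula in the correct order. (Indeed the hard flexion bookkeeping has already been carried out in the proof of Proposition~\ref{prop:arit-ari}.) Once the pre-Lie identity is established, Proposition~\ref{ARI Lie algebra} follows at once, because antisymmetrization of any pre-Lie product, here $\ariu(A,B)=\preariu(A,B)-\preariu(B,A)$, automatically satisfies the Jacobi identity.
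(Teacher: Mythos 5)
Your argument is correct and proceeds along essentially the same lines as the paper's proof: expand $\preariu$, invoke associativity of $\times$ and the derivation property of $\aritu$ (Lemma~\ref{lem:arit derivation}) to reduce to the commutation formula of Proposition~\ref{prop:arit-ari}. The only cosmetic difference is that you first simplify a single associator before taking the difference, whereas the paper expands the full difference in one pass; also note that your displayed "remainder" is the negative of $\mathrm{Ass}(A,B,C)-\mathrm{Ass}(A,C,B)$, but since it vanishes this costs nothing.
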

\begin{proof}
We have
\begin{align*}
	&\{\preariu(A,\preariu(B,C))-\preariu(\preariu(A,B),C)\} \\
	&-\{\preariu(A,\preariu(C,B))-\preariu(\preariu(A,C),B)\} \\
%1-equal
	&=\aritu(\preariu(B,C))(A)+A\times\preariu(B,C)
	-\aritu(C)(\preariu(A,B))-\preariu(A,B)\times C \\
	&\quad-\aritu(\preariu(C,B))(A)-A\times\preariu(C,B)
	+\aritu(B)(\preariu(A,C))+\preariu(A,C)\times B \\
%2-equal
	&=\aritu(\aritu(C)(B))(A)+\aritu(B\times C)(A)+A\times(\aritu(C)(B))+A\times(B\times C) \\
	&\quad-\aritu(C)(\aritu(B)(A))-\aritu(C)(A\times B)-\aritu(B)(A)\times C-(A\times B)\times C \\
	&\quad-\aritu(\aritu(B)(C))(A)-\aritu(C\times B)(A)-A\times(\aritu(B)(C))-A\times(C\times B) \\
	&\quad\quad+\aritu(B)(\aritu(C)(A))+\aritu(B)(A\times C)+\aritu(C)(A)\times B+(A\times C)\times B. \\
	\intertext{By using associativity of $(\ARI(\Gamma),\times)$ and using Lemma \ref{lem:arit derivation}, we get}
	&=\aritu(\aritu(C)(B))(A)+\aritu(B\times C)(A) - \aritu(C)(\aritu(B)(A)) \\
	&\quad-\aritu(\aritu(B)(C))(A)-\aritu(C\times B)(A) + \aritu(B)(\aritu(C)(A)) \\
	&=\aritu(\ariu(B,C))(A) -\{ \aritu(C)\circ\aritu(B)(A)- \aritu(B)\circ\aritu(C)(A)\}.
\end{align*}
Therefore, by using Proposition \ref{prop:arit-ari}, we obtain the claim.
\end{proof}

\noindent
{\it Proof of Proposition \ref{ARI Lie algebra}.} 
It is sufficient to prove the Jacobi identity
\begin{equation*}
	\ariu(\ariu(A,B),C) + \ariu(\ariu(B,C),A) + \ariu(\ariu(C,A),B) = 0
\end{equation*}
for $A,B,C\in\ARI(\Gamma)$. By the relationship between $\ariu$ and $\preariu$, we calculate
\begin{align*}
	&\ariu(\ariu(A,B),C) + \ariu(\ariu(B,C),A) + \ariu(\ariu(C,A),B) \\
%	&=\aritu(C)(\ariu(A,B))-\aritu(\ariu(A,B))(C)+[\ariu(A,B),C] \\
%	&\quad+\aritu(A)(\ariu(B,C))-\aritu(\ariu(B,C))(A)+[\ariu(B,C),A] \\
%	&\quad\quad+\aritu(B)(\ariu(C,A))-\aritu(\ariu(C,A))(B)+[\ariu(C,A),B].
%	\intertext{By using \eqref{ari-bracket} again, we have}
%%-equal
%	&=\{\aritu(C)\circ\aritu(B)(A)-\aritu(C)\circ\aritu(A)(B)+\aritu(C)([A,B])\} \\
%	&\quad-\aritu(\ariu(A,B))(C)+\{[\aritu(B)(A),C]-[\aritu(A)(B),C]+[[A,B],C]\} \\
%	&\quad+\{\aritu(A)\circ\aritu(C)(B)-\aritu(A)\circ\aritu(B)(C)+\aritu(A)([B,C])\} \\
%	&\quad\quad-\aritu(\ariu(B,C))(A)+\{[\aritu(C)(B),A]-[\aritu(B)(C),A]+[[B,C],A]\} \\
%	&\quad\quad+\{\aritu(B)\circ\aritu(A)(C)-\aritu(B)\circ\aritu(C)(A)+\aritu(B)([C,A])\} \\
%	&\quad\quad\quad-\aritu(\ariu(C,A))(B)+\{[\aritu(A)(C),B]-[\aritu(C)(A),B]+[[C,A],B]\}.
%	\intertext{Because $\aritu$ is a derivation, we have $\aritu(C)([A,B])=[\aritu(C)(A),B]+[A,\aritu(C)(B)]$, so we get}
%%-equal
%	&=\aritu(B)\circ\aritu(A)(C)-\aritu(A)\circ\aritu(B)(C)-\aritu(\ariu(A,B))(C) \\
%	&\quad+\aritu(A)\circ\aritu(C)(B)-\aritu(C)\circ\aritu(A)(B)-\aritu(\ariu(C,A))(B) \\
%	&\quad\quad+\aritu(C)\circ\aritu(B)(A)-\aritu(B)\circ\aritu(C)(A)-\aritu(\ariu(B,C))(A).
	&=\preariu(\ariu(A,B),C) - \preariu(C,\ariu(A,B)) \\
	&\quad+ \preariu(\ariu(B,C),A) - \preariu(A,\ariu(B,C)) \\
	&\quad\quad+ \preariu(\ariu(C,A),B) - \preariu(B,\ariu(C,A)) \\
	&=\preariu(\preariu(A,B)-\preariu(B,A),C)
	- \preariu(C,\preariu(A,B)-\preariu(B,A)) \\
	&\quad+ \preariu(\preariu(B,C)-\preariu(C,B),A)
	- \preariu(A,\preariu(B,C)-\preariu(C,B)) \\
	&\quad\quad+ \preariu(\preariu(C,A)-\preariu(A,C),B)
	- \preariu(B,\preariu(C,A)-\preariu(A,C)).
\end{align*}
By Proposition \ref{prop:preari}, it is equal to 0. So we obtain the Jacobi identity.
\hfill $\Box$

%%%%%%%%%%%%%%%%%%%%%%%%%%%%%%%%%%%%%%%%%%%%%%%%%%%%%%%%%%%%%%%%%%%%%%%%
\subsection{Proof of Proposition \ref{ARIal Lie algebra}}\label{sec:A.2}
We give a proof of Proposition \ref{ARIal Lie algebra} which claims that  $\ARI(\Gamma)_\al$ forms a Lie algebra.

We show the following key lemma in this section.
\begin{lem}\label{lem:shuffle coefficient}
	For $\omega,\eta,\alpha_1,\dots,\alpha_r\in X_{\Z}^\bullet$, we have
	\begin{equation}
		\Sh{\omega}{\eta}{\alpha_1\cdots\alpha_r}
		=\sum_{\substack{
		\omega=\omega_1\cdots\omega_r \\	
		\eta=\eta_1\cdots\eta_r	}}
		\Sh{\omega_1}{\eta_1}{\alpha_1}\cdots\Sh{\omega_r}{\eta_r}{\alpha_r}.
	\end{equation}
	where $\omega_1,\dots,\omega_r,\eta_1,\dots,\eta_r$ run over $X_{\Z}^\bullet$.
\end{lem}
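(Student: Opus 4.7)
The plan is to prove the identity by induction on $r$, reducing everything to the case $r=2$, which is the essential combinatorial fact, namely that the deconcatenation coproduct on $\mathcal{A}_X$ is a morphism with respect to the shuffle product. For $r=1$ the claim is trivial. For the induction step, writing $\alpha_1\cdots\alpha_r = \alpha_1 \cdot (\alpha_2\cdots\alpha_r)$ and invoking the $r=2$ case yields
\begin{equation*}
\Sh{\omega}{\eta}{\alpha_1\cdots\alpha_r}
=\sum_{\substack{\omega=\omega_1\omega' \\ \eta=\eta_1\eta'}}
\Sh{\omega_1}{\eta_1}{\alpha_1}\cdot\Sh{\omega'}{\eta'}{\alpha_2\cdots\alpha_r},
\end{equation*}
and then the induction hypothesis applied to the second factor expands $\Sh{\omega'}{\eta'}{\alpha_2\cdots\alpha_r}$ into $\sum \Sh{\omega_2}{\eta_2}{\alpha_2}\cdots\Sh{\omega_r}{\eta_r}{\alpha_r}$ over splittings $\omega' = \omega_2\cdots\omega_r$, $\eta'=\eta_2\cdots\eta_r$. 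Substituting gives exactly the desired sum over all splittings of $\omega$ and $\eta$ into $r$ pieces.

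For the base case $r=2$, I would proceed by induction on the total length $l(\omega)+l(\eta)$ using the recursive definition $u\omega \shuffle v\eta = u(\omega\shuffle v\eta)+v(u\omega\shuffle \eta)$. If either $\omega$ or $\eta$ is empty the identity is immediate (only the splitting with the corresponding $\omega_i$ or $\eta_i$ empty contributes). Otherwise, write $\omega = u\omega''$ and $\eta = v\eta''$; then $\Sh{\omega}{\eta}{\alpha_1\alpha_2}$ splits according to whether the first letter of $\alpha_1$ is $u$ or $v$ (with appropriate care when $\alpha_1 = \emptyset$, so the first letter of $\alpha_2$ is taken instead). In each subcase the induction hypothesis produces a sum over decompositions of the remaining pieces, and rearranging the sums yields the desired formula. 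Alternatively—and perhaps more transparently—one can give a direct bijective proof: a shuffle $\sigma\in\Sha_{l(\omega),l(\eta)}$ producing $\alpha_1\alpha_2$ determines a unique pair of shuffles $(\sigma_1,\sigma_2)$ by restricting $\sigma$ to the positions of $\alpha_1$ and $\alpha_2$ respectively, together with splittings $\omega=\omega_1\omega_2$ and $\eta=\eta_1\eta_2$ determined by which prefix of $\omega$ (respectively $\eta$) gets used up within the first $l(\alpha_1)$ output positions; the monotonicity condition \eqref{shuffle permutation} guarantees these prefixes are well defined and that $\sigma_1\in\Sha_{l(\omega_1),l(\eta_1)}$, $\sigma_2\in\Sha_{l(\omega_2),l(\eta_2)}$.

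The main obstacle is purely bookkeeping rather than conceptual: one has to handle the degenerate splittings where some $\omega_i$ or $\eta_i$ is empty uniformly (so that, e.g., the terms with $\omega_1=\emptyset$ in the $r=2$ sum correctly reproduce the contribution where $\alpha_1$ is shuffled entirely out of $\eta$), and one must verify that the bijection in the combinatorial proof respects the monotonicity constraints cutting out $\Sha_{p,q}$. Beyond that, the induction on $r$ is routine once the $r=2$ case is in hand, and no further input from mould theory is required—this lemma is a purely combinatorial statement about the shuffle algebra on $X_{\Z}^\bullet$.
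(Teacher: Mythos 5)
Your proof is correct and rests on the same underlying fact as the paper's: the deconcatenation coproduct $\Delta$ is a morphism for the shuffle product. The paper simply defines the iterated coproduct $\Delta_r$, asserts it is an algebra homomorphism, and reads off the identity by taking the coefficient of $\alpha_1\otimes\cdots\otimes\alpha_r$; your induction on $r$ is exactly the recursive construction of $\Delta_r$, and your $r=2$ arguments (length induction or the explicit bijection on $\Sha_{p,q}$) fill in the step the paper declares clear.
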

\begin{proof}
	We consider the deconcatenation coproduct $\Delta:\mathcal A_X\rightarrow\mathcal A_X^{\otimes2}$ defined by
	\begin{equation*}
		\Delta(\omega):=\sum_{\substack{
			\omega=\omega_1\omega_2 \\
			\omega_1,\omega_2\in X_{\Z}^\bullet}}
		\omega_1\otimes\omega_2
	\end{equation*}
	for $\omega\in X_{\Z}^\bullet$.
	We recursively define $\mathbb Q$-linear maps $\Delta_r:\mathcal{A}_X\rightarrow\mathcal{A}_X^{\otimes r}$ by $\Delta_2:=\Delta$ and for $r\geqslant 3$
	\begin{equation*}
		\Delta_r:=(\underbrace{{\rm Id}\otimes \cdots\otimes {\rm Id}}_{r-2}\otimes \Delta)\circ \Delta_{r-1}.
	\end{equation*}
	It is clear that $\Delta_r$ is an algebra homomorphism
	\footnote{Note that the product $\shuffle$ of $\mathcal A_X$ induces the product of $\mathcal A_X^{\otimes r}$ (we also denote this product to the same symbol $\shuffle$) as
	$$
	(\omega_1\otimes\cdots\otimes\omega_r)\shuffle(\eta_1\otimes\cdots\otimes\eta_r)
	:=(\omega_1\ \shuffle\ \eta_1)\otimes\cdots\otimes(\omega_r\ \shuffle\ \eta_r)
	$$
	for any $\omega_i,\eta_i\in X_{\Z^\bullet}$.}
	, i.e, for $\omega,\eta\in X_{\Z}^\bullet$, we have
	\begin{equation*}
		\Delta_r(\omega\ \shuffle\ \eta)=\Delta_r(\omega)\shuffle\Delta_r(\eta).
	\end{equation*}
	By expanding the above both sides and taking the coefficient of $\alpha_1\otimes \cdots\otimes \alpha_r$ for $\alpha_1,\dots,\alpha_r\in X_{\Z}^\bullet$, we obtain the claim.
\end{proof}
\noindent
{\it Proof of Proposition \ref{ARIal Lie algebra}.} Because we have
\begin{equation*}
	\ari(A,B)=\arit(B)(A)-\arit(A)(B)+[A,B],
\end{equation*}
it is sufficient to prove the following two formulae for $A,B\in\ARI(\Gamma)_{\al}$:
\begin{align}
	\label{mu-al} [A,B]&\in\ARI(\Gamma)_{\al}, \\
	\label{arit-al} \arit(B)(A)&\in\ARI(\Gamma)_{\al}.
\end{align}

Firstly, we prove \eqref{mu-al}. Let $p,q\geqslant1$ and put $\omega=\varia{x_1,\ \dots,\ x_p}{\sigma_1,\ \dots,\ \sigma_p}$ and $\eta=\varia{x_{p+1},\ \dots,\ x_{p+q}}{\sigma_{p+1},\ \dots,\ \sigma_{p+q}}$. For our simplicity, we denote
\begin{equation}\label{eqn:def of shuffle map}
\mathpzc{Sh}(M)(\omega;\eta)
:=\sum_{\alpha\in X_{\Z}^\bullet}\Sh{\omega}{\eta}{\alpha}M^{p+q}(\alpha).
\end{equation}
 Then we have
\begin{align*}
	&\mathpzc{Sh}(A\times B)(\omega;\eta) 
	=\sum_{\alpha\in X_{\Z}^\bullet}\Sh{\omega}{\eta}{\alpha}\sum_{\alpha=\alpha_1\alpha_2}
		A(\alpha_1)B(\alpha_2) 
	=\sum_{\alpha_1,\alpha_2\in X_{\Z}^\bullet}\Sh{\omega}{\eta}{\alpha_1\alpha_2}
		A(\alpha_1)B(\alpha_2).
	\intertext{By using Lemma \ref{lem:shuffle coefficient} for $r=2$, we get}
	=&\sum_{\alpha_1,\alpha_2\in X_{\Z}^\bullet}\sum_{\substack{
		\omega=\omega_1\omega_2 \\
		\eta=\eta_1\eta_2}}
	\Sh{\omega_1}{\eta_1}{\alpha_1}\Sh{\omega_2}{\eta_2}{\alpha_2}
		A(\alpha_1)B(\alpha_2) \\
	=&\sum_{\substack{
		\omega=\omega_1\omega_2 \\
		\eta=\eta_1\eta_2}}
	\left\{\sum_{\alpha_1\in X_{\Z}^\bullet}\Sh{\omega_1}{\eta_1}{\alpha_1}
		A(\alpha_1)\right\}
	\left\{\sum_{\alpha_2\in X_{\Z}^\bullet}\Sh{\omega_2}{\eta_2}{\alpha_2}
		B(\alpha_2)\right\}.
	\intertext{By using alternality of $A$, we calculate}
	=&\sum_{\substack{
		\omega=\omega_1\omega_2 \\
		\omega_1\neq\emptyset}}
	A(\omega_1)
	\left\{\sum_{\alpha_2\in X_{\Z}^\bullet}\Sh{\omega_2}{\eta}{\alpha_2}
		B(\alpha_2)\right\}
	+\sum_{\substack{
		\eta=\eta_1\eta_2 \\
		\eta_1\neq\emptyset}}
	A(\eta_1)
	\left\{\sum_{\alpha_2\in X_{\Z}^\bullet}\Sh{\omega}{\eta_2}{\alpha_2}
		B(\alpha_2)\right\}.
	\intertext{By using alternality of $B$, we obtain}
	=&A(\omega)B(\eta)+A(\eta)B(\omega).
\end{align*}
Because we have $[A,B]=A\times B-B\times A$, we get
$\mathpzc{Sh}([A,B])(\omega;\eta)=0,$
that is, we obtain \eqref{mu-al}.

Secondly, we prove \eqref{arit-al}. 
We remark that, for $\alpha\in X_{\Z}^\bullet$ with $l(\alpha)\geqslant2$, we have
\begin{align}\label{eqn: remark on arit(B)(A)}
	& \arit(A)(B)(\alpha) \\
	& =\sum_{\substack
		{\alpha=\alpha_1\alpha_2\alpha_3 \\
		\alpha_2,\alpha_3\neq\emptyset}}
	A(\alpha_1\urflex{\alpha_2}{\alpha_3})B(\llflex{\alpha_2}{\alpha_3}) 
	-\sum_{\substack
		{\alpha=\alpha_1\alpha_2\alpha_3 \\
		\alpha_1,\alpha_2\neq\emptyset}}
	A(\ulflex{\alpha_1}{\alpha_2}\alpha_3)B(\lrflex{\alpha_1}{\alpha_2}) \nonumber\\
	& =\sum_{\substack
		{\alpha=\alpha_1\alpha_2\alpha_3 \\
		\alpha_3\neq\emptyset}}
	A(\alpha_1\urflex{\alpha_2}{\alpha_3})B(\llflex{\alpha_2}{\alpha_3}) 
	-\sum_{\substack
		{\alpha=\alpha_1\alpha_2\alpha_3 \\
		\alpha_1\neq\emptyset}}
	A(\ulflex{\alpha_1}{\alpha_2}\alpha_3)B(\lrflex{\alpha_1}{\alpha_2}) \nonumber\\
	& =\sum_{\substack
		{\alpha=\alpha_1\alpha_2x\alpha_3 \\
		\alpha_i\in X_{\Z}^\bullet,x\in X_{\Z}}}
	A(\alpha_1\urflex{\alpha_2}{x}\alpha_3)B(\llflex{\alpha_2}{x}) 
	-\sum_{\substack
		{\alpha=\alpha_1x\alpha_2\alpha_3 \\
		\alpha_i\in X_{\Z}^\bullet,x\in X_{\Z}}}
	A(\alpha_1\ulflex{x}{\alpha_2}\alpha_3)B(\lrflex{x}{\alpha_2}). \nonumber
\end{align}
Then by the definition \eqref{eqn:def of shuffle map} of the map $\mathpzc{Sh}$,
we  calculate
\begin{align*}
	& \mathpzc{Sh}(\arit(B)(A))(\omega;\eta) %\\
	%& 
	=\sum_{\alpha\in X_{\Z}^\bullet}
	\Sh{\omega}{\eta}{\alpha}
	\arit(A)(B)(\alpha).
\intertext{Then by \eqref{eqn: remark on arit(B)(A)}, we have}
	& =\sum_{\alpha\in X_{\Z}^\bullet}
	\Sh{\omega}{\eta}{\alpha}
	\left\{ \sum_{\substack
		{\alpha=\alpha_1\alpha_2x\alpha_3 \\
		\alpha_i\in X_{\Z}^\bullet,x\in X_{\Z}}}
	 A(\alpha_1\urflex{\alpha_2}{x}\alpha_3) B(\llflex{\alpha_2}{x}) 
	-\sum_{\substack
		{\alpha=\alpha_1x\alpha_2\alpha_3 \\
		\alpha_i\in X_{\Z}^\bullet,x\in X_{\Z}}}
	 A(\alpha_1\ulflex{x}{\alpha_2}\alpha_3) B(\lrflex{x}{\alpha_2}) \right\} \\
	& =\sum_{\substack
		{\alpha_1,\alpha_2,\alpha_3\in X_{\Z}^\bullet \\
		x\in X_{\Z}}}
	\left\{ \Sh{\omega\ }{\ \eta}{\alpha_1\alpha_2x\alpha_3}
	 A(\alpha_1\urflex{\alpha_2}{x}\alpha_3) B(\llflex{\alpha_2}{x}) 
	-\Sh{\omega\ }{\ \eta}{\alpha_1x\alpha_2\alpha_3}
	 A(\alpha_1\ulflex{x}{\alpha_2}\alpha_3) B(\lrflex{x}{\alpha_2}) \right\}.
\end{align*}
By using Lemma \ref{lem:shuffle coefficient}, we have
\begin{align*}
	& =\sum_{\substack
		{\alpha_1,\alpha_2,\alpha_3\in X_{\Z}^\bullet \\
		x\in X_{\Z}}}
	\left\{ \sum_{\substack{
		\omega=\omega_1\omega_2\omega_2'\omega_3 \\
		\eta=\eta_1\eta_2\eta_2'\eta_3}}
	\Sh{\omega_1}{\eta_1}{\alpha_1}\Sh{\omega_2}{\eta_2}{\alpha_2}\Sh{\omega_2'}{\eta_2'}{x}\Sh{\omega_3}{\eta_3}{\alpha_3}
	 A(\alpha_1\urflex{\alpha_2}{x}\alpha_3) B(\llflex{\alpha_2}{x}) \right. \\
	& \hspace{0.2cm}
	\left. -\sum_{\substack{
		\omega=\omega_1\omega_2'\omega_2\omega_3 \\
		\eta=\eta_1\eta_2'\eta_2\eta_3}}
	\Sh{\omega_1}{\eta_1}{\alpha_1}\Sh{\omega_2'}{\eta_2'}{x}\Sh{\omega_2}{\eta_2}{\alpha_2}\Sh{\omega_3}{\eta_3}{\alpha_3}
	 A(\alpha_1\ulflex{x}{\alpha_2}\alpha_3) B(\lrflex{x}{\alpha_2}) \right\}.
\end{align*}
Here, if $\Sh{\omega_2}{\eta_2}{\alpha_2}\neq0$ holds for $\alpha_2\in X_{\Z}^\bullet$, then all letters appearing in $\alpha_2$ match with all ones appearing in $\omega_2$ and $\eta_2$. So we have $\urflex{\alpha_2}{x}=\urflex{(\omega_2,\eta_2)}{x}$ and $\ulflex{x}{\alpha_2}=\ulflex{x}{(\omega_2,\eta_2)}$ for $\alpha_2\in X_{\Z}^\bullet$ and we continue
\begin{align*}
	& =\sum_{\substack
		{\alpha_1,\alpha_3\in X_{\Z}^\bullet \\
		x\in X_{\Z}}}
	\left\{ \sum_{\substack{
		\omega=\omega_1\omega_2\omega_2'\omega_3 \\
		\eta=\eta_1\eta_2\eta_2'\eta_3}}
	\Sh{\omega_1}{\eta_1}{\alpha_1}\Sh{\omega_2'}{\eta_2'}{x}\Sh{\omega_3}{\eta_3}{\alpha_3}
	 A(\alpha_1\urflex{(\omega_2,\eta_2)}{x}\alpha_3)\mathpzc{Sh}( B)(\llflex{\omega_2}{x};\llflex{\eta_2}{x}) \right. \\
	& \hspace{0.2cm}
	\left. -\sum_{\substack{
		\omega=\omega_1\omega_2'\omega_2\omega_3 \\
		\eta=\eta_1\eta_2'\eta_2\eta_3}}
	\Sh{\omega_1}{\eta_1}{\alpha_1}\Sh{\omega_2'}{\eta_2'}{x}\Sh{\omega_3}{\eta_3}{\alpha_3}
	 A(\alpha_1\ulflex{x}{(\omega_2,\eta_2)}\alpha_3)\mathpzc{Sh}( B)(\lrflex{x}{\omega_2};\lrflex{x}{\eta_2}) \right\}.
\end{align*}
Because $ B\in\ARI(\Gamma)_{\al}$, we have $\mathpzc{Sh}( B)(\emptyset;\emptyset)=0$ and $\mathpzc{Sh}( B)(\llflex{\omega_2}{x};\llflex{\eta_2}{x})=\mathpzc{Sh}( B)(\lrflex{x}{\omega_2};\lrflex{x}{\eta_2})=0$ for $\omega_2,\eta_2\neq\emptyset$. So we have
\begin{align*}
	& =\sum_{\substack
		{\alpha_1,\alpha_3\in X_{\Z}^\bullet \\
		x\in X_{\Z}}}
	\left\{ \sum_{\substack{
		\omega=\omega_1\omega_2\omega_2'\omega_3 \\
		\eta=\eta_1\eta_2'\eta_3 \\
		\omega_2\neq\emptyset}}
	\Sh{\omega_1}{\eta_1}{\alpha_1}\Sh{\omega_2'}{\eta_2'}{x}\Sh{\omega_3}{\eta_3}{\alpha_3}
	 A(\alpha_1\urflex{\omega_2}{x}\alpha_3) B(\llflex{\omega_2}{x}) \right. \\
	& \hspace{2cm}
	-\sum_{\substack{
		\omega=\omega_1\omega_2'\omega_2\omega_3 \\
		\eta=\eta_1\eta_2'\eta_3 \\
		\omega_2\neq\emptyset}}
	\Sh{\omega_1}{\eta_1}{\alpha_1}\Sh{\omega_2'}{\eta_2'}{x}\Sh{\omega_3}{\eta_3}{\alpha_3}
	 A(\alpha_1\ulflex{x}{\omega_2}\alpha_3) B(\lrflex{x}{\omega_2}) \\
	& \hspace{2.5cm}+ \sum_{\substack{
		\omega=\omega_1\omega_2'\omega_3 \\
		\eta=\eta_1\eta_2\eta_2'\eta_3 \\
		\eta_2\neq\emptyset}}
	\Sh{\omega_1}{\eta_1}{\alpha_1}\Sh{\omega_2'}{\eta_2'}{x}\Sh{\omega_3}{\eta_3}{\alpha_3}
	 A(\alpha_1\urflex{\eta_2}{x}\alpha_3) B(\llflex{\eta_2}{x}) \\
	& \hspace{3cm}
	\left. -\sum_{\substack{
		\omega=\omega_1\omega_2'\omega_3 \\
		\eta=\eta_1\eta_2'\eta_2\eta_3 \\
		\eta_2\neq\emptyset}}
	\Sh{\omega_1}{\eta_1}{\alpha_1}\Sh{\omega_2'}{\eta_2'}{x}\Sh{\omega_3}{\eta_3}{\alpha_3}
	 A(\alpha_1\ulflex{x}{\eta_2}\alpha_3) B(\lrflex{x}{\eta_2}) \right\}.
\end{align*}
Here, $\Sh{\omega_2'}{\eta_2'}{x}\neq0$ holds for $x\in X_{\Z}$ if and only if $(\omega_2',\eta_2')=(x,\emptyset)$ or $(\emptyset,x)$. So we get
	%we have $\omega_2'=x$ and $\eta_2'=\emptyset$ or have $\omega_2'=\emptyset$ and $\eta_2'=x$.
\begin{align*}
	& =\sum_{\substack
		{\alpha_1,\alpha_3\in X_{\Z}^\bullet \\
		x\in X_{\Z}}}
	\left[ \sum_{\substack{
		\omega=\omega_1\omega_2x\omega_3 \\
		\eta=\eta_1\eta_3 \\
		\omega_2\neq\emptyset}}
	\Sh{\omega_1}{\eta_1}{\alpha_1}\Sh{\omega_3}{\eta_3}{\alpha_3}
	 A(\alpha_1\urflex{\omega_2}{x}\alpha_3) B(\llflex{\omega_2}{x}) \right. \\
	& \hspace{0.2cm}
	-\sum_{\substack{
		\omega=\omega_1x\omega_2\omega_3 \\
		\eta=\eta_1\eta_3 \\
		\omega_2\neq\emptyset}}
	\Sh{\omega_1}{\eta_1}{\alpha_1}\Sh{\omega_3}{\eta_3}{\alpha_3}
	 A(\alpha_1\ulflex{x}{\omega_2}\alpha_3) B(\lrflex{x}{\omega_2}) \\
	& \left.\hspace{0.4cm}+ \sum_{\substack{
		\omega=\omega_1x\omega_3 \\
		\eta=\eta_1\eta_2\eta_3 \\
		\eta_2\neq\emptyset}}
	\Sh{\omega_1}{\eta_1}{\alpha_1}\Sh{\omega_3}{\eta_3}{\alpha_3}
	\left\{  A(\alpha_1\urflex{\eta_2}{x}\alpha_3) B(\llflex{\eta_2}{x})
	- A(\alpha_1\ulflex{x}{\eta_2}\alpha_3) B(\lrflex{x}{\eta_2}) \right\} \right] \\
	& +\sum_{\substack
		{\alpha_1,\alpha_3\in X_{\Z}^\bullet \\
		x\in X_{\Z}}}
	\left[ \sum_{\substack{
		\omega=\omega_1\omega_2\omega_3 \\
		\eta=\eta_1x\eta_3 \\
		\omega_2\neq\emptyset}}
	\Sh{\omega_1}{\eta_1}{\alpha_1}\Sh{\omega_3}{\eta_3}{\alpha_3}
	\left\{  A(\alpha_1\urflex{\omega_2}{x}\alpha_3) B(\llflex{\omega_2}{x})  
	- A(\alpha_1\ulflex{x}{\omega_2}\alpha_3) B(\lrflex{x}{\omega_2}) \right\} \right. \\
	& \hspace{0.2cm}+ \sum_{\substack{
		\omega=\omega_1\omega_3 \\
		\eta=\eta_1\eta_2x\eta_3 \\
		\eta_2\neq\emptyset}}
	\Sh{\omega_1}{\eta_1}{\alpha_1}\Sh{\omega_3}{\eta_3}{\alpha_3}
	 A(\alpha_1\urflex{\eta_2}{x}\alpha_3) B(\llflex{\eta_2}{x}) \\
	& \hspace{0.4cm}
	\left. -\sum_{\substack{
		\omega=\omega_1\omega_3 \\
		\eta=\eta_1x\eta_2\eta_3 \\
		\eta_2\neq\emptyset}}
	\Sh{\omega_1}{\eta_1}{\alpha_1}\Sh{\omega_3}{\eta_3}{\alpha_3}
	 A(\alpha_1\ulflex{x}{\eta_2}\alpha_3) B(\lrflex{x}{\eta_2}) \right].
\end{align*}
Because $x$ runs over $X_{\Z}$, we get $\urflex{\omega_2}{x}=\ulflex{x}{\omega_2}$ and $\urflex{\eta_2}{x}=\ulflex{x}{\eta_2}$ and $\llflex{\eta_2}{x}=\lrflex{x}{\eta_2}$ and $\llflex{\omega_2}{x}=\lrflex{x}{\omega_2}$. Hence we calculate
\begin{align*}
	& =\sum_{\substack
		{\alpha_1,\alpha_3\in X_{\Z}^\bullet \\
		x\in X_{\Z}}}
	\left\{ \sum_{\substack{
		\omega=\omega_1\omega_2x\omega_3 \\
		\eta=\eta_1\eta_3 \\
		\omega_2\neq\emptyset}}
	\Sh{\omega_1}{\eta_1}{\alpha_1}\Sh{\omega_3}{\eta_3}{\alpha_3}
	 A(\alpha_1\urflex{\omega_2}{x}\alpha_3) B(\llflex{\omega_2}{x}) \right. \\
	& \left.\hspace{3cm}
	-\sum_{\substack{
		\omega=\omega_1x\omega_2\omega_3 \\
		\eta=\eta_1\eta_3 \\
		\omega_2\neq\emptyset}}
	\Sh{\omega_1}{\eta_1}{\alpha_1}\Sh{\omega_3}{\eta_3}{\alpha_3}
	 A(\alpha_1\urflex{\omega_2}{x}\alpha_3) B(\llflex{\omega_2}{x}) \right\} \\
	& +\sum_{\substack
		{\alpha_1,\alpha_3\in X_{\Z}^\bullet \\
		x\in X_{\Z}}}
	\left\{ \sum_{\substack{
		\omega=\omega_1\omega_3 \\
		\eta=\eta_1\eta_2x\eta_3 \\
		\eta_2\neq\emptyset}}
	\Sh{\omega_1}{\eta_1}{\alpha_1}\Sh{\omega_3}{\eta_3}{\alpha_3}
	 A(\alpha_1\urflex{\eta_2}{x}\alpha_3) B(\llflex{\eta_2}{x}) \right. \\
	& \hspace{3cm}
	\left. -\sum_{\substack{
		\omega=\omega_1\omega_3 \\
		\eta=\eta_1x\eta_2\eta_3 \\
		\eta_2\neq\emptyset}}
	\Sh{\omega_1}{\eta_1}{\alpha_1}\Sh{\omega_3}{\eta_3}{\alpha_3}
	 A(\alpha_1\urflex{\eta_2}{x}\alpha_3) B(\llflex{\eta_2}{x}) \right\}.
\end{align*}
For $\omega_1,\omega_2,\omega_3,x$ with $\omega=\omega_1\omega_2x\omega_3$, by using Lemma \ref{lem:shuffle coefficient} with $r=3$ and $\omega=\omega_1\urflex{\omega_2}{x}\omega_3$ and $\alpha_2=\urflex{\omega_2}{x}$, we have
\begin{align*}
	\Sh{\omega_1\urflex{\omega_2}{x}\omega_3}{\eta}{\alpha_1\urflex{\omega_2}{x}\alpha_3}
	&=\sum_{\substack{
		\omega_1\urflex{\omega_2}{x}\omega_3=\omega_1'\omega_2'\omega_3' \\
		\eta=\eta_1\eta_2\eta_3}}
	\Sh{\omega_1'}{\eta_1}{\alpha_1}\Sh{\omega_2'}{\eta_2}{\urflex{\omega_2}{x}}\Sh{\omega_3'}{\eta_3}{\alpha_3}.
	\intertext{Because $\eta=\varia{x_{p+1},\ \dots,\ x_{p+q}}{\sigma_{p+1},\ \dots,\ \sigma_{p+q}}$ and $\omega_2\neq\emptyset$, the letter $\urflex{\omega_2}{x}\in X_\mathbb Z$ does not appear in $\eta$. So for any word $\eta_2$ such that $\eta=\eta_1\eta_2\eta_3$, we get $\eta_2\neq\urflex{\omega_2}{x}$. Hence, $\Sh{\omega_2'}{\eta_2}{\urflex{\omega_2}{x}}\neq0$ holds if and only if $\omega_2'=\urflex{\omega_2}{x}$ and $\eta_2=\emptyset$. So we have}
	&=\sum_{\eta=\eta_1\eta_3}
	\Sh{\omega_1}{\eta_1}{\alpha_1}\Sh{\omega_3}{\eta_3}{\alpha_3}.
\end{align*}
Similarly, we get
\begin{align*}
	\sum_{\omega=\omega_1\omega_3}
	\Sh{\omega_1}{\eta_1}{\alpha_1}\Sh{\omega_3}{\eta_3}{\alpha_3}
	=\Sh{\omega}{\eta_1\urflex{\eta_2}{x}\eta_3}{\alpha_1\urflex{\omega_2}{x}\alpha_3}.
\end{align*}
So by using these, we calculate
\begin{align*}
	&\mathpzc{Sh}(\arit( B)( A))(\omega;\eta) \\
	&= \sum_{x\in X_{\Z}}
	\left\{ \sum_{\substack{
		\omega=\omega_1\omega_2x\omega_3 \\
		\omega_2\neq\emptyset}}
	\mathpzc{Sh}( A)(\omega_1\urflex{\omega_2}{x}\omega_3;\eta) B(\llflex{\omega_2}{x}) 
	-\sum_{\substack{
		\omega=\omega_1x\omega_2\omega_3 \\
		\omega_2\neq\emptyset}}
	\mathpzc{Sh}( A)(\omega_1\urflex{\omega_2}{x}\omega_3;\eta) B(\llflex{\omega_2}{x}) \right\} \\
	&\quad +\sum_{x\in X_{\Z}}
	\left\{ \sum_{\substack{
		\eta=\eta_1\eta_2x\eta_3 \\
		\eta_2\neq\emptyset}}
	\mathpzc{Sh}( A)(\omega;\eta_1\urflex{\eta_2}{x}\eta_3) B(\llflex{\eta_2}{x}) 
	-\sum_{\substack{
		\eta=\eta_1x\eta_2\eta_3 \\
		\eta_2\neq\emptyset}}
	\mathpzc{Sh}( A)(\omega;\eta_1\urflex{\eta_2}{x}\eta_3) B(\llflex{\eta_2}{x}) \right\}.
\end{align*}

Lastly, similarly to footnote \ref{footnote:arit embedding} in Definition \ref{def:aritu}, all letters appearing in two words $\omega_1\urflex{\omega_2}{x}\omega_3$ and $\eta$ (resp. $\omega$ and $\eta_1\urflex{\eta_2}{x}\eta_3$) are algebraically independent over $\mathbb Q$, so by Remark \ref{rem:component of mould by embedding}, the component $\mathpzc{Sh}( A)(\omega_1\urflex{\omega_2}{x}\omega_3;\eta)$ (resp. $\mathpzc{Sh}( A)(\omega;\eta_1\urflex{\eta_2}{x}\eta_3)$) is well-defined.
Hence, by using alternality of $A$, we obtain \eqref{arit-al}. 
\hfill $\Box$

%%%%%%%%%%%%%%%%%%%%%%%%%%%%%%%%%%%%%%%%%%%%%%%%%%%%%%%%%%%%%%%%%%%%%%%%
\subsection{Proof of Proposition \ref{ARIalal Lie algebra}}\label{sec:A.3}
We give a proof Proposition \ref{ARIalal Lie algebra} which claims that 
%The $\mathbb Q$-linear space 
$\ARI(\Gamma)_{\underline\al/\underline\al}$ 
 forms a filtered Lie subalgebra of $\ARI(\Gamma)_\al$ under the $\ari_u$-bracket.

For $A,B\in\ARI(\Gamma)_{\underline\al/\underline\al}$, it is enough to prove $\ari_u(A,B)\in\ARI(\Gamma)_{\underline\al/\underline\al}$, that is,
\begin{align}
	\label{ari in ARIal}& \ari_u(A,B)\in\ARI(\Gamma)_{\al}, \\
	\label{swapari in barARIal}& \swap(\ari_u(A,B))\in\overline{\ARI}(\Gamma)_\al, \\
	\label{length =1 is even}& \ari_u(A,B)^1\varia{x_1}{\sigma_1}=\ari_u(A,B)^1\varia{-x_1}{\sigma_1^{-1}}.
\end{align}
Because $\ARI(\Gamma)_{\al}$ forms a Lie algebra under the $\ari_u$-bracket, \eqref{ari in ARIal} is obvious. By the definition of $\ari_u$, \eqref{length =1 is even} is also clear. By Lemma \ref{swap-ari commutation} and Proposition \ref{prop:DKV1}, we have
\begin{equation*}
	\swap(\ari_u(A,B))=\ari_v(\swap(A),\swap(B)).
\end{equation*}
Since we have $\swap(A),\swap(B)\in\overline{\ARI}(\Gamma)_{\al}$ for $A$, $B\in\ARI(\Gamma)_{\underline\al/\underline\al}$,
we get $\swap(\ari_u(A,B))\in\overline{\ARI}(\Gamma)_{\al}$ 
by Proposition \ref{barARIal Lie algebra}. 
Thus we obtain \eqref{swapari in barARIal}. \hfill $\Box$

%%%%%%%%%%%%%%%%%%%%%%%%%%%%%%%%%%%%%%%%%%%%%%%%%%%%%%%%%%%%%%%%%%%%%%
\section{Multiple polylogarithms at roots of unity}\label{sec:MPL at roots}
In this appendix we recall how 
$\ARI(\Gamma)_{\underline\al/\underline\al}$ and ${\mathbb D}(\Gamma)_{\bullet\bullet}$
are related to multiple polylogarithms at roots of unity.

Multiple polylogarithm is the several variable complex function
which is defined by the following power series
$$
\Li_{n_1,\dots,n_r}(z_1,\dots,z_r):=\sum_{0<k_1<\cdots <k_r}\frac{z_1^{k_1}\cdots z_r^{k_r}}{k_1^{n_1}\cdots k_r^{n_r}}
$$
for $n_1,\dots,n_r, r\in\N$.
For $N\in\N$, we denote $\mu_N$  to be the group of $N$-th roots of unity in  $\C$.
Its limit value at $r$-tuple $(\epsilon_1, \dots, \epsilon_r)\in\mu_N^{\oplus r}$ makes sense if and only if $(n_r,\epsilon_r)\neq (1,1)$.
They are called {\it multiple $L$-values} (particularly  multiple zeta values when $N=1$).

In \cite[\S 8]{E-ARIGARI} and \cite[\S 1.2]{E-flex}
(it is also recalled in \cite[\S A.3]{R-PhD} in the case when $\Gamma=\{e\}$),
the following moulds in $\mathcal M(\mathcal F;\Gamma)$
with $\mathcal F=\cup_m {\mathbb C}[[u_1,\dots, u_m]]$ and $\Gamma=\mu_N$
$$\Zag=\{\Zag\varia{u_1,\ \dots,\ u_m}{\epsilon_1,\ \dots,\ \epsilon_m}\}_m
\quad \text{ and }\quad
\Zig=\{\Zig\varia{\epsilon_1,\ \dots,\ \epsilon_m}{v_1,\ \dots,\ v_m}\}_m
$$
%associated with the multiple $L$-values
are introduced and defined by
\begin{align*}
&\Zag\varia{u_1,\ \dots,\ u_m}{\epsilon_1,\ \dots,\ \epsilon_m}=
\sum_{n_1,\dots, n_m>0}
\Li^{\scalebox{.5}{$\Sha$}}_{n_1,\dots,n_m}(\frac{\epsilon_1}{\epsilon_2},\dots,\frac{\epsilon_{m-1}}{\epsilon_m},\epsilon_m)
u_1^{n_1-1}(u_1+u_2)^{n_2-1}\cdots
(u_1+\cdots+u_m)^{n_m-1} \\
&\Zig\varia{\epsilon_1,\ \dots,\ \epsilon_m}{v_1,\ \dots,\ v_m}=
\sum_{n_1,\dots, n_m>0}
\Li^\ast_{n_m,\dots,n_1}(\epsilon_m, \dots, \epsilon_1)
v_1^{n_1-1}\cdots v_m^{n_m-1}
\end{align*}
where
 $\Li^{\scalebox{.5}{$\Sha$}}_{n_1,\dots,n_m}(\epsilon_1,\dots,\epsilon_m)$
 and
  $\Li^{\ast}_{n_1,\dots,n_m}(\epsilon_1,\dots,\epsilon_m)$
mean the shuffle regularization and the harmonic (stuffle) regularization of
$\Li_{n_1,\dots,n_m}(\epsilon_1,\dots,\epsilon_m)$
respectively (cf. \cite{AK}).
Particularly $\Li^{\scalebox{.5}{$\Sha$}}_{n_1,\dots,n_m}(\epsilon_1,\dots,\epsilon_m)=\Li^{\ast}_{n_1,\dots,n_m}(\epsilon_1,\dots,\epsilon_m)=
\Li_{n_1,\dots,n_m}(\epsilon_1,\dots,\epsilon_m)$
when $(n_m,\epsilon_m)\neq (1,1)$.
In \cite[(37)]{E-ARIGARI} and \cite[(1.27)]{E-flex}, 
it is explained  that they are related as follows
\begin{equation}\label{eq: Ecalle regularization relations}
\Zig=\Mini\times \swap(\Zag)
\end{equation}
(see \eqref{eq:swap} for $\swap$).
%\Add{Discuss about \cite[(1.30)]{E-flex}. }
Here 
$\Mini=\{\Mini\varia{u_1,\ \dots,\ u_m}{\epsilon_1,\ \dots,\ \epsilon_m}\}_m$
is the mould defined by
$$
\Mini\varia{\epsilon_1,\ \dots,\ \epsilon_m}{v_1,\ \dots,\ v_m}=
\begin{cases}
\Mono_m & \text{when} \quad (\epsilon_1,\ \dots,\ \epsilon_m)=(1,\dots, 1),\\
0 & \text{otherwise}  \\
\end{cases}
$$
with $1+\sum_{r=2}^\infty\Mono_r\cdot t^r:=\exp\{\sum_{k=2}^\infty (-1)^{k-1}\frac{\zeta(k)}{k}t^{k}\}$ (cf. \cite[(1.30)]{E-flex}).

Goncharov's arguments in \cite{G-MPL} suggest us  to express them as
\begin{align*}
&\Zag\varia{u_1,\ \dots,\ u_m}{\epsilon_1,\ \dots,\ \epsilon_m}=
\reg^{\scalebox{.5}{$\Sha$}}\int_{0<s_1<\cdots< s_m<1}
\frac{\epsilon_1s_1^{-u_1}}{1-\epsilon_1s_1}ds_1\wedge\cdots\wedge
\frac{\epsilon_ms_m^{-u_m}}{1-\epsilon_ms_m}ds_m, \\
&\Zig\varia{\epsilon_1,\ \dots,\ \epsilon_m}{v_1,\ \dots,\ v_m}=
\reg^\ast\sum_{0<k_m<\cdots<k_1}
\frac{\epsilon_m^{k_m}\cdots\epsilon_1^{k_1}}
{(k_m-v_m)\cdots (k_1-v_1)}
\end{align*}
by using his regularizations.
These might also helpful to understand \cite[(9.5)]{E-flex} saying that
$\Zag$ belongs to $\GARI(\Gamma)_{\as/\is}$.
The authors are not aware of its definition but  expect that it means 
the symmetrality for $\Zag$ and the symmetrilty for $\Zig$ 
(\cite[\S\S 1.1--1.2]{E-flex}),
which looks corresponding to the shuffle and the harmonic product
among multiple $L$-values.
In \cite[\S 4.7]{E-flex} it is directed to combine
a related group $\GARI(\Gamma)_{\underline{\as}/\underline{\is}}$
%of moulds reflected by these relations 
and its Lie algebra 
$\ARI(\Gamma)_{\underline{\al}/\underline{\il}}$
with a bigraded variant $\GARI(\Gamma)_{\underline{\as}/\underline{\as}}$
and $\ARI(\Gamma)_{\underline{\al}/\underline{\al}}$
(cf. Definition \ref{def:ARIalal})
under maps $\mathrm{adgari(pal)}$ and $\mathrm{adari(pal)}$ 
(cf. \cite[(2.54), (2.55), and \S 4.2]{E-flex}):
\begin{equation}\label{eq:Ecalle CD}
\xymatrix{ 
\GARI(\Gamma)_{\underline{\as}/\underline{\as}}\ar[r]^{\mathrm{adgari(pal)}}\ar[d]^{\mathrm{logari}}& \GARI(\Gamma)_{\underline{\as}/\underline{\is}}\ar[d]^{\mathrm{logari}}\\ 
\ARI(\Gamma)_{\underline{\al}/\underline{\al}}\ar[r]^{\mathrm{adari(pal)}}&\ARI(\Gamma)_{\underline{\al}/\underline{\il}}.
}
\end{equation}

While the cyclotomic analogue of Drinfeld's KZ-associator (\cite{Dr}) 
which is constructed  from the KZ-like differential equation in $\hat{\mathbb A}_\C$
over 
$\mathbb{P}^1(\C)\setminus\{0,\mu_N,\infty\}$
with a complex variable $s$
$$
\frac{d}{ds}H(s)=\left(\frac{x}{s}+\sum_{\xi\in\mu_N}\frac{y_\xi}{\xi s-1}\right)
H(s)
$$
is introduced and discussed in \cite{R, En}.
It is a non-commutative formal power series
$\Phi_{\KZ}^N\in\hat{\mathbb A}_\C$
with $\Gamma=\mu_N$
whose coefficients are
given by multiple $L$-values.
In particular, the coefficient of 
$$x^{n_r-1}y_{\epsilon_r}
x^{n_{r-1}-1}y_{\epsilon_{r-1}\epsilon_{r}}\cdots
x^{n_1-1}y_{\epsilon_{1}\cdots\epsilon_{r}}$$
is $(-1)^r\Li^{\scalebox{.5}{$\Sha$}}_{n_1,\dots,n_r}(\epsilon_1,\dots,\epsilon_r)$.
Definition \ref{def:ma} enables us to calculate
a relation between the associator ${\Phi}_{\KZ}^N$ and the mould $\Zag$
%calculated to be
as follows
\begin{align}\label{eq: ma=Zag}
\ma_{({\Phi}_{\KZ}^N)^{-1}}=\left\{
\Zag\varia{-u_1, \dots, -u_m}{\epsilon_1^{-1},\ \dots \ ,\ \epsilon_m^{-1}}
\right\}_m
\in\mathcal M(\mathcal F;\mu_N). %\\
%&\ma_{(\widetilde\Phi_{\KZ,\ast}^N)^{-1}}=?
\end{align}
%%(for $\ma$, recall Definition \ref{def:ma}).
%%Ho about $\ma_{(\widetilde\Phi_{\KZ,\ast}^N)^{-1}}=??$ or $\mathrm{mi}_{(\widetilde\Phi_{\KZ,\ast}^N)^{-1}}=??$
%The equation \eqref{eq: Ecalle regularization relations} looks
%corresponding to \eqref{eq: Racinet regularization relations}
%under the map $\ma$.
Put $\widetilde\Phi_{\KZ}^N=\Phi_{\KZ}^N(x,(-y_\sigma))$ %\in \hat{\mathbb A}_\C$,
and
$\widetilde\Phi_{\KZ,\corr}=\exp\left\{\sum\limits_{n\geqslant 2}
\frac{(-1)^{n-1}}{n}
%(\pi_Y(\widetilde{\Phi}_{\KZ}^N)|x^{n-1}y)
\Li_n(1)y_1^n\right\}
\in \hat{\mathbb A}_\C$
and define
\begin{equation}\label{eq: Racinet regularization relations}
\widetilde\Phi_{\KZ,\ast}^N:=\widetilde\Phi_{\KZ, \corr}\cdot
\q(\pi_Y(\widetilde\Phi_{\KZ}^N)) %\in\hat{\mathbb A}_\C
\end{equation}
(for $\pi_Y$ and $\q$, see \S \ref{subsec:KV condition}).
It is shown in \cite{R} that $\widetilde\Phi_{\KZ}^N$ is group-like
with respect to the shuffle (deconcatenation) coproduct of $\hat{\mathbb A}_\C$
and $\widetilde\Phi_{\KZ,\ast}^N$  is so
with respect to the harmonic coproduct of $\mathrm{Im}\,\pi_Y$.
They correspond to the shuffle and the harmonic product
among multiple $L$-values respectively.
The regularized double shuffle relations (the shuffle and the harmonic products
and  the regularization relations \eqref{eq: Racinet regularization relations})
are the defining equations of his torsor 
$\mathrm{DMR}_\mu$ 
\footnote{The word DMR
stands for the French `double m\'elange et r\'egularisation'.}
for $\mu\in\C^\times$
which contains $\widetilde\Phi_{\KZ}^N$ as a specific point when $\mu=2\pi\sqrt{-1}$.
It is equipped with a free and transitive action of 
the group $\mathrm{DMR}_0$. 
Its associated Lie algebra, %$\log\mathrm{DMR}_0$, 
which he denotes by $\mathfrak{dmr}_0$,
is a filtered graded Lie algebra
defined by the regularized double shuffle relations  modulo products.
Our dihedral Lie algebra $\mathbb D(\mu_N)_{\bullet\bullet}$ should be called
its bigraded variant,
%the bigraded Lie algebra 
defined by  \lq their highest depth parts' of the relations.
By definition,  it contains %admits an inclusion from 
the associated graded $\gr\mathfrak{dmr}_0$.
By translating Ecalle's pictures including the diagram \eqref{eq:Ecalle CD}
to this setting,
%via \eqref{eq: ma=Zag},
we might learn
%to this side looks very profitable.
%As far as the authors  know, 
%Ecalle's picture including
%the commutative diagram \eqref{eq:Ecalle CD} might lead us 
more enriched structures on these Lie algebras. % in this side.
%has not been shown yet in this side.

%%%%%%%%%%%%%%%%%%%%%%%%%%%%%%%%%%%%%%%%%%%%%%%%%%%%%%%%%%%%%%%%%%%%%%


\begin{thebibliography}{}
\bibitem[AET]{AET}
Alekseev, A., Enriquez, B. and Torossian, C.,
{\it Drinfeld associators, braid groups and explicit solutions of the Kashiwara-Vergne equations},
Publ. Math. Inst. Hautes \'{E}tudes Sci. No. {\bf 112} (2010), 143--189. 

\bibitem[AKKN]{AKKN}
Alekseev, A., Kawazumi. N, Kuno. Y. and  Naef, F.
{\it The Goldman-Turaev Lie bialgebra in genus zero and the Kashiwara-Vergne problem}, Advances in Mathematics, {\bf 326}, 1-53 (2018).

\bibitem[AT]{AT}
Alekseev, A. and Torossian, C.,
{\it The Kashiwara-Vergne conjecture and Drinfeld's associators},
Ann. of Math. (2) {\bf 175} (2012), no. 2, 415--463. 

\bibitem[AK]{AK}
Arakawa, T., Kaneko, M., 
{\it On multiple $L$-values}, 
J. Math. Soc. Japan 56 (2004), no.4, 967--991. 

%\bibitem[B]{B}
%Brown. F.,
%{\it Depth-graded motivic multiple zeta values},
%{\tt arXiv:0712.2337}, preprint.

\bibitem[C]{Cre}
Cresson. J.,
{\it Calcul moulien},
Ann. Fac. Sci. Toulouse Math. (6)  {\bf 18}  (2009),  no. 2, 307--395. 

\bibitem[D]{Dr} V. Drinfeld,
{\it On quasitriangular quasi-Hopf algebras and on a group that is closely connected with 
$\mathrm{Gal}(\overline{\mathbb Q}/\mathbb Q)$},
Leningrad Math. J. 2 (1991), no. 4, 829--860.  

\bibitem[Ec81]{E81}
Ecalle, J.,
{\it Les fonctions r\'{e}surgentes. Tome I et II},
%Les fonctions r\'{e}surgentes appliqu\'{e}es \`{a} l'it\'{e}ration,
Publications Math\'{e}matiques d'Orsay {\bf 81},
6. Universit\'{e} de Paris-Sud, D\'{e}partement de Math\'{e}matique, Orsay, 1981. %pp. 248--531. 
 
\bibitem[Ec03]{E-ARIGARI}
Ecalle, J., 
{\it ARI/GARI, la dimorphie et l'arithm\'{e}tique des multiz\^{e}tas: un premier bilan},
J. Th\'{e}or. Nombres Bordeaux {\bf 15} (2003), no. 2, 411--478. 

\bibitem[Ec11]{E-flex}
Ecalle, J.,
{\it The flexion structure and dimorphy: flexion units, singulators, generators, and the enumeration of multizeta irreducibles}, 
With computational assistance from S. Carr. CRM Series, {\bf 12}, Asymptotics in dynamics, geometry and PDEs; generalized Borel summation. Vol. II, 27--211, Ed. Norm., Pisa, 2011.

\bibitem[En]{En}
Enriquez, B.,
{\it Quasi-reflection algebras and cyclotomic associators},
Selecta Math.(N.S.) 13 (2007), no. 3, 391--463.

\bibitem[F]{F}
Furusho, H.,
{\it Around associators}, Automorphic forms and Galois representations. Vol. 2, 105--117, London Math. Soc. Lecture Note Ser., {\bf 415}, Cambridge Univ. Press, Cambridge, 2014.

\bibitem[G01a]{G}
Goncharov, A. B., 
{\it The dihedral Lie algebras and Galois symmetries of $\pi_1^{(l)}(\mathbb{P}^1-(\{0,\infty\}\cup \mu_N))$},
Duke Math. J. {\bf 110} (2001), no. 3, 397--487. 

\bibitem[G01b]{G-MPL}
Goncharov, A. B., 
{\it Multiple polylogarithms and mixed Tate motives},
preprint, {\tt arXiv:math/0103059}.

\bibitem[M]{M}
Maassarani, M.,
{\it Bigraded Lie algebras related to MZVs},
preprint, {\tt arXiv:1907.07200}.

\bibitem[R00]{R-PhD} 
Racinet, G.,
{\it S\'{e}ries g\'{e}n\'{e}ratrices non-commutatives de polyz\^{e}tas et associateurs de Drinfeld},
Ph.D. desseratation, Paris, France, 2000.

\bibitem[R02]{R} 
Racinet, G.,
{\it Doubles m\'elanges des polylogarithmes multiples aux racines de l'unit\'e}, 
Publ. Math. Inst. Hautes \'Etudes Sci. {\bf 95} (2002), 185--231. 

\bibitem[RS]{RS}
Raphael, E., Schneps, L.,
{\it On linearised and elliptic versions of the Kashiwara-Vergne Lie algebra},
{\tt arXiv:1706.08299v1}, preprint.

\bibitem[SaSch]{SS}
Salerno, A., Schnepps, L.,
{\it Mould theory and the double shuffle Lie algebra structure}, 
%{\tt arXiv:1510.05535}, preprint.
Periods in Quantum Field Theory and Arithmetic,
Springer Proceedings in Mathematics \& Statistics, vol {\bf 314} (2020), Springer, 399--430. 

\bibitem[Sau]{Sau}
Sauzin, D.,
{\it Mould expansions for the saddle-node and resurgence monomials},
%arXiv:0712.2337.
Renormalization and Galois theories, 
83--163, IRMA Lect. Math. Theor. Phys., {\bf 15}, Eur. Math. Soc., Z\"{u}rich, 2009. 

\bibitem[Sch12]{S}
Schneps, L.,
{\it Double shuffle and Kashiwara-Vergne Lie algebras},
J. Algebra {\bf 367} (2012), 54--74.

\bibitem[Sch15]{S-ARIGARI}
Schneps, L.,
{\it ARI, GARI, ZIG and ZAG: An introduction to Ecalle's theory of multiple zeta values},
{\tt arXiv:1507.01534}, preprint.


\end{thebibliography}
\end{document}